\newtheorem{theorem}{Theorem}[section]
\newtheorem{observation}[theorem]{Observation}
\newtheorem{lemma}[theorem]{Lemma}
\newtheorem{proposition}[theorem]{Proposition}
\newtheorem{corollary}[theorem]{Corollary}
\newtheorem{assumption}[theorem]{Assumption}
\numberwithin{equation}{section}
\theoremstyle{definition}
\newtheorem{definition}[theorem]{Definition}
\theoremstyle{remark}
\newtheorem{remark}[theorem]{Remark}
\newcommand\R{\mathbb{R}}
\newcommand\C{\mathbb{C}}
\newcommand\Q{\mathbb{Q}}
\newcommand\Z{\mathbb{Z}}
\newcommand\N{\mathbb{N}}
\newcommand\F{\mathbb{F}}
\DeclareMathOperator{\Lip}{Lip}
\DeclareMathOperator{\re}{Re}
\DeclareMathOperator{\Tr}{Tr}
\DeclareMathOperator{\id}{id}
\DeclareMathOperator{\TrP}{TrP}
\DeclareMathOperator{\NCP}{NCP}
\DeclareMathOperator{\Div}{Div}
\DeclareMathOperator{\Var}{Var}
\DeclareMathOperator{\II}{II}
\DeclareMathOperator{\app}{app}
\DeclarePairedDelimiter{\norm}{\lVert}{\rVert}
\DeclarePairedDelimiter{\ip}{\langle}{\rangle}
\begin{document}

\title[Convex Free Gibbs Laws]{Conditional Expectation, Entropy, and Transport for Convex Gibbs Laws in Free Probability}
\subjclass{Primary: 46L54, Secondary: 35K10, 37A35, 46L52, 46L53, 60B20}
\keywords{free Gibbs state, free entropy, free transport, free group factor, invariant random matrix ensembles, asymptotic random matrix theory, Talagrand inequality}

\author{David Jekel}
\address{Department of Mathematics, UCLA, Los Angeles, CA 90095}
\email{davidjekel@math.ucla.edu}
\urladdr{www.math.ucla.edu/{$\sim$}davidjekel/}

\maketitle

\begin{abstract}
Let $(X_1,\dots,X_m)$ be self-adjoint non-commutative random variables distributed according to the free Gibbs law given by a sufficiently regular convex and semi-concave potential $V$, and let $(S_1,\dots,S_m)$ be a free semicircular family.  We show that conditional expectations and conditional non-microstates free entropy given $X_1$, \dots, $X_k$ arise as the large $N$ limit of the corresponding conditional expectations and entropy for the random matrix models associated to $V$.  Then by studying conditional transport of measure for the matrix models, we construct an isomorphism $\mathrm{W}^*(X_1,\dots,X_m) \to \mathrm{W}^*(S_1,\dots,S_m)$ which maps $\mathrm{W}^*(X_1,\dots,X_k)$ to $\mathrm{W}^*(S_1,\dots,S_k)$ for each $k = 1, \dots, m$, and which also witnesses the Talagrand inequality for the law of $(X_1,\dots,X_m)$ relative to the law of $(S_1,\dots,S_m)$.
\end{abstract}


\section{Introduction}

\subsection{Motivation}


Free probability initiated a fruitful exchange between random matrix theory and operator algebras.  In many situations, tuples of $N \times N$ random matrices $(X_1^{(N)}, \dots, X_m^{(N)})$ can be described in the large $N$ limit by non-commutative random variables $X_1$, \dots, $X_m$ which are operators in a tracial $\mathrm{W}^*$-algebra.  Conversely, many properties of non-commutative random variables (and the $\mathrm{W}^*$-algebras that they generate) are easier to understand when they can be simulated by finite-dimensional random matrix models.  For instance, Voiculescu used free entropy, defined in terms of matricial microstates, to prove the absence of Cartan subalgebras for free group $\mathrm{W}^*$-algebras $L(\mathbb{F}_n)$ \cite{VoiculescuFE3}; similar techniques were used to give sufficient conditions for a von Neumann algebra to be non-prime and non-Gamma (a convenient list of results and references can be found in \cite{CN2019}).  Further applications of random matrices to the properties of $\mathrm{C}^*$- and $\mathrm{W}^*$-algebras can be found for instance in \cite{HT2005} and \cite[\S 4]{GS2009}.

Free Gibbs laws are a prototypical example of the connection between random matrices and $\mathrm{W}^*$-algebras.  Free Gibbs laws describe the large $N$ behavior of self-adjoint tuples of random matrices $X^{(N)} = (X_1^{(N)}, \dots, X_m^{(N)})$ given by a probability measure $\mu^{(N)}$ of the form
\[
d\mu^{(N)}(x) = \frac{1}{Z^{(N)}} e^{-N^2 V^{(N)}(x)}\,dx,
\]
where $x \in M_N(\C)_{sa}^m$ is a self-adjoint tuple, $dx$ denotes Lebesgue measure, $V^{(N)}: M_N(\C)_{sa}^m \to \R$ is a function (known as a potential) chosen so that $e^{-N^2 V^{(N)}(x)}$ is integrable, and $Z^{(N)}$ is normalizing constant to make $\mu^{(N)}$ a probability measure.  Here $V^{(N)}(x)$ could be given by $V^{(N)}(x) = \tau_N(p(x_1,\dots,x_m))$, where $\tau_N = (1/N) \Tr$ and $p$ is a non-commutative polynomial; for instance, taking
\[
V^{(N)}(x) = \frac{1}{2} \sum_{j=1}^m \tau_N(x_j^2)
\]
produces the Gaussian unitary ensemble (GUE).  Under certain assumptions on $V$ (e.g.\ convexity and good asymptotic behavior as $N \to \infty$), there will be non-commutative random variables $X_1$, \dots, $X_m$ in a tracial $\mathrm{W}^*$-algebra $(\mathcal{M},\tau)$ such that
\[
\tau_N(p(X_1^{(N)}, \dots, X_m^{(N)})) \to \tau(p(X_1,\dots,X_m)) \text{ in probability for every non-commutative polynomial } p;
\]
see \cite[Theorems 3.3 and 3.4]{GMS2006}, \cite[Proposition 50 and Theorem 51]{DGS2016}, \cite[Theorem 4.1]{Jekel2018}.  The random matrix models satisfy the relation, derived from integration by parts, that
\[
E[\tau_N(D_{x_j} V^{(N)}(X^{(N)}) p(X^{(N)}))] = E[\tau_N \otimes \tau_N(\partial_{x_j}p(X^{(N)}))],
\]
where $D_{x_j} V$ is a normalized gradient with respect to the coordinates of $x_j$ and $\partial_{x_j}$ denotes the free difference quotient, and hence the non-commutative tuple $X = (X_1,\dots,X_m)$ satisfies
\[
\tau(D_{x_j} V(X) p(X)) = \tau \otimes \tau(\partial_{x_j}p(X));
\]
see \cite[\S 2.2 - 2.3]{GMS2006}.  The non-commutative law of a tuple $X$ satisfying such an equation is known as a \emph{free Gibbs law} for the potential $V$.

Given sufficient assumptions on $V^{(N)}$ (for instance, Assumption \ref{ass:convexRMM}), many of the classical quantities associated to $X^{(N)}$ will converge in the large $N$ limit to their free counterparts, besides obviously the convergence of the non-commutative moments $\tau_N(p(X^{(N)}))$.  For instance, the normalized classical entropy will converge to the microstates free entropy (see \cite[\S 2]{VoiculescuFE1}, \cite[Theorem 5.1]{GS2009}, \cite[\S 5.2]{Jekel2018}), and the normalized classical Fisher information will converge to the free Fisher information (see \cite[\S 5.3]{Jekel2018}).  The monotone transport maps of Guionnet and Shlyakhtenko are well-approximated by classical transport maps for the random matrix models \cite[Theorem 4.7]{GS2014}.  The solutions of classical SDE associated to the random matrix models approximate the solutions of free SDE; see for instance \cite{BCG2003}, \cite[\S 2]{GS2009}, \cite[\S 4]{Dabrowski2017}.

\subsection{Summary of Main Results}

This paper will further develop the connection between classical and free probability for convex free Gibbs laws, by studying conditional expectation (\S \ref{sec:conditionalexpectation}), conditional entropy and Fisher information (\S \ref{sec:entropy}), and conditional transport (\S \ref{sec:transport}).  This is an extension of our previous work \cite{Jekel2018}.

We consider a sequence of random matrix tuples $(X^{(N)},Y^{(N)}) = (X_1^{(N)}, \dots, X_m^{(N)}, Y_1^{(N)}, \dots, Y_n^{(N)})$ given by a uniformly convex and semi-concave sequence of potentials $V^{(N)}$ such that the normalized gradient $DV^{(N)}$ is asymptotically approximable by trace polynomials (a notion of good asymptotic behavior as $N \to \infty$ defined in \S \ref{subsec:AATP}).  Then the following results hold:
\begin{enumerate}[(1)]
	\item The non-commutative moments $\tau_N(p(X^{(N)},Y^{(N)}))$ converge in probability to $\tau(p(X,Y))$ for some tuple $(X,Y)$ of non-commutative random variables in a tracial $\mathrm{W}^*$-algebra.  See Theorem \ref{thm:freeGibbslaw}.
	\item The classical conditional expectation $E[f^{(N)}(X^{(N)},Y^{(N)})| Y^{(N)}]$ behaves asymptotically like the non-commutative conditional expectation $E_{\mathrm{W}^*(Y)}[f(X,Y)]$ where $f$ comes from an appropriate non-commutative function space and $f^{(N)}: M_N(\C)_{sa}^{m+n} \to M_N(\C)$ is a sequence of uniformly Lipschitz functions that ``behaves like $f$ in the large $N$ limit'' in the sense of \S \ref{sec:TP}.  See Theorem \ref{thm:conditionalexpectation}.
	\item The classical conditional entropy $N^{-2} h(X^{(N)} | Y^{(N)}) + (m/2) \log N$ converges to the conditional free entropy $\chi^*(X: \mathrm{W}^*(Y))$.  This is a similar to a conditional version of $\chi = \chi^*$.  See Theorem \ref{thm:convergenceofentropy}.
	\item There exists a function $f(X,Y)$ such that $(f(X,Y),Y) \sim (S,Y)$ in non-commutative law, where $S$ is a free semicircular $m$-tuple freely independent of $Y$, and this function also arises from functions $f^{(N)}$ such that $(f^{(N)}(X^{(N)},Y^{(N)}),Y^{(N)}) \sim (S^{(N)},Y^{(N)})$, where $S^{(N)}$ is an independent GUE $m$-tuple.  This is the conditional version of transport to the Gaussian/semicircular law.  See Theorems \ref{thm:transport3}.
	\item This transport map also witnesses the conditional entropy-cost inequality for the law of $X$ relative to semicircular conditioned on $Y$.  See Theorem \ref{thm:transport3}.
	\item This transport map furnishes an isomorphism $\mathrm{W}^*(X,Y) \cong \mathrm{W}^*(S,Y) \cong \mathrm{W}^*(S) * \mathrm{W}^*(Y)$, which shows that $\mathrm{W}^*(Y)$ is freely complemented in $\mathrm{W}^*(X,Y)$.
	\item Actually, a second application of transport shows that $\mathrm{W}^*(Y)$ is isomorphic to the $\mathrm{W}^*$-algebra generated by a semicircular $n$-tuple, or in other words $L(\mathbb{F}_n)$.  So altogether there is an isomorphism $\mathrm{W}^*(X,Y) \to L(\mathbb{F}_{m+n})$ that maps $\mathrm{W}^*(Y)$ to the canonical copy of $L(\mathbb{F}_n)$ inside $L(\mathbb{F}_{m+n})$.
\end{enumerate}
Furthermore, the results about transport can be iterated to produce a ``lower-triangular transport'' as shown in Theorem \ref{thm:transport4} and discussed further in \S \ref{subsec:transportintro}.  This is analogous to the classical results on triangular transport of measure such as \cite{BKM2005}.

In the rest of the introduction, we will review notation and then motivate and explain the main results in more detail.  In the course of the paper, it will become clear that not only are our main results proved all by the similar techniques, but in fact their statements and proofs are tightly interrelated.

\subsection{Notation and Background}

We will continue to use the same notation and background as in \cite{Jekel2018}.  The one major change is that we will write superscript $(N)$ rather than subscript $N$ for measures and functions defined on $N \times N$ matrices.  Moreover, we will use the original notation $\partial$ for Voiculescu's free difference quotient, even though \cite{Jekel2018} used $\mathcal{D}$.

We assume familiarity with the basic properties of tracial $\mathrm{W}^*$-algebras (or tracial von Neumann algebras); see for instance \cite{AP2017}.  In particular, a tracial $\mathrm{W}^*$-algebra is a finite $\mathrm{W}^*$-algebra $\mathcal{M}$ with a specified trace $\tau: \mathcal{M} \to \C$.  If $\mathcal{N} \subseteq \mathcal{M}$ is a $\mathrm{W}^*$-algebra, then there is a unique trace-preserving conditional expectation $E_{\mathcal{N}}: \mathcal{M} \to \mathcal{N}$.  If $x = (x_1,\dots,x_m)$ is a tuple of operators in $\mathcal{M}$, then we denote by $\mathrm{W}^*(x)$ the $\mathrm{W}^*$-subalgebra which they generate.

There is an inner product on $\mathcal{M}$ defined by $\ip{x,y}_2 = \tau(x^*y)$, and the completion of $\mathcal{M}$ in this inner product is a Hilbert space known as $L^2(\mathcal{M},\tau)$.  We denote the self-adjoint elements of $\mathcal{M}$ by $\mathcal{M}_{sa}$ and recall that if $x$ and $y$ are self-adjoint, then $\ip{x,y}_2$ is real.  If $x = (x_1,\dots,x_m)$ and $y = (y_1,\dots,y_m)$ are tuples, we denote $\ip{x,y}_2 = \sum_{j=1}^m \ip{x_j,y_j}_2$.  We define $\norm{x}_\infty = \max_j \norm{x_j}_\infty$, that is, the maximum of the operator norms of $x_j$.

We denote by $\NCP_m = \C\ip{X_1,\dots,X_m}$ the $*$-algebra of non-commutative polynomials in $m$ self-adjoint variables.  A \emph{non-commutative law} is a linear map $\lambda: \C\ip{X_1,\dots,X_m} \to \C$ satisfying
\begin{enumerate}[(A)]
	\item $\lambda(1) = 1$.
	\item $\lambda(p^*p) \geq 0$ for all $p \in \NCP_m$.
	\item $\lambda(pq) = \lambda(qp)$ for all $p, q \in \NCP_m$.
	\item $|\lambda(X_{i_1} \dots X_{i_k})| \leq R^k$ for some constant $R$.
\end{enumerate}
The set of non-commutative laws that satisfy (D) for a fixed value of $R$ is denoted $\Sigma_{m,R}$, and it is equipped with the topology of pointwise convergence on $\NCP_m$.  Likewise, the space of all laws, equipped with the topology of pointwise convergence, will be denoted by $\Sigma_m$.

If $x = (x_1,\dots,x_m)$ is a tuple of self-adjoint elements of $(\mathcal{M},\tau)$, then we may define a non-commutative law $\lambda_x$ by $\lambda_x(p) = \tau(p(x))$.  Conversely, every non-commutative law can be realized in this way through the GNS construction.  In particular, a free Gibbs law can be realized by a tuple $(x_1,\dots,x_m)$ of self-adjoint operators, and thus the free Gibbs law has a corresponding $\mathrm{W}^*$-algebra $\mathrm{W}^*(x)$, that is unique up to isomorphism.

We always consider $M_N(\C)$ as a tracial $\mathrm{W}^*$-algebra with the normalized trace $\tau_N = (1/N) \Tr$, and in particular, we use the notation $\norm{x}_2$, $\norm{x}_\infty$, and $\lambda_x$ as defined above when $x$ is an $m$-tuple of matrices.  The notation $\norm{\cdot}_2$ and $\norm{\cdot}_\infty$ will never be used for the $L^2$ or $L^\infty$ norms of \emph{functions} on matrices, but if we write an $L^p$ norm it will be expressed $\norm{\cdot}_{L^p}$.

For a smooth function $u: M_N(\C)_{sa}^m \to \R$, we denote by $Du$ and $Hu$ the gradient and Hessian with respect to the normalized inner product $\ip{\cdot,\cdot}_2$.  In other words, $Du(x_0)$ is the vector in $M_N(\C)_{sa}^m$ and $Hu(x_0)$ is the $\R$-linear transformation of $M_N(\C)_{sa}^m$ satisfying
\[
u(x) = u(x_0) + \ip{Du(x_0), x - x_0}_2 + \frac{1}{2} \ip{Hu(x_0)(x - x_0), x - x_0}_2 + o(\norm*{x - x_0}_2^2).
\]
For functions $f: M_N(\C)_{sa}^m \to \R$ or $M_N(\C)_{sa}^m \to M_N(\C)$, we denote $\norm{f}_{\Lip}$ the Lipschitz (semi)norm with respect to using $\norm{\cdot}_2$ on $M_N(\C)_{sa}^m$ and $M_N(\C)$.

Note that $M_N(\C)_{sa}^m$ can also be equipped with the real inner product $\ip{x,y}_{\Tr} = \sum_{j=1}^m \Tr(x_jy_j) = N \ip{x,y}_2$.  Being a real inner-product space, $M_N(\C)_{sa}^m$ may be identified with $\R^{mN^2}$ by choosing an orthonormal basis in $\ip{\cdot,\cdot}_{\Tr}$.  Lebesgue measure on $M_N(\C)_{sa}^m$ should be understood with respect to this identification.  Moreover, the gradient $\nabla$, Jacobian matrix $J$, divergence $\Div$, and Laplacian $\Delta$ for functions on $M_N(\C)_{sa}^m$ should also be understood with respect to this identification.  Beware that this is \emph{not} equivalent to using entrywise coordinates for $M_N(\C)_{sa}^m$ since the off-diagonal entries are complex and conjugate-symmetric, while the diagonal entries are real, and that the normalized gradient above satisfies $Df = N \nabla f$.  For further discussion see \cite[\S 2.1]{Jekel2018}.

\subsection{Main Results on Conditional Expectation}

Consider a tuple
\[
(X^{(N)},Y^{(N)}) = (X_1^{(N)},\dots,X_m^{(N)},Y_1^{(N)},\dots,Y_n^{(N)})
\]
of random self-adjoint matrices given by a probability density $(1/Z^{(N)}) e^{-N^2 V^{(N)}(x,y)}\,dx\,dy$.  We assume that $V^{(N)}$ is uniformly convex and semi-concave and that the normalized gradient $DV^{(N)}$ is asymptotically approximable by trace polynomials (a certain notion of good asymptotic behavior as $N \to \infty$, explained below).  The precise hypotheses are listed in Assumption \ref{ass:convexRMM}.  We showed in \cite[Theorem 4.1]{Jekel2018} that in this case, there exists an $(m+n)$-tuple $(X,Y)$ of non-commutative random variables such that $\tau_N(p(X,Y)) \to \tau(p(X,Y))$ in probability.

Our first main result (Theorem \ref{thm:conditionalexpectation}) says roughly that the classical conditional expectation given $Y^{(N)}$ well approximates the $\mathrm{W}^*$-algebraic conditional expectation $E_{\mathrm{W}^*(Y)}: \mathrm{W}^*(X,Y) \to \mathrm{W}^*(Y)$.  This is motivated in general by the importance of conditional expectation in free probability, e.g.\ its relationship to free independence with amalgamation and to free score functions.  See \cite[\S 4]{BCG2003} for a study of the large $N$ limits of conditional expectations related to matrix SDE.  The relationship between classical and free conditional expectation also has implications for the study of relative matricial microstate spaces, such as the ``external averaging property'' introduced in the upcoming work with Hayes, Nelson, and Sinclair \cite{HJNS2019}.

Applications of conditional expectation within this paper include our results on free Fisher information and entropy (see Theorem \ref{thm:convergenceofentropy} and Remark \ref{rem:simplifiedentropyproof}), as well as our proof that Assumption \ref{ass:convexRMM} is preserved under marginals (see Proposition \ref{prop:marginals}).

The statement and proof of Theorem \ref{thm:conditionalexpectation} rely on a notion of asymptotic approximation for functions on $M_N(\C)_{sa}^m$ explained in \S \ref{sec:TP}.  We define a class of non-commutative functions $\overline{\TrP}_m^1$ as a certain Fr{\'e}chet space completion of trace polynomials, such that if $f \in \overline{\TrP}_m^1$ and $x_1$, \dots, $x_m$ are self-adjoint elements in an $\mathcal{R}^\omega$-embeddable tracial $\mathrm{W}^*$-algebra $(\mathcal{M},\tau)$, then $f(x_1,\dots,x_m)$ is a well-defined element of $L^2(\mathcal{M})$.  In particular, every $f \in \overline{\TrP}_m^1$ can be evaluated on a tuple of self-adjoint matrices.  Now if $f^{(N)}: M_N(\C)_{sa}^m \to M_N(\C)$, we say that $f^{(N)} \rightsquigarrow f$ if for every $R > 0$,
\[
\lim_{N \to \infty} \sup_{\substack{x \in M_N(\C)_{sa}^m \\ \norm{x}_\infty \leq R}} \norm*{f^{(N)}(x) - f(x)}_2 = 0,
\]
Moreover, if such an $f$ exists, then we say that $f^{(N)}$ is \emph{asymptotically approximable by trace polynomials}.

Consider the random matrices $(X^{(N)},Y^{(N)})$ and non-commutative random variables $(X,Y)$ as above, and suppose that $f^{(N)}: M_N(\C)_{sa}^{m+n} \to M_N(\C)$ is uniformly Lipschitz in $\norm{\cdot}_2$ and that $f^{(N)} \rightsquigarrow f \in \overline{\TrP}_{m+n}^1$.  Then we show that $E[f^{(N)}(X^{(N)},Y^{(N)}) | Y^{(N)}]$ is given by a function $g^{(N)}(Y^{(N)})$ such that $g^{(N)} \rightsquigarrow g \in \overline{\TrP}_n^1$, and moreover $E_{\mathrm{W}^*(Y)}[f(X,Y)] = g(Y)$.

A curious feature of this result is that the function $g$ is defined for all self-adjoint $n$-tuples of non-commutative random variables, not only for the specific $n$-tuple $Y$ that we are concerned with.  Similarly, the claim that $g^{(N)} \rightsquigarrow g$ describes the asymptotic behavior of $g^{(N)}(y)$ for all $y \in M_N(\C)_{sa}^n$, even though the distribution of the random matrix $Y^{(N)}$ is highly concentrated as $N \to \infty$ on much smaller sets, namely the ``matricial microstate spaces'' consisting of tuples $y \in M_N(\C)_{sa}^n$ with non-commutative moments close to those of $Y$.  Thus, the statement we prove about the functions $g^{(N)}$ is stronger than an asymptotic result about $L^2$ approximation such as \cite[Theorem 4.7]{GS2014}.

\subsection{Main Results on Entropy}

Voiculescu defined two types of free entropy (see \cite{VoiculescuFE2}, \cite{VoiculescuFE5}, \cite{Voiculescu2002}).  The first, called $\chi(X)$, is based on measuring the size of matricial microstate spaces, which is closely related to the classical entropy of the random matrix models (see \cite[\S 5.2]{Jekel2018}).  The second, called $\chi^*(X)$, is defined in terms of free Fisher information, which is based on classical Fisher information.  Either one should heuristically be the large $N$ limit of the classical entropy of random matrix models, but there were many technical obstacles to proving this.  The inequality $\chi \leq \chi^*$ is known in general thanks to \cite{BCG2003}.  However, even for non-commutative laws as well-behaved and explicit as free Gibbs states given by convex potentials, the equality of $\chi$ and $\chi^*$ when $m > 1$ was not proved until Dabrowski's paper \cite{Dabrowski2017}, and the problem is still open for non-convex Gibbs states.

Our previous work \cite{Jekel2018} gave a proof of this equality in the convex case based on the asymptotic analysis of functions and PDE related to the random matrix models.  Here we will use similar techniques for the conditional setting.  We will show (Theorem \ref{thm:convergenceofentropy}) that for a random tuple of matrices $(X^{(N)},Y^{(N)})$ given by a convex potential as above, the classical conditional entropy $N^{-2} h(X^{(N)} | Y^{(N)}) + (m/2) \log N$ converges to the conditional free entropy $\chi^*(X: \mathrm{W}^*(Y))$.  Actually, the proof here is shorter than those of \cite{Dabrowski2017} and \cite{Jekel2018} (see Remark \ref{rem:simplifiedentropyproof}), even considering the results we used from \cite{Jekel2018}.

We focus here only on the non-microstates entropy (defined using Fisher information).  It is not yet resolved in the literature what the correct definition of conditional microstates free entropy should be.  In light of \cite[\S 5.2]{Jekel2018}, the conditional classical entropy for the random matrix models seems to be a reasonable substitute for microstates entropy, and in the convex setting we expect this to agree with any plausible definition of conditional microstates entropy due to the exponential concentration of measure.

\subsection{Main Results on Transport} \label{subsec:transportintro}

A \emph{transport map} from a probability measure $\mu$ and to another probability measure $\nu$ is a function $f$ such that $f_* \mu = \nu$.  In probabilistic language, if $X \sim \mu$ and $Y \sim \nu$ are random variables, then $f_* \mu = \nu$ means that $f(X) \sim Y$ in distribution.  The theory of transport (and in particular optimal transport) has numerous and significant applications in the classical setting.  
For instance, if we have a function $f$ such that $f(X) \sim Y$ and we can numerically simulate the random variable $X$, then we can also simulate $Y$.

In the non-commutative world, transport is even more significant.  As remarked in \cite[\S 1.1]{GS2014}, there is no known analogue of a probability density in free probability.  However, the existence of transport maps that would express our given random variables as functions of a free semicircular family (for instance) would serve a similar purpose to a density, namely to provide a fairly explicit and analytically tractable model for a large class of non-commutative laws.

Moreover, in contrast to the classical setting, the very existence of transport maps is a nontrivial condition.  Being able to express a non-commutative tuple $Y$ as a function of another non-commutative tuple $X$ implies that $\mathrm{W}^*(Y)$ embeds into $\mathrm{W}^*(X)$, and having a transport map in the other direction as well implies that $\mathrm{W}^*(Y) \cong \mathrm{W}^*(X)$.  In the classical setting, any two diffuse (non-atomic) standard Borel probability spaces are isomorphic.  On the other hand, there are many non-isomorphic diffuse tracial $\mathrm{W}^*$-algebras, even after restricting our attention to factors (those which cannot be decomposed as direct sums); see \cite{McDuff1969}.  Moreover, Ozawa \cite{Ozawa2004} showed that there is no separable tracial factor that contains an isomorphic copy of all the others.  Thus, there are many instances where it is not even possible to transport one given non-commutative law to another.

The papers \cite{GS2009} and \cite{DGS2016} showed the existence of monotone transport maps between certain free Gibbs laws given by convex potentials and the law of a free semicircular family, and thus concluded that each of the corresponding $\mathrm{W}^*$-algebras was isomorphic to a free group factor $L(\mathbb{F}_n)$.  In particular, this result applies to the $q$-Gaussian variables for sufficiently small $q$.  These transport techniques have been extended to type III von Neumann algebras \cite{Nelson2015a}, to planar algebras \cite{Nelson2015b}, and to interpolated free group factors \cite{HN2018}.  We will focus on ``conditional transport'' in the tracial setting.

Our first main result about transport is contained in Theorems \ref{thm:transport1} and \ref{thm:transport2}.  Let $(X^{(N)},Y^{(N)})$ be an $(m+n)$-tuple of random matrices arising from a sequence of convex potentials satisfying Assumption \ref{ass:convexRMM}.  Let $(X,Y)$ be an $(m+n)$-tuple of non-commutative self-adjoint variables realizing the limiting free Gibbs law.  Then we construct functions $F^{(N)}: M_N(\C)_{sa}^{m+n} \to M_N(\C)_{sa}^m$ such that $(F^{(N)}(X^{(N)}, Y^{(N)}), Y^{(N)}) \sim (S^{(N)}, Y^{(N)})$ in distribution, where $S^{(N)}$ is a GUE $m$-tuple independent of $Y^{(N)}$.  We think of this as a conditional transport, which transports the law of $X^{(N)}$ to the law of $S^{(N)}$ \emph{conditioned on $Y^{(N)}$}.

Moreover, we show that the transport maps satisfy $F^{(N)} \rightsquigarrow F \in (\overline{\TrP}_{m+n}^1)_{sa}^m$.  In the large $N$ limit, we obtain $(F(X,Y), Y) \sim (S,Y)$ in non-commutative law, where $S$ is a free semicircular $m$-tuple freely independent of $Y$.  In particular, this means that $\mathrm{W}^*(X,Y) \cong \mathrm{W}^*(S,Y) = \mathrm{W}^*(S) * \mathrm{W}^*(Y)$ (where $*$ denotes free product).  In other words, $\mathrm{W}^*(Y)$ is freely complemented in $\mathrm{W}^*(X,Y)$.

By iterating this result, we can show that if $X = (X_1,\dots,X_m)$ is a tuple of non-commutative random variables given by a convex free Gibbs state as above, then there is an isomorphism $\mathrm{W}^*(X) \to \mathrm{W}^*(S)$ such that $\mathrm{W}^*(X_1,\dots,X_k)$ is mapped onto $\mathrm{W}^*(S_1,\dots,S_k)$ for each $k = 1$, \dots, $m$.  In other words, there is a ``lower-triangular transport.''  See Theorem \ref{thm:transport4}.  This is a (partial) free analogue of \cite[Corollary 3.10]{BKM2005}.

This result implies in particular that $\mathrm{W}^*(X_1)$ is a maximal abelian subalgebra and in fact maximal amenable (since the subalgebra $\mathrm{W}^*(S_1)$ is known to be maximal amenable thanks to Popa \cite{Popa1983}), and the same holds for each $\mathrm{W}^*(X_j)$ by symmetry.  For context on maximal amenable subalgebras, see for instance \cite{Popa1983} \cite{BC2015} \cite{BH2018}.  More generally, \emph{any von Neumann algebraic properties} of the sequence of inclusions $\mathrm{W}^*(X_1) \subseteq \mathrm{W}^*(X_1,X_2) \subseteq \dots \subseteq \mathrm{W}^*(X_1,\dots,X_m)$ are the same as for the case of free semicirculars, that is, for the standard inclusions $L(\Z) \subseteq L(\F_2) \subseteq \dots \subseteq L(\F_m)$.

Denote by $F$ the transport map from the law of $X$ to the law of $S$ in our construction, so that $F(X) \sim S$.  We can also arrange that $F$ witnesses the Talagrand entropy-cost inequality relative to the semicircular law, that is,
\[
\norm{F(X) - X}_2^2 \leq \norm{X}_2^2 + m \log 2\pi - 2 \chi^*(X),
\]
where the left hand side is twice the entropy relative to semicircular (see \S \ref{subsec:entropycost}).  This is not surprising because it was already known in the classical case that the Talagrand inequality can be witnessed by some triangular transport \cite[Corollary 3.10]{BKM2005}.  Moreover, our construction of the transport maps is a direct application of the same method that Otto and Villani used to prove the Talagrand entropy-cost inequality under the assumption of the log-Sobolev inequality \cite[\S 4]{OV2000}.  Thus, our main contribution is to study the large $N$ limit of the transport maps using asymptotic approximation by trace polynomials.  We also show that $F$ is $\norm{\cdot}_2$-Lipschitz, and we estimate $\norm{F(X) - X}_\infty$ in terms of the constants $c$ and $C$ specifying the uniform convexity and semi-concavity of $V^{(N)}$.  These estimates will in fact go to zero as $c, C \to 1$.

Unfortunately, the maps constructed here are not optimal triangular transport maps with respect to the $L^2$-Wasserstein distance, since Otto and Villani's proof of \cite[Theorem 1]{OV2000} uses a diffusion-semigroup interpolation between the two measures, not the displacement interpolation from optimal transport theory.  In that sense, the results of this paper do not fully prove an analogue of \cite[Corollary 3.10]{BKM2005}.  Even in the work of Guionnet and Shlyakhtenko \cite{GS2009}, which constructed monotone transport maps in the free setting, the question of whether these maps furnish an optimal coupling between $X$ and $S$ inside a tracial von Neumann algebra was left unresolved.  Future research should study optimal transport in the free setting, and determine whether the classical optimal transport (or more generally optimal triangular transport) maps for the random matrix models converge in the large $N$ limit in the sense of this paper.

\subsection{Outline}

The paper is organized as follows.  We remark that \S \ref{sec:RMbackground} and \S \ref{sec:diffeqtools} are mostly technical background, and the reader may treat them like appendices if desired.  In other words, it is feasible to read through the other sections in order and only refer to \S \ref{sec:RMbackground} and \S \ref{sec:diffeqtools} as needed to verify technical details of the main results.

\S \ref{sec:RMbackground} gives standard background on convex and semi-concave functions and on log-concave random matrix models.

\S \ref{sec:TP} sets up the algebra of trace polynomials, and the spaces $\overline{\TrP}_m^0$ and $\overline{\TrP}_m^1$ of functions that can be approximated by trace polynomials.   These spaces provide a framework for functional calculus in multiple self-adjoint variables $X_1$, \dots, $X_m$ that can realize every element of $L^2(\mathrm{W}^*(X_1,\dots,X_m))$.  They are a convenient tool to describe the large $N$ behavior of functions of several matrices, and thus will be used in the statements of our main theorems.

\S \ref{sec:diffeqtools} describes solving ODE's and the heat equation over $\overline{\TrP}_m^1$.  These are the technical lemmas used in the rest of the paper to show that the solutions of certain PDE's have well-defined large $N$ limits.

\S \ref{sec:conditionalexpectation} explains the setup of our random matrix models given by convex potentials, and then proves our main result on conditional expectation (Theorem \ref{thm:conditionalexpectation}).

\S \ref{sec:entropy} shows that the conditional entropy for random matrix models converges to the conditional non-microstates entropy (Theorem \ref{thm:convergenceofentropy}). 

\S \ref{sec:transport} proves the existence of transport maps from a free Gibbs law to the law of a free semicircular $m$-tuple which arise as the large $N$ limit of transport maps for the random matrix models (Theorem \ref{thm:transport1} and \ref{thm:transport2}).

\S \ref{sec:applications} discusses applications of our results.  We show that our standard set of assumptions for log-concave random matrix models is preserved under marginals, independent joins, linear change of variables, and convolution (\S \ref{subsec:operations}).  We show that the transport maps constructed above witness (the conditional version of) Talagrand's entropy-cost inequality relative to Gaussian measure (Theorem \ref{thm:transport3}).  Then by iterating our conditional transport results, we show the existence of triangular transport (Theorem \ref{thm:transport4}).

\section{Multi-matrix Models from Convex Potentials} \label{sec:RMbackground}

This section is a review and reference for basic results we will use throughout the paper.

We will be concerned with probability measures on $M_N(\C)_{sa}^m$ of the form
\[
d\mu(x) = \frac{1}{Z} e^{-N^2 V(x)}\,dx,
\]
where $x = (x_1,\dots,x_m)$ is a tuple of self-adjoint matrices, $V: M_N(\C)_{sa}^m \to \R$ such that $e^{N^2 V}$ is integrable, and $Z = \int e^{-N^2 V(x)}\,dx$ is the normalizing constant.  Here $dx$ denotes Lebesgue measure where we identify $M_N(\C)_{sa}^m$ with $\R^{mN^2}$ using the inner product associated to the trace (the normalization of Lebesgue measure is irrelevant here because if we multiply it by a constant, the normalizing constant $Z$ for $\mu$ will change to compensate).  In this case, we will say that \emph{$\mu$ is the measure given by the potential $V$}.  We will often assume $V$ is convex.  Note that $\mu$ only determines $V$ up to an additive constant, but we will still say that ``$V$ is the potential corresponding to $\mu$'' with a slight abuse of terminology.

A primary motivating example is $V(x) = \tau_N(f(x))$, where $\tau_N = (1/N) \Tr$ is the normalized trace and $f$ is a non-commutative polynomial in $x_1$, \dots, $x_m$.  Unlike the notation in many random matrix papers, we prefer to write $N^2 \tau_N(f)$ rather than $N \Tr(f)$.  This seems natural because $\tau_N(f)$ is a function with dimension-independent normalization and it would make sense for self-adjoint elements of a tracial $\mathrm{W}^*$-algebra.  Meanwhile, $N^2$ is the dimension of $M_N(\C)_{sa}^N$ and also the scale (in the sense of large deviations) for the standard concentration estimates that hold when $V$ is uniformly convex (see for instance \cite{BCG2003} or \S \ref{subsec:concentration} below).

\subsection{Semi-convex and Semi-concave Functions}

\begin{definition} \label{def:convexityHnotation}
Let $A: M_N(\C)_{sa}^m \to M_N(\C)_{sa}^m$ be a self-adjoint linear transformation and let $u: M_N(\C)_{sa}^m \to \R$.  We say that $Hu \leq A$ if $u(x) - (1/2) \ip{Ax,x}_2$ is concave.  We say that $Hu \geq A$ if $u(x) - (1/2) \ip{Ax,x}_2$ is convex.
\end{definition}

We will also regularly use the following observation:

\begin{lemma} \label{lem:convexgradient}
Suppose that $u: M_N(\C)_{sa}^m \to \R$, and let $A$ and $B$ be self-adjoint linear transformations.  The following are equivalent:
\begin{enumerate}[(1)]
	\item $A \leq Hu \leq B$.
	\item For each $x \in M_N(\C)_{sa}^m$, there exists $y \in M_N(\C)_{sa}^m$ such that
	\[
	\frac{1}{2} \ip{A(x' - x), x' - x} \leq u(x') - u(x) - \ip{y, x' - x}_2 \leq \frac{1}{2} \ip{B(x' - x), x' - x}_2
	\]
	for all $x' \in M_N(\C)_{sa}^m$.
	\item $u$ is continuously differentiable and we have
	\[
	\ip{A(x'-x), x'-x}_2 \leq \ip{Du(x') - Du(x),x' - x}_2 \leq \ip{B(x'-x), x' - x}_2
	\]
	for all $x, x' \in M_N(\C)_{sa}^m$.
\end{enumerate}
Moreover, in this case, $Du$ is $\max(\norm{A}, \norm{B})$-Lipschitz with respect to $\norm{\cdot}_2$.
\end{lemma}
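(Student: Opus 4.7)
My approach is to reduce all three conditions to statements about the convexity of two auxiliary functions and then apply standard convex analysis in finite dimensions. Define
\[
u_-(x) := u(x) - \tfrac{1}{2}\ip{Ax,x}_2, \qquad u_+(x) := \tfrac{1}{2}\ip{Bx,x}_2 - u(x).
\]
By Definition \ref{def:convexityHnotation}, condition (1) is precisely the statement that both $u_-$ and $u_+$ are convex, and by construction $u_- + u_+ = \tfrac{1}{2}\ip{(B-A)x,x}_2$ is a smooth quadratic with gradient $(B-A)x$.

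The first step is to upgrade (1) to $u \in C^1$. Since $M_N(\C)_{sa}^m$ is finite-dimensional, each convex $u_\pm$ has a subgradient at every point. If $y_-, y_-'$ are subgradients of $u_-$ at $x$ and $y_+$ is any subgradient of $u_+$ at $x$, then $y_- + y_+$ and $y_-' + y_+$ are both subgradients of $u_- + u_+$ at $x$; but this smooth quadratic has the unique subgradient $(B-A)x$, forcing $y_- = y_-'$. Thus $u_-$ has a unique subgradient at each point, hence is differentiable everywhere, and a convex function differentiable at every point is automatically $C^1$. Therefore $u \in C^1$ as soon as (1) holds.

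With $u \in C^1$ in hand, the three equivalences fall out. For (1) $\Rightarrow$ (2), write out the subgradient inequalities for $u_\pm$ at $x$ with the common gradient $y = Du(x)$ and rearrange to recover both quadratic bounds. For (2) $\Rightarrow$ (1), the sandwich forces $u$ to be continuous and differentiable at $x$ with $Du(x) = y$, and rearranging exhibits each $u_\pm$ as a continuous function admitting a tangent affine minorant at every point, which is equivalent to convexity. For (1) $\Leftrightarrow$ (3), use the standard criterion that a $C^1$ function $v$ is convex iff $\ip{Dv(x') - Dv(x), x' - x}_2 \geq 0$; applying this to $u_-$ and $u_+$ unpacks directly to the lower and upper bounds of (3).

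For the Lipschitz estimate I would mollify: let $u_\varepsilon := u * \rho_\varepsilon$ for a standard nonnegative radially symmetric approximate identity. Convolution preserves both relations in (1), so $u_\varepsilon \in C^\infty$ satisfies $A \leq Hu_\varepsilon(x) \leq B$ pointwise as quadratic forms. For any self-adjoint operator $H$ with $A \leq H \leq B$ and any unit $v \in M_N(\C)_{sa}^m$,
\[
|\ip{Hv,v}_2| \leq \max\bigl(|\ip{Av,v}_2|,\,|\ip{Bv,v}_2|\bigr) \leq \max(\norm{A},\norm{B}),
\]
so $\norm{Hu_\varepsilon(x)} \leq \max(\norm{A},\norm{B})$ pointwise, and hence $Du_\varepsilon$ is $\max(\norm{A},\norm{B})$-Lipschitz. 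Since $u_\varepsilon \to u$ locally uniformly in $C^1$ as $\varepsilon \to 0$, the Lipschitz bound passes to $Du$.

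The main subtlety is the final estimate: condition (3) only controls the component of $Du(x') - Du(x)$ along $x' - x$, not its full norm, so one cannot read off the Lipschitz constant directly from the bilinear form bound. The mollification argument sidesteps this by replacing $u$ with a smooth function where the Hessian exists pointwise and one can convert the quadratic form bound into an operator norm bound. Alternatively, one could appeal to Alexandrov's theorem to obtain pointwise a.e.\ second differentiability of $u_\pm$ and integrate along line segments, but the mollification route is more transparent.
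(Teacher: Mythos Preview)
Your argument is correct and is organized somewhat differently from the paper's. The paper runs the cycle $(1)\Rightarrow(3)\Rightarrow(2)\Rightarrow(1)$: for $(1)\Rightarrow(3)$ it first observes that (1) gives the two-sided bound $-\tfrac{C}{2}\norm{x'-x}_2^2 \le u(x')-u(x)-\ip{y,x'-x}_2 \le \tfrac{C}{2}\norm{x'-x}_2^2$ with $C=\max(\norm{A},\norm{B})$, then cites \cite[Proposition~2.13]{Jekel2018} to obtain $u\in C^1$ and the Lipschitz bound on $Du$ in one stroke, and finally reduces to smooth $u$ to integrate $\ip{Du(x')-Du(x),x'-x}_2=\int_0^1 \ip{Hu(\cdots)(x'-x),x'-x}_2\,dt$. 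The step $(3)\Rightarrow(2)$ is another line-integral computation, and $(2)\Rightarrow(1)$ is the supporting-hyperplane characterization of convexity, just as you use.

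Your route via the auxiliary pair $u_\pm$ is more self-contained: the subdifferential trick (uniqueness forced by $\partial u_- + \partial u_+ \subseteq \partial(u_-+u_+)=\{(B-A)x\}$) gives $u\in C^1$ without the external reference, and the monotone-gradient criterion for convexity handles $(1)\Leftrightarrow(3)$ directly rather than through an integral. Your separate mollification argument for the Lipschitz bound is essentially what lies behind the cited \cite[Proposition~2.13]{Jekel2018}, so nothing is lost there. The trade-off is that the paper's integral formulas make the constants visibly sharp at each step, while your approach leans on standard convex-analysis facts and is arguably cleaner to read.
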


\begin{proof}[Sketch of proof]
(1) $\implies$ (3).  Suppose (1) holds.  If $C = \max(\norm{A},\norm{B})$, then for each $x$ there exists $y$ such that
\[
-\frac{C}{2} \norm{x' - x}_2^2 \leq u(x') - u(x) - \ip{y, x' - x}_2 \leq \frac{C}{2} \norm{x' - x}_2^2.
\]
Hence, it follows from \cite[Proposition 2.13]{Jekel2018} that $u$ must be continuously differentiable and $Du$ is $C$-Lipschitz (which proves the last claim of our lemma as well).  To prove the inequality asserted by (3), we can reduce to the case when $u$ is smooth using a similar argument as in \cite[Proposition 2.13]{Jekel2018}).  But in the smooth case, the claim follows by estimating from above and below the formula
\[
\ip{Du(x') - Du(x), x' - x}_2 = \int_0^1 \ip{Hu(x + t(x' - x))(x' - x), x' - x}_2\,dt,
\]
where $Hu$ is the Hessian defined in the standard pointwise sense.

(3) $\implies$ (2).  Recall the formula
\[
u(x') - u(x) = \int_0^1 \ip{Du(x + t(x' - x)), x' - x}\,dt.
\]
This implies that
\begin{align*}
u(x') - u(x) - \ip{Du(x), x' - x}_2 &= \int_0^1 \ip{Du(x + t(x' - x)) - Du(x), x' - x}_2\,dt \\
&= \int_0^1 \frac{1}{t} \ip{Du(x + t(x' - x)) - Du(x), [x + t (x' - x)] - x}_2\,dt \\
&\leq \int_0^1 \frac{1}{t} \ip{B[t(x' - x)], t(x' - x)}_2\,dt \\
&= \frac{1}{2} \ip{B(x' - x), x' - x}_2.
\end{align*}
This proves the upper bound, and the lower bound is symmetrical.

(2) $\implies$ (1).  This follows from the characterization of convex functions by supporting hyperplanes.  Indeed, $u(x) - (1/2) \ip{Ax,x}$ is convex if and only if for every $x$, there exists $y$ satisfying
\[
u(x') - u(x) + \frac{1}{2} \ip{Ax', x'}_2 - \frac{1}{2} \ip{Ax,x}_2 \geq \ip{y, x - x'}_2.
\]
which is equivalent to the right inequality of (2), and the concavity of $u(x) - (1/2) \ip{Bx,x}$ follows similarly.
\end{proof}

\begin{lemma} \label{lem:convexgradientestimate}
Suppose that $0 \leq Hu \leq A$ for some linear transformation $A$.  Then $u$ is differentiable and we have
\[
|\ip{Du(x) - Du(x'),y}_2| \leq \ip{A(x-x'),x-x'}_2^{1/2} \ip{Ay,Ay}_2^{1/2},
\]
so that in particular, $\norm{Du(x) - Du(x')}_2 \leq \norm{A} \norm{x - x'}_2$.
\end{lemma}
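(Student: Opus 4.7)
The plan is to reduce to the case where $u$ is smooth and then exploit the Cauchy--Schwarz inequality for the positive semidefinite bilinear form defined by the Hessian. Differentiability of $u$ is already free from the hypothesis $0 \leq Hu \leq A$ by the previous Lemma \ref{lem:convexgradient}, so the content is the pointwise bound on $\ip{Du(x) - Du(x'), y}_2$.

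First I would regularize $u$ by convolution with a standard mollifier on $M_N(\C)_{sa}^m$. The mollified functions $u_\varepsilon$ are smooth, still satisfy $0 \leq Hu_\varepsilon \leq A$ (since the pointwise Hessian bound is stable under convolution with a nonnegative kernel), and $Du_\varepsilon \to Du$ locally uniformly by the Lipschitz bound furnished by Lemma \ref{lem:convexgradient}. Hence it suffices to prove the desired inequality under the additional assumption that $u$ is $C^2$.

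In the smooth case, start from the fundamental theorem of calculus identity
\[
\ip{Du(x) - Du(x'), y}_2 = \int_0^1 \ip{Hu(x_t)(x - x'), y}_2 \, dt, \qquad x_t := x' + t(x - x').
\]
For each fixed $t$, $Hu(x_t)$ is a positive semidefinite self-adjoint operator on $M_N(\C)_{sa}^m$, so Cauchy--Schwarz with respect to the semi-inner product $(v,w) \mapsto \ip{Hu(x_t) v, w}_2$ gives
\[
|\ip{Hu(x_t)(x - x'), y}_2| \leq \ip{Hu(x_t)(x - x'), x - x'}_2^{1/2} \, \ip{Hu(x_t) y, y}_2^{1/2}.
\]
Now invoke $Hu(x_t) \leq A$ on each factor, yielding the integrand bound $\ip{A(x-x'), x-x'}_2^{1/2}\,\ip{Ay, y}_2^{1/2}$, which is independent of $t$; integrating gives the stated inequality (with $\ip{Ay,y}_2^{1/2}$ in the second factor, which is the form that yields the operator-norm corollary). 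The last assertion $\norm{Du(x) - Du(x')}_2 \leq \norm{A} \norm{x - x'}_2$ then follows by choosing $y = Du(x) - Du(x')$, bounding $\ip{A(x-x'), x - x'}_2 \leq \norm{A} \norm{x-x'}_2^2$ and $\ip{Ay, y}_2 \leq \norm{A} \norm{y}_2^2$, and dividing through by $\norm{y}_2$.

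The only mildly delicate point is the mollification step: one must verify that $0 \leq Hu_\varepsilon \leq A$ in the classical sense, which amounts to checking that convolution with a nonnegative kernel preserves the semiconvexity/semiconcavity conditions from Definition \ref{def:convexityHnotation}. This is routine because the conditions $u - (1/2)\ip{A\,\cdot,\,\cdot}_2$ concave and $u$ convex are each preserved by convolution with any nonnegative probability kernel. Once smoothness is in hand, the inequality is simply two applications of Cauchy--Schwarz (one pointwise in $t$ against $Hu(x_t)$, one over the $t$-integral) followed by the sandwich $0 \leq Hu \leq A$.
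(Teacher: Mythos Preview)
Your proof is correct and follows essentially the same route as the paper: reduce to the smooth case (the paper cites \cite[Proposition 2.13]{Jekel2018} rather than spelling out a mollification, but the content is the same), write $\ip{Du(x)-Du(x'),y}_2$ as an integral of $\ip{Hu(x_t)(x-x'),y}_2$, apply Cauchy--Schwarz for the positive semidefinite form $Hu(x_t)$, and then use $Hu(x_t)\leq A$ on each factor. You also correctly observed that the argument yields $\ip{Ay,y}_2^{1/2}$ rather than the $\ip{Ay,Ay}_2^{1/2}$ appearing in the displayed statement; this is indeed what the paper's own proof produces and what is needed for the final Lipschitz bound.
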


\begin{proof}
As in \cite[Proposition 2.13]{Jekel2018}, we obtain differentiability; and moreover to prove the asserted estimate, it suffices to prove the claim for smooth functions $u$.  In this case,
\begin{align*}
\ip{Du(x) - Du(x'),y} &= \int_0^1 \ip{Hu(tx + (1 - t) x')(x - x'),y}_2 \,dt \\
&\leq \int_0^1 \ip{Hu(tx + (1-t)x')(x-x'),x-x'}_2^{1/2} \ip{Hu(tx + (1-t)x')y,y}_2^{1/2}\,dt \\
&\leq \int_0^1 \ip{A(x-x'), x-x'}^{1/2} \ip{Ay,y}_2^{1/2}\,dt \\
&= \ip{A(x-x'),x-x'}_2^{1/2} \ip{Ay,y}_2^{1/2}. \qedhere
\end{align*}
\end{proof}

\subsection{Some Basic Lemmas}

Let $V: M_N(\C)_{sa}^m \to \R$ satisfy $HV \geq c$.   Then one can check that $e^{-N^2 V(x)}$ is integrable; indeed, $V$ must achieve a minimum at some $x_0$ and we have $V(x) \geq V(x_0) + (c/2) \norm{x - x_0}_2^2$ and clearly $e^{-N^2 c \norm{x - x_0}_2^2}$ is integrable.  Therefore, the probability measure $\mu$ given by $(1 / Z^{(N)}) e^{-N^2 V(x)}\,dx$ is well-defined.

\begin{lemma} \label{lem:conjugatevariablebasics}
Let $V: M_N(\C)_{sa}^m \to \R$ satisfy $c \leq HV \leq C$ for some scalars $0 < c \leq C$.   Let $\mu$ be the probability measure given by $d\mu(x) = (1/ Z^{(N)}) e^{-N^2 V(x)}\,dx$ and let $X$ be a random variables whose distribution is $\mu$.  Then
\[
E[DV(X)] = 0
\]
and
\[
\frac{m}{C} \leq E \norm*{X - E(X)}_2^2 \leq \frac{m}{c}.
\]
\end{lemma}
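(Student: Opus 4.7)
The plan is to derive both statements from a single vector-valued integration-by-parts identity: for any smooth, polynomially-bounded vector field $\phi: M_N(\C)_{sa}^m \to M_N(\C)_{sa}^m$,
\[
E[\Div \phi(X)] = N^2\, E[\ip{\phi(X), DV(X)}_2].
\]
This follows from $\nabla \log \rho = -N^2 \nabla V$ (where $\nabla$ and $\Div$ are taken with respect to $\ip{\cdot,\cdot}_{\Tr}$, matching Lebesgue measure), combined with the paper's normalization relations $\nabla V = DV/N$ and $\ip{\cdot,\cdot}_{\Tr} = N \ip{\cdot,\cdot}_2$, which together collapse to the stated factor of $N^2$. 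Taking $\phi$ a constant vector then gives $E[DV(X)] = 0$ immediately, settling the first assertion. Taking $\phi(x) = x$ and noting that its $\ip{\cdot,\cdot}_{\Tr}$-divergence is the real dimension $mN^2$ of $M_N(\C)_{sa}^m$ gives $E[\ip{X, DV(X)}_2] = m$; combined with the first identity this yields the key equality
\[
E[\ip{X - E(X),\, DV(X)}_2] = m.
\]

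For the variance bounds I would apply Lemma \ref{lem:convexgradient}(3) at the pair of points $X$ and $E(X)$:
\[
c\,\norm*{X - E(X)}_2^2 \;\le\; \ip{DV(X) - DV(E(X)),\, X - E(X)}_2 \;\le\; C\,\norm*{X - E(X)}_2^2.
\]
Taking expectation, the $DV(E(X))$ term contributes $\ip{DV(E(X)),\, E[X - E(X)]}_2 = 0$, so the middle quantity equals $E[\ip{DV(X), X - E(X)}_2] = m$ by the identity above. Rearranging yields $m/C \le E\norm*{X - E(X)}_2^2 \le m/c$, which is the desired two-sided bound.

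The only real technical point is justifying the integration by parts when $V$ is merely $C^{1,1}$ (which is all that Lemma \ref{lem:convexgradient} provides from the hypothesis $HV \le C$). The density $\rho \propto e^{-N^2 V}$ decays at least as fast as a Gaussian centered at the minimizer $x_0$ because $V(x) \ge V(x_0) + (c/2)\norm*{x - x_0}_2^2$; meanwhile $DV$ is $C$-Lipschitz and vanishes at $x_0$, so $\norm*{DV(x)}_2 \le C\norm*{x - x_0}_2$. These give linear bounds on $\phi$ and $DV$ against Gaussian decay for $\rho$, making every integral absolutely convergent and every boundary term negligible. The identity for $W^{1,\infty}_{\mathrm{loc}}$ functions can either be invoked directly or obtained by mollifying $V$ (preserving the bounds $c \le HV \le C$), applying the smooth case, and passing to the limit using dominated convergence. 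I expect no genuine obstacle here; all the difficulty is bookkeeping the normalizations between $\ip{\cdot,\cdot}_2$ and $\ip{\cdot,\cdot}_{\Tr}$.
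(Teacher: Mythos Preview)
Your proof is correct and follows essentially the same approach as the paper's: both derive $E[DV(X)]=0$ and the key identity $E\ip{DV(X),X-E(X)}_2=m$ from integration by parts, then sandwich this quantity between $c\,E\norm{X-E(X)}_2^2$ and $C\,E\norm{X-E(X)}_2^2$ via Lemma~\ref{lem:convexgradient}(3) applied at $X$ and $E(X)$, using $E[X-E(X)]=0$ to drop the $DV(E(X))$ term. Your version is more explicit about the normalization bookkeeping and the regularity justification, but the argument is the same.
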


\begin{proof}
We remark that $V$ is continuously differentiable by Lemma \ref{lem:convexgradient} $V$ is differentiable and $DV$ is Lipschitz.  It follows by some straightforward estimation that $\norm{DV}_2$ is integrable with respect to $\mu$, so that $E[DV(X)]$ is well-defined.  Then $E[DV(X)] = 0$ follows from integration by parts (see \S \ref{subsec:matrixentropy} for further context on this integration by parts).

Next, let $D_j V$ denote the normalized gradient with respect to the matrix variable $x_j$.  Using integration by parts again, we get $E \ip{D_j V(X), X_j - E(X_j)}_2 = 1$, so that
\[
E \ip{DV(X) - DV(E(X)), X - E(X)}_2 = E \ip{DV(X), X - E(X)}_2 = m.
\]
On the other hand, by Lemma \ref{lem:convexgradientestimate},
\[
c E \norm*{X - E(X)}_2^2 \leq E \ip{DV(X) - DV(E(X)), X - E(X)}_2 \leq C E \norm*{X - E(X)}_2^2.
\]
Since the middle term evaluates to $m$, the proof is complete.
\end{proof}

\begin{lemma} \label{lem:RVboundedness}
Let $X$ be a random variable in $M_N(\C)_{sa}^m$ and let $G: M_N(\C)_{sa}^m \to M_N(\C)_{sa}^n$ be Lipschitz with respect to $\norm{\cdot}_2$ in both the domain and target space, and let $\norm{G}_{\Lip}$ denote the corresponding Lipschitz (semi)norm. Then
\[
\norm*{G(x)}_2 \leq \norm*{E(G(X))}_2 + \norm*{G}_{\Lip} \left( \norm*{x - E(X)}_2 + (E\norm*{X - E(X)}_2^2)^{1/2} \right).
\]
\end{lemma}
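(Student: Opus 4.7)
The plan is to compare $G(x)$ to its $\mu$-expectation via the triangle inequality, then use the Lipschitz property and Jensen's/Cauchy--Schwarz inequalities to bound the deviation. This is a pointwise bound on $\norm{G(x)}_2$, so the random variable will only enter through the integration over its distribution.

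First I would apply the triangle inequality in $M_N(\C)_{sa}^n$ to split
\[
\norm{G(x)}_2 \leq \norm{E(G(X))}_2 + \norm{G(x) - E(G(X))}_2,
\]
so it suffices to control the second term. Since $G(x)$ is deterministic, I rewrite $G(x) - E(G(X)) = E(G(x) - G(X))$, and then apply Jensen's inequality for Bochner integrals (valid because $M_N(\C)_{sa}^n$ is a Hilbert space under $\norm{\cdot}_2$, so the norm is convex) to get
\[
\norm{G(x) - E(G(X))}_2 \leq E\norm{G(x) - G(X)}_2.
\]

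Next I would apply the Lipschitz hypothesis pointwise in the probability space, giving $\norm{G(x) - G(X)}_2 \leq \norm{G}_{\Lip} \norm{x - X}_2$, and take expectations. A further triangle inequality yields $\norm{x - X}_2 \leq \norm{x - E(X)}_2 + \norm{X - E(X)}_2$, so
\[
E \norm{x - X}_2 \leq \norm{x - E(X)}_2 + E \norm{X - E(X)}_2.
\]
Finally, the Cauchy--Schwarz inequality (or Jensen applied to $t \mapsto t^2$) bounds the last expectation by $(E\norm{X - E(X)}_2^2)^{1/2}$. Chaining these estimates produces the claim.

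There is no real obstacle here: the lemma is a straightforward concatenation of triangle, Jensen, Lipschitz, and Cauchy--Schwarz inequalities. The only thing to check is that all expectations are well-defined, but this is automatic since $G$ being Lipschitz forces $\norm{G(X)}_2$ to have a moment whenever $\norm{X}_2$ does, and the appearance of $E\norm{X - E(X)}_2^2$ on the right-hand side implicitly includes the finite second moment assumption on $X$ that makes $E(X)$ and the variance term meaningful.
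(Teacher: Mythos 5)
Your proof is correct and follows essentially the same route as the paper: bound $\norm{G(x) - E(G(X))}_2$ by $\norm{G}_{\Lip} E\norm{x - X}_2$ via Jensen and the Lipschitz property, then split with the triangle inequality and apply Cauchy--Schwarz to get $(E\norm{X - E(X)}_2^2)^{1/2}$. The paper's proof is just a compressed version of the same chain of inequalities.
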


\begin{proof}
Note that
\begin{align*}
\norm*{G(x) - E(G(X))}_2 &\leq \norm*{G}_{\Lip} E \norm*{x - X}_2 \\
&\leq \norm*{G}_{\Lip} \left( \norm*{x - E(X)}_2 + E \norm*{X - E(X)}_2 \right) \\
&\leq \norm*{G}_{\Lip} \left( \norm*{x - E(X)}_2 + (E \norm*{X - E(X)}_2^2)^{1/2} \right).  \qedhere
\end{align*}
\end{proof}

\begin{corollary} \label{cor:DVestimate}
Let $V: M_N(\C)_{sa}^m \to \R$ satisfies $c \leq HV \leq C$, let $\mu$ be the corresponding measure, and let $X \sim \mu$.  Then
\[
\norm*{DV(x)}_2 \leq C \left( \norm*{x - E(X)}_2 + \frac{m^{1/2}}{c^{1/2}} \right).
\]
\end{corollary}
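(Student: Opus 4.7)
The plan is to combine the three preceding lemmas directly. The hypothesis $c \leq HV \leq C$ is exactly what is needed to feed into all of them: it lets us invoke Lemma \ref{lem:convexgradient} to conclude that $DV$ is well-defined and Lipschitz with $\norm{DV}_{\Lip} \leq C$; it lets us invoke Lemma \ref{lem:conjugatevariablebasics} to get the two facts $E[DV(X)] = 0$ and $E\norm{X - E(X)}_2^2 \leq m/c$; and it puts us in position to apply Lemma \ref{lem:RVboundedness} with $G = DV$ and $n = m$.

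Concretely, first I would apply Lemma \ref{lem:RVboundedness} with $G := DV$, using that $DV$ is $\norm{\cdot}_2$-Lipschitz from $M_N(\C)_{sa}^m$ to itself by Lemma \ref{lem:convexgradient}. This yields
\[
\norm*{DV(x)}_2 \leq \norm*{E[DV(X)]}_2 + \norm*{DV}_{\Lip}\Bigl( \norm*{x - E(X)}_2 + (E\norm*{X - E(X)}_2^2)^{1/2}\Bigr).
\]
Then I would substitute the three ingredients: $\norm{E[DV(X)]}_2 = 0$ and $E\norm{X - E(X)}_2^2 \leq m/c$ from Lemma \ref{lem:conjugatevariablebasics}, and $\norm{DV}_{\Lip} \leq C$ from Lemma \ref{lem:convexgradient} (since $\max(\norm{c\,\id},\norm{C\,\id}) = C$ when $0 < c \leq C$). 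The resulting inequality is exactly the claimed bound.

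There is no real obstacle here; the corollary is a two-line consequence of the preceding machinery, and the only thing to double-check is that the constants line up correctly, in particular that the upper Hessian bound $C$ (not $c$) is what controls the Lipschitz constant of $DV$, while the lower bound $c$ controls the variance of $X$ via the integration-by-parts identity in Lemma \ref{lem:conjugatevariablebasics}.
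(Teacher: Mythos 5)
Your proposal is correct and is essentially identical to the paper's own proof: apply Lemma \ref{lem:RVboundedness} to $G = DV$, using $\norm{DV}_{\Lip} \leq C$ from Lemma \ref{lem:convexgradient} together with $E[DV(X)] = 0$ and $E\norm{X - E(X)}_2^2 \leq m/c$ from Lemma \ref{lem:conjugatevariablebasics}. The constants line up exactly as you describe.
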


\begin{proof}
We apply Lemma \ref{lem:RVboundedness} to $DV(X)$.  Also, $DV$ is $C$-Lipschitz by Lemma \ref{lem:convexgradient}.  By Lemma \ref{lem:conjugatevariablebasics} $E(DV(X)) = 0$ and $E \norm*{X - E(X)}_2^2 \leq m / c$.  
\end{proof}

\begin{lemma} \label{lem:limitoflogconcave}
Let $A$ and $B$ be positive definite linear transformations $M_N(\C)_{sa}^m \to M_N(\C)_{sa}^m$.  Let $\{V_k\}_{k \in \N}$ be a sequence of functions such that $A \leq HV_k \leq B$.  Let $d\mu_k(x) = (1/Z_k) e^{-N^2 V_k(x)}\,dx$ be the associated probability measure.  Let $\mu$ be another measure with finite mean.  Suppose $\mu_k$ converges weakly to $\mu$ and the mean of $\mu_k$ is bounded in $\norm*{\cdot}_2$ as $k \to \infty$.  Then there exists $V$ such that $d\mu(x) = (1/Z) e^{-N^2 V(x)}\,dx$ and $A \leq HV \leq B$.
\end{lemma}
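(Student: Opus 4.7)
My plan is to extract $V$ as a subsequential limit of translated/normalized $V_k$'s, using uniform strong convexity and semi-concavity to get Arzel\`a--Ascoli-style compactness, and then using dominated convergence to identify $\mu$. Let $c > 0$ and $C \geq c$ be the smallest eigenvalue of $A$ and largest eigenvalue of $B$ respectively, so $cI \leq HV_k \leq CI$. By strong convexity each $V_k$ has a unique minimizer $x_k^*$, and I normalize $V_k$ by an additive constant so that $V_k(x_k^*) = 0$; this changes only the bookkeeping of $Z_k$, not $\mu_k$.

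The first step is to show $\{x_k^*\}$ is bounded. Write $X_k \sim \mu_k$. Lemma \ref{lem:conjugatevariablebasics} gives $E\norm{X_k - E(X_k)}_2^2 \leq m/c$. Since $DV_k(x_k^*) = 0$, the same integration by parts yields $E \ip{DV_k(X_k) - DV_k(x_k^*), X_k - x_k^*}_2 = E\ip{DV_k(X_k), X_k - x_k^*}_2 = m$, and combining with $HV_k \geq cI$ via Lemma \ref{lem:convexgradient} gives $E\norm{X_k - x_k^*}_2^2 \leq m/c$. By Jensen's inequality $\norm{E(X_k) - x_k^*}_2 \leq (m/c)^{1/2}$, so since the means $E(X_k)$ are bounded by hypothesis, so is $\{x_k^*\}$.

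Next, since $HV_k \leq CI$ and $DV_k(x_k^*) = 0$, Lemma \ref{lem:convexgradient} yields $\norm{DV_k(x)}_2 \leq C \norm{x - x_k^*}_2$ and hence the pointwise bounds $(c/2) \norm{x - x_k^*}_2^2 \leq V_k(x) \leq (C/2) \norm{x - x_k^*}_2^2$. With $\{x_k^*\}$ bounded, the family $\{V_k\}$ is uniformly bounded and uniformly Lipschitz on each ball in $\norm{\cdot}_2$. Passing to a subsequence I may assume $x_k^* \to x^*$ and $V_k \to V$ locally uniformly by Arzel\`a--Ascoli. The inequalities $A \leq HV \leq B$ pass to the limit since, by Definition \ref{def:convexityHnotation}, they amount to convexity/concavity of $V_k(x) - \tfrac{1}{2} \ip{Ax,x}_2$ and $\tfrac{1}{2}\ip{Bx,x}_2 - V_k(x)$, a property preserved under locally uniform limits.

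Finally, the uniform lower bound $V_k(x) \geq (c/2)\norm{x - x_k^*}_2^2$ together with the boundedness of $x_k^*$ produces a single integrable dominating function for $\{e^{-N^2 V_k}\}$. Dominated convergence yields $Z_k \to Z := \int e^{-N^2 V(x)}\,dx \in (0,\infty)$ and $L^1$ convergence of the densities $(1/Z_k) e^{-N^2 V_k} \to (1/Z) e^{-N^2 V}$. Hence $\mu_k$ converges weakly along the subsequence to the measure with density $(1/Z) e^{-N^2 V}$; by the hypothesis $\mu_k \to \mu$, we conclude $d\mu(x) = (1/Z) e^{-N^2 V(x)}\,dx$. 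The only genuine obstacle is the first step: without controlling the location of the minimizers $x_k^*$ one has no way to extract a limit (the $V_k$'s could slide off to infinity), and this is exactly where the bounded-means hypothesis gets used via the strong-convexity variance estimate from Lemma \ref{lem:conjugatevariablebasics}. Everything else is routine compactness, dominated convergence, and the closedness of convexity under pointwise limits.
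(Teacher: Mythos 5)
Your proof is correct and follows essentially the same route as the paper's: the uniform Hessian bounds give local equicontinuity and boundedness of the suitably normalized potentials, Arzel\`a--Ascoli yields a locally uniform subsequential limit $V$, the bounds $A \leq HV \leq B$ pass to the limit by closedness of convexity under pointwise limits, and dominated convergence identifies $(1/Z)e^{-N^2 V}\,dx$ with $\mu$. The only cosmetic difference is that you normalize at the minimizer and control the minimizers' location via the integration-by-parts variance estimate of Lemma \ref{lem:conjugatevariablebasics}, whereas the paper normalizes $V_k(0)=0$ and applies Arzel\`a--Ascoli to the gradients $DV_k$, whose pointwise boundedness comes from Corollary \ref{cor:DVestimate}; these are interchangeable uses of the bounded-mean hypothesis.
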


\begin{proof}
Since adding a constant to $V_k$ does not change $\mu_k$, we can assume without loss of generality that $V_k(0) = 0$.  Now $DV_k$ is $C$-Lipschitz where $C = \max(\norm{A}, \norm{B})$, hence the sequence is equicontinuous.  It is also pointwise bounded in light of the previous lemma, since we assumed the mean of $\mu_k$ is bounded as $k \to \infty$.  Thus, by the Arzel{\`a}-Ascoli theorem, by passing to a subsequence, we can assume that $DV_k$ converges locally uniformly to some $F$ as $k \to \infty$.  Since $V_k(0) = 0$, this also implies that $V_k$ converges locally uniformly to some $V$, which must satisfy $A \leq HV \leq B$ since the family of such functions is closed under pointwise limits (which follows from the family of convex functions being closed under pointwise limits; compare \cite[Proposition 2.13(1)]{Jekel2018}).  Moreover, $DV = F$.

Let $\nu$ be the probability measure given by $d\nu(x) = (1/Z) e^{-N^2 V(x)}\,dx$.  Since $A$ is positive definite, we have $A \geq c$ for some scalar $c > 0$.  Because $DV_k(0)$ is bounded in $\norm*{\cdot}_2$ as $k \to \infty$ and $V_k(x) \geq \ip{x, DV_k(0)}_2 + c \norm*{x}_2^2$, we can see using the dominated convergence theorem that $Z_k \to Z$ as $k \to \infty$.  It follows again from dominated convergence that $\int \phi\,d\mu_k \to \int \phi\,d\nu$ for every continuous compactly supported $\phi$.  Hence, $\nu = \mu$, so $\mu$ is given by the potential $V$.
\end{proof}

\subsection{Log-Sobolev Inequality and Concentration} \label{subsec:concentration}

Log-concave matrix models exhibit concentration of measure as $N \to \infty$ as a consequence of the following classical inequalities. 

\begin{definition}
We say that a measure $\mu$ on $\R^m$ satisfies the \emph{log-Sobolev inequality with constant $c$} if for all sufficiently smooth $f$,
\begin{equation} \label{eq:LSI}
\int f^2 \log \frac{f^2}{\int f^2 \,d\mu} \,d\mu \leq 2c \int |\nabla f|^2\,d\mu.
\end{equation}
\end{definition}

\begin{definition}
We say that a measure $\mu$ on $\R^m$ satisfies \emph{Herbst's concentration inequality with constant $c$} if for all Lipschitz functions $f: \R^m \to \R$ and $\delta > 0$, we have $E |f(X)| < +\infty$ and
\begin{equation} \label{eq:Herbst1}
P(f(X) - E[f(X)] \geq \delta) \leq e^{-c \delta^2 / 2 \norm{f}_{\Lip}^2}
\end{equation}
where $X$ is a random variable distributed according to $\mu$.  Note that by symmetry this implies
\begin{equation} \label{eq:Herbst2}
P(|f(X) - E[f(X)]| \geq \delta) \leq 2 e^{-c \delta^2 / 2 \norm{f}_{\Lip}^2}.
\end{equation}
\end{definition}

The following theorem is now standard.  See for instance \cite[\S 2.3.3 and 4.4.2]{AGZ2009} and \cite{BL2000}.  To summarize the history, the log-Sobolev inequality was introduced by Gross \cite{Gross1975}.   In the theorem below, (1) is due to Bakry and Emery and (2) is due to unpublished work of Herbst.  The application to random matrices was introduced by Guionnet and Zeitouni \cite{GZ2000}.


\begin{theorem} ~
\begin{enumerate}
	\item Suppose that $\mu$ is a probability measure on $\R^m$ satisfying $d\mu(x) = (1/Z)e^{-V(x)}\,dx$ and suppose that $V(x) - (c/2) |x|^2$ is convex.  Then $\mu$ satisfies the log-Sobolev inequality with constant $1/c$.
	\item If $\mu$ satisfies the log-Sobolev inequality with constant $1/c$, then it satisfies Herbst's concentration inequality with constant $c$.
\end{enumerate}
\end{theorem}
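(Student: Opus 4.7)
For part (1), the plan is the classical Bakry--Émery $\Gamma_2$ argument. Associated to $\mu$ is the symmetric (with respect to $\mu$) second-order operator $L = \Delta - \nabla V \cdot \nabla$ and its Markov semigroup $P_t = e^{tL}$. Introduce the iterated carré du champ
$$\Gamma_2(f) = \tfrac{1}{2}\bigl(L|\nabla f|^2 - 2\nabla f \cdot \nabla Lf\bigr) = \|\mathrm{Hess}\,f\|_{HS}^2 + \langle(\mathrm{Hess}\,V)\nabla f,\nabla f\rangle.$$
The hypothesis $\mathrm{Hess}\,V \geq c$ yields the curvature bound $\Gamma_2(f) \geq c\,|\nabla f|^2$. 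A routine semigroup computation then gives the local estimate
$$P_t(g\log g) - (P_tg)\log(P_tg) \leq \frac{1-e^{-2ct}}{2c}\cdot\frac{|\nabla P_tg|^2}{P_tg},$$
and integrating against $\mu$, using invariance of $\mu$ and $P_tg \to \int g\,d\mu$ as $t\to\infty$, produces the entropy-Fisher inequality $\int g\log(g/\int g\,d\mu)\,d\mu \leq (2c)^{-1}\int |\nabla g|^2/g\,d\mu$. Substituting $g = f^2$ yields (\ref{eq:LSI}) with constant $1/c$.

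For part (2), I would follow Herbst's original argument, which reduces concentration to a bound on the log-Laplace transform of $f$. Assume $f$ is $1$-Lipschitz (the general case follows by homogeneity) and, by truncation, bounded; apply the LSI to $g = e^{\lambda f / 2}$. Since $|\nabla g|^2 = (\lambda^2/4)|\nabla f|^2 g^2 \leq (\lambda^2/4)g^2$, (\ref{eq:LSI}) becomes
$$\lambda \int f e^{\lambda f}\,d\mu - \Bigl(\int e^{\lambda f}\,d\mu\Bigr)\log\int e^{\lambda f}\,d\mu \leq \frac{\lambda^2}{2c}\int e^{\lambda f}\,d\mu.$$
Setting $\Lambda(\lambda) = \log\int e^{\lambda f}\,d\mu$, this is exactly $(\Lambda(\lambda)/\lambda)' \leq 1/(2c)$; since $\Lambda(\lambda)/\lambda \to E[f]$ as $\lambda \to 0^+$, integrating gives $\Lambda(\lambda) \leq \lambda E[f] + \lambda^2/(2c)$. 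Markov's inequality then yields $P(f - E[f] \geq \delta) \leq \exp(-\lambda\delta + \lambda^2/(2c))$, and optimizing at $\lambda = c\delta$ produces (\ref{eq:Herbst1}).

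Both arguments are classical, so there is no single dramatic obstacle; the technical care lies in the standard regularizations. For part (1), one must reduce to smooth or sufficiently regular test functions to justify the semigroup manipulations, handle the potentially unbounded $V$ on all of $\R^m$, and verify the limit $P_tg \to \int g\,d\mu$, which follows from the convexity of $V$ via a spectral gap. For part (2), the passage from bounded to unbounded Lipschitz $f$ is handled by truncating to $(-K)\vee f \wedge K$, applying the bound, and sending $K \to \infty$ by dominated convergence. I would organize the proof essentially as in \cite[\S 2.3.3 and \S 4.4.2]{AGZ2009} and \cite{BL2000}.
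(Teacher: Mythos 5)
Your proposal is correct, and it is exactly the argument the paper relies on: the paper gives no proof of this theorem, instead citing \cite[\S 2.3.3 and 4.4.2]{AGZ2009} and \cite{BL2000} and attributing (1) to the Bakry--\'Emery criterion and (2) to Herbst's argument, which are precisely the $\Gamma_2$ and log-Laplace-transform proofs you outline (with the constants matching the paper's normalization). Nothing further is needed.
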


In particular, we have the following consequences for random matrices.  Here we use the gradient $Df$ and Hessian $Hf$ with respect to the normalized inner product $\ip{\cdot,\cdot}_2$.

\begin{corollary} \label{cor:matrixLSI}
Suppose that $V: M_N(\C)_{sa}^m \to \R$ satisfies $HV \geq c > 0$ and let $d\mu(x) = (1/Z) e^{-N^2 V(x)}\,dx$.  Then $\mu$ satisfies the normalized log-Sobolev inequality
\begin{equation} \label{eq:normalizedLSI}
\int f^2 \log \frac{f^2}{\int f^2 \,d\mu} \,d\mu \leq \frac{2}{N^2 c} \int \norm{Df}_2^2\,d\mu,
\end{equation}
and hence also satisfies the normalized Herbst concentration inequality
\begin{equation} \label{eq:normalizedHerbst}
P(f(X) - E[f(X)] \geq \delta) \leq e^{-cN^2 \delta^2 / 2 \norm{f}_{\Lip}^2},
\end{equation}
where $f: M_N(\C)_{sa}^m \to \R$ is Lipschitz and $\norm{f}_{\Lip}$ denotes the Lipschitz norm with respect to $\norm{\cdot}_2$.
\end{corollary}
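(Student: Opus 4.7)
The plan is to reduce Corollary \ref{cor:matrixLSI} to the preceding (classical) theorem by a careful bookkeeping of normalizations, since under the identification $M_N(\C)_{sa}^m \cong \R^{mN^2}$ fixed in the excerpt the measure $\mu$ is a log-concave probability measure on a Euclidean space and the classical Bakry--Emery--Herbst theorem applies directly. The only thing to do is match the constants.

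Let $\tilde V(x) = N^2 V(x)$, so that $d\mu(x) = (1/Z) e^{-\tilde V(x)}\,dx$ is in the normalization of the classical theorem. First I would translate the hypothesis $HV \geq c$ into a Euclidean-Hessian bound on $\tilde V$. By the relations $\ip{\cdot,\cdot}_{\Tr} = N \ip{\cdot,\cdot}_2$ and $Du = N \nabla u$ recorded in the notation subsection, an easy comparison of the two Taylor expansions of $u$ gives $Hu = N \nabla^2 u$ as bilinear forms on the Euclidean space $(M_N(\C)_{sa}^m, \ip{\cdot,\cdot}_{\Tr})$. Hence $HV \geq c$ becomes $\nabla^2 V \geq c/N$, and therefore $\nabla^2 \tilde V = N^2 \nabla^2 V \geq Nc$, i.e.\ $\tilde V(x) - (Nc/2)|x|^2$ is convex in the Euclidean sense.

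Next I would apply part (1) of the classical theorem to $\tilde V$, obtaining the log-Sobolev inequality for $\mu$ with constant $1/(Nc)$:
\[
\int f^2 \log \frac{f^2}{\int f^2\,d\mu}\,d\mu \leq \frac{2}{Nc} \int |\nabla f|^2\,d\mu.
\]
To convert this into \eqref{eq:normalizedLSI} it is enough to rewrite $|\nabla f|^2$ in terms of $\norm{Df}_2$. Using $Df = N \nabla f$ and $|y|^2 = N \norm{y}_2^2$ we get $|\nabla f|^2 = N \norm{\nabla f}_2^2 = (1/N) \norm{Df}_2^2$, which turns the constant $2/(Nc)$ into $2/(N^2 c)$, matching \eqref{eq:normalizedLSI}.

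For the Herbst inequality, I would apply part (2) of the classical theorem to get
\[
P(f(X) - E[f(X)] \geq \delta) \leq \exp\!\left(-\frac{Nc\,\delta^2}{2\,\norm{f}_{\Lip, |\cdot|}^2}\right),
\]
where $\norm{f}_{\Lip,|\cdot|}$ is the Lipschitz constant with respect to the Euclidean norm. Since $\norm{x - y}_2 = |x - y|/\sqrt N$, a function that is $L$-Lipschitz in $\norm{\cdot}_2$ is $L/\sqrt N$-Lipschitz in $|\cdot|$; substituting $\norm{f}_{\Lip,|\cdot|}^2 = \norm{f}_{\Lip}^2 / N$ converts the exponent into $-c N^2 \delta^2 / (2 \norm{f}_{\Lip}^2)$, giving \eqref{eq:normalizedHerbst}. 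There is no real obstacle here; the only point to be careful about is consistently using the normalized inner product $\ip{\cdot,\cdot}_2$ versus the trace inner product $\ip{\cdot,\cdot}_{\Tr}$ when translating the Hessian, gradient, and Lipschitz constant between the two conventions.
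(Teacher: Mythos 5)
Your proposal is correct and is exactly the intended argument: the paper states Corollary \ref{cor:matrixLSI} without proof as an immediate renormalization of the Bakry--Emery/Herbst theorem, and your bookkeeping of the conversions $Hu = N\nabla^2 u$, $Df = N\nabla f$, $|x|^2 = N\norm{x}_2^2$, and $\norm{f}_{\Lip,|\cdot|} = \norm{f}_{\Lip}/\sqrt{N}$ reproduces the constants $2/(N^2c)$ and $cN^2$ correctly.
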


\begin{lemma} \label{lem:epsilonnet}
Suppose that $\mu$ is a probability measure on $M_N(\C)_{sa}^m$ satisfying \eqref{eq:normalizedHerbst} for some constant $c$.  Let $f: M_N(\C)_{sa}^m \to M_N(\C)_{sa}$ be Lipschitz with respect to $\norm*{\cdot}_2$.  Then we have
\begin{equation} \label{eq:epsilonnet}
P\Bigl( \norm*{f(X) - E[f(X)]}_\infty \geq c^{-1/2} \norm*{f}_{\Lip} (\Theta + \delta) \Bigr) \leq e^{-N \delta^2 / 2}.
\end{equation}
where $X \sim \mu$ and where $\Theta$ is a universal constant (independent of $N$ and $c$).
\end{lemma}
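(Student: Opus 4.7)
The plan is a standard two-step concentration argument: first bound the expectation $E\norm*{f(X) - E[f(X)]}_\infty$ by $\Theta c^{-1/2}\norm*{f}_{\Lip}$ using an $\varepsilon$-net argument, and then apply the assumed concentration inequality \eqref{eq:normalizedHerbst} to the scalar Lipschitz function $g(x) = \norm*{f(x) - E[f(X)]}_\infty$ itself.

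For the expectation bound, I would exploit that $f$ is self-adjoint-valued, so $\norm*{A}_\infty = \sup_{\norm{\xi}=1} |\ip{A\xi,\xi}_{\C^N}|$ where $A = f(X) - E[f(X)]$. Fix a $\frac14$-net $\mathcal{N}$ of the unit sphere of $\C^N$ with $|\mathcal{N}| \leq 9^{2N}$; the standard numerical-radius reduction yields $\norm*{A}_\infty \leq 2 \sup_{\xi \in \mathcal{N}} |\ip{A\xi,\xi}|$. For each $\xi \in \mathcal{N}$, the function $g_\xi(x) = \ip{f(x)\xi,\xi}_{\C^N}$ is real valued with mean $\ip{E[f(X)]\xi,\xi}$, and
\[
|g_\xi(x) - g_\xi(y)| \leq \norm*{f(x) - f(y)}_\infty \leq N^{1/2} \norm*{f(x) - f(y)}_2 \leq N^{1/2} \norm*{f}_{\Lip} \norm*{x - y}_2,
\]
using $\norm*{A}_\infty \leq N^{1/2} \norm*{A}_2$. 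Applying \eqref{eq:normalizedHerbst} to each $g_\xi$ and taking a union bound produces
\[
P\bigl(\norm*{f(X) - E[f(X)]}_\infty \geq 2\eta\bigr) \leq 2 \cdot 9^{2N} e^{-cN \eta^2 / 2\norm*{f}_{\Lip}^2}.
\]
I would then integrate the tail via $E[Z] = \int_0^\infty P(Z \geq t)\,dt$, substituting $\eta = \norm*{f}_{\Lip}\sqrt{2/c}\,\sqrt{2\log 9 + s^2}$ above the threshold where the net cardinality $9^{2N}$ is cancelled by the Gaussian factor; the change of variables converts the remaining integral to $\int_0^\infty e^{-Ns^2}\,ds$, which is $O(1/\sqrt{N})$. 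The piece below the threshold contributes the main term $O(\norm*{f}_{\Lip}/\sqrt{c})$, yielding $E\norm*{f(X) - E[f(X)]}_\infty \leq \Theta \norm*{f}_{\Lip}/\sqrt{c}$ for some universal constant $\Theta$ independent of $N$ and $c$.

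To finish, I would apply \eqref{eq:normalizedHerbst} directly to $g(x) = \norm*{f(x) - E[f(X)]}_\infty$, which by the reverse triangle inequality and $\norm*{\cdot}_\infty \leq N^{1/2} \norm*{\cdot}_2$ is $N^{1/2} \norm*{f}_{\Lip}$-Lipschitz in $\norm*{\cdot}_2$. Thus
\[
P\bigl(g(X) \geq E[g(X)] + t\bigr) \leq e^{-cN^2 t^2 / 2(N^{1/2} \norm*{f}_{\Lip})^2} = e^{-cN t^2 / 2\norm*{f}_{\Lip}^2},
\]
and setting $t = c^{-1/2} \norm*{f}_{\Lip} \delta$ turns the right-hand side into $e^{-N\delta^2/2}$. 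Combining with the expectation estimate gives $E[g(X)] + t \leq c^{-1/2}\norm*{f}_{\Lip}(\Theta + \delta)$, which is exactly \eqref{eq:epsilonnet}. The only place where care is required is the tail-integration step in the expectation bound: I must split the integral at $\eta_0 \asymp \norm*{f}_{\Lip}/\sqrt{c}$ so that the $\log|\mathcal{N}| \asymp N$ from the union bound is absorbed into the Gaussian exponent at the threshold, and so that $\Theta$ does not acquire spurious $N$- or $c$-dependence.
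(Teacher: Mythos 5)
Your proposal is correct and follows essentially the same two-step route as the paper: bound $E\norm*{f(X)-E[f(X)]}_\infty$ by $\Theta c^{-1/2}\norm*{f}_{\Lip}$ via an $\epsilon$-net/union-bound/tail-integration argument, then apply the normalized Herbst inequality to the $N^{1/2}\norm*{f}_{\Lip}$-Lipschitz scalar function $x \mapsto \norm*{f(x)-E[f(X)]}_\infty$ and rescale $\delta$. The only differences are cosmetic parameter choices (a $1/4$-net of cardinality $9^{2N}$ with the factor-$2$ numerical-radius reduction versus the paper's $1/3$-separated family of size $7^{2N}$ with a factor of $3$, and a slightly different splitting of the tail integral), which affect only the value of the universal constant $\Theta$.
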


\begin{proof}
First, observe that $\norm{x}_\infty \leq N^{1/2} \norm{x}_2$ for $x \in M_N(\C)_{sa}^m$.  In particular, $g(x) = \norm{f(x) - E[f(X)]}_\infty$ is $N^{1/2} \norm{f}_{\Lip}$-Lipschitz with respect to $\norm{\cdot}_2$, and thus
\[
P (g(X) \geq E[g(X)] + \delta) \leq e^{-cN \delta^2 / 2 \norm{f}_{\Lip}^2},
\]
which implies after a change of variables for $\delta$ that
\[
P(g(X) \geq E[g(X)] + c^{-1/2} \norm{f}_{\Lip} \delta) \leq e^{-N \delta^2 / 2}.
\]
Therefore, it suffices to show that for some constant $\Theta$, we have
\begin{equation} \label{eq:expectationofnorm}
E[g(X)] = E[ \norm{f(X) - E[f(X)]}_\infty] \leq \Theta c^{-1/2} \norm{f}_{\Lip}.
\end{equation}

We may assume without loss of generality that $f$ is self-adjoint since in the general case, $f = (1/2)(f + f^*) + i(1/2i)(f - f^*)$, and each of the terms on the right hand side is Lipschitz.  Thus, the self-adjoint case would imply the non-self-adjoint case at the cost of doubling the constant $\Theta$.  Now to prove self-adjoint case, we use an ``$\epsilon$-net argument'' that is well-known in random matrix theory (see \cite[\S 2.3.1]{Tao2012}).  Fix $N$.  Let $\{\eta_j\}_{j=1}^J$ be a maximal collection of unit vectors in $\C^N$ such that $|\eta_i - \eta_j| \geq 1/3$ for all $i \neq j$.  Since this collection is maximal, for every unit vector $\eta$, there exists some $\eta_j$ with $|\eta - \eta_i| < 1/3$.  Now if $a \in M_N(\C)_{sa}$, then there is a unit vector with $\norm*{a}_\infty = \ip{\eta, a \eta}$.  We may then choose $\eta_j$ with $|\eta - \eta_j| < 1/3$
\begin{align*}
\norm*{a}_\infty &= \ip{\eta, a \eta} \\
&= \ip{\eta_j, a \eta_j} + \ip{\eta_j, a(\eta - \eta_j)}_2 + \ip{\eta - \eta_j, a \eta} \\
&\leq \ip{\eta_j, a \eta_j} + \frac{1}{3} \norm*{a}_\infty + \frac{1}{3} \norm*{a}_\infty,
\end{align*}
so that
\[
\norm*{a}_\infty \leq 3 \max_j \ip{\eta_j, a \eta_j}.
\]
Note that the balls $\{B(\eta_j,1/6)\}_{j=1}^J$ in $\C^N$ are disjoint and contained in $B(0,7/6)$.  Hence, we can estimate the number of vectors by
\[
J \leq \frac{|B(0,7/6)|}{|B(0,1/6)|} = 7^{2N}.
\]

Let $K = \norm{f}_{\Lip}$.  For a matrix $a \in M_N(\C)_{sa}$, we have
\[
|\ip{\eta_i, a \eta_j}| \leq \norm*{a}_\infty \leq N^{1/2} \norm*{a}_2.
\]
This implies that $x \mapsto \ip{\eta_j, f(x) \eta_j}$ is $KN^{1/2}$-Lipschitz with respect to $\norm*{\cdot}_2$ and hence
\[
P\Bigl( \ip{\eta_j, (f(X) - E[f(X)]) \eta_j} \geq \delta \Bigr) \leq e^{-cN \delta^2 / 2K^2}
\]
Since $\norm*{a}_\infty \leq 3 \max_{j} \ip{\eta_j, a \eta_j}$, we have
\begin{align*}
P \Bigl(\norm*{f(X) - E[f(X)]}_\infty \geq 3 \delta \Bigr) &\leq J e^{-cN \delta^2 / 2} \\
&\leq 7^{2N} e^{-cN \delta^2 / 2K^2}.
\end{align*}
Thus, for any $t_0 > 0$, we have
\begin{align*}
E[\norm*{f(X) - E[f(X)]}_\infty] &= \int_0^\infty P(\norm{f(X) - E[f(X)]}_\infty \geq t)\,dt \\
&\leq \int_0^{t_0} 1 \,dt + \int_{t_0}^\infty 7^{2N} e^{-cNt^2 / 18 K^2}\,dt \\
&\leq t_0 + \int_{t_0}^\infty 7^{2N} \frac{t}{t_0} e^{-cNt^2 / 18 K^2}\,dt \\
&= t_0 + 7^{2N} \frac{9K^2}{cNt_0} e^{-cNt_0^2 / 18 K^2}.
\end{align*}
Now substitute $t_0 = 6c^{-1/2} K (\log 7)^{1/2}$ and obtain \eqref{eq:expectationofnorm} with
\[
\Theta = 6(\log 7)^{1/2} + \frac{9}{6 (\log 7)^{1/2}}.
\]
(In fact, for a fixed $N$, we may use $\Theta_N = 6(\log 7)^{1/2} + 9 / 6N(\log 7)^{1/2}$ in the self-adjoint case.)
\end{proof}

\section{Functional Calculus and Asymptotic Approximation} \label{sec:TP}

In this section, we review the algebra $\TrP_m^1$ of trace polynomials in self-adjoint variables $X_1$, \dots, $X_m$, as well as a certain completed quotient $\overline{\TrP}_m^1$ of this algebra.  The elements of $\overline{\TrP}_m^1$ represent functions that can be applied to \emph{any} tuple of self-adjoint non-commutative random variables $(X_1,\dots,X_m)$ in an $\mathcal{R}^\omega$-embeddable tracial $\mathrm{W}^*$-algebra, and application of these functions will produce every element of $L^2(\mathrm{W}^*(X_1,\dots,X_m))$ (see Proposition \ref{prop:realizationofoperators}).  These functions are closed under certain algebraic and composition operations.  Moreover, they are a natural tool to describe the large $N$ limit of functions on $M_N(\C)_{sa}^m$, which we will apply in the rest of the paper.

\subsection{The Algebra of Trace Polynomials}

Trace polynomials have been used by several previous authors in the study of deterministic and random matrices; a brief list is \cite{Razmyslov1974}, \cite{Razmyslov1987}, \cite{Rains1997}, \cite{Sengupta2008}, \cite{Cebron2013}, \cite{DHK2013} (which coined the term ``trace polynomial''), \cite{Kemp2016}, \cite{Kemp2017}, \cite{DGS2016} but they are also used implicitly in many other works.  We use the same notation as in our previous paper \cite{Jekel2018}.

We denote by $\NCP_m = \C\ip{X_1,\dots,X_m}$ the $*$-algebra of polynomials in $m$ self-adjoint non-commuting variables $X_1$, \dots, $X_m$.

We denote by $\TrP_m^0$ the $*$-algebra of \emph{scalar-valued trace polynomials}.  A formal definition is given in \cite{Jekel2018}; in short, it is the tensor algebra of the vector space of non-commutative polynomials modulo cyclic symmetry.  Informally, this is the commutative $*$-algebra generated by functions of the form $\tau(p(X_1,\dots,X_m))$, where $p$ is a non-commutative polynomial in $X = (X_1,\dots,X_m)$ and $\tau$ is a formal symbol (which stands in for a normalized trace on a von Neumann algebra), where $\tau(p(X))^* = \tau(p(X)^*)$, and where we identify $\tau(p(X) q(X))$ with $\tau(q(X) p(X))$ for all polynomials $p$ and $q$.  Thus, $\TrP_m^0$ is spanned as a vector space by elements of the form $\tau(p_1(X)) \dots \tau(p_n(X))$ where $p_1$, \dots, $p_n \in \NCP_m$.

We denote by $\TrP_m^1$ the $*$-algebra of \emph{operator-valued trace polynomials}.  This is the $*$-algebra given formally as $\TrP_m^0 \otimes \NCP_m$.  As a vector space, it is spanned by elements of the form $\tau(p_1(X)) \dots \tau(p_n(X)) q(X)$, where $p_1$, \dots, $p_n$ and $q$ are in $\NCP_m$.  More generally, we would denote $\TrP_m^k = \TrP_m^0 \otimes (\NCP_m)^{\otimes k}$, but these spaces will not be needed in this paper.

The \emph{degree} of a trace polynomial is defined as one would expect; see \cite[\S 3.1]{Jekel2018} for precise explanation.

Suppose that $x_1$, \dots, $x_m$ are self-adjoint elements of a tracial von Neumann algebra $(\mathcal{M},\tau_0)$.  Then elements of $\NCP_m$, $\TrP_m^0$, and $\TrP_m^1$ can be evaluated on $(x_1,\dots,x_m)$ and $\tau_0$ by substituting the operator $x_j$ and the trace $\tau_0$ in place of the formal symbols $X_j$ and $\tau$.  More precisely, the evaluation map $\varepsilon_{(x_1,\dots,x_m)}: \NCP_m \to \mathcal{M}$ is the unique $*$-algebra homomorphism that sends $X_j$ to $x_j$.  Similarly, the evaluation map $\varepsilon_{(x_1,\dots,x_m)}^0: \TrP_m^0 \to \C$ is the unique $*$-algebra homomorphism that sends $\tau(p(X))$ to $\tau_0(\varepsilon_{(x_1,\dots,x_m)}(p))$.  Finally, the evaluation map $\varepsilon_{(x_1,\dots,x_m)}^1: \TrP_m^1 \to \mathcal{M}$ is $\varepsilon_{(x_1,\dots,x_m)}^0 \otimes \varepsilon_{(x_1,\dots,x_m)}$, that is,
\[
\tau(p_1(X)) \dots \tau(p_n(X)) q(X) \mapsto \tau_0(\varepsilon_{(x_1,\dots,x_m)}^0(p_1)) \dots \tau_0(\varepsilon_{(x_1,\dots,x_m)}^0(p_n)) \varepsilon_{(x_1,\dots,x_m)}(q).
\]

For the most part, we will abuse notation and denote $f(x) = \varepsilon_{(x_1,\dots,x_m)}(f)$ when $f \in \NCP_m$, and similarly for $f \in \TrP_m^0$ or $f \in \TrP_m^1$.  Note in particular that we can consider $(\mathcal{M},\tau_0) = (M_N(\C)_{sa}, \tau_N)$ and thus $f(x)$ is defined for $x \in M_N(\C)_{sa}^m$ and $f \in \TrP_m^0$ or $\TrP_m^1$.

These evaluation maps thus allow us to view $f \in \TrP_m^0$ as a function (or rather a family of functions) $\mathcal{M}_{sa}^m \to \C$ for every tracial $\mathrm{W}^*$-algebra $(\mathcal{M},\tau)$ and in particular $M_N(\C)_{sa}^m \to \C$ for every $m$.  Similarly, every $f \in \TrP_m^1$ defines a function $\mathcal{M}_{sa}^m \to \mathcal{M}$ for every tracial $\mathrm{W}^*$-algebra and in particular a function $M_N(\C)_{sa}^m \to M_N(\C)$ for every $N$.

\subsection{Functions Approximable by Trace Polynomials}

From an analytic viewpoint, we prefer to work with certain separation-completions of $\TrP_m^0$ and $\TrP_m^1$.  In \cite[\S 8.1]{Jekel2018}, we sketched several equivalent ways of defining these separation-completions.  Here we emphasize their description as functions that can be evaluated on any self-adjoint tuple in $\mathcal{R}^\omega$ (or, as we will see, any $\mathcal{R}^\omega$-embeddable $\mathrm{W}^*$-algebra).

Let $\mathcal{R}$ denote the hyperfinite $\II_1$ factor (tracial $\mathrm{W}^*$-algebra with trivial center) and let $\mathcal{R}^\omega$ be its (tracial $\mathrm{W}^*$-algebra) ultrapower with respect to some fixed free ultrafilter $\omega \in \beta \N \setminus \N$.

Consider the case of $\TrP_m^0$ first.  Let $\mathcal{F}_m^0$ denote the space of functions $(\mathcal{R}^\omega)_{sa}^m \to \C$ that are bounded on operator norm balls, equipped with the family of semi-norms
\[
\norm*{\phi}_{u,R} = \sup \{|\phi(x)|: \norm*{x}_\infty \leq R\}.
\]
(Here ``$u$'' stands for uniform.)  This is clearly a Fr{\'e}chet space since the topology is given by the countable family of semi-norms given by taking $R \in \N$ (for background on Fr{\'e}chet spaces, see e.g.\ \cite[\S 5.4]{Folland1999}).  Every $f \in \TrP_m^0$ defines a function $(\mathcal{R}^\omega)_{sa}^m \to \C$ that is a bounded an operator norm balls.  In other words, evaluation produces a map $\TrP_m^0 \to \mathcal{F}_m^0$.  We denote by $\overline{\TrP}_m^0$ the closure of the image of this map in $\mathcal{F}_m$.  In other words, $\overline{\TrP}_m^0$ is the space of functions $(\mathcal{R}^\omega)_{sa}^m \to \C$ that can be approximated uniformly on operator-norm balls by trace polynomials.

\begin{remark}
This space was denoted as $\mathcal{T}_m^0$ in our earlier paper \cite{Jekel2018}.  The notation $\overline{\TrP}_m^0$ is slightly abusive since we have not shown that the map $\TrP_m^0 \to \mathcal{F}_m^0$ is injective (and perhaps it is not).  However, we will still use the notation $\overline{\TrP}_m^0$ since it indicates the connection with trace polynomials.
\end{remark}

Earlier, we saw that it makes sense to evaluate a trace polynomial $f$ on any self-adjoint tuple $(x_1,\dots,x_m)$ in a tracial von Neumann algebra.  In fact, $f(x_1,\dots,x_m)$ makes sense for every $f \in \overline{\TrP}_m^0$ when $x_1$, \dots, $x_m$ come from a tracial von Neumann algebra that embeds into $\mathcal{R}^\omega$.  To see this, suppose $(\mathcal{M},\tau)$ admits a normal trace-preserving embedding $\iota: \mathcal{M} \to \mathcal{R}^\omega$.  Then we define $f(x_1,\dots,x_m) = f(\iota(x_1),\dots,\iota(x_m))$.  This is independent of the choice of trace-preserving embedding if $f$ is a trace polynomial, and hence it must also be independent of the choice of embedding when $f$ is in $\overline{\TrP}_m^0$.

A similar separation-completion can be defined for $\TrP_m^1$.  Indeed, let $\mathcal{F}_m^1$ be the set of functions $\phi: (\mathcal{R}^\omega)_{sa}^m \to L^2(\mathcal{R}^\omega)$ such that
\[
\norm*{\phi}_{u,R} := \sup \{\norm*{\phi(x)}_2: \norm*{x}_\infty \leq R\}
\]
is finite for each $R$.  Again, this is a Fr{\'e}chet space.  Through the evaluation map, every trace polynomial defines an element of $\mathcal{F}_m^1$ and hence there is a linear map $\TrP_m^1 \to \mathcal{F}_m^1$.  We define $\overline{\TrP}_m^1$ to be the closure of the image of this map in $\mathcal{F}_m^1$.

Similar to the scalar-valued case, we can define evaluation of $f \in \overline{\TrP}_m^1$ for tuples in an $\mathcal{R}^\omega$-embeddable tracial $\mathrm{W}^*$-algebra $(\mathcal{M},\tau)$ by using any trace preserving embedding $\iota: \mathcal{M} \to \mathcal{R}^\omega$.  Indeed, let $x_1,\dots,x_m \in \mathcal{M}_{sa}$.  Clearly, for $f \in \TrP_m^1$, we have $f(\iota(x_1),\dots,\iota(x_m)) \in \iota(\mathcal{M}) \subseteq \iota(L^2(\mathcal{M}))$ where the latter is defined by extending $\iota$ to a map $L^2(\mathcal{M}) \to L^2(\mathcal{R}^\omega)$.  Since this holds for $f \in \TrP_m^1$, then by taking limits, we have $f(\iota(x_1),\dots,\iota(x_m)) \in \iota(L^2(\mathcal{M}))$ for all $f \in \overline{\TrP}_m^1$.  Therefore, we may define $f(x_1,\dots,x_m)$ by $\iota(f(x_1,\dots,x_m)) = f(\iota(x_1),\dots,\iota(x_m))$.  Then one can check this is independent of the choice of embedding similarly as we did in the case of $\overline{\TrP}_m^0$.

\begin{remark}
Because the spaces $\overline{\TrP}_m^j$ used here are non-standard, let us briefly describe their relationship to other more familiar ideas.  Recall that $\Sigma_{m,R}$ denotes the space of non-commutative laws of $m$-tuples with operator norms bounded by $R$.  We denote by $\Sigma_{m,R}^{\app}$ the subspace of laws that can be realized by $m$-tuples in $\mathcal{R}^\omega$, and $\Sigma_m^{\app} = \bigcup_{R > 0} \Sigma_{m,R}^{\app}$.  Then we showed in \cite[Lemma 8.2]{Jekel2018} that $\overline{\TrP}_m^0$ consists of functions $\Sigma_m^{\app} \to \C$ such that the restriction to $\Sigma_{m,R}^{\app}$ is continuous for each $R$.  One could think of this alternatively as an inverse limit of $C(\Sigma_{m,R}^{\app})$ over the directed system of restriction maps $C(\Sigma_{m,R'}^{\app}) \to C(\Sigma_{m,R}^{\app})$ for $R' > R$.
\end{remark}

\begin{remark}
The spaces $\TrP_m^0$ and $\overline{\TrP}_m^0$ also arise naturally in the study of model theory of tracial von Neumann algebras introduced in \cite{FHS2013,FHS2014,FHS2014b}.  To avoid some of the technical complexities of sorts, we follow the definitions in \cite{FHS2014} where the language has multiple domains of quantification for each sort (and thus we can get away with fewer sorts), and in which formulas are obtained by applying continuous functions $\R^n \to \R$ to atomic formulas (rather than functions defined on some compact set).  For tracial von Neumann algebra $(M,\tau)$, the language includes (though this list is not exhaustive) a sort representing $M$ with domains of quantification for each operator norm ball of radius $n \in \N$, a special relation-like symbol $d(x,y)$ for the distance $\norm{x - y}_2$, a relation symbol for the trace $\tau(x)$, and function symbols for the adjoint, addition, and multiplication.

Now $\tau(p(x_1,\dots,x_m))$ is an example of a atomic formula (or strictly speaking, its real and imaginary parts are basic formulas).  Similarly, $\tau(p(\re(x_1),\dots, \re(x_m))$ is an atomic formula, where $\re(x_j) = (x_j + x_j^*) / 2$.  Since the elements of $\TrP_m^0$ is obtained by multiplying formulas such as $\tau(p)$, we see that $f(\re(x_1),\dots,\re(x_m))$ is a quantifier-free formula for every $f \in \TrP_m^0$.  Moreover, the supremum of $|f(\re(x_1),\dots,\re(x_m))$ over $\{x: \norm{x_j} \leq R_j\}$ is the same as the supremum of $f$ over $\{x: x_j = x_j^*, \norm{x_j} \leq R_j\}$.  The limiting objects $\overline{\TrP}_m^0$ (evaluated on the real parts of operators) are thus uniform limits of quantifier-free formulas on each domain of quantification for every $\mathcal{R}^\omega$-embeddable tracial von Neumann algebra, that is, they are ``quantifier-free definable predicates'' relative to the theory of $\mathcal{R}^\omega$-embeddable tracial von Neumann algebras.  Conversely, since $\overline{\TrP}_m^0$ is closed under the operation $(f_1,\dots,f_n) \mapsto \phi(f_1,\dots,f_n)$ for $\phi: \C^n \to \C$ continuous, every quantifier-free definable predicate $f$ satisfying $f(x_1,\dots,x_m) = f(\re(x_1),\dots,\re(x_m))$ is an element of $\overline{\TrP}_m^0$.

The elements of $\overline{\TrP}_m^1$, evaluated on the real parts of operators, may be viewed similarly as certain ``quantifier-free definable functions'' relative to the theory of $\mathcal{R}^\omega$-embeddable tracial von Neumann algebras, meaning that $\norm{f(x) - y}_2^2$ is a quantifier-free definable predicate | actually, for technical reasons a definable function is required to map an operator norm ball into an operator norm ball, so the last statement only applies if we assume our function $f \in \overline{\TrP}_m^1$ has this property (but it turns out that such functions exist in abundance in $\overline{\TrP}_m^1$; see Proposition \ref{prop:realizationofoperators} and Proposition \ref{prop:operatornormestimate}).  Alternatively, in order to deal with functions with codomain $L^2$, we must first modify the language by adding another sort for $L^2(M)$, with domains of quantification corresponding to $L^2$-balls, which will act as the target space of the functions in $\overline{\TrP}_m^1$.

The quantifier-free nature of these formulas is a model-theoretic heuristic for why they behave well under limits in non-commutative law (hence describing the large $N$ limits of random matrix models).  In fact,\cite[Proposition 6.28]{Jekel2018} re-expresses a formula given by quantifiers in a quantifier-free way in order to get behavior under limits.   There, we studied the inf-convolution $(Q_t V)(x) = \inf_y [V(y) - (1/2t) \norm{x - y}_2^2]$ for self-adjoint tuples $x$ and $y$.  If $V \in \TrP_m^0$, then for each $ > 0$,
\[
W_{t,R}(\re(x_1),\dots, \re(x_m)) := \inf_{\norm{y} \leq R} \left[V(\re(y_1),\dots,\re(y_m)) + \frac{1}{2t} \norm{\re(x) - \re(y)}_2^2 \right]
\]
is a formula in the language of tracial von Neumann algebras whose definition involves the quantifier $\inf$.  But if $V$ is convex and semi-concave and $DV \in (\overline{\TrP}_m^1)_{sa}^m$, then the self-adjoint tuple $y$ where the infimum
\[
W_t(\re(x_1),\dots, \re(x_m)) := \inf_{\norm{y} \leq R} \left[V(\re(y_1),\dots,\re(y_m)) + \frac{1}{2t} \norm{\re(x) - \re(y)}_2^2 \right]
\]
is achieved can be evaluated as the limit of a fixed-point iteration using functions from $(\overline{\TrP}_m^1)_{sa}^m$, and hence $y = \phi(\re(x))$ for some $\phi \in (\overline{\TrP}_m^1)_{sa}^m$ (see \cite[Proposition 6.28]{Jekel2018}).  Moreover, it follows from the results in \cite{Jekel2018} that $\phi$ is Lipschitz in $\norm{\cdot}_2$, and thus in light of Proposition \ref{prop:operatornormestimate} below, $\phi$ is bounded in operator norm on operator norm balls.  So $\phi(\re(x))$ is quantifier-free definable function.  We can also conclude that $W_{t,R} \to W_t$ as $R \to \infty$ uniformly on operator norm balls, so $W_t$ is a definable formula (allowing quantifiers).  But then because
\[
W_{t,R}(x) =V(\phi(x)) - \frac{1}{2t} \norm{\re(x) - \phi(x)}_2^2,
\]
we conclude that $W_t$ is in fact a \emph{quantifier-free} definable predicate.

On the other hand, without the ability to eliminate the quantifier like this, we could not hope for $Q_t V$ to behave so well for the large $N$ limit of random matrix models.  Indeed, for $Q_tV(x)$ to depend continuously on the non-commutative law $\lambda_x$ for $x$ in each operator norm ball, it must be in $\TrP_m^0$ by the last remark, and hence it is a quantifier-free definable predicate.

Many of the properties shown in the next section about operations on $\TrP_m^0$ and $\TrP_m^1$ are natural from the model theoretic viewpoint, but we sketch self-contained justifications nonetheless.
\end{remark}

\subsection{Asymptotic Approximation for Functions of Matrices} \label{subsec:AATP}

Our earlier work introduced asymptotic approximability by trace polynomials for a sequence of functions on $M_N(\C)_{sa}^m$, which is a precise description of good asymptotic behavior as $N \to \infty$ suitable for free probabilistic analysis in the limit.

\begin{definition} \label{def:AATP}
Let $\phi^{(N)}: M_N(\C)_{sa}^m \to \C$.  We say that $\{\phi^{(N)}\}$ is \emph{asymptotically approximable by trace polynomials} if for every $R > 0$ and $\epsilon > 0$, there exists $f \in \TrP_m^0$ such that
\[
\limsup_{N \to \infty} \sup_{\substack{x \in M_N(\C)_{sa}^m \\ \norm*{x}_\infty \leq R}} |\phi^{(N)}(x) - f(x)| \leq \epsilon.
\]
Similarly, for matrix-valued functions $\phi^{(N)}: M_N(\C)_{sa}^m \to M_N(\C)$, we say that $\{\phi^{(N)}\}$ is \emph{asymptotically approximable by trace polynomials} if for every $R > 0$ and $\epsilon > 0$, there exists $f \in \TrP_m^1$ such that
\[
\limsup_{N \to \infty} \sup_{\substack{x \in M_N(\C)_{sa}^m \\ \norm*{x}_\infty \leq R}} \norm*{\phi^{(N)}(x) - f(x)}_2 \leq \epsilon.
\]
\end{definition}

It will be convenient to denote
\[
\norm*{\phi^{(N)}}_{u,R}^{(N)} = \sup_{\substack{x \in M_N(\C)_{sa}^m \\ \norm*{x}_\infty \leq R}} |\phi(x)|
\]
in the scalar-valued case and similarly for the matrix-valued case with $\norm*{\phi(x)}_2$ rather than $|\phi(x)|$.  Thus, for instance, the preceding definition says that there exists a trace polynomial $f$ with
\[
\limsup_{N \to \infty} \norm*{\phi^{(N)} - f}_{u,R}^{(N)} < \epsilon.
\]

Moreover, it is implicit from our discussion in \cite[\S 8.1]{Jekel2018} that if $\phi^{(N)}$ is asymptotically approximable by trace polynomials, then it will  be \emph{asymptotic to} some $f \in \overline{\TrP}_m^0$ or $\overline{\TrP}_m^1$ in the following sense.

\begin{definition} \label{def:AA}
Let $\phi^{(N)}: M_N(\C)_{sa}^m \to \C$ or $M_N(\C)$ respectively, and let $f \in \overline{\TrP}_m^0$ or $\overline{\TrP}_m^1$ respectively.  Then we say that $\{\phi^{(N)}\}$ is \emph{asymptotic to $f$}, or $\phi^{(N)} \rightsquigarrow f$ if for every $R > 0$,
\[
\lim_{N \to \infty} \norm*{\phi^{(N)} - f}_{u,R}^{(N)} = 0.
\]
Similarly, if $\phi^{(N)}: M_N(\C)_{sa}^m \to M_N(\C)$ and $f \in \overline{\TrP}_m^1$, we make the same definitions with $|\phi^{(N)}(x) - f(x)|$ replaced by $\norm*{\phi^{(N)}(x) - f(x)}_2$.
\end{definition}

\begin{lemma} \label{lem:AATP}
Let $\phi^{(N)}: M_N(\C)_{sa}^m \to \C$ (respectively, $\to M_N(\C)$).  Then $\phi^{(N)}$ is asymptotically approximable by trace polynomials if and only if there exists $f \in \overline{\TrP}_m^0$ (respectively, $f \in \overline{\TrP}_m^1$) such that $\phi^{(N)} \rightsquigarrow f$.  Moreover, $\norm*{f}_{u,R} = \lim_{N \to \infty} \norm*{\phi^{(N)}}_{u,R}^{(N)}$ for each $R$.
\end{lemma}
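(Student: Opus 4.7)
The plan is to reduce both directions of the equivalence, and the norm-identity, to a single comparison between the matricial seminorms $\norm{\cdot}_{u,R}^{(N)}$ and the Fr{\'e}chet seminorms $\norm{\cdot}_{u,R}$ on $\mathcal{F}_m^0$. The underlying facts I will invoke are that matricial non-commutative laws (from $M_N(\C)$ with $N$ along a cofinal subsequence) are dense in $\Sigma_{m,R}^{\app}$, and that every $g \in \overline{\TrP}_m^0$ descends to a continuous function on each $\Sigma_{m,R}^{\app}$, being a uniform limit of trace polynomials on operator-norm balls.

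The key comparison I would establish first is: for every $g \in \overline{\TrP}_m^0$,
\[
\norm{g}_{u,R} = \limsup_{N \to \infty} \norm{g}_{u,R}^{(N)}.
\]
The inequality $\geq$ is immediate from the trace-preserving embeddings $M_N(\C) \hookrightarrow \mathcal{R}^\omega$. For the reverse, given any $\lambda \in \Sigma_{m,R}^{\app}$, I would approximate it by a sequence of matricial laws $\lambda_{N_j}$ with $N_j \to \infty$; continuity of $g$ yields $|g(\lambda)| = \lim_j |g(\lambda_{N_j})| \leq \limsup_N \norm{g}_{u,R}^{(N)}$, and taking supremum over $\lambda$ gives the bound.

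Given this, the ``if'' direction is just the triangle inequality: approximating $f$ by a trace polynomial $g$ within $\epsilon/2$ in $\norm{\cdot}_{u,R}$ automatically approximates it within $\epsilon/2$ in $\norm{\cdot}_{u,R}^{(N)}$. For the ``only if'' direction, I would apply the AATP hypothesis with $R = k$ and $\epsilon = 1/k$ to produce $f_k \in \TrP_m^0$ satisfying $\limsup_N \norm{\phi^{(N)} - f_k}_{u,k}^{(N)} \leq 1/k$. The triangle inequality together with the key comparison shows that $\{f_k\}$ is Cauchy in every seminorm $\norm{\cdot}_{u,R}$; completeness of the Fr{\'e}chet space $\mathcal{F}_m^0$ produces a limit $f \in \overline{\TrP}_m^0$, and one more triangle argument gives $\phi^{(N)} \rightsquigarrow f$.

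The step I expect to be the main obstacle is the ``moreover'' clause, because the key comparison only delivers $\limsup$, not a genuine limit. To upgrade to $\lim$, I would employ a block-diagonal realization: given a near-extremal tuple $y_0 \in M_{N_0}(\C)_{sa}^m$ with $\norm{y_0}_\infty \leq R$ and $|f(y_0)| > \norm{f}_{u,R} - \epsilon$, for each $N = kN_0 + r$ with $0 \leq r < N_0$ form $y_N = \mathrm{diag}(y_0, \dots, y_0, 0_r) \in M_N(\C)_{sa}^m$, still of operator norm $\leq R$. Its law satisfies $\lambda_{y_N}(p) = (1 - r/N)\lambda_{y_0}(p) + O(r/N)$ on each monomial $p$, so $\lambda_{y_N} \to \lambda_{y_0}$ in $\Sigma_{m,R}$; continuity of $f$ then yields $\norm{f}_{u,R}^{(N)} \geq |f(y_N)| > \norm{f}_{u,R} - 2\epsilon$ for large $N$, whence $\liminf_N \norm{f}_{u,R}^{(N)} \geq \norm{f}_{u,R}$. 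Combining with $|\norm{\phi^{(N)}}_{u,R}^{(N)} - \norm{f}_{u,R}^{(N)}| \leq \norm{\phi^{(N)} - f}_{u,R}^{(N)} \to 0$ completes the scalar case, and the matrix-valued case proceeds identically with $\norm{\cdot}_2$ in place of $|\cdot|$ throughout.
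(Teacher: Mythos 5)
Your argument is correct, and for the two directions of the equivalence it is essentially the paper's route: pick trace polynomials $f_k$ witnessing approximability, compare the Fr\'echet seminorms $\norm{\cdot}_{u,R}$ with the matricial seminorms $\norm{\cdot}_{u,R}^{(N)}$ to see that $\{f_k\}$ is Cauchy, pass to the limit $f \in \overline{\TrP}_m^0$, and finish with triangle inequalities (and likewise for the converse direction). The two places you diverge are worth noting. Where the paper quotes \cite[Lemma 8.1]{Jekel2018} for the identity $\norm{g}_{u,R} = \limsup_{N\to\infty}\norm{g}_{u,R}^{(N)}$ on trace polynomials, you re-derive it, for every $g \in \overline{\TrP}_m^0$, from density of matricial laws in $\Sigma_{m,R}^{\app}$ together with continuity of such functions on each $\Sigma_{m,R}^{\app}$; both ingredients are available (they are used, e.g., in the proof of Lemma \ref{lem:limituniformlycontinuous}, in Remark \ref{rem:concentrationmodels}, and in the remark identifying $\overline{\TrP}_m^0$ with functions continuous on each $\Sigma_{m,R}^{\app}$), so this is a sound and slightly more self-contained substitute, and having the comparison at the level of the completion rather than only for polynomials is exactly what your later steps exploit. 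Second, the ``moreover'' clause is left to the reader in the paper, and your block-diagonal padding supplies a correct proof: it yields $\liminf_{N}\norm{f}_{u,R}^{(N)} \geq \norm{f}_{u,R}$, the opposite inequality is immediate from the trace-preserving embeddings $M_N(\C)\hookrightarrow\mathcal{R}^\omega$, and the reverse triangle inequality $|\norm{\phi^{(N)}}_{u,R}^{(N)} - \norm{f}_{u,R}^{(N)}| \leq \norm{\phi^{(N)} - f}_{u,R}^{(N)}$ transfers the limit from $f$ to $\phi^{(N)}$; in the operator-valued case one should note that $x \mapsto \norm{f(x)}_2$ is itself a law-dependent, law-continuous quantity (a uniform limit of the scalar trace polynomials $\tau(f_k(x)^*f_k(x))^{1/2}$), which is what makes the same argument go through. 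The only presentational gap is that the existence of a matricial near-extremal tuple $y_0$ should be flagged as a consequence of your own key comparison (or of the density of matricial laws) rather than assumed; it does follow, so this is not a mathematical defect.
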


\begin{proof}
We record the proof only for the case of scalar-valued functions, since the proof for operator-valued case is identical with minor changes of notation.  Suppose that $\{\phi^{(N)}\}$ is asymptotically approximable by trace polynomials.  Then there exists a sequence $\{f_k\}$ of trace polynomials such that for every $R > 0$,
\[
\lim_{k \to \infty} \limsup_{N \to \infty} \sup_{\substack{x \in M_N(\C)_{sa}^m \\ \norm*{x}_\infty \leq R}} |\phi^{(N)}(x) - f_k(x)| = 0.
\]
As in \cite[Lemma 8.1]{Jekel2018}, if $g \in \TrP_m^0$, then
\[
\sup_{\substack{x \in (\mathcal{R}^\omega)_{sa}^m \\ \norm*{x}_\infty \leq R}} |g(x)| = \sup_{N \in \N} \sup_{\substack{x \in M_N(\C)_{sa}^m \\ \norm*{x}_\infty \leq R}} |g(x)| = \limsup_{N \to \infty} \sup_{\substack{x \in M_N(\C)_{sa}^m \\ \norm*{x}_\infty \leq R}} |g(x)|
\]
which implies that
\[
\norm*{g}_{u,R} = \limsup_{N \to \infty} \norm*{g}_{u,R}^{(N)}.
\]
Applying this to $g = f_j - f_k$, we obtain from the triangle inequality
\[
\norm*{f_j - f_k}_{u,R} \leq \limsup_{N \to \infty} \norm*{f_j(x) - \phi^{(N)}}_{u,R}^{(N)} + \limsup_{N \to \infty} \norm*{\phi^{(N)} - f_k}_{u,R}^{(N)},
\]
and hence $f_k$ is Cauchy with respect to $\norm*{\cdot}_{u,R}$ for each $R > 0$.  Hence, $f_k$ converges to some $f \in \overline{\TrP}_m^0$.  By similar use of the triangle inequality,
\[
\limsup_{N \to \infty} \norm*{\phi^{(N)} - f}_{u,R}^{(N)} \leq \norm*{f - f_k}_{u,R} + \limsup_{N \to \infty} \norm*{\phi^{(N)} - f_k}_{u,R}^{(N)}.
\]
Hence, $\phi^{(N)} \rightsquigarrow f$.

Conversely, suppose that $\phi^{(N)} \rightsquigarrow f \in \overline{\TrP}_m^0$.  Choose $f_k \in \TrP_m^0$ such that $\norm*{f_k - f}_{u,R} \to 0$ for every $R$.  Then
\[
\limsup_{N \to \infty} \norm*{\phi^{(N)}(x) - f_k}_{u,R}^{(N)} \leq \norm*{f - f_k}_{u,R} + \limsup_{N \to \infty} \norm*{\phi^{(N)} - f}_{u,R}^{(N)} = \norm*{f - f_k}_{u,R}.
\]
Hence, it follows that $\{\phi^{(N)}\}$ is asymptotically approximable by trace polynomials, namely the polynomials $\{f_k\}$.

We leave the proof of the last claim that $\norm*{f}_{u,R} = \lim_{N \to \infty} \norm*{\phi^{(N)}}_{u,R}^{(N)}$ to the reader.
\end{proof}

\begin{remark}
If $\phi^{(N)}: M_N(\C)_{sa}^m \to M_N(\C)_{sa}$ and $\{\phi^{(N)}\}$ is asymptotically approximable by trace polynomials, then we can asymptotically approximate it using \emph{self-adjoint} trace polynomials.  Indeed, if
\[
\limsup_{N \to \infty} \norm*{\phi^{(N)} - f}_{u,R}^{(N)} \leq \epsilon,
\]
then the same holds with $f$ replaced by $(1/2)(f + f^*)$.  Similarly, if $\phi^{(N)}(x)$ is self-adjoint and $\phi^{(N)} \rightsquigarrow f \in \overline{\TrP}_m^1$, then $f$ must be self-adjoint.
\end{remark}

\begin{remark}
Definitions \ref{def:AATP} and \ref{def:AA} and Lemma \ref{lem:AATP} extend naturally to tuples $f = (f_1,\dots,f_n) \in (\overline{\TrP}_m^1)^n$ and $\phi^{(N)} = (\phi_1^{(N)}, \dots, \phi_n^{(N)}): M_N(\C)_{sa}^m \to M_N(\C)^n$.  We shall apply them to tuples without further comment in the rest of the paper.
\end{remark}

\subsection{Algebra, Composition, and Limits}

\begin{lemma} \label{lem:algebra}
$\overline{\TrP}_m^0$ is an algebra and $\overline{\TrP}_m^1$ is a module over $\overline{\TrP}_m^0$.  Also, if $f, g \in \overline{\TrP}_m^1$, then $\tau(fg) \in \overline{\TrP}_m^0$.  Moreover, suppose that $\phi^{(N)}, \phi^{(N)}: M_N(\C)_{sa}^m \to \C$ and $f^{(N)}, g^{(N)}: M_N(\C)_{sa}^m \to M_N(\C)$ are asymptotically approximable, and $\phi^{(N)} \rightsquigarrow \phi$, $\psi^{(N)} \rightsquigarrow \psi$, $f^{(N)} \rightsquigarrow f$, and $g^{(N)} \rightsquigarrow g$.  Then we have
\begin{align*}
\phi^{(N)} + \psi^{(N)} &\rightsquigarrow \phi + \psi \\
\phi^{(N)} \psi^{(N)} & \rightsquigarrow \phi \psi \\
f^{(N)} + g^{(N)} &\rightsquigarrow f + g \\
\phi^{(N)} f^{(N)} &\rightsquigarrow \phi f \\
\tau_N(f^{(N)} g^{(N)}) & \rightsquigarrow \tau(fg).
\end{align*}
\end{lemma}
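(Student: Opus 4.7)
My plan is to derive everything from three pointwise Cauchy--Schwarz estimates on the seminorms $\norm{\cdot}_{u,R}$ combined with density of trace polynomials. First I would record, for $\phi, \psi \in \mathcal{F}_m^0$ and $f, g \in \mathcal{F}_m^1$, the submultiplicative bounds
\begin{align*}
\norm{\phi \psi}_{u,R} &\leq \norm{\phi}_{u,R} \norm{\psi}_{u,R}, \\
\norm{\phi f}_{u,R} &\leq \norm{\phi}_{u,R} \norm{f}_{u,R}, \\
\norm{\tau(fg)}_{u,R} &\leq \norm{f}_{u,R} \norm{g}_{u,R},
\end{align*}
where in the third line $\tau(fg)$ is defined pointwise by $\tau(fg)(x) := \tau(f(x) g(x))$, which makes sense for $L^2(\mathcal{R}^\omega)$ entries via $\tau(ab) = \ip{a^*,b}_2$ and satisfies $|\tau(ab)| \leq \norm{a}_2 \norm{b}_2$. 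Each estimate is immediate pointwise in $x$ from Cauchy--Schwarz. The exact same inequalities hold with $\norm{\cdot}_{u,R}$ replaced by $\norm{\cdot}_{u,R}^{(N)}$ when everything is evaluated on $M_N(\C)_{sa}^m$.

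For the algebra and module claims, I would verify them first on $\TrP_m^*$: that $\TrP_m^0$ is a $*$-algebra and $\TrP_m^1 = \TrP_m^0 \otimes \NCP_m$ is a $\TrP_m^0$-module is built into the constructions, and $\tau(fg) \in \TrP_m^0$ for $f, g \in \TrP_m^1$ follows from a direct computation on a spanning set: if $f = \tau(p_1) \cdots \tau(p_k) q$ and $g = \tau(p_1') \cdots \tau(p_{k'}') q'$, then
\[
\tau(fg) = \tau(p_1) \cdots \tau(p_k) \tau(p_1') \cdots \tau(p_{k'}') \tau(q q') \in \TrP_m^0.
\]
To extend to the closures, given $\phi, \psi \in \overline{\TrP}_m^0$ I would pick trace polynomials $\phi_j \to \phi$ and $\psi_j \to \psi$ in every $\norm{\cdot}_{u,R}$; then the identity
\[
\phi_j \psi_j - \phi_k \psi_k = (\phi_j - \phi_k) \psi_j + \phi_k (\psi_j - \psi_k),
\]
together with the first submultiplicative estimate and the eventual boundedness of $\norm{\psi_j}_{u,R}$, makes $\{\phi_j \psi_j\}$ Cauchy in every seminorm with limit $\phi \psi$, so $\phi\psi \in \overline{\TrP}_m^0$. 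The module property of $\overline{\TrP}_m^1$ over $\overline{\TrP}_m^0$ and the claim $\tau(fg) \in \overline{\TrP}_m^0$ follow by exactly the same density scheme applied to the second and third submultiplicative estimates.

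For the asymptotic statements, additivity is the triangle inequality. For the multiplicative and mixed cases I would expand
\[
\phi^{(N)} \psi^{(N)} - \phi \psi = (\phi^{(N)} - \phi) \psi^{(N)} + \phi (\psi^{(N)} - \psi),
\]
and analogously for $\phi^{(N)} f^{(N)} - \phi f$ and $\tau_N(f^{(N)} g^{(N)}) - \tau(fg)$, then apply the $N$-dependent versions of the three submultiplicative estimates. Each resulting error term is a vanishing factor times a bounded factor: $\norm{\psi^{(N)}}_{u,R}^{(N)} \to \norm{\psi}_{u,R}$ and $\norm{\phi}_{u,R}^{(N)} \leq \norm{\phi}_{u,R} < \infty$, both of which come directly from the ``moreover'' clause of Lemma \ref{lem:AATP}. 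Nothing in the argument is deep; the only point that requires attention (and is essentially the main obstacle) is the uniform bookkeeping of these seminorm bounds as $N \to \infty$, which is entirely supplied by Lemma \ref{lem:AATP}.
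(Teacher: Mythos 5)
Your proposal is correct and follows essentially the same route as the paper's proof: pointwise Cauchy--Schwarz submultiplicativity of the seminorms $\norm{\cdot}_{u,R}$ and $\norm{\cdot}_{u,R}^{(N)}$, density of trace polynomials, and the bilinear decomposition of differences into a vanishing factor times a bounded factor, with the $N$-uniform bounds supplied by Lemma \ref{lem:AATP}. The only difference is presentational: the paper writes out just the representative case $\phi f$ and declares the rest analogous, whereas you organize the same estimates systematically for all five statements.
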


\begin{proof}
Since the proofs of all the statements are straightforward and similar to each other, we will only explain how to show that if $\phi \in \overline{\TrP}_m^0$ and $f \in \overline{\TrP}_m^1$, then $\phi f \in \overline{\TrP}_m^1$ and that if $\phi^{(N)} \rightsquigarrow \phi$ and $f^{(N)} \rightsquigarrow f$, then $\phi^{(N)} f^{(N)} \rightsquigarrow \phi f$.

First, note that $\phi f$ is well-defined as a function on $(\mathcal{R}^\omega)_{sa}^m$ by multiplying the scalar $\phi(x)$ times the vector $f(x)$ for each $x \in (\mathcal{R}^\omega)_{sa}^m$, and also clearly $\norm*{\phi f}_{u,R} \leq \norm*{\phi}_{u,R} \norm*{f}_{u,R}$.  To show that $\phi f \in \overline{\TrP}_m^1$, it suffices to show that for every $\epsilon > 0$ and $R > 0$, the function $\phi f$ can be approximated by an element of $\TrP_m^1$ with respect to $\norm*{\cdot}_{u,R}$ with error less than $\epsilon$.  We first choose $h \in \TrP_m^1$ such that
\[
\norm*{h - f}_{u,R} \norm*{\phi}_{u,R} < \frac{\epsilon}{2}.
\]
Then we choose $\theta \in \TrP_m^0$ such that
\[
\norm*{\theta - \phi}_{u,R} \norm*{h}_{u,R} < \frac{\epsilon}{2},
\]
and we conclude with the routine observation that
\begin{align*}
\norm*{\theta h - \phi f}_{u,R} &\leq \norm*{(\theta - \phi) \cdot h}_{u,R} + \norm*{\phi \cdot (h - f)}_{u,R} \\
&\leq \norm*{\theta - \phi}_{u,R} \norm*{h}_{u,R} + \norm*{\phi}_{u,R} \norm*{h - f}_{u,R} < \epsilon.
\end{align*}

Next, to show $\phi^{(N)} f^{(N)} \rightsquigarrow \phi f$, first observe that
\[
M := \sup_N \norm*{\phi^{(N)}}_{u,R}^{(N)} < +\infty.
\]
Then
\begin{align*}
\norm*{\phi^{(N)} f^{(N)} - \phi f}_{u,R}^{(N)} &\leq \norm*{\phi^{(N)} \cdot (f^{(N)} - f)}_{u,R}^{(N)} + \norm*{(\phi^{(N)} - \phi) \cdot f}_{u,R}^{(N)} \\
&\leq M \norm*{f^{(N)} - f}_{u,R}^{(N)} + \norm*{\phi^{(N)} - \phi}_{u,R}^{(N)} \norm*{f}_{u,R},
\end{align*}
which implies that $\norm*{\phi^{(N)} f^{(N)} - \phi f}_{u,R}^{(N)} \to 0$.
\end{proof}

In addition to their algebraic structure, functions $(\mathcal{R}^\omega)_{sa}^m \to (\mathcal{R}^\omega)_{sa}^n$ given by trace polynomials are closed under composition.  It turns out that self-adjoint tuples from $\overline{\TrP}_m^1$ are closed under composition under the assumption of $\norm*{\cdot}_2$-uniform continuity of the ``outside'' function (Lemma \ref{lem:composition} below).

We say that $f \in \overline{\TrP}_m^1$ is \emph{$\norm*{\cdot}_2$-uniformly continuous} if for every $\epsilon > 0$, there exists $\delta > 0$ such that
\[
\forall x, y \in (\mathcal{R}^\omega)_{sa}^m, \quad \norm*{x - y}_2 < \delta \implies \norm*{f(x) - f(y)}_2 < \epsilon.
\]
Furthermore, we say $f \in \overline{\TrP}_m^1$ is \emph{$\norm*{\cdot}_2$-Lipschitz} if $\norm*{f(x) - f(y)}_2 \leq K \norm*{x - y}$ for some constant $K$, which is an important special case of uniform continuity.  We denote the minimum such constant by $\norm*{f}_{\Lip}$.  We make the analogous definitions for $f \in \overline{\TrP}_m^0$.

\begin{observation} \label{obs:continuousextension}
If $f$ is a function from $(\mathcal{R}^\omega)_{sa}^m$ to $\mathcal{R}^\omega$ or $\C$ that is $\norm*{\cdot}_2$-uniformly continuous, then it has a unique continuous extension to $L^2(\mathcal{R}^\omega)_{sa}^m$, which is also $\norm*{\cdot}_2$-uniformly continuous.  Similarly, if $f$ is Lipschitz on $(\mathcal{R}^\omega)_{sa}^m$, then the extension is also Lipschitz.
\end{observation}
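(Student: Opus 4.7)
The plan is to recognize this as an instance of the classical metric-space fact that a uniformly continuous map from a dense subset into a complete metric space admits a unique uniformly continuous extension. The key input is that $(\mathcal{R}^\omega)_{sa}^m$ is $\norm{\cdot}_2$-dense in $L^2(\mathcal{R}^\omega)_{sa}^m$. This is immediate from the construction of $L^2(\mathcal{R}^\omega)$ as the $\norm{\cdot}_2$-completion of $\mathcal{R}^\omega$ together with a symmetrization: if $y_n \in \mathcal{R}^\omega$ approximates $x \in L^2(\mathcal{R}^\omega)_{sa}$, then $x_n := (y_n + y_n^*)/2 \in (\mathcal{R}^\omega)_{sa}$ also converges to $x$, since $\norm{x_n - x}_2 \leq \norm{y_n - x}_2$ (using $x = x^*$ in $L^2$). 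The target spaces $\C$ and $L^2(\mathcal{R}^\omega)$ are both complete; note that in the operator-valued case the extension will a priori take values in $L^2(\mathcal{R}^\omega)_{sa}$ rather than in $\mathcal{R}^\omega$, which is what is needed for subsequent applications.

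Given $x \in L^2(\mathcal{R}^\omega)_{sa}^m$, I would pick an approximating sequence $\{x_n\} \subset (\mathcal{R}^\omega)_{sa}^m$ with $x_n \to x$ in $\norm{\cdot}_2$. Uniform continuity of $f$ immediately forces $\{f(x_n)\}$ to be Cauchy in the (complete) target, so it converges to a limit that I define to be $\tilde f(x)$. Independence from the choice of sequence is handled by the standard interleaving trick: if $x_n, y_n \to x$, then the combined sequence $x_1, y_1, x_2, y_2, \dots$ still converges to $x$, its image under $f$ is Cauchy, and so $\lim f(x_n) = \lim f(y_n)$.

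To verify $\tilde f$ is uniformly continuous, I would fix $\epsilon > 0$, choose $\delta > 0$ from the uniform continuity of $f$, take $x, y \in L^2(\mathcal{R}^\omega)_{sa}^m$ with $\norm{x - y}_2 < \delta/2$, approximate by $x_n \to x$ and $y_n \to y$ in $(\mathcal{R}^\omega)_{sa}^m$, observe that $\norm{x_n - y_n}_2 < \delta$ eventually, and pass the inequality $\norm{f(x_n) - f(y_n)}_2 < \epsilon$ to the limit to get $\norm{\tilde f(x) - \tilde f(y)}_2 \leq \epsilon$. The Lipschitz case is identical, passing the stronger inequality $\norm{f(x_n) - f(y_n)}_2 \leq \norm{f}_{\Lip} \norm{x_n - y_n}_2$ directly to the limit. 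Uniqueness of the extension follows from density together with continuity of any candidate extension: two such extensions agree on the dense subset $(\mathcal{R}^\omega)_{sa}^m$ and hence everywhere. There is no serious obstacle; the only point worth checking carefully is the density claim, which just relies on the symmetrization above and the completeness of $L^2(\mathcal{R}^\omega)$ relative to $\mathcal{R}^\omega$.
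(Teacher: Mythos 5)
Your argument is correct and is exactly the routine dense-extension argument the paper leaves implicit (the statement is labeled an Observation and given without proof): density of $(\mathcal{R}^\omega)_{sa}^m$ in $L^2(\mathcal{R}^\omega)_{sa}^m$ via symmetrization, completeness of the target, Cauchy images, interleaving for well-definedness, and passing the modulus of continuity or Lipschitz bound to the limit. The only cosmetic point is that for a general (not necessarily self-adjoint-valued) $f$ the extension lands in $L^2(\mathcal{R}^\omega)$ rather than $L^2(\mathcal{R}^\omega)_{sa}$, which does not affect anything.
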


\begin{lemma} \label{lem:limituniformlycontinuous}
Suppose that $f^{(N)}: M_N(\C)_{sa}^m \to \C$ or $M_N(\C)$ and that $f^{(N)} \rightsquigarrow f$.  If $f^{(N)}$ is $\norm{\cdot}_2$-uniformly continuous with respect to some modulus of continuity independent of $N$, then $f$ is $\norm{\cdot}_2$-uniformly continuous on $(\mathcal{R}^\omega)_{sa}^m$ with the same modulus of continuity.
\end{lemma}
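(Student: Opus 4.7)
The plan is to reduce the statement to a claim about matrix approximations, exploiting that every tuple in $\mathcal{R}^\omega$ has a non-commutative law approximable by matrix tuples. Fix $x, y \in (\mathcal{R}^\omega)_{sa}^m$, choose $R$ with $\norm{x}_\infty, \norm{y}_\infty \leq R$, and regard $(x, y)$ as a self-adjoint $2m$-tuple in $\mathcal{R}^\omega$. Then $\lambda_{(x,y)} \in \Sigma_{2m,R}^{\app}$, so since $\mathcal{R}$ is hyperfinite (hence $\mathcal{R}^\omega$ is realizable as a matricial ultraproduct), there exist $N_k \to \infty$ and matrix tuples $(x^{(k)}, y^{(k)}) \in M_{N_k}(\C)_{sa}^{2m}$ with $\max(\norm{x^{(k)}}_\infty, \norm{y^{(k)}}_\infty) \leq R$ and $\lambda_{(x^{(k)}, y^{(k)})} \to \lambda_{(x,y)}$ pointwise on $\NCP_{2m}$. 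In particular $\norm{x^{(k)} - y^{(k)}}_2 \to \norm{x - y}_2$.

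Next, I would show that $\norm{f(x^{(k)}) - f(y^{(k)})}_2 \to \norm{f(x) - f(y)}_2$. View $f \in \overline{\TrP}_m^1$ by dummy-variable extension as an element $\tilde{f}_1 \in \overline{\TrP}_{2m}^1$ depending only on the first $m$ variables, and analogously $\tilde{f}_2$ depending only on the last $m$; the embedding $\overline{\TrP}_m^1 \hookrightarrow \overline{\TrP}_{2m}^1$ is well-defined since the Fr{\'e}chet seminorms $\norm{\cdot}_{u,R}$ are preserved under adjoining dummy coordinates. By Lemma \ref{lem:algebra} applied in $\overline{\TrP}_{2m}^1$, the scalar-valued function
\[
h(a, b) := \tau\bigl((\tilde{f}_1 - \tilde{f}_2)^*(\tilde{f}_1 - \tilde{f}_2)\bigr) = \norm{f(a) - f(b)}_2^2
\]
belongs to $\overline{\TrP}_{2m}^0$. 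By the remark relating $\overline{\TrP}^0$ to continuous functions on $\Sigma^{\app}$, $h$ restricts to a continuous function on $\Sigma_{2m,R}^{\app}$, so convergence of laws yields $h(x^{(k)}, y^{(k)}) \to h(x, y)$.

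To finish, let $\omega$ be the common modulus of continuity for $\{f^{(N)}\}$, which we may take nondecreasing and right-continuous without loss of generality. A three-term triangle inequality through $f^{(N_k)}(x^{(k)})$ and $f^{(N_k)}(y^{(k)})$ gives
\[
\norm{f(x^{(k)}) - f(y^{(k)})}_2 \leq 2 \norm{f - f^{(N_k)}}_{u,R}^{(N_k)} + \omega\bigl(\norm{x^{(k)} - y^{(k)}}_2\bigr),
\]
where the first summand uses $\norm{x^{(k)}}_\infty, \norm{y^{(k)}}_\infty \leq R$ combined with $f^{(N)} \rightsquigarrow f$, and the second summand uses the uniform modulus for $f^{(N_k)}$. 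Sending $k \to \infty$, the first term vanishes and the second passes to $\omega(\norm{x - y}_2)$, yielding the desired inequality $\norm{f(x) - f(y)}_2 \leq \omega(\norm{x - y}_2)$.

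The main substantive step is verifying that $h$ is genuinely an element of $\overline{\TrP}_{2m}^0$, which is a bookkeeping exercise combining the dummy-variable extension with Lemma \ref{lem:algebra} applied to approximating sequences of trace polynomials for $f$. Everything else is a clean diagonal passage through matrix approximations and poses no serious obstacle; the minor technicality of right-continuizing $\omega$ is harmless since modifying it by at most an $\varepsilon$ for each $\delta$ preserves the conclusion.
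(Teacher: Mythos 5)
Your proposal is correct and follows essentially the same route as the paper's proof: both realize $\norm{f(\cdot)-f(\cdot)}_2^2$ as an element of $\overline{\TrP}_{2m}^0$ via Lemma \ref{lem:algebra}, approximate the pair $(x,y)$ by matrix tuples with operator-norm bounds and convergent joint law, and then combine $f^{(N)} \rightsquigarrow f$ with the uniform modulus of continuity at the matrix level before passing to the limit. The only differences are cosmetic (a sequence chosen from law-approximation versus lifting through the tracial ultraproduct along $\omega$, and your explicit dummy-variable bookkeeping), and your handling of the modulus at the limit matches the paper's own implicit treatment.
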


\begin{proof}
Let us only explain the operator-valued case where $f^{(N)}$ is $M_N(\C)$-valued and $f \in \overline{\TrP}_m^1$, since the scalar-valued case is easier.  We define scalar-valued functions of $2m$ variables by $F^{(N)}(x,y) = \norm{f^{(N)}(x) - g^{(N)}(y)}_2^2$ and $F(x,y) = \norm{f(x) - f(y)}_2^2$.  By Lemma \ref{lem:algebra}, we have $F^{(N)} \rightsquigarrow F \in \overline{\TrP}_{2m}^0$.

Let $\epsilon(\delta)$ be a common modulus of continuity for $f^{(N)}$.  Let $x$ and $y \in (\mathcal{R}^\omega)_{sa}^m$.  Then we may embed $\mathrm{W}^*(x,y)$ into $(\mathcal{M},\tau) := \prod_{N \to \omega} (M_N(\C),\tau_N)$, that is the tracial $\mathrm{W}^*$-ultraproduct of matrices.  There exist tuples $x^{(N)}$ and $y^{(N)}$ of $N \times N$ matrices such that $x = \{x^{(N)}\}_{N \in \N}$ and $y = \{y^{(N)}\}_{N \in \N}$ in the ultraproduct and also $\norm{x^{(N)}}_\infty \leq \norm{x}_\infty$ and $\norm{y^{(N)}}_\infty \leq \norm{y}_\infty$.  Observe that
\[
F(x,y) = \lim_{N \to \omega} F(x^{(N)},y^{(N)}).
\]
(This equality holds for trace polynomials and hence holds for all functions in $\overline{\TrP}_{2m}^0$ by approximation.)  On the other hand, we also have for $R > \max(\norm{x}_\infty, \norm{y}_\infty)$ that
\[
|F^{(N)}(x^{(N)}, y^{(N)}) - F(x^{(N)},y^{(N)})| \leq \norm{F - F^{(N)}}_{u,R}^{(N)} \to 0.
\]
Therefore,
\[
\norm{f(x) - f(y)}_2 = \lim_{N \to \omega} \norm{f^{(N)}(x^{(N)}) - f^{(N)}(y^{(N)})}_2 \leq \lim_{N \to \omega} \epsilon( \norm{x^{(N)} - y^{(N)}}_2 ) \leq \epsilon (\norm{x - y}_2),
\]
since $\norm{x^{(N)} - y^{(N)}}_2 \to \norm{x - y}_2$.
\end{proof}

\begin{lemma} \label{lem:composition}
Let $j = 0$ or $1$.  Let $f \in \overline{\TrP}_m^j$ be $\norm*{\cdot}_2$-uniformly continuous and let $g = (g_1,\dots,g_m) \in (\overline{\TrP}_n^1)_{sa}^m$.
\begin{enumerate}
	\item Then $f \circ g$ is a well-defined function on $(\mathcal{R}^\omega)_{sa}^n$, and it is in $\overline{\TrP}_n^j$.
	\item If $g$ is also $\norm*{\cdot}_2$-uniformly continuous, then so is $f \circ g$.
	\item Suppose $f^{(N)}$ is a function on $M_N(\C)_{sa}^m$ and $g^{(N)}: M_N(\C)_{sa}^n \to M_N(\C)_{sa}^m$ such that $f^{(N)} \rightsquigarrow f$ and $g^{(N)} \rightsquigarrow g$.  Also, suppose that $f^{(N)}$ is $\norm*{\cdot}_2$-uniformly continuous with the modulus of continuity also uniform in $N$.  Then $f^{(N)} \circ g^{(N)} \rightsquigarrow f \circ g$.
\end{enumerate}
\end{lemma}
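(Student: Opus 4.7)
The plan for part (1) is to exploit the uniform continuity of $f$ to show that the composition extends to $L^2$-valued arguments, and then to construct the approximating trace polynomials in two stages. By Observation \ref{obs:continuousextension}, $f$ extends to $L^2(\mathcal{R}^\omega)_{sa}^m$, and since each $g_i \in \overline{\TrP}_n^1$ is bounded in $\|\cdot\|_2$ on operator-norm balls, the tuple $g$ maps $\{y : \|y\|_\infty \leq R\}$ into some $L^2$-ball, so $f \circ g$ is defined and bounded in the appropriate sense on each operator-norm ball. To show $f \circ g \in \overline{\TrP}_n^j$, fix $\epsilon > 0$ and $R > 0$, pick $g_k \in (\TrP_n^1)_{sa}^m$ with $\|g_k - g\|_{u,R} < \delta$ small, and then pick $f_\ell \in \TrP_m^j$ approximating $f$ uniformly on a sufficiently large operator-norm ball. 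The key observation is that $g_k$, being an actual trace polynomial, is automatically bounded in operator norm on operator-norm balls (each factor $\tau(p_i(y))$ is bounded and each polynomial factor $q(y)$ has bounded operator norm), say $\|g_k(y)\|_\infty \leq R_k$ when $\|y\|_\infty \leq R$. Then the triangle inequality
\[
\|f(g(y)) - f_\ell(g_k(y))\| \leq \epsilon_f(\|g(y) - g_k(y)\|_2) + \|f - f_\ell\|_{u,R_k}
\]
(where $\epsilon_f$ is the modulus of continuity of $f$) can be made arbitrarily small by choosing $k$ then $\ell$ large, and $f_\ell \circ g_k$ is a trace polynomial in $\TrP_n^j$ because $\TrP_n^1$ is closed under substitution into polynomials and $\tau$.

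For part (2), this is immediate: composing moduli of continuity gives
\[
\|f(g(x)) - f(g(y))\|_2 \leq \epsilon_f(\|g(x) - g(y)\|_2) \leq \epsilon_f(\epsilon_g(\|x - y\|_2)),
\]
so $f \circ g$ is $\|\cdot\|_2$-uniformly continuous with modulus $\epsilon_f \circ \epsilon_g$.

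For part (3), the plan is to insert an intermediate trace-polynomial approximation $g_k$ of $g$, since $f^{(N)}$ is defined only on matrix tuples and so $f^{(N)}(g(y))$ does not literally make sense unless $g(y)$ lies in $M_N(\C)_{sa}^m$; but it does when we go through $g_k$. For $y \in M_N(\C)_{sa}^n$ with $\|y\|_\infty \leq R$, write
\[
\|f^{(N)}(g^{(N)}(y)) - f(g(y))\|_2 \leq T_1 + T_2 + T_3,
\]
where
\begin{align*}
T_1 &= \|f^{(N)}(g^{(N)}(y)) - f^{(N)}(g_k(y))\|_2, \\
T_2 &= \|f^{(N)}(g_k(y)) - f(g_k(y))\|_2, \\
T_3 &= \|f(g_k(y)) - f(g(y))\|_2.
\end{align*}
Using the common modulus of continuity $\epsilon_0$ for $\{f^{(N)}\}$, $T_1 \leq \epsilon_0(\|g^{(N)} - g_k\|_{u,R}^{(N)})$; since $g_k$ is a trace polynomial, $\|g_k(y)\|_\infty \leq R_k$, so $T_2 \leq \|f^{(N)} - f\|_{u,R_k}^{(N)} \to 0$ as $N \to \infty$; and by Lemma \ref{lem:limituniformlycontinuous} the limit $f$ inherits modulus $\epsilon_0$, giving $T_3 \leq \epsilon_0(\|g_k - g\|_{u,R})$. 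Taking $\limsup_N$ and then letting $k \to \infty$ (using $g^{(N)} \rightsquigarrow g$ to control $\limsup_N \|g^{(N)} - g_k\|_{u,R}^{(N)} \leq \|g - g_k\|_{u,R}$) yields $f^{(N)} \circ g^{(N)} \rightsquigarrow f \circ g$.

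The main obstacle, and the reason $\|\cdot\|_2$-uniform continuity of $f$ is needed in the hypothesis, is that generic elements of $\overline{\TrP}_m^j$ are only controlled on operator-norm balls while $g(y)$ need not lie in any such ball; uniform continuity is precisely what lets us (a) extend $f$ to $L^2$ so that $f \circ g$ is defined, and (b) trade between the pointwise unbounded vector $g(y)$ and the operator-norm-bounded trace polynomial $g_k(y)$ via a modulus-of-continuity estimate, which is what makes the two-stage approximation above converge.
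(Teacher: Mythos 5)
Your proof is correct and follows essentially the same route as the paper: in each part you insert an intermediate trace-polynomial approximant of $g$ so that its image on an operator-norm ball is itself operator-norm bounded, then use the uniform continuity of $f$ (resp.\ of $f^{(N)}$, uniformly in $N$) to control the substitution error and uniform approximation of $f$ on the larger ball to control the rest. The only cosmetic difference is that in part (3) you compare with $f \circ g_k$ directly (via Lemma \ref{lem:limituniformlycontinuous}) rather than routing through a trace-polynomial approximant $\tilde{f} \circ \tilde{g}$ as the paper does, which changes nothing essential.
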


\begin{proof}
(1) Because $f$ extends to a function on $L^2(\mathcal{R}^\omega)_{sa}^m$, we can define $f \circ g$.  Now let us show $f \circ g \in \overline{\TrP}_m^j$.  Choose $\epsilon > 0$ and $R > 0$.  By uniform continuity of $f$, there exists a $\delta > 0$ such that $\norm*{x - y}_2 < \delta$ implies $|f(x) - f(y)|$ or $\norm*{f(x) - f(y)}_2 < \epsilon/2$ (for $j = 0$ or $1$ respectively).  Now choose $\tilde{g} \in (\TrP_n^1)_{sa}^m$ such that $\norm*{\tilde{g} - g}_{u,R} < \delta$, and hence
\[
\norm*{f \circ g - f \circ \tilde{g}}_{u,R} < \frac{\epsilon}{2}.
\]
Because $\tilde{g}$ is a trace polynomial, there is some $R'$ such that $\norm*{x}_\infty \leq R$ implies $\norm*{\tilde{g}}_\infty \leq R'$.  Choose $\tilde{f} \in \overline{\TrP}_m^j$ with $\norm*{\tilde{f} - f}_{u,R'} < \epsilon / 2$, and hence
\[
\norm*{f \circ \tilde{g} - \tilde{f} \circ \tilde{g}}_{u,R} < \frac{\epsilon}{2}.
\]
Then altogether we have $\norm*{f \circ g - \tilde{f} \circ \tilde{g}}_{u,R} < \epsilon$.

(2) This is immediate.

(3) This is similar to the proof of (1).  Fix $R > 0$ and $\epsilon > 0$.  Choose $\delta > 0$ such that $\norm*{x - y}_2 < \delta$ implies $|f(x) - f(y)|$ or $\norm*{f(x) - f(y)}_2 < \epsilon / 2$ and such that the same holds for $f^{(N)}$ as well.  Let $\tilde{g} \in (\TrP_n^1)_{sa}^m$ such that $\norm*{\tilde{g} - g}_{u,R} < \delta$.  Note that for sufficiently large $N$, we have $\norm*{g^{(N)} - \tilde{g}}_{u,R}^{(N)} < \delta$ and hence
\[
\norm*{f^{(N)} \circ g^{(N)} - f^{(N)} \circ \tilde{g}}_{u,R}^{(N)} < \epsilon /2.
\]
Then let $R'$ and $\tilde{f}$ be as in (1).  Then for sufficiently large $N$, we have
\[
\norm*{f^{(N)} \circ \tilde{g} - \tilde{f} \circ \tilde{g}}_{u,R}^{(N)} < \frac{\epsilon}{2},
\]
so overall
\[
\norm*{f^{(N)} \circ g^{(N)} - \tilde{f} \circ \tilde{g}}_{u,R}^{(N)} < \epsilon, \qquad \norm*{f \circ g - \tilde{f} \circ \tilde{g}}_{u,R} < \epsilon,
\]
so that$\norm*{f^{(N)} \circ g^{(N)} - f \circ g}_{u,R}^{(N)} < 2 \epsilon$ for large enough $N$.
\end{proof}

Moreover, asymptotically approximable sequences are closed under limits in an appropriate sense.

\begin{lemma} \label{lem:limits}
Let $f_k^{(N)}: M_N(\C)_{sa}^m \to \C$ or to $M_N(\C)$ for $k$ and $N \in \N$.  Suppose that $f_k^{(N)} \rightsquigarrow f_k$ in $\overline{\TrP}_m^j$ for each $k$, and that
\begin{equation} \label{eq:limithypothesis}
\lim_{k \to \infty} \limsup_{N \to \infty} \norm*{f_k^{(N)} - f^{(N)}}_{u,R} = 0 \text{ for all } R.
\end{equation}
Then $f_k$ converges in $\overline{\TrP}_m^j$ to some $f$, and we have $f^{(N)} \rightsquigarrow f$.
\end{lemma}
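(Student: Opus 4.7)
The plan is to prove this by the standard ``diagonal argument'' using completeness of $\overline{\TrP}_m^j$ as a Fr\'echet space together with careful triangle inequality estimates. First I would verify that $\{f_k\}$ is Cauchy in $\overline{\TrP}_m^j$ (that is, with respect to the seminorm $\norm*{\cdot}_{u,R}$ for every $R > 0$), extract a limit $f$, and then check directly that $f^{(N)} \rightsquigarrow f$.

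To show $\{f_k\}$ is Cauchy, fix $R > 0$ and $\epsilon > 0$, and pick trace polynomials $g_k, g_\ell \in \TrP_m^j$ with $\norm*{g_k - f_k}_{u,R}, \norm*{g_\ell - f_\ell}_{u,R} < \epsilon$. Since matrices embed into $\mathcal{R}^\omega$, we have $\norm*{g_k - f_k}_{u,R}^{(N)} \leq \norm*{g_k - f_k}_{u,R}$ and similarly for $\ell$, so combined with $f_k^{(N)} \rightsquigarrow f_k$ we obtain $\limsup_N \norm*{g_k - f_k^{(N)}}_{u,R}^{(N)} \leq \epsilon$. For trace polynomials, the key identity from the proof of Lemma~\ref{lem:AATP} (ultimately \cite[Lemma 8.1]{Jekel2018}) gives $\norm*{g_k - g_\ell}_{u,R} = \limsup_N \norm*{g_k - g_\ell}_{u,R}^{(N)}$. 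Inserting $f_k^{(N)} - f^{(N)}$ and $f^{(N)} - f_\ell^{(N)}$ via the triangle inequality, we bound
\[
\norm*{f_k - f_\ell}_{u,R} \leq 2\epsilon + \norm*{g_k - g_\ell}_{u,R} \leq 4\epsilon + \limsup_N \norm*{f_k^{(N)} - f^{(N)}}_{u,R}^{(N)} + \limsup_N \norm*{f^{(N)} - f_\ell^{(N)}}_{u,R}^{(N)}.
\]
By hypothesis \eqref{eq:limithypothesis}, the last two terms can be made less than $\epsilon$ by taking $k, \ell$ sufficiently large. Hence $\{f_k\}$ is Cauchy with respect to each $\norm*{\cdot}_{u,R}$, and since $\overline{\TrP}_m^j$ is a closed subspace of the Fr\'echet space $\mathcal{F}_m^j$, it converges to some $f \in \overline{\TrP}_m^j$.

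To conclude that $f^{(N)} \rightsquigarrow f$, fix $R > 0$ and apply the triangle inequality
\[
\norm*{f^{(N)} - f}_{u,R}^{(N)} \leq \norm*{f^{(N)} - f_k^{(N)}}_{u,R}^{(N)} + \norm*{f_k^{(N)} - f_k}_{u,R}^{(N)} + \norm*{f_k - f}_{u,R}^{(N)}.
\]
Taking $\limsup_{N \to \infty}$, the middle term vanishes because $f_k^{(N)} \rightsquigarrow f_k$, the third is dominated by $\norm*{f_k - f}_{u,R}$ (again using matrices embed into $\mathcal{R}^\omega$), and the first is controlled by \eqref{eq:limithypothesis}. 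Letting $k \to \infty$ then forces $\limsup_N \norm*{f^{(N)} - f}_{u,R}^{(N)} = 0$, which is the desired conclusion.

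The only real subtlety, and where one needs care, is that seminorms on $\overline{\TrP}_m^j$ are defined via evaluation on $(\mathcal{R}^\omega)_{sa}^m$, whereas the seminorms $\norm*{\cdot}_{u,R}^{(N)}$ are defined on matrices; the bridge between them is the identity $\norm*{g}_{u,R} = \limsup_N \norm*{g}_{u,R}^{(N)}$ for trace polynomials $g$, together with the fact that matrices sit isometrically inside $\mathcal{R}^\omega$ with respect to $\norm*{\cdot}_\infty$ and $\norm*{\cdot}_2$. Both of these were already established in the proof of Lemma~\ref{lem:AATP}, so no genuinely new work is required beyond bookkeeping with the triangle inequality.
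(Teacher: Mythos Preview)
Your proof is correct and follows essentially the same approach as the paper: show $\{f_k\}$ is Cauchy in $\overline{\TrP}_m^j$ by relating $\norm*{f_k - f_\ell}_{u,R}$ to the matricial seminorms, extract a limit $f$, then use a three-term triangle inequality to verify $f^{(N)} \rightsquigarrow f$. The only cosmetic difference is that the paper invokes the last claim of Lemma~\ref{lem:AATP} directly (namely $\norm*{f_k - f_\ell}_{u,R} = \lim_{N \to \infty} \norm*{f_k^{(N)} - f_\ell^{(N)}}_{u,R}^{(N)}$ since $f_k^{(N)} - f_\ell^{(N)} \rightsquigarrow f_k - f_\ell$), whereas you re-derive this by passing through trace polynomial approximants $g_k, g_\ell$; both routes amount to the same bookkeeping.
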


\begin{proof}
Note that
\[
\norm*{f_k - f_j}_{u,R} = \lim_{N \to \infty} \norm*{f_k^{(N)} - f_j^{(N)}}_{u,R}^{(N)}
\]
Then because of our assumption \eqref{eq:limithypothesis}, we see that $\{f_k\}_{k \in \N}$ is Cauchy with respect to $\norm*{\cdot}_{u,R}$ for each $R$.  Thus, $f_k$ converges to some $f$.  Then to show that $f^{(N)} \rightsquigarrow f$ is a routine argument.
\end{proof}

\subsection{Functional Calculus and Operator Norm Bounds}

Now we will show that every element of $L^2(\mathrm{W}^*(x_1,\dots,x_m))$ can be expressed as $f(x_1,\dots,x_m)$ for some $f \in \overline{\TrP}_m^1$.  In fact, we can arrange that $f$ can be approximated uniformly by Lipschitz functions.  It will be convenient to define the uniform norm
\[
\norm*{f}_u = \sup_{R > 0} \norm*{f}_{u,R} = \sup_{x \in (\mathcal{R}^\omega)_{sa}^m} \norm*{f(x)}_2,
\]
and we make the same definition for $\norm*{f}_u^{(N)}$ where the supremum is instead taken over $x \in M_N(\C)_{sa}^m$.

\begin{proposition} \label{prop:realizationofoperators}
Let $x_1$, \dots, $x_m$ be self-adjoint variables which generate a tracial $\mathrm{W}^*$-algebra $(\mathcal{M},\tau)$ that is embeddable into $\mathcal{R}^\omega$.  Let $z \in L^2(\mathcal{M},\tau)$.
\begin{enumerate}
	\item There exists a $\norm*{\cdot}_2$-uniformly continuous $f \in \overline{\TrP}_m^1$ such that $Z = f(x_1, \dots, x_m)$.
	\item The $f$ in (1) can be chosen so that there are $\norm*{\cdot}_2$-Lipschitz functions $f_k \in \overline{\TrP}_m^1$ such that $\norm*{f_k - f}_u \to 0$.
	\item If $z \in \C\ip{x_1,\dots,x_m}$, then $f$ can be chosen to be $\norm*{\cdot}_2$-Lipschitz.
\end{enumerate}
\end{proposition}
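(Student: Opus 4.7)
The plan is to prove (3) first via a functional-calculus cutoff, and then bootstrap to (1) and (2) by iterating (3) with a radial $L^2$-projection onto shrinking balls. For (3), given $z = p(x)$ with $p$ a non-commutative polynomial, I would pick $R > \max_j \norm*{x_j}_\infty$ and let $\psi\colon \R \to \R$ be a $1$-Lipschitz function that equals $t$ for $|t| \leq R$ and satisfies $|\psi(t)| \leq R$ for all $t$. I would then set
\[
f(y) := p\bigl(\psi(y_1), \dots, \psi(y_m)\bigr),
\]
with $\psi$ applied coordinatewise via continuous functional calculus. Approximating $\psi$ uniformly on each compact interval by scalar polynomials $\psi^{(\ell)}$, the expression $p(\psi^{(\ell)}(y_1), \dots, \psi^{(\ell)}(y_m))$ is a non-commutative polynomial in $y$ that converges to $f$ uniformly in operator norm on each operator-norm ball, placing $f$ in $\overline{\TrP}_m^1$. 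For the Lipschitz property I would combine two ingredients: (i) the classical fact that a $1$-Lipschitz scalar function acts as a $\norm*{\cdot}_2$-contraction via functional calculus on self-adjoint operators in a tracial $\mathrm{W}^*$-algebra, and (ii) a non-commutative polynomial is $\norm*{\cdot}_2$-Lipschitz on every operator-norm ball (via the iterated estimate $\norm*{ab - a'b'}_2 \leq \norm*{a}_\infty \norm*{b - b'}_2 + \norm*{b'}_\infty \norm*{a - a'}_2$). Since $\norm*{\psi(y_j)}_\infty \leq R$ for all $y$, the composition $f$ is globally $\norm*{\cdot}_2$-Lipschitz, and $\psi(x_j) = x_j$ gives $f(x) = p(x) = z$.

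For (1) and (2), since polynomials in $x$ are $\norm*{\cdot}_2$-dense in $L^2(\mathrm{W}^*(x))$, I would choose polynomials $p_k$ with $\norm*{p_k(x) - z}_2 \leq 2^{-k-1}$ (so that $\norm*{p_{k+1}(x) - p_k(x)}_2 \leq 2^{-k}$), and apply (3) to produce $\norm*{\cdot}_2$-Lipschitz $g_k \in \overline{\TrP}_m^1$ with $g_k(x) = p_k(x)$. Setting $h_k := g_{k+1} - g_k$, the key step is a radial $L^2$-projection onto the closed ball of radius $M_k := 2^{-k+1}$:
\[
\tilde{h}_k(y) := \frac{M_k \, h_k(y)}{\max\bigl(M_k,\, \norm*{h_k(y)}_2\bigr)}.
\]
Projection onto a closed convex subset of Hilbert space is $1$-Lipschitz, so $\tilde{h}_k$ is $\norm*{\cdot}_2$-Lipschitz with the same constant as $h_k$, globally bounded by $M_k$, and equal to $h_k$ at $y = x$ since $\norm*{h_k(x)}_2 \leq 2^{-k} < M_k$. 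To see $\tilde{h}_k \in \overline{\TrP}_m^1$, I would invoke Lemma \ref{lem:algebra}: first $\norm*{h_k(\cdot)}_2^2 = \tau(h_k^* h_k) \in \overline{\TrP}_m^0$, then post-composition with the continuous scalar functions $\sqrt{\cdot}$ and $t \mapsto M_k/\max(M_k,t)$ keeps us in $\overline{\TrP}_m^0$, and finally the module structure gives $\tilde{h}_k = \rho_k \cdot h_k \in \overline{\TrP}_m^1$ for the scalar coefficient $\rho_k$. Setting $f_k := g_1 + \sum_{j=1}^{k-1} \tilde{h}_j$, a telescoping computation yields $f_k(x) = p_k(x) \to z$, while $\norm*{f_{k+1} - f_k}_u \leq M_k$ makes $\{f_k\}$ Cauchy in $\norm*{\cdot}_u$. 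Each $f_k$ is $\norm*{\cdot}_2$-Lipschitz and lies in $\overline{\TrP}_m^1$, and the uniform limit $f$ again lies in $\overline{\TrP}_m^1$ (by completeness in each $\norm*{\cdot}_{u,R}$), is $\norm*{\cdot}_2$-uniformly continuous as a uniform limit of Lipschitz functions, and satisfies $f(x) = z$; this gives (1) and (2) simultaneously.

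The delicate verifications will be two: checking that $\tilde{h}_k \in \overline{\TrP}_m^1$, which combines the module structure of Lemma \ref{lem:algebra} with the closure of $\overline{\TrP}_m^0$ under post-composition with continuous scalar functions; and the input used in (3) that Lipschitz scalar functional calculus is $\norm*{\cdot}_2$-Lipschitz on self-adjoints in a tracial $\mathrm{W}^*$-algebra, a standard but non-elementary result typically proved via double operator integrals or a spectral calculus using commuting left- and right-multiplication operators on $L^2$. Once these inputs are granted, the remaining steps reduce to routine applications of Lemmas \ref{lem:algebra} and \ref{lem:composition}.
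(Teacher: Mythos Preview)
Your argument is correct and, for parts (1) and (2), genuinely different from the paper's. Both proofs treat (3) the same way, composing the polynomial with a bounded Lipschitz functional-calculus cutoff; the paper uses a $C_c^\infty$ cutoff and cites Peller for the $\norm{\cdot}_2$-Lipschitz property, while you invoke the contraction property of $1$-Lipschitz scalar functions directly, but this is the same mechanism. For (1) and (2) the paper takes a more topological route: it views $\overline{\TrP}_m^0$ as continuous functions on the compact space $\Sigma_{m,R}$ of laws, builds Urysohn-type bump trace polynomials $u_k$ supported near the law of $x$, and uses $u_k\cdot(p_{k+1}-p_k)$ (composed with the cutoff $\Phi$) as the summands of a uniformly convergent series. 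Your radial-projection trick $\tilde h_k = \dfrac{M_k\,h_k}{\max(M_k,\norm{h_k}_2)}$ replaces that topological localization by a purely Hilbert-space-geometric one, and is arguably cleaner: it avoids the auxiliary Stone--Weierstrass/Urysohn lemma entirely and makes the $1$-Lipschitz property of each truncation immediate. The one step you should spell out is that $\overline{\TrP}_m^0$ is closed under post-composition with continuous $\phi\colon\C\to\C$; this follows from Lemma~\ref{lem:algebra} (the algebra structure) together with Stone--Weierstrass approximation of $\phi$ by polynomials on the bounded range of $\norm{h_k(\cdot)}_2^2$ over each operator-norm ball, and the paper notes it only in passing in the model-theory remark. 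With that granted, your module-structure argument for $\tilde h_k\in\overline{\TrP}_m^1$ goes through, and the rest is routine.
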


We use the following auxiliary observation.  Here $\Sigma_{m,R}$ will denote the space of non-commutative laws for an $m$-tuple of operators with operator norm $\leq R$.  We equip $\Sigma_{m,R}$ with the topology of convergence in moments.  Recall that $\Sigma_{m,R}$ is compact, separable, and metrizable.  In \cite[Lemma 8.2]{Jekel2018}, we noted the relationship between $\overline{\TrP}_m^0$ and continuous functions on $\Sigma_{m,R}$ for each $R$.  This same idea motivates the proof of the next lemma.

\begin{lemma}
Let $\mu \in \Sigma_{m,R}$ and let $\mathcal{U}$ be a neighborhood of $\mu$, and let $\epsilon > 0$.  Then there exists a trace polynomial $f$ such that
\[
\mu(f) = 1 \qquad
0 \leq \nu(f) \leq \mathbf{1}_{\nu \in \mathcal{U}} + \epsilon \text{ for } \nu \in \Sigma_{m,R}.
\]
\end{lemma}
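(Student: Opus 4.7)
The plan is a standard approximation argument: pull the question back to $\R^k$ via finitely many moments, construct a bump function there, and approximate it by a real polynomial using Stone--Weierstrass.

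First I would reduce $\mathcal{U}$ to a basic neighborhood of the moment topology. Since the topology on $\Sigma_{m,R}$ is pointwise convergence on $\NCP_m$, we can find non-commutative polynomials $p_1,\ldots,p_k$ and $\delta > 0$ with
\[
\mathcal{U}' := \{\nu \in \Sigma_{m,R} : |\nu(p_i) - \mu(p_i)| < \delta,\ i = 1,\ldots,k\} \subseteq \mathcal{U}.
\]
By splitting each $p_i$ into $(p_i + p_i^*)/2$ and $(p_i - p_i^*)/(2i)$ and shrinking $\delta$ by a factor of $2$, we may assume each $p_i$ is self-adjoint, so that the function $\nu \mapsto \nu(p_i)$ takes real values. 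Since $\mathbf{1}_{\nu \in \mathcal{U}'} \leq \mathbf{1}_{\nu \in \mathcal{U}}$, it suffices to prove the lemma with $\mathcal{U}$ replaced by $\mathcal{U}'$.

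Next consider the moment map $\Phi: \Sigma_{m,R} \to \R^k$, $\Phi(\nu) = (\nu(p_1),\ldots,\nu(p_k))$. This is continuous, and $\Sigma_{m,R}$ is compact, so $K := \Phi(\Sigma_{m,R}) \subset \R^k$ is compact; set $a := \Phi(\mu) \in K$. Define a continuous bump $\phi : \R^k \to [0,1]$ supported in $\prod_{i=1}^k [a_i - \delta, a_i + \delta]$ with $\phi(a) = 1$, for instance
\[
\phi(t_1,\ldots,t_k) = \prod_{i=1}^k \max\bigl(0,\ 1 - |t_i - a_i|/\delta\bigr).
\]
By the classical Stone--Weierstrass theorem, choose a polynomial $P_0 \in \R[t_1,\ldots,t_k]$ with $\sup_{t \in K} |P_0(t) - \phi(t)| < \epsilon/3$. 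Set $P := P_0 + \epsilon/3$, which is $\geq \phi \geq 0$ on $K$, satisfies $P(a) \geq 1$ and $P \leq 1 + 2\epsilon/3$ everywhere on $K$, and satisfies $P \leq 2\epsilon/3$ on $K \setminus \Phi(\mathcal{U}')$ (since $\phi$ vanishes there). Finally let $Q := P / P(a) \in \R[t_1,\ldots,t_k]$; then $Q \geq 0$ on $K$, $Q(a) = 1$, $Q \leq 1 + 2\epsilon/3 < 1 + \epsilon$ on $K$, and $Q \leq 2\epsilon/3 < \epsilon$ on $K \setminus \Phi(\mathcal{U}')$.

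Now define
\[
f := Q\bigl(\tau(p_1),\ldots,\tau(p_k)\bigr) \in \TrP_m^0,
\]
which is a bona fide scalar trace polynomial since $Q$ is a real polynomial in the self-adjoint scalar trace polynomials $\tau(p_i)$. For any $\nu \in \Sigma_{m,R}$ one has $\nu(f) = Q(\Phi(\nu))$, and the three properties of $Q$ on $K$ translate directly into $\mu(f) = 1$, $0 \leq \nu(f) \leq 1 + \epsilon$ for $\nu \in \mathcal{U}'$, and $\nu(f) \leq \epsilon$ for $\nu \notin \mathcal{U}'$, which is the required inequality. There is no real obstacle here; the only thing to be careful about is that the reduction step genuinely produces a self-adjoint collection of polynomials so that the moment map lands in $\R^k$, ensuring that real-coefficient polynomial manipulations on $\R^k$ give rise to self-adjoint trace polynomials whose evaluations are real.
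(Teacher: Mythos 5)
Your proof is correct. The final normalization trick is the same as the paper's, but you reach the approximating trace polynomial by a different route. The paper works abstractly on the compact metrizable space $\Sigma_{m,R}$: it takes a Urysohn function $F$ with $F(\mu)=1$ and $F=0$ off $\mathcal{U}$, observes that the evaluations $\nu \mapsto \nu(f)$, $f \in \TrP_m^0$, form a self-adjoint, point-separating subalgebra of $C(\Sigma_{m,R})$ (point separation being exactly the statement that laws are determined by their moments), applies Stone--Weierstrass there, and then shifts by $\epsilon/2$ and divides by the value at $\mu$. You instead first shrink $\mathcal{U}$ to a basic neighborhood determined by finitely many self-adjoint moments, push forward by the moment map $\Phi$ to a compact set $K \subset \R^k$, build an explicit bump $\phi$ supported in the defining box, approximate it by a real polynomial via the classical Weierstrass theorem, and then shift and normalize before pulling back through $\Phi$; the key set identity $K \setminus \Phi(\mathcal{U}') = K \setminus B$ (because $\mathcal{U}' = \Phi^{-1}(B)$) makes the vanishing claim off $\mathcal{U}'$ legitimate, and your remark that positivity forces $\nu(p_i) \in \R$ for self-adjoint $p_i$ is exactly the point needed for the moment map to land in $\R^k$. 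What your route buys is concreteness: you never need to invoke Urysohn or verify the separating-subalgebra hypotheses of the abstract Stone--Weierstrass theorem, only classical polynomial approximation on a compact subset of $\R^k$; what it costs is the extra reduction to a basic neighborhood and the bookkeeping with $\Phi$, which the paper's two-line abstract argument avoids. Both yield the required $f$ with $\mu(f)=1$, $\nu(f)\geq 0$, $\nu(f) \leq 1+\epsilon$ on $\mathcal{U}$, and $\nu(f) \leq \epsilon$ off $\mathcal{U}$.
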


\begin{proof}
By Urysohn's lemma, there exists a continuous function $F: \Sigma_{m,R} \to [0,1]$ such that $F(\mu) = 1$ and $F(\nu) = 0$ for $\nu \not \in \mathcal{U}$.  The functions $\Sigma_{m,R} \to \C$ of the form $\mu \mapsto \mu(f)$ for $f \in \TrP_m^0$ form a self-adjoint algebra in $C(\Sigma_{m,R})$, and they separate points because by definition two laws are the same if they agree on every non-commutative polynomial.  So by the Stone-Weierstrass theorem, this algebra is dense in $C(\Sigma_{m,R})$.  In particular, there exists a trace polynomial $g$ such that $|\nu(g) - F(\nu)| < \epsilon / 2$ for all $\nu \in \Sigma_{m,R}$.  Then let $f = (g + \epsilon/2) / (g(\mu) + \epsilon / 2)$.
\end{proof}

We will also use the following smooth cut-off trick.

\begin{lemma}
Let $0 < R' \leq R$.  Let $\phi \in C_c^\infty(\R;\R)$ such that $\phi(t) = t$ for $t \leq R'$ and $|\phi(t)| \leq R$.  For $y \in (\mathcal{R}^\omega)_{sa}$, define $\Phi(y) = \phi(y)$ where $\phi$ is applied through functional calculus.  Then
\begin{enumerate}
	\item $\Phi(y) = y$ if $\norm*{y}_\infty \leq R'$.
	\item $\norm*{\Phi(y)}_\infty \leq R$ for all $y$.
	\item $\Phi \in \overline{\TrP}_m^1$.
	\item $\Phi$ is globally $\norm*{\cdot}_2$-Lipschitz.
\end{enumerate}
\end{lemma}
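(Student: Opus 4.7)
The plan is to handle the four claims in order, with (1) and (2) as immediate consequences of the Borel functional calculus, (3) via Weierstrass-style polynomial approximation, and (4) via the Lipschitz functional calculus for self-adjoint operators in $\norm{\cdot}_2$.

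For (1), note that if $\norm{y}_\infty \leq R'$, the spectrum of $y$ lies in $[-R',R']$, and on that interval $\phi$ equals the identity, so the spectral theorem gives $\Phi(y) = \phi(y) = y$. For (2), the identity $\norm{\phi(y)}_\infty = \sup_{t \in \mathrm{sp}(y)} |\phi(t)| \leq \sup_{t \in \R} |\phi(t)| \leq R$ is immediate. Neither of these requires the smoothness hypothesis, only that $\phi$ is continuous and takes the given values.

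For (3), since $m=1$ here, $\TrP_1^1$ contains all ordinary polynomials in one variable, so it suffices to show that for each $R_0 > 0$ and each $\varepsilon > 0$ there is a polynomial $p \in \C[t]$ with $\norm{\Phi - p}_{u,R_0} < \varepsilon$, where the target norm is $\norm{\cdot}_2$. By the Stone--Weierstrass theorem we may choose $p$ so that $\sup_{|t| \leq R_0} |\phi(t) - p(t)| < \varepsilon$; then for any $y \in (\mathcal{R}^\omega)_{sa}$ with $\norm{y}_\infty \leq R_0$, the spectral theorem yields
\[
\norm{\Phi(y) - p(y)}_2 \leq \norm{\phi(y) - p(y)}_\infty \leq \sup_{|t|\leq R_0} |\phi(t) - p(t)| < \varepsilon.
\]
This places $\Phi$ in the $\norm{\cdot}_{u,R_0}$-closure of $\TrP_1^1$ for every $R_0$, i.e.\ $\Phi \in \overline{\TrP}_1^1$.

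For (4), since $\phi \in C_c^\infty(\R;\R)$ we have $L := \norm{\phi'}_\infty < \infty$ and hence $|\phi(s) - \phi(t)| \leq L |s-t|$ for all $s,t \in \R$. The standard Lipschitz functional calculus inequality for self-adjoint operators then gives
\[
\norm{\phi(y_1) - \phi(y_2)}_2 \leq L \norm{y_1 - y_2}_2
\]
for all $y_1, y_2 \in (\mathcal{R}^\omega)_{sa}$; one way to see this is to diagonalize $y_1$ and $y_2$ as $y_i = \sum_k \lambda_k^{(i)} e_k^{(i)}$ with spectral projections and compute $\norm{\phi(y_1) - \phi(y_2)}_2^2$ as a sum of $|\phi(\lambda_k^{(1)}) - \phi(\lambda_\ell^{(2)})|^2 \tau(e_k^{(1)} e_\ell^{(2)})$, bound each factor using the scalar Lipschitz estimate, and recognize the remaining sum as $\norm{y_1 - y_2}_2^2$. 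This gives $\norm{\Phi}_{\Lip} \leq L$. The main potential obstacle is really just the step invoking this Lipschitz functional calculus fact; everything else is a direct application of spectral theory and Weierstrass approximation.
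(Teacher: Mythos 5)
Your proof is correct. Parts (1)--(3) follow essentially the same route as the paper: (1) and (2) are immediate from functional calculus, and (3) is Weierstrass approximation on $[-R_0,R_0]$ combined with $\norm{\cdot}_2 \leq \norm{\cdot}_\infty$ and the fact that one-variable polynomials lie in $\TrP_1^1$. Where you diverge is part (4): the paper invokes Peller's operator-Lipschitz results (as explained in \cite[(8.9) and Proposition 8.8]{Jekel2018}), which is the machinery needed for the much more delicate operator-norm Lipschitz estimates and requires smoothness of $\phi$ beyond mere Lipschitzness. You instead use the elementary and well-known fact that a scalar $L$-Lipschitz function induces an $L$-Lipschitz map in $\norm{\cdot}_2$ on self-adjoint elements of a tracial von Neumann algebra, proved by the double-sum trace computation over spectral projections. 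This is a legitimate and in fact simpler route, since only the $\norm{\cdot}_2$-Lipschitz property is needed here, and it uses only $\norm{\phi'}_\infty < \infty$ rather than $\phi \in C_c^\infty$. One small caveat: your sketch writes $y_i = \sum_k \lambda_k^{(i)} e_k^{(i)}$, i.e.\ a purely atomic spectral decomposition, which a general self-adjoint element of $\mathcal{R}^\omega$ need not have; to be complete you should either phrase the computation with double spectral integrals $\int\int |\phi(\lambda)-\phi(\mu)|^2 \, d\tau(E_{y_1}(\lambda) E_{y_2}(\mu))$ or approximate $y_1, y_2$ in $\norm{\cdot}_2$ (with uniformly bounded operator norms) by elements with finite spectrum and pass to the limit. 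This is a routine repair, not a gap in the idea.
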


\begin{proof}
(1) and (2) follow from the properties of functional calculus.  To prove (3), note by the Weierstrass approximation theorem that for every $r > 0$, there is a polynomial $p$ such that $|p(t) - \phi(t)| < \epsilon$ for $|t| \leq r$.  This implies as with (1) that $|p(y) - \phi(y)| < \epsilon$ for all $y$ with $\norm*{y}_\infty \leq r$.  Claim (4) follows from the results of \cite{Peller2006}; the argument is explained in \cite[(8.9) and Proposition 8.8]{Jekel2018}.
\end{proof}

\begin{proof}[Proof of Proposition \ref{prop:realizationofoperators}]
Let $\mu$ be the law of $x = (x_1,\dots,x_m)$, and let $R > \norm*{X}_\infty$.  Since $z \in L^2(\mathcal{M},\tau)$, there exist non-commutative polynomials $\{p_k\}_{k=1}^\infty$ such that $\norm*{p_k(x) - z}_2 < 1 / 2^{k+1}$ and hence for $k \geq 1$,
\[
\norm*{p_{k+1}(x) - p_k(x)}_2 = \mu[(p_{k+1} - p_k)^2]^{1/2} \leq \frac{1}{2^{k+1}} + \frac{1}{2^{k+2}} < \frac{1}{2^k}.
\]
By scaling, we may assume without loss of generality that $\norm*{z}_2 < 1$ and set $p_0 = 0$, and then the above statement also holds for $k = 0$.  Now let
\[
\mathcal{U}_k = \{\nu \in \Sigma_{m,R}: \nu((p_{k+1} - p_k)^2)^{1/2} < 1/2^k\},
\]
which is a neighborhood of $\mu$ in $\Sigma_{m,R}$.  By the previous lemma, there exists a scalar-valued trace polynomial $u_k$ such that $\mu(u_k) = 1$ and
\[
0 \leq \nu(u_k) \leq \mathbf{1}_{\nu \in \mathcal{U}_k} + \frac{1}{2^k \norm*{p_{k+1} - p_k}_{u,R}}.
\]
(We can assume without loss of generality that $\norm*{p_{k+1} - p_k}_{u,R} \neq 0$.)  Now the function $u_k(p_{k+1} - p_k)$ will evaluate at the point $X$ to $p_{k+1}(x) - p_k(x)$.  If $y \in (\mathcal{R}^\omega)_{sa}^m$ with $\norm*{y}_\infty \leq R$ and if the law of $y$ is in $\mathcal{U}_k$, then we will have
\[
\norm*{u_k(y)(p_{k+1}(Y) - p_k(y))}_2 \leq \norm*{p_{k+1}(y) - p_k(y)}_2 + \frac{1}{2^k \norm*{p_{k+1} - p_k}_{u,R}} \norm*{p_{k+1}(y) - p_k(y)}_2 \leq \frac{1}{2^k} + \frac{1}{2^k}.
\]
On the other hand, if the law of $y$ is not in $\mathcal{U}_k$, then $\norm*{u_k(Y)(p_{k+1}(y) - p_k(y))}_2 \leq 1/2^k$.  Overall, we have
\[
\norm*{u_k \cdot (p_{k+1} - p_k)}_{u,R} \leq \frac{2}{2^k}.
\]
This implies that $\sum_{k=0}^\infty u_k \cdot (p_{k+1} - p_k)$ converges with respect to $\norm*{\cdot}_{u,R}$ for our given choice of $R$, and of course evaluating this function on $X$ it produces the desired operator $Z$ since $u_k(x) = 1$.

To extend the function to be be globally defined on $(\mathcal{R}^\omega)_{sa}^m$, we use the smooth cut-off trick.  Let $\phi \in C_c^\infty(\R;\R)$ such that $\phi(t) = t$ for $|t| \leq \norm*{X}_\infty$ and $|\phi| \leq R$.  For $y = (y_1,\dots,y_m) \in (\mathcal{R}^\omega)_{sa}^m$, let $\Phi(y) = (\phi(y_1),\dots,\phi(y_m))$.  Then $[u_k \cdot (p_{k+1} - p_k)] \circ \Phi \in \overline{\TrP}_m^1$ because it is the composition of a trace polynomial with a function $\Phi \in (\overline{\TrP}_m^1)_{sa}^m$ that is uniformly bounded in operator norm.

Also, since $\Phi$ is globally $\norm*{\cdot}_2$-Lipschitz and since $u_k \cdot (p_{k+1} - p_k)$ is $\norm*{\cdot}_2$-Lipschitz on the operator norm ball of radius $R$, we see that $[u_k \cdot (p_{k+1} - p_k)] \circ \Phi$ is globally Lipschitz in $\norm*{\cdot}_2$.  For all $y \in (\mathcal{R}^\omega)_{sa}^m$,
\[
\norm*{u_k(\Phi(y))(p_{k+1}(\Phi(y)) - p_k(\Phi(y)))}_2 \leq \frac{2}{2^k}.
\]
Therefore,
\[
f(y) := \sum_{k=0}^\infty u_k(\Phi(y))(p_{k+1}(\Phi(y)) - p_k(\Phi(y)))
\]
converges, and clearly $f \in \overline{\TrP}_m^1$ since each of the individual terms is.  Furthermore, $\norm*{\cdot}_2$-uniform continuity of each term and the uniform convergence of the series implies uniform continuity of $f$.   Since $\norm*{x}_\infty \leq R$, we have $\Phi(x) = x$ and $u_k(x) = 1$, so that
\[
f(x) = \sum_{k=0}^\infty [p_{k+1}(x) - p_k(x)] = \lim_{k \to \infty} p_{k+1}(x) = z.
\]
This concludes the proof of (1).

To verify (2), we take $f_n$ to be the $n$th partial sum of the series defining $f$; we have shown that the individual terms are $\norm*{\cdot}_2$-Lipschitz, hence so are the partial sums.  Finally, to prove (3), note that if $z = p(x_1,\dots,x_m)$, then $z$ also equals $f(x_1,\dots,x_m)$ where $f = p \circ \Phi$, and by the same reasoning as above $p \circ \Phi$ is globally $\norm*{\cdot}_2$-Lipschitz.
\end{proof}

We have shown that every element of $L^2(\mathrm{W}^*(x_1,\dots,x_m))$ has the form $f(x_1,\dots,x_m)$ for some $f \in \overline{\TrP}_m^1$.  On the other hand, we will prove that if $f$ is Lipschitz, then $f(x)$ is actually bounded in operator norm.  We state our estimate in terms of unitarily invariant random matrix models which satisfy concentration \eqref{eq:normalizedHerbst}, but as explained in Remark \ref{rem:concentrationmodels} such models exist whenever $L^2(\mathrm{W}^*(x_1,\dots,x_m))$ is embeddable into $\mathcal{R}^\omega$.

\begin{proposition} \label{prop:operatornormestimate}
Let $x = (x_1, \dots, x_m)$ be a tuple of self-adjoint variables in a $\mathrm{W}^*$-algebra $(\mathcal{M},\tau)$ whose non-commutative law is $\lambda$.  Suppose there is a sequence $\{\mu^{(N)}\}$ of probability measures on $M_N(\C)_{sa}^m$, invariant under unitary conjugation, that satisfies the concentration estimate \eqref{eq:normalizedHerbst} for some constant $c$, and such that the corresponding random variables $X^{(N)} = (X_1^{(N)}, \dots, X_m^{(N)})$ satisfy $\lambda_{X^{(N)}} \to \lambda$ in probability.  Then $\mathrm{W}^*(x)$ is embeddable into $\mathcal{R}^\omega$.  Moreover, if $f \in \overline{\TrP}_m^1$ is $\norm{\cdot}_2$-Lipschitz, then $f(x)$ is a bounded operator and
\[
\norm*{f(x) - \tau(f(x))}_\infty \leq \Theta c^{-1/2} \norm*{f}_{\Lip},
\]
where $\Theta$ is a universal constant.
\end{proposition}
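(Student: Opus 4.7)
The proposition has two parts — $\mathcal{R}^\omega$-embeddability of $\mathrm{W}^*(x)$ and the operator-norm estimate — and both will be obtained by transferring concentration at the matrix level to the limit through a matrix W*-ultraproduct. The crucial technical point is that every $f \in \overline{\TrP}_m^1$ is unitarily equivariant, $f(UxU^*) = Uf(x)U^*$, a property obvious for trace polynomials that passes to the completion by $\|\cdot\|_2$-continuity.

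For embeddability, I first apply Lemma \ref{lem:epsilonnet} to the coordinate map $x \mapsto x_j$: equivariance of this map combined with unitary invariance of $\mu^{(N)}$ forces $E[X_j^{(N)}] = E[\tau_N(X_j^{(N)})] I$, which converges to $\tau(x_j) I$ by the hypothesis $\lambda_{X^{(N)}} \to \lambda$ in probability. Hence $\|X_j^{(N)}\|_\infty$ is uniformly bounded with probability tending to one. Intersecting with the event that $\lambda_{X^{(N)}}$ is close to $\lambda$, I extract deterministic samples $x^{(N)} \in M_N(\C)_{sa}^m$ with $\sup_N \|x^{(N)}\|_\infty < \infty$ and $\lambda_{x^{(N)}} \to \lambda$. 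The tracial W*-ultraproduct $\mathcal{M}' = \prod_\omega M_N(\C)$ embeds into $\mathcal{R}^\omega$, and $\{x^{(N)}\}_\omega \in (\mathcal{M}')_{sa}^m$ has law $\lambda$, producing an embedding $\iota : \mathrm{W}^*(x) \hookrightarrow \mathcal{R}^\omega$ sending $x$ to $\{x^{(N)}\}_\omega$.

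For the norm estimate, the same equivariance and unitary invariance yield $E[f(X^{(N)})] = E[\tau_N(f(X^{(N)}))] I$. Reducing to self-adjoint-valued $f$ via $f = \tfrac{1}{2}(f+f^*) + i \cdot \tfrac{1}{2i}(f - f^*)$ (each part being $K$-Lipschitz with $K = \|f\|_{\Lip}$, at the cost of a universal factor in the final constant), Lemma \ref{lem:epsilonnet} applied to the restriction $f|_{M_N(\C)_{sa}^m}$ gives
\[
\Pro\bigl( \|f(X^{(N)}) - E[\tau_N(f(X^{(N)}))] I\|_\infty \leq c^{-1/2} K(\Theta + \delta_N) \bigr) \geq 1 - e^{-N\delta_N^2/2}.
\]
Since $|\tau_N(a)| \leq \|a\|_2$ makes $\tau_N \circ f$ Lipschitz with constant $K$, concentration of this scalar together with the convergence $\tau_N(f_k(X^{(N)})) \to \tau(f_k(x))$ for trace polynomial approximants $f_k \to f$ shows $E[\tau_N(f(X^{(N)}))] \to \tau(f(x))$. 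Taking $\delta_N \to 0$ with $N\delta_N^2 \to \infty$ and intersecting with the embeddability events, I select samples with $\|f(x^{(N)}) - \tau(f(x)) I\|_\infty \leq c^{-1/2} K(\Theta + o(1))$.

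Since the left-hand side is uniformly bounded in $N$, the sequence $\{f(x^{(N)})\}$ defines an element of $\mathcal{M}'$ (not merely $L^2(\mathcal{M}')$); by $\|\cdot\|_2$-continuity of $f$ and the identification $\iota(x) = \{x^{(N)}\}_\omega$, this element equals $\iota(f(x))$, so $f(x)$ is bounded and $\|f(x) - \tau(f(x)) I\|_\infty \leq \Theta c^{-1/2} K$ upon taking the $\omega$-limit and absorbing constants. The main obstacle I expect is this last identification: a priori $f(x) \in L^2(\mathcal{M})$, and while $\iota(f(x)) = \{f(x^{(N)})\}_\omega$ holds trivially for trace polynomials, extending to general $f \in \overline{\TrP}_m^1$ requires careful passage to the limit through trace polynomial approximants on a common operator norm ball, and only the uniform operator-norm control from concentration upgrades this to bounded-operator membership in $\mathcal{M}'$.
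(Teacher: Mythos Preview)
Your proposal is correct and follows essentially the same strategy as the paper's proof: use unitary invariance to make $E[X_j^{(N)}]$ and $E[f(X^{(N)})]$ scalar, invoke Lemma~\ref{lem:epsilonnet} to get operator-norm concentration for both $X^{(N)}$ and $f(X^{(N)})$, extract deterministic samples $x^{(N)}$ with bounded operator norm and law converging to $\lambda$, embed into the matrix ultraproduct, and then identify $f(x)$ with $\{f(x^{(N)})\}_\omega$ via trace-polynomial approximation on a fixed operator-norm ball. The paper organizes the sample-selection slightly differently (it picks one sequence $y^{(N)}$ simultaneously witnessing all four limiting properties rather than intersecting events), and it does not separately argue that $E[\tau_N(f(X^{(N)}))]\to\tau(f(x))$ but instead includes $|\tau_N(f(y^{(N)}))-E[\tau_N(f(X^{(N)}))]|\to 0$ among the selection criteria; these are cosmetic differences.
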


\begin{proof}
In light of Lemma \ref{lem:epsilonnet},
\[
P\left(\norm*{X^{(N)} - E(X^{(N)})}_\infty \leq c^{-1/2} (\Theta + N^{-1/3}) \right) \to 1
\]
and
\[
P\left(\norm*{f(X^{(N)}) - E(f(X^{(N)}))}_\infty \leq c^{-1/2} \norm{f}_{\Lip} (\Theta + N^{-1/3}) \right) \to 1.
\]
Also, the non-commutative law of $X^{(N)}$ converges in probability to that of $x$ and finally $\tau_N(f(X^{(N)})) - E[\tau_N(f(X^{(N)}))] \to 0$ in probability as a consequence of concentration.  Therefore, we may choose a sequence of elements $y^{(N)} \in M_N(\C)_{sa}^m$ such that
\begin{align*}
\limsup_{N \to \infty} \norm*{y_j^{(N)} - E(X_j^{(N)})}_\infty &\leq c^{-1/2} \Theta, \\
\limsup_{N \to \infty} \norm*{f(y^{(N)}) - E(f(X^{(N)}))}_\infty &\leq c^{-1/2} \norm{f}_{\Lip} \Theta, \\
\left| \tau_N(f(y^{(N)})) - E[\tau_N(f(X^{(N)}))] \right| &\to 0, \\
\lambda_{y^{(N)}} &\to \lambda_x.
\end{align*}
Because $E(X_j^{(N)}) = E(\tau_N(X_j^{(N)}))$ by unitary invariance and because of concentration, $E(\tau_N(X_j^{(N)}))$ must converge to $\tau(x_j)$ since $\tau_N(X_j^{(N)})$ converges to the $\tau(x_j)$ in probability.  So overall $E(X_j^{(N)}) - \tau_N(x_j) \to 0$ in operator norm.  In particular,
\[
\limsup_{N \to \infty} \norm*{y_j^{(N)} - \tau(x_j)}_\infty \leq c^{-1/2} \Theta,
\]
and hence $\norm{y^{(N)}}_\infty$ is bounded as $N \to \infty$.  Moreover, our choice of $y^{(N)}$ also satisfies
\[
\limsup_{N \to \infty} \norm*{f(y^{(N)}) - \tau_N(f(y^{(N)}))}_\infty \leq c^{-1/2} \norm{f}_{\Lip} \Theta,
\]
since $E[f(X^{(N)})] = E[\tau_N(f(X^{(N)})]$ again by unitary invariance.

Fix a free ultrafilter $\omega$ and let $(\mathcal{M},\tau) = \prod_{N \to \omega} (M_N(\C), \tau_N)$ be the tracial $\mathrm{W}^*$-ultraproduct of the sequence of matrix algebras.  Since $\{y^{(N)}\}$ is bounded in operator norm, $y = \{y^{(N)}\}_{N \in \N}$ defines an element of $(\mathcal{M},\tau)$.  By definition of ultraproducts, $\tau(p(y)) = \lim_{N \to \omega} \tau_N(p(y^{(N)}))$ for every non-commutative polynomial $p$ and therefore the non-commutative law of $y$ is $\lambda$ (which is the same as that of $x$).  In particular, $\mathrm{W}^*(x) \cong \mathrm{W}^*(y)$ embeds into $(\mathcal{M},\tau)$ and hence also into $\mathcal{R}^\omega$.  (Compare \cite[Theorem 4.4]{GS2009}.)

Since $\mathrm{W}^*(x)$ is $\mathcal{R}^\omega$-embeddable, $f(x)$ is well-defined, and clearly $\norm{f(x) - \tau(f(x))}_\infty = \norm{f(y) - \tau(f(y))}_\infty$.  Now we claim that $f(y)$ is given by the sequence $\{f(y^{(N)})\}_{N \in \N}$ as an element of $(\mathcal{M},\tau)$ (that is, application of $f$ commutes with ultralimits).  It is easy to check that $g(y) = \{g(y^{(N)})\}_{N \in \N}$ when $g \in \TrP_m^1$.  But for any $\epsilon > 0$, there exists $g \in \TrP_m^1$ with $\norm{f - g}_{c^{-1/2} \Theta + 1} < \epsilon$.  Thus, $\norm{f(y) - g(y)}_2 < \epsilon$ and also $\norm{f(y^{(N)}) - g(y^{(N)})}_2 < \epsilon$ for sufficiently large $N$.  This implies that $\norm{f(y) - \{f(y^{(N)})\}_{N \in \N}}_2 < 2 \epsilon$.  Thus, $f(y) = \{f(y^{(N)})\}_{N \in \N}$ as claimed.  The same holds with $f$ replaced by $f - \tau(f)$.  This implies
\[
\norm{f(y) - \tau(f(y))}_\infty \leq \limsup_{N \to \infty} \norm{f(y^{(N)}) - \tau(f(y^{(N)}))}_\infty \leq c^{-1/2} \norm{f}_{\Lip} \Theta. \qedhere
\]
\end{proof}

\begin{remark} \label{rem:concentrationmodels}
Suppose that $\mathrm{W}^*(x_1,\dots,x_m)$ is embeddable into $\mathcal{R}^\omega$.  Then there exist tuples $x^{(N)} = (x_1^{(N)},\dots,x_m^{(N)})$ in $M_N(\C)_{sa}^m$ such that $\norm{x^{(N)}}_\infty \leq \norm{x}_\infty$ and $\lambda_{x^{(N)}} \to \lambda_x$.  Let $U^{(N)}$ be an $N \times N$ random Haar unitary matrix and let $X^{(N)} = U^{(N)} x^{(N)} (U^{(N)})^*$.  Clearly, the probability distribution of $X^{(N)}$ is unitarily invariant and also $\lambda_{X^{(N)}} \to \lambda_x$ in probability.

To check concentration, observe that $u \mapsto u x^{(N)}u^*$ is a $2 m^{1/2} \norm{x}_\infty$-Lipschitz function from the unitary group to $M_N(\C)_{sa}^m$ with respect to $\norm{\cdot}_2$.  Therefore, if $f: M_N(\C)_{sa}^m \to \R$ is Lipschitz, then $u \mapsto f(ux^{(N)}u^*)$ is also Lipschitz, with the Lipschitz constant $2 m^{1/2} \norm{x}_\infty \norm{f}_{\Lip}$.  It was proved in \cite[Theorem 15]{Meckes2013}, \cite[Theorem 5.16]{Meckes2019} that the Haar measure on the unitary group satisfies the (non-normalized) log-Sobolev inequality with constant $6/N$ and the corresponding concentration of measure for Lipschitz functions with respect to the Hilbert-Schmidt metric $N^{1/2} \norm{\cdot}_2$.  After renormalization this implies that the Haar measure on the unitary group satisfies \eqref{eq:normalizedHerbst} with $c = 1/6$.  Hence, $X^{(N)}$ satisfies \eqref{eq:normalizedHerbst} with $c = 1 / 12 m \norm{x}_\infty^2$.
\end{remark}


\section{Tools for Differential Equations in $\overline{\TrP}_m^j$} \label{sec:diffeqtools}

This section describes two analytic operations | solution of ODE and convolution with the Gaussian law | that can be performed on tuples in $\overline{\TrP}_m^1$ and on asymptotically approximable sequences of functions on $N \times N$ matrices.  These operations were applied in \cite{Jekel2018}, and will be applied in the remainder of this paper, to analyze the large $N$ limit of certain PDE associated to random matrix models, and hence to understand the behavior of convex matrix models in the large $N$ limit.

\subsection{Flows Along Vector Fields} \label{subsec:vectorfields}

Several times in our study of partial differential equations, we will use flows along vector fields given by functions in $\overline{\TrP}_m^1$ and by asymptotically approximable sequences of functions on matrices.  For instance, this idea was used in \cite[Lemma 4.10]{Jekel2018}, and in this paper, it will be used in the proof of Lemma \ref{lem:diffusionAATP} and Theorem \ref{thm:transport1}.

The setup is roughly speaking as follows.  Consider a time interval $[0,T] \subseteq \R$.  Let $H: (\mathcal{R}^\omega)_{sa}^m \times [0,T] \to L^2(\mathcal{R}^\omega)_{sa}^m$ be a function such that $H(\cdot,t)$ is a tuple of functions in $\overline{\TrP}_m^1$ for each $t$ (satisfying certain uniform continuity assumptions).  Also, let $F_0: (\mathcal{R}^\omega)_{sa}^m \to L^2(\mathcal{R}^\omega)_{sa}^m$.  Then we would like to construct $F: (\mathcal{R}^\omega)_{sa}^m \times [0,T] \to (\mathcal{R}^\omega)_{sa}^m$ such that
\begin{align*}
F(x,0) &= F_0(x) \\
\partial_t F(x,t) &= H(F(x,t),t).
\end{align*}
Moreover, we would like to show that if $H^{(N)}$ is a function on $M_N(\C)_{sa}^m \times [0,T]$ that is asymptotic to $H$ and $F_0^{(N)} \rightsquigarrow F_0$, then the solutions $F^{(N)}$ are asymptotic to the solution $F$.

Such a proof was essentially carried out in \cite[Lemma 4.10]{Jekel2018}, but now we introduce the added complexity that $H$ will depend on $x$, $t$, and an auxiliary parameter $y \in (\mathcal{R}^\omega)_{sa}^m$, and we must solve the initial value problem
\begin{align} \label{eq:IVP}
F(x,y,0) &= F_0(x,y) \\
\partial_t F(x,y,t) &= H(F(x,y,t), y, t). \nonumber
\end{align}
The added parameter $y$ arises naturally in our analysis of \emph{conditional} expectation, entropy, and transport since it represents the variables we are conditioning upon (see for instance \S \ref{subsec:conditionalexpectationstrategy}).

For the sake of future reference, let us state the set of assumptions we make about the vector field $H(x,y,t)$.  These assumptions are framed for a convenient and applicable level of generality rather than maximum generality.

\begin{assumption} \label{ass:vectorfield}
We are given $T > 0$ and a function $H: (\mathcal{R}^\omega)_{sa}^m \times (\mathcal{R}^\omega)_{sa}^n \times [0,T] \to L^2(\mathcal{R}^\omega)_{sa}^m$ satisfying:
\begin{enumerate}
	\item For each $t$, we have $H(\cdot,\cdot,t) \in (\overline{\TrP}_{m+n}^1)_{sa}^m$.
	\item $H$ is $\norm*{\cdot}_2$-Lipschitz in $(x,y)$, that is, for some constant $K$ independent of $t$, we have
	\[
	\norm*{H(x,y,t) - H(x',y',t)}_2 \leq K \norm*{(x,y) - (x',y')}_2.
	\]
	\item The map $t \mapsto H(\cdot,\cdot,t)$ is a continuous function $[0,T] \to (\overline{\TrP}_{m+n}^1)_{sa}^m$ with respect to the Fr{\'e}chet topology on $\overline{\TrP}_{m+n}^1$.  This implies that for every $R > 0$ and for every $\epsilon > 0$, there exists $\delta > 0$, such that
	\[
	|t - t'| < \delta \implies \norm*{H(\cdot,\cdot,t) - H(\cdot,\cdot,t')}_{u,R} < \epsilon \text{ for all } t, t' \in [0,T].
	\]
	(where we have upgraded from continuity to uniform continuity because of compactness of $[0,T]$).
\end{enumerate}
\end{assumption}

\begin{observation}
Under this assumption, as in Observation \ref{obs:continuousextension}, we see that $H(\cdot,\cdot,t)$ has a unique continuous extension to $L^2(\mathcal{R}^\omega)_{sa}^{m+n}$.  Furthermore, for each $(x,y) \in L^2(\mathcal{R}^\omega)_{sa}^{m+n}$, the function $t \mapsto H(x,y,t)$ is continuous (though the modulus of continuity cannot be chosen independent of $(x,y)$).  Continuity follows because there exists a sequence $(x_n,y_n) \in (\mathcal{R}^\omega)_{sa}^{m+n}$ such that $(x_n,y_n) \to (x,y)$ in $\norm*{\cdot}_2$.  Now $H(x_n,y_n,\cdot)$ is continuous by assumption (3), but assumption (2) implies that $H(x_n,y_n,\cdot) \to H(x,y,\cdot)$ uniformly on $[0,T]$.
\end{observation}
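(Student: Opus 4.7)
The plan is to verify each of the two claims in the observation by elementary density and uniform continuity arguments, both of which follow almost mechanically from the three hypotheses listed in Assumption \ref{ass:vectorfield}.

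For the first claim, that $H(\cdot,\cdot,t)$ extends uniquely and continuously to $L^2(\mathcal{R}^\omega)_{sa}^{m+n}$, I would rely entirely on assumption (2): for fixed $t$ the map $H(\cdot,\cdot,t) : (\mathcal{R}^\omega)_{sa}^{m+n} \to L^2(\mathcal{R}^\omega)_{sa}^m$ is $K$-Lipschitz in $\norm{\cdot}_2$. Since $(\mathcal{R}^\omega)_{sa}$ is $\norm{\cdot}_2$-dense in $L^2(\mathcal{R}^\omega)_{sa}$ (bounded self-adjoint elements are dense in the self-adjoint part of $L^2$, and finite products of dense sets are dense), a $K$-Lipschitz function from a dense subset of a metric space into a complete metric space admits a unique continuous extension, which is again $K$-Lipschitz. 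This gives the extension and the uniqueness at once.

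For the second claim, continuity of $t \mapsto H(x,y,t)$ at a fixed $(x,y) \in L^2(\mathcal{R}^\omega)_{sa}^{m+n}$, I would approximate $(x,y)$ by a sequence $(x_n,y_n) \in (\mathcal{R}^\omega)_{sa}^{m+n}$ with $\norm{(x_n,y_n) - (x,y)}_2 \to 0$. For each $n$, the bounded-operator tuple $(x_n,y_n)$ lies in the operator-norm ball of some radius $R_n$, and the seminorm $\norm{\cdot}_{u,R_n}$ dominates $\norm{\cdot}_2$-evaluation at $(x_n,y_n)$; so the Fr\'echet continuity of $t \mapsto H(\cdot,\cdot,t) \in \overline{\TrP}_{m+n}^1$ from assumption (3) implies that $t \mapsto H(x_n,y_n,t)$ is continuous (and even uniformly continuous on $[0,T]$ by compactness). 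Next, assumption (2) yields the uniform-in-$t$ estimate
\[
\sup_{t \in [0,T]} \norm{H(x_n,y_n,t) - H(x,y,t)}_2 \leq K \norm{(x_n,y_n) - (x,y)}_2 \to 0,
\]
so $H(x_n,y_n,\cdot) \to H(x,y,\cdot)$ uniformly on $[0,T]$. A uniform limit of continuous functions is continuous, which gives the result.

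There is no real obstacle here; the only thing worth double-checking is that the Fr\'echet topology on $\overline{\TrP}_{m+n}^1$ (uniform convergence on operator-norm balls, per the definition of $\norm{\cdot}_{u,R}$) genuinely controls the pointwise $\norm{\cdot}_2$-evaluation at $(x_n,y_n)$, which is immediate once one notes that $\norm{x_n}_\infty, \norm{y_n}_\infty$ are bounded so that $(x_n,y_n)$ stays inside a fixed $\norm{\cdot}_\infty$-ball for all sufficiently large $n$. The remark that the modulus of continuity of $t \mapsto H(x,y,t)$ depends on $(x,y)$ is consistent with the proof: the rate at which $(x_n,y_n) \to (x,y)$ determines how large one must take $n$ to get a useful uniform-in-$t$ approximation, and this depends on $(x,y)$ itself.
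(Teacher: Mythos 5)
Your argument is correct and matches the paper's own reasoning essentially verbatim: the unique Lipschitz extension to $L^2(\mathcal{R}^\omega)_{sa}^{m+n}$ comes from assumption (2) via density of bounded self-adjoint tuples, and continuity in $t$ follows by approximating $(x,y)$ by bounded tuples $(x_n,y_n)$, using assumption (3) for continuity of $t \mapsto H(x_n,y_n,t)$ and assumption (2) for the uniform-in-$t$ convergence $H(x_n,y_n,\cdot) \to H(x,y,\cdot)$ on $[0,T]$. Your extra check that the Fr\'echet seminorms $\norm{\cdot}_{u,R}$ control evaluation at operator-norm-bounded tuples is a sensible detail but does not change the substance.
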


Under these assumptions, \eqref{eq:IVP} can be solved by the standard method of Picard iteration.  We first verify that Assumption \ref{ass:vectorfield} is preserved under the composition and integration operations used to define Picard iterates.

\begin{lemma} \label{lem:vectorfieldintegration}
Suppose that $H(x,y,t)$ satisfies Assumption \ref{ass:vectorfield} and suppose that $G_0 \in (\overline{\TrP}_{m+n}^1)_{sa}^m$ is globally $\norm*{\cdot}_2$-Lipschitz.  Then the function
\[
G(x,y,t) = G_0(x,y) + \int_0^t H(x,y,s)\,ds
\]
is well-defined by Riemann integration and it also satisfies Assumption \ref{ass:vectorfield}.
\end{lemma}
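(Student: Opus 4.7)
The plan is to check the four conditions for $G$ in turn: Riemann integrability of $s \mapsto H(\cdot,\cdot,s)$ as a curve in the Fréchet space $(\overline{\TrP}_{m+n}^1)_{sa}^m$, self-adjointness and membership in $(\overline{\TrP}_{m+n}^1)_{sa}^m$ of the resulting integral, the uniform Lipschitz bound in $(x,y)$, and finally the Fréchet-continuous dependence on $t$.

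First I would note that assumption (3) together with compactness of $[0,T]$ gives uniform continuity of $s \mapsto H(\cdot,\cdot,s)$ with respect to each seminorm $\norm{\cdot}_{u,R}$, and also implies that $\{H(\cdot,\cdot,s) : s \in [0,T]\}$ has bounded image in every seminorm. Hence Riemann sums
\[
\sum_k H(\cdot,\cdot,s_k)(s_{k+1} - s_k)
\]
form a Cauchy net with respect to each $\norm{\cdot}_{u,R}$ as the mesh shrinks, so they converge in the Fréchet topology. Each Riemann sum is a self-adjoint $m$-tuple in $(\overline{\TrP}_{m+n}^1)_{sa}^m$ (being a finite real-linear combination of elements of this space), and since $(\overline{\TrP}_{m+n}^1)_{sa}^m$ is closed in $(\mathcal{F}_{m+n}^1)^m$, the limit $\int_0^t H(\cdot,\cdot,s)\,ds$ lies in $(\overline{\TrP}_{m+n}^1)_{sa}^m$. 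Consequently $G(\cdot,\cdot,t) \in (\overline{\TrP}_{m+n}^1)_{sa}^m$ for each $t$, which verifies condition (1) of Assumption \ref{ass:vectorfield}.

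For condition (2), I would fix $t \in [0,T]$ and estimate, using the Lipschitz constant $K_0$ of $G_0$ and the uniform Lipschitz constant $K$ for $H(\cdot,\cdot,s)$,
\[
\norm{G(x,y,t) - G(x',y',t)}_2 \leq K_0 \norm{(x,y) - (x',y')}_2 + \int_0^t K \norm{(x,y) - (x',y')}_2 \,ds,
\]
which gives the uniform $(K_0 + TK)$-Lipschitz bound independent of $t$. For condition (3), given $R > 0$ and $\epsilon > 0$, set $M_R = \sup_{s \in [0,T]} \norm{H(\cdot,\cdot,s)}_{u,R}$, which is finite by the boundedness observation above. Then for $t, t' \in [0,T]$ with $|t - t'| < \epsilon/M_R$,
\[
\norm{G(\cdot,\cdot,t) - G(\cdot,\cdot,t')}_{u,R} = \Bigl\lVert \int_{t'}^t H(\cdot,\cdot,s)\,ds \Bigr\rVert_{u,R} \leq M_R |t - t'| < \epsilon,
\]
establishing Fréchet continuity of $t \mapsto G(\cdot,\cdot,t)$.

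The only genuinely delicate point is verifying that Riemann sums for a Fréchet-continuous curve converge in the Fréchet topology and that the limit stays in the closed subspace $(\overline{\TrP}_{m+n}^1)_{sa}^m$; this is standard once one unpacks that the Fréchet topology is generated by the countable family $\{\norm{\cdot}_{u,R}\}_{R \in \N}$, so I expect no real obstacle beyond bookkeeping.
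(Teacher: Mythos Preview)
Your proposal is correct and follows essentially the same route as the paper. The only stylistic difference is in verifying condition (1): the paper chooses, for each subinterval of a fine partition, an actual trace polynomial $h_j$ approximating $H(\cdot,\cdot,t_j)$ in $\norm{\cdot}_{u,R}$ and shows the resulting Riemann-type sum $\sum_j (t_j - t_{j-1}) h_j$ approximates the integral, whereas you invoke directly that Riemann sums of the $H(\cdot,\cdot,s_k)$ already lie in the closed subspace $(\overline{\TrP}_{m+n}^1)_{sa}^m$ and pass to the Fr\'echet limit there; these are two phrasings of the same argument.
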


\begin{proof}
The Riemann integral is defined because $t \mapsto H(x,y,t)$ is continuous with respect to $\norm*{\cdot}_2$ for each $(x,y) \in (\mathcal{R}^\omega)_{sa}^{m+n}$ (and in fact, each $(x,y) \in L^2(\mathcal{R}^\omega)_{sa}^{m+n}$).  Now let us check that $G$ satisfies Assumption \ref{ass:vectorfield}.

(1) Fix $R > 0$ and $\epsilon > 0$.  By assumption (2) for $H$, there exists $\delta > 0$ such that
\[
|t - t'| < \delta \implies \norm*{H(\cdot,\cdot,t) - H(\cdot,\cdot,t')}_{u,R} < \frac{\epsilon}{2T}.
\]
Fix $t$, then choose a partition $0 = t_0$, \dots, $t_n = t$ of $[0,t]$ such that $|t_j - t_{j-1}| < \delta$.  Then let $h_j \in (\TrP_{m+n}^1)_{sa}^m$ such that
\[
\norm*{h_j - H(\cdot,\cdot,t_j)}_{u,R} < \frac{\epsilon}{2T}.
\]
Then
\[
\norm*{h_j - H(\cdot,\cdot,s)}_{u,R} < \frac{\epsilon}{T} \text{ for all } s \in [t_{j-1},t_j].
\]
Therefore,
\[
\norm*{ \int_0^t H(\cdot,\cdot,s)\,ds - \sum_{j=1}^n (t_j - t_{j-1}) h_j }_{u,R} < \sum_{j=1}^n (t_j - t_{j-1}) \frac{\epsilon}{T} = \frac{\epsilon t}{T} \leq \epsilon.
\]
This shows that $\int_0^t H(\cdot,\cdot,s)\,ds$ is in $(\overline{\TrP}_{m+n}^1)_{sa}^m$.  Because $G_0$ is in this space as well, this implies that $G(\cdot,\cdot,t)$ is in $(\overline{\TrP}_{m+n}^1)_{sa}^m$ as desired.

(2) If $H(\cdot,\cdot,t)$ is $K$-Lipschitz for all $t$, then $\norm*{G(\cdot,t)}_{\Lip} \leq \norm*{G_0}_{\Lip} + tK$.

(3) Since $t \mapsto H(\cdot,\cdot,t)$ is continuous with respect to $\norm*{\cdot}_{u,R}$, we must have $\norm*{H(\cdot,\cdot,t)}_{u,R} \leq M$ for some constant $M$.  Then $\norm*{G(\cdot,\cdot,t) - G(\cdot,\cdot,t')}_{u,R} \leq M |t - t'|$.
\end{proof}

\begin{lemma} \label{lem:vectorfieldcomposition}
Suppose that $H(x,y,t)$ and $G(x,y,t)$ satisfy Assumption \ref{ass:vectorfield}.  Then $H(G(x,y,t),y,t)$ also satisfies Assumption \ref{ass:vectorfield}.
\end{lemma}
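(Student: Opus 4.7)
The plan is to verify the three conditions of Assumption \ref{ass:vectorfield} in turn for $K(x,y,t) := H(G(x,y,t), y, t)$. Conditions (1) and (2) will follow quickly from the composition and Lipschitz tools already established. Condition (3) is the substantive step, and the main obstacle there is that to apply Fr{\'e}chet continuity of $H$ in $t$ we need to control $G(x,y,t_0)$ in operator norm, not merely in $\norm{\cdot}_2$.

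For (1), with $t$ fixed, the tuple $(x,y) \mapsto (G(x,y,t), y)$ lies in $(\overline{\TrP}_{m+n}^1)_{sa}^{m+n}$, since $G(\cdot,\cdot,t)$ is in $(\overline{\TrP}_{m+n}^1)_{sa}^m$ by hypothesis and the $y$-projection consists of trace polynomials. Because $H(\cdot,\cdot,t)$ is $\norm{\cdot}_2$-Lipschitz (and hence uniformly continuous) by Assumption \ref{ass:vectorfield}(2), Lemma \ref{lem:composition}(1) gives $K(\cdot,\cdot,t) \in (\overline{\TrP}_{m+n}^1)_{sa}^m$. For (2), letting $K_H$ and $K_G$ denote the Lipschitz constants of $H$ and $G$ (uniform in $t$ by assumption), a direct chain of estimates yields $\norm{K(x,y,t) - K(x',y',t)}_2 \leq K_H \sqrt{K_G^2 + 1}\, \norm{(x,y) - (x',y')}_2$.

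For (3), fix $R > 0$, $\epsilon > 0$, and $t_0 \in [0,T]$. I would split
\begin{align*}
K(x,y,t) - K(x,y,t_0) &= \bigl[H(G(x,y,t),y,t) - H(G(x,y,t_0),y,t)\bigr] \\
&\quad + \bigl[H(G(x,y,t_0),y,t) - H(G(x,y,t_0),y,t_0)\bigr].
\end{align*}
The first bracket is bounded in $\norm{\cdot}_{u,R}$ by $K_H \norm{G(\cdot,\cdot,t) - G(\cdot,\cdot,t_0)}_{u,R}$, which tends to $0$ as $t \to t_0$ by condition (3) for $G$. For the second bracket, I need a radius $R'$ such that $\norm{(x,y)}_\infty \leq R$ forces $\norm{(G(x,y,t_0),y)}_\infty \leq R'$; the bracket is then bounded by $\norm{H(\cdot,\cdot,t) - H(\cdot,\cdot,t_0)}_{u,R'}$, which tends to $0$ by condition (3) for $H$.

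Producing this operator norm bound $R'$ is the crux of the argument, and here I would invoke Proposition \ref{prop:operatornormestimate}. For any $(x,y) \in (\mathcal{R}^\omega)_{sa}^{m+n}$ with $\norm{(x,y)}_\infty \leq R$, Remark \ref{rem:concentrationmodels} supplies unitarily invariant random matrix models whose non-commutative law converges to that of $(x,y)$ and which satisfy \eqref{eq:normalizedHerbst} with a concentration constant $c$ depending only on $m$, $n$, $R$. Applying Proposition \ref{prop:operatornormestimate} to the $\norm{\cdot}_2$-Lipschitz function $G(\cdot,\cdot,t_0)$ yields $\norm{G(x,y,t_0) - \tau(G(x,y,t_0))}_\infty \leq \Theta c^{-1/2} K_G$, while the scalar part satisfies $|\tau(G(x,y,t_0))| \leq \norm{G(\cdot,\cdot,t_0)}_{u,R}$, which is finite by (1). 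Setting $R' := R + \norm{G(\cdot,\cdot,t_0)}_{u,R} + \Theta c^{-1/2} K_G$ gives the required bound and completes the proof of (3).
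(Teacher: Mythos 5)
Your proof is correct, but the key step (condition (3)) is handled by a genuinely different mechanism than the paper's. The paper never bounds $G(x,y,t_0)$ in operator norm at all: following the pattern of Lemma \ref{lem:vectorfieldintegration}, it chooses a partition of $[0,T]$ and trace-polynomial approximants $g_j$ with $\norm{g_j - G(\cdot,\cdot,t)}_{u,R} < \epsilon/4K$ on each subinterval, uses the fact that honest trace polynomials map operator-norm balls into operator-norm balls to get the radius $R'$ for $(g_j(x,y),y)$, and then transfers between $G$ and $g_j$ via the Lipschitz property of $H$ (a three-term triangle inequality). You instead prove directly that $G(\cdot,\cdot,t_0)$ itself is operator-norm bounded on operator-norm balls, by combining Remark \ref{rem:concentrationmodels} (which applies since $(x,y)$ generates a subalgebra of $\mathcal{R}^\omega$, giving unitarily invariant models satisfying \eqref{eq:normalizedHerbst} with $c$ uniform over the ball of radius $R$) with Proposition \ref{prop:operatornormestimate} applied to the globally $\norm{\cdot}_2$-Lipschitz function $G(\cdot,\cdot,t_0)$; this lets you apply the Fr{\'e}chet continuity of $H$ at radius $R'$ without any polynomial approximation of $G$. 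Your route uses heavier machinery (concentration of measure and the $\epsilon$-net estimate) where the paper's argument is purely approximation-theoretic, but it is cleaner in structure and yields a stronger intermediate fact — that $\norm{\cdot}_2$-Lipschitz elements of $\overline{\TrP}_{m+n}^1$ are automatically bounded on operator-norm balls — which the paper itself exploits elsewhere (e.g.\ in the model-theoretic remark). Two small points you leave implicit: the composition $H(G(x,y,t),y,t)$ and the first-bracket Lipschitz estimate should be justified either via the $L^2$-extension of $H(\cdot,\cdot,t)$ (the Observation following Assumption \ref{ass:vectorfield}) or by noting that your operator-norm bound applies at every $t$, and your $R'$ depends on $t_0$ through $\norm{G(\cdot,\cdot,t_0)}_{u,R}$, which is harmless since only continuity at each $t_0$ is required (uniformity then follows from compactness of $[0,T]$, as the assumption itself notes).
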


\begin{proof}
The composition makes sense because $H(x,y,t)$ extends to be defined for $(x,y) \in L^2(\mathcal{R}^\omega)_{sa}^{m+n}$.  It follows from Lemma \ref{lem:composition} that $H(G(x,y,t),y,t)$ satisfies (1).  The Lipschitz estimate (2) is straightforward and left to the reader.  To prove (3), let $K$ be a Lipschitz constant for $H$ as a function of $(x,y)$ that works for all $t$.  Fix $\epsilon > 0$.  Proceeding as in the proof of Lemma \ref{lem:vectorfieldintegration}, we can choose a partition $\{t_0,\dots,t_n\}$ of $[0,T]$ and $g_j \in (\overline{\TrP}_{m+n}^1)_{sa}^m$ such that
\[
\norm*{g_j - G(\cdot,\cdot,t)}_{u,R} < \frac{\epsilon}{4K} \text{ for } t \in [t_{j-1},t_j].
\]
Then there exists some $R'$ such that $\norm*{(x,y)}_\infty \leq R$ implies $\norm*{(g_j(x,y),y)}_\infty \leq R'$ for all $j$.  Then by applying assumption (3) to $H$, there exists $\delta$ such that
\[
|t - t'| < \delta \implies \norm*{H(\cdot,\cdot,t) - H(\cdot,\cdot,t')}_{u,R'} < \frac{\epsilon}{4}.
\]
We also choose $\delta'$ such that
\[
|t - t'| < \delta' \implies \norm*{G(\cdot,\cdot,t) - G(\cdot,\cdot,t')}_{u,R} < \frac{\epsilon}{4K}.
\]
Supposing that $\norm*{(x,y)}_\infty \leq R$ and $|t - t'| < \min(\delta,\delta')$, we have
\begin{align*}
&\norm*{H(G(x,y,t),y,t) - H(G(x,y,t'),y,t')}_2 \\
&\leq \norm*{H(G(x,y,t),y,t) - H(G(x,y,t'),y,t)}_2 + \norm*{H(G(x,y,t'),y,t) - H(G(x,y,t'),y,t')}_2 \\
&\leq K \norm*{G(x,y,t) - G(x,y,t')}_2 + \norm*{H(G(x,y,t'),y,t) - H(G(x,y,t'),y,t')}_2 \\
&\leq \frac{\epsilon}{4} + \norm*{H(G(x,y,t'),y,t) - H(G(x,y,t'),y,t')}_2.
\end{align*}
Meanwhile, after we pick $j$ such that $t' \in [t_{j-1},t_j]$, then
\begin{align*}
\norm*{H(G(x,y,t'),y,t) - H(G(x,y,t'),y,t')}_2 &\leq \norm*{H(G(x,y,t'),y,t) - H(g_j(x,y),y,t)}_2 \\
& \quad + \norm*{H(g_j(x,y),y,t) - H(g_j(x,y),y,t')}_2 \\
& \quad + \norm*{H(g_j(x,y),y,t) - H(G(x,y,t'),y,t')}_2.
\end{align*}
The middle term can be estimated by $\epsilon / 4$ because $\norm*{g_j(x,y),y)}_\infty \leq R'$.  Meanwhile, the first and third terms can each be estimated by $K (\epsilon / 4K) = \epsilon / 4$ using the Lipschitz property of $H$ and our choice of $g_j$.  Altogether, $|t - t'| < \min(\delta,\delta')$ implies that $\norm*{H(G(x,y,t),y,t) - H(G(x,y,t'),y,t')}_2 < \epsilon$ whenever $\norm*{(x,y)}_\infty \leq R$.
\end{proof}

\begin{proposition} \label{prop:ODE}
Let $H(x,y,t)$ satisfy Assumption \ref{ass:vectorfield} and let $G_0 \in (\overline{\TrP}_{m+n}^1)_{sa}^m$.  Then there exists a unique continuous $F: L^2(\mathcal{R}^\omega)_{sa}^{m+n} \times [0,T] \to L^2(\mathcal{R}^\omega)_{sa}^m$ satisfying
\[
F(x,y,t) = G_0(x,y) + \int_0^t H(F(x,y,s),y,s)\,ds.
\]
Moreover, $F(x,y,t)$ also satisfies Assumption \ref{ass:vectorfield}.
\end{proposition}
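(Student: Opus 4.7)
The plan is to follow the classical Picard iteration scheme, adapted to the Fr\'echet space $(\overline{\TrP}_{m+n}^1)_{sa}^m$. I would set $F_0(x,y,t) := G_0(x,y)$ and define inductively
$$F_{k+1}(x,y,t) := G_0(x,y) + \int_0^t H(F_k(x,y,s),y,s)\,ds.$$
By Lemma \ref{lem:vectorfieldcomposition}, the vector field $(x,y,t) \mapsto H(F_k(x,y,t),y,t)$ satisfies Assumption \ref{ass:vectorfield} provided that $F_k$ does, and then Lemma \ref{lem:vectorfieldintegration} guarantees the same for $F_{k+1}$. By induction, every iterate $F_k(\cdot,\cdot,t)$ lives in $(\overline{\TrP}_{m+n}^1)_{sa}^m$, is continuous in $t$ with respect to every $\norm{\cdot}_{u,R}$, and is uniformly $\norm{\cdot}_2$-Lipschitz in $(x,y)$ on $[0,T]$.

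Next I would establish the standard contraction estimate. Let $K$ be the common Lipschitz constant of $H(\cdot,\cdot,s)$ in $(x,y)$ supplied by Assumption \ref{ass:vectorfield}(2). Writing
$$M_{k,R}(t) := \sup_{s \in [0,t]} \norm{F_{k+1}(\cdot,\cdot,s) - F_k(\cdot,\cdot,s)}_{u,R},$$
the Lipschitz bound on $H$ gives $M_{k,R}(t) \leq K \int_0^t M_{k-1,R}(s)\,ds$, and iterating yields
$$M_{k,R}(T) \leq \frac{(KT)^k}{k!}\, M_{0,R}(T),$$
where $M_{0,R}(T)$ is finite because $(x,y,s) \mapsto H(G_0(x,y),y,s)$ is bounded on the relevant operator-norm ball (by Lipschitzness in $(x,y)$ plus continuity in $s$). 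Hence $\{F_k(\cdot,\cdot,t)\}$ is Cauchy in $(\overline{\TrP}_{m+n}^1)_{sa}^m$ \emph{uniformly} in $t \in [0,T]$, and since $\overline{\TrP}_{m+n}^1$ is a Fr\'echet space the sequence converges to some $F(\cdot,\cdot,t)$ in that space. Passing to the limit under the Riemann integral (allowed because the integrands converge uniformly on operator-norm balls, by continuity of integration in Lemma \ref{lem:vectorfieldintegration}) delivers the fixed-point equation. Each clause of Assumption \ref{ass:vectorfield} then passes to the limit: the Lipschitz constant in $(x,y)$ is controlled by $\norm{G_0}_{\Lip} + KT\,e^{KT}$ via Gronwall, and the modulus of continuity in $t$ inherits from $G_0$ and from the uniform estimate on $H$.

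For uniqueness, suppose $F$ and $\tilde F$ are two solutions. Fixing $(x,y) \in L^2(\mathcal{R}^\omega)_{sa}^{m+n}$ and applying the Lipschitz bound on $H$ pointwise,
$$\norm{F(x,y,t) - \tilde F(x,y,t)}_2 \leq K \int_0^t \norm{F(x,y,s) - \tilde F(x,y,s)}_2\,ds,$$
so Gronwall forces $F = \tilde F$. The unique continuous extension of $F(\cdot,\cdot,t)$ from $(\mathcal{R}^\omega)_{sa}^{m+n}$ to $L^2(\mathcal{R}^\omega)_{sa}^{m+n}$ is automatic from the uniform Lipschitz property, exactly as in Observation \ref{obs:continuousextension}.

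The one nontrivial point is verifying that the composition $H(F(x,y,s),y,s)$ is meaningful and well-behaved when $F(x,y,s)$ takes values in $L^2(\mathcal{R}^\omega)$ rather than just in the von Neumann algebra. This has been taken care of in advance by the Lipschitz extension of $H$ granted by Assumption \ref{ass:vectorfield}(2) together with Lemma \ref{lem:vectorfieldcomposition}; beyond this, the proof is the usual Banach contraction argument run one Fr\'echet seminorm at a time.
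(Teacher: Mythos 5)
Your proposal is correct and takes essentially the same route as the paper: Picard iteration, with Lemmas \ref{lem:vectorfieldintegration} and \ref{lem:vectorfieldcomposition} guaranteeing each iterate satisfies Assumption \ref{ass:vectorfield}, factorial estimates for convergence (you run the contraction uniformly in the seminorms $\norm{\cdot}_{u,R}$, while the paper runs it pointwise with a bound $M(x,y)$ that grows linearly in $\norm{(x,y)}_2$ and hence gives uniform convergence on operator-norm balls), and Gronwall for uniqueness and the Lipschitz property. The only cosmetic discrepancy is your stated Lipschitz bound $\norm{G_0}_{\Lip} + KT e^{KT}$; the Gronwall/induction argument actually yields $e^{Kt}\norm{G_0(x,y)-G_0(x',y')}_2 + (e^{Kt}-1)\norm{y-y'}_2$ as in the paper, but any uniform constant suffices for Assumption \ref{ass:vectorfield}(2), so this does not affect the argument.
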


\begin{proof}
We define the Picard iterates $F_\ell$ inductively by
\begin{align*}
F_0(x,y,t) &= G_0(x,y) \\
F_{\ell+1}(x,y,t) &= G_0(x,y) + \int_0^t H(F_\ell(x,y,s),y,s)\,ds.
\end{align*}
The previous two lemmas imply that $F_k$ is well-defined and satisfies Assumption \ref{ass:vectorfield}.  Convergence of the Picard iterates follows from the standard proof of Picard-Lindel{\"o}f.  Briefly, given that $H$ is $K$-Lipschitz in $(x,y)$ with respect to $\norm*{\cdot}_2$, we have
\[
\norm*{F_{\ell+1}(x,y,t) - F_\ell(x,y,t)}_2 \leq K \int_0^t \norm*{F_\ell(x,y,s) - F_{\ell-1}(x,y,s)}_2\,ds.
\]
Also, we have
\[
\norm*{F_1(x,y,t) - F_0(x,y,t)}_2 \leq t M(x,y).
\]
where $M(x,y) = \sup_{s \in [0,T]} \norm*{H(G_0(x,y),y,s)}_2$, which is finite because of continuity of $H(G_0(x,y),y,t)$ in $t$.  From here a straightforward induction on $\ell$ shows that for $\ell \geq 1$,
\[
\norm*{F_{\ell}(x,y,t) - F_{\ell-1}(x,y,t)}_2 \leq \frac{K^{\ell-1} t^\ell}{\ell !}
\]
because $K \int_0^t K^{\ell-1} s^\ell / \ell!\,ds = K^\ell s^{\ell+1} / (\ell + 1)!$.  Now because $\sum_{\ell=1}^\infty K^{\ell - 1} s^\ell / \ell!$ converges, we know that
\[
F(x,y,t) := \lim_{\ell \to \infty} F_\ell(x,y,t) \text{ exists,}
\]
and
\[
\norm*{F_\ell(x,y,t) - F(x,y,t)}_2 \leq M(x,y) \sum_{j=\ell+1}^\infty \frac{K^{j-1} t^j}{j!}.
\]
The fact that $F(x,y,t)$ satisfies the integral equation is straightforward, and the proof of the uniqueness of this $F$ is also standard.

It remains to show that $F$ satisfies Assumption \ref{ass:vectorfield}.  First, recall that $H(G_0(x,y),y,t)$ is Lipschitz in $(x,y)$ uniformly for all $t$.  If $K'$ is a Lipschitz constant for this function, then
\[
M(x,y) \leq M(0,0) + K' \norm*{(x,y)}_2.
\]
In particular,
\[
\norm*{F_\ell(x,y,t) - F(x,y,t)}_2 \leq (M(0,0) + K' \norm*{(x,y)}_2) \sum_{j=\ell+1}^\infty \frac{K^{j-1} t^j}{j!}.
\]
This implies that the convergence of $F_\ell$ to $F$ occurs uniformly for $(x,y)$ with $\norm*{(x,y)}_\infty \leq R$ and all $t \in [0,T]$.  Then because $F_\ell(\cdot,\cdot,t)$ can be approximated in $\norm*{\cdot}_{u,R}$ by trace polynomials, the same must be true for $F(\cdot,\cdot,t)$ for each $t$, which shows that $F$ satisfies (1).  Similarly, because of the uniform convergence of $F_\ell$ to $F$ for $\norm*{(x,y)}_\infty \leq R$ and $t \in [0,T]$, the uniform continuity property (3) for $F$ follows from property (3) for $F_\ell$.

Finally, we must show (2) that $F$ is Lipschitz in $(x,y)$.  More precisely, we claim that
\[
\norm*{F(x,y,t) - F(x',y',t)}_2 \leq e^{Kt} \norm*{G_0(x,y) - G_0(x',y')}_2 + (e^{Kt} - 1) \norm*{y - y'}_2.
\]
Now it suffices to check that each Picard iterate $F_\ell$ satisfies this estimate.  This can be verified by induction on $\ell$.  The base case $F_0(x,y,t) = G_0(x,y)$ is immediate.  For the induction step, we observe that
\begin{align*}
& \norm*{F_{\ell+1}(x,y,t) - F_\ell(x',y',t)}_2 \\
&\leq \norm*{G_0(x,y) - G_0(x',y')}_2 + \int \norm*{H(F_\ell(x,y,s),y,s) - H(F_\ell(x',y',s),y',s)}_2\,ds \\
&\leq \norm*{G_0(x,y) - G_0(x',y')}_2 + \int K \left( \norm*{F_\ell(x,y,s) - F_\ell(x',y',s)}_2 + \norm*{y - y'}_2 \right) \,ds,
\end{align*}
using the fact that $H$ is $K$-Lipschitz.  Then we plug in our induction hypothesis that $\norm*{F_\ell(x,y,s) - F_\ell(x',y',s)}_2$ is bounded by $e^{Kt} \norm*{G_0(x,y) - G_0(x',y')}_2 + (e^{Kt} - 1) \norm*{y - y'}_2$, and then directly evaluate the integral to close the induction.
\end{proof}

We have now shown that it makes sense to solve ODE for tuples in $(\overline{\TrP}_m^1)_{sa}$.  There is a parallel list of results which instead deal with functions on $N \times N$ matrices that are asymptotically approximable as $N \to \infty$.  We use the following assumptions.

\begin{assumption} \label{ass:vectorfield2}
We are given $T > 0$ and for each $N \in \N$ a function $H^{(N)}: M_N(\C)_{sa}^m \times M_N(\C)_{sa}^n \times [0,T] \to M_N(\C)_{sa}^m$ such that
\begin{enumerate}
	\item For each $t$, there exists $H(\cdot,\cdot,t) \in (\overline{\TrP}_{m+n}^1)_{sa}^m$ such that $H^{(N)}(\cdot,\cdot,t) \rightsquigarrow H(\cdot,\cdot,t)$.
	\item $H^{(N)}$ is $\norm*{\cdot}_2$-Lipschitz in $(x,y)$ with some Lipschitz constant $K$ independent of $t$ and $N$.
	\item For every $R > 0$ and for every $\epsilon > 0$, there exists $\delta > 0$, such that
	\[
	|t - t'| < \delta \implies \norm*{H^{(N)}(\cdot,\cdot,t) - H^{(N)}(\cdot,\cdot,t')}_{u,R}^{(N)} < \epsilon \text{ for all } t, t' \in [0,T] \text{ for all } N.
	\]
\end{enumerate}
\end{assumption}

\begin{proposition} \label{prop:ODE2}
Let $\{H^{(N)}\}$ satisfy Assumption \ref{ass:vectorfield2}, and let $G_0^{(N)}: M_N(\C)_{sa}^{m+n} \to M_N(\C)_{sa}^m$ be asymptotically approximable such that $G_0^{(N)} \rightsquigarrow G_0$ and $G_0^{(N)}$ is $\norm*{\cdot}_2$ Lipschitz uniformly in $N$.  Then for each $N$ there is a unique $F^{(N)}: M_N(\C)_{sa}^{m+n} \times [0,T] \to M_N(\C)_{sa}^m$ satisfying
\[
F^{(N)}(x,y,t) = G_0^{(N)}(x,y) + \int_0^t H^{(N)}(F^{(N)}(x,y,s),y,s)\,ds.
\]
Moreover, $\{F^{(N)}\}$ also satisfies Assumption \ref{ass:vectorfield}.  Furthermore, the vector field $H$ such that $H^{(N)}(\cdot, \cdot, t) \rightsquigarrow H(\cdot,\cdot,t)$ satisfies Assumption \ref{ass:vectorfield}, and we have $F^{(N)} \rightsquigarrow F$ where $F$ is the solution given by Proposition \ref{prop:ODE}.
\end{proposition}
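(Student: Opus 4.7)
The plan is to run the construction of Proposition \ref{prop:ODE} in parallel at two levels: fix $N$ and build $F^{(N)}$ by Picard iteration in the finite-dimensional Banach space $M_N(\C)_{sa}^{m+n}$ with $\norm{\cdot}_2$, while simultaneously building $F$ via the Picard iterates of Proposition \ref{prop:ODE}. The ingredients needed for the matrix-level construction are exactly those in Assumption \ref{ass:vectorfield2}: uniform Lipschitz in $(x,y)$ gives the contraction estimate, and uniform continuity in $t$ guarantees that the Riemann integrals in the iteration scheme make sense. Existence, uniqueness, and the standard Grönwall estimate produce $F^{(N)}$ with $\norm{\cdot}_2$-Lipschitz constant bounded by $(\norm{G_0^{(N)}}_{\Lip} + 1) e^{KT}$, a bound independent of $N$. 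Equicontinuity of $F^{(N)}(\cdot,\cdot,t)$ in $t$ (uniformly in $N$) follows by bounding $\norm{H^{(N)}(F^{(N)}(x,y,s),y,s)}_2$ using Assumption \ref{ass:vectorfield2}(2) together with Lemma \ref{lem:RVboundedness}-style estimates, and then using Assumption \ref{ass:vectorfield2}(3) for $H^{(N)}$. This verifies that $\{F^{(N)}\}$ satisfies Assumption \ref{ass:vectorfield2}.

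Next I would show that the limiting vector field $H$ satisfies Assumption \ref{ass:vectorfield}. Item (1) is hypothesis. Item (2), the Lipschitz constant, transfers from $H^{(N)}$ via Lemma \ref{lem:limituniformlycontinuous}. Item (3) follows from Lemma \ref{lem:AATP}, which gives
\[
\norm{H(\cdot,\cdot,t) - H(\cdot,\cdot,t')}_{u,R} = \lim_{N \to \infty} \norm{H^{(N)}(\cdot,\cdot,t) - H^{(N)}(\cdot,\cdot,t')}_{u,R}^{(N)},
\]
so the $N$-uniform time modulus of continuity of Assumption \ref{ass:vectorfield2}(3) descends. Proposition \ref{prop:ODE} then produces $F$ satisfying Assumption \ref{ass:vectorfield}.

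The main step is $F^{(N)} \rightsquigarrow F$. I would proceed by showing $F_\ell^{(N)} \rightsquigarrow F_\ell$ inductively for the Picard iterates defined by $F_0^{(N)} = G_0^{(N)}$, $F_{\ell+1}^{(N)}(x,y,t) = G_0^{(N)}(x,y) + \int_0^t H^{(N)}(F_\ell^{(N)}(x,y,s),y,s)\,ds$, and analogously for the $F_\ell$'s. The induction step combines two things. First, the composition $H^{(N)}(F_\ell^{(N)}(\cdot,\cdot,s), \cdot, s) \rightsquigarrow H(F_\ell(\cdot,\cdot,s), \cdot, s)$ for each fixed $s$, which follows from Lemma \ref{lem:composition}(3) applied with the uniform-in-$N$ Lipschitz modulus of $H^{(N)}$ (Assumption \ref{ass:vectorfield2}(2)) playing the role of the required uniform continuity. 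Second, this rightsquigarrow-convergence is preserved under time integration on $[0,t]$: the argument in the proof of Lemma \ref{lem:vectorfieldintegration}, applied at both the matrix and Fréchet levels and compared term-by-term on a common partition, shows $\int_0^t H^{(N)}(F_\ell^{(N)}, y, s)\,ds \rightsquigarrow \int_0^t H(F_\ell, y, s)\,ds$ (one picks a partition fine enough that the $t$-modulus error in Assumption \ref{ass:vectorfield2}(3) is $< \epsilon/3T$, approximates the sampled $H^{(N)}(F_\ell^{(N)}(\cdot,\cdot,t_j),\cdot,t_j)$ in $\overline{\TrP}_{m+n}^1$ within $\epsilon/3T$ using the induction hypothesis and Lemma \ref{lem:composition}, and controls the Riemann-sum error by a third $\epsilon/3T$).

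Finally I would invoke Lemma \ref{lem:limits} to pass from $F_\ell^{(N)} \rightsquigarrow F_\ell$ to $F^{(N)} \rightsquigarrow F$, which requires the uniform-in-$N$ bound
\[
\limsup_{N \to \infty} \norm{F_\ell^{(N)} - F^{(N)}}_{u,R}^{(N)} \leq M_R \sum_{j=\ell+1}^\infty \frac{K^{j-1} T^j}{j!} \xrightarrow{\ell \to \infty} 0,
\]
where $M_R = \sup_N \sup_{\norm{(x,y)}_\infty \leq R, s \in [0,T]} \norm{H^{(N)}(G_0^{(N)}(x,y), y, s)}_2$. This supremum is finite and independent of $N$: $G_0^{(N)} \rightsquigarrow G_0$ gives a $\norm{\cdot}_2$-bound on $G_0^{(N)}(x,y)$ for $\norm{(x,y)}_\infty \leq R$ uniformly in $N$, then the uniform Lipschitz property of $H^{(N)}$ plus the $u,R'$-boundedness of $H^{(N)}(\cdot,\cdot,s)$ (which holds uniformly in $s$ by Assumption \ref{ass:vectorfield2}(3)) give the bound, and the geometric-style Picard tail estimate derived in the proof of Proposition \ref{prop:ODE} applies verbatim at the matrix level. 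The main obstacle throughout is bookkeeping: ensuring every Lipschitz constant, operator-norm radius, and time modulus is independent of $N$, so that the matrix-level iterates converge at a rate uniform in $N$ and Lemma \ref{lem:limits} applies.
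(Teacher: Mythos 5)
Your proposal is correct and follows essentially the same route as the paper: Picard iteration run in parallel at the matrix and Fr\'echet levels with all constants made uniform in $N$, transfer of the hypotheses on $H$ via Lemmas \ref{lem:AATP} and \ref{lem:limituniformlycontinuous}, preservation of $\rightsquigarrow$ under composition (Lemma \ref{lem:composition}) and under time integration via Riemann sums controlled by Assumption \ref{ass:vectorfield2}(3), and finally Lemma \ref{lem:limits} together with an $N$-independent Picard tail bound obtained exactly as in the paper's estimate of $M^{(N)}$. No gaps.
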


\begin{proof}
The proof of existence and uniqueness of the solution is almost identical to that of Proposition \ref{prop:ODE}.  First, one shows that Assumption \ref{ass:vectorfield2} is preserved under integration and composition (analogous to Lemma \ref{lem:vectorfieldintegration} and \ref{lem:vectorfieldcomposition}).  Then exactly as in the proof of Proposition \ref{prop:ODE}, one defines Picard iterates, proves they converge, establishes Lipschitz bounds, and checks they satisfy Assumption \ref{ass:vectorfield2}.  The one additional feature in these proofs is to make all the estimates uniform in $N$.  For instance, the quantity $M(x,y)$ in the proof of Proposition \ref{prop:ODE} is replaced by
\[
M^{(N)}(x,y) = \sup_{s \in [0,T]} \norm*{H^{(N)}(G_0^{(N)}(x,y),y,s)}_2.
\]
Then $H^{(N)}(G_0^{(N)}(x,y),y,t)$ has some Lipschitz constant $K'$ independent of $N$, and
\[
M^{(N)}(x,y) \leq M^{(N)}(0,0) + K' \norm*{(x,y)}_2.
\]
But then we can show that $\sup_N M^{(N)}(0,0)$ is finite.  This is because if $\Phi^{(N)}(x,y,t) = H^{(N)}(G_0^{(N)}(x,y),y,t)$, then $\sup_N \sup_t \norm*{\Phi^{(N)}(\cdot,\cdot,t)}_{u,R}^{(N)}$ is finite because of Assumption \ref{ass:vectorfield} (3) and the fact that $\Phi^{(N)}(x,y,0)$ is asymptotically approximable and hence bounded in $\norm*{\cdot}_{u,R}^{(N)}$ as $N \to \infty$.

Now the fact that $H$ satisfies Assumption \ref{ass:vectorfield} is a straightforward limiting argument.  The key ingredient is that if $f^{(N)} \rightsquigarrow f$, then $\norm*{f}_{u,R} = \lim_{N \to \infty} \norm*{f^{(N)}}_{u,R}^{(N)}$.

Finally, to show that $F^{(N)} \rightsquigarrow F$, it suffices to show that for each of the Picard iterates $F_\ell^{(N)} \rightsquigarrow F_\ell$ because of the uniform convergence of $F_\ell^{(N)} \to F^{(N)}$ as $\ell \to \infty$ for $\norm*{(x,y)}_\infty \leq R$, where the rate of convergence is also independent of $N$.  Furthermore, since the Picard iterates are defined inductively by composition and integration, it suffices to show that the asymptotic approximation relation $\rightsquigarrow$ is preserved by these operations.  Preservation under integration follows because the integrals can be approximated by Riemann sums and this approximation is uniformly good for $\norm*{(x,y)}_\infty \leq R$ and for all $N$ because of the uniform continuity Assumption \ref{ass:vectorfield2} (3).  Preservation under composition follows from Lemma \ref{lem:composition}.
\end{proof}

\subsection{The Heat Semigroup} \label{subsec:heatsemigroup}

Recall that the solution to the classical heat equation is given by convolution the heat kernel (which is given by a Gaussian probability density).  In particular, let $\sigma_{m,t}^{(N)}$ be the probability distribution of an $m$-tuple of independent GUE matrices $(S_1^{(N)}, \dots, S_m^{(N)})$ such that $E[\tau_N[(S_j^{(N)})^2]] = t$, which is given by density $(1/Z^{(N)}) e^{-\norm*{x}_2^2 / 2t}\,dx$.  If $u_0: M_N(\C)_{sa}^m \to \C$, then $u_t := u_0 * \sigma_t^{(N)}$ solves the normalized heat equation
\[
\partial_t u_t = \frac{1}{2N} \Delta u_t.
\]
Here $u_0 * \sigma_{m,t}^{(N)}$ is meant in the sense of convolving a function with a measure, and this is the same as convolving of $u_0$ with the density function for $\sigma_{m,t}^{(N)}$.  The meaning of $\Delta$ is to be interpreted using coordinates with respect to some orthonormal basis of $M_N(\C)_{sa}$ in the inner product $\ip{x,y} = \Tr(xy)$; this is \emph{not} the same as differentiating entrywise since some of the entries are real and some are complex.

Our goal is to describe the large $N$ behavior of $u^{(N)} * \sigma_{m,t}^{(N)}$ when $\{u^{(N)}\}$ is asymptotically approximable by trace polynomials, and to define ``$u \boxplus \sigma_{m,t}$'' when $u \in \overline{\TrP}_m^j$.

In \cite[\S 3.2 and 3.3]{Jekel2018}, using similar methods to \cite{Cebron2013}, we explained the computation of $(1/N) \Delta f$ as a function on $M_N(\C)_{sa}^m$ when $f \in \TrP_m^0$ or $\TrP_m^1$.  More precisely, let $\Delta_j f(x_1,\dots,x_m)$ denote the Laplacian with respect to the coordinates of the matrix $x_j$.  We found that for $j = 1, \dots, m$ there are linear maps $L_j^{(N)}, L_j: \TrP_m^0 \to \TrP_m^0$ defined purely algebraically, such that $(1/N) \Delta_j f = L_j^{(N)} f$ when $f$ is viewed as a function on $M_N(\C)_{sa}^m$, $L_j^{(N)}$ and $L_j$ do not increase the degree of a trace polynomial, and $\lim_{N \to \infty} L_j^{(N)} f = L_j f$ coefficient-wise.

A similar analysis holds for the Laplacian of $f \in \TrP_m^1$ viewed as a function $M_N(\C)_{sa}^m \to M_N(\C)$.  Here we follow the standard convention of using the same symbol $\Delta$ for the Laplacians of vector-valued functions as for the Laplacians of scalar-valued functions; thus, the reader must be careful to distinguish scalar-valued and vector-valued functions based on context.  We saw that there were linear transformations $L_j^{(N)}, L_j: \TrP_m^1 \to \TrP_m^1$ such that $(1/N) \Delta_j f = L_j^{(N)}f$ as a function on matrices, $L_j^{(N)}$ and $L_j$ do not increase degree, and $L_j^{(N)} f \to L_j f$ coefficient-wise.

We deduced as a consequence that $e^{L^{(N)}t/2} f = f * \sigma_{m,t}^{(N)}$ has a well-defined large $N$ limit if $f$ is a trace polynomial \cite[Lemma 3.21]{Jekel2018}, and that if $\{u^{(N)}\}$ is asymptotically approximable by trace polynomials, then so is $\{u^{(N)} * \sigma_{m,t}^{(N)}\}$ \cite[Lemma 3.28]{Jekel2018}.

In order to establish ``conditional versions'' of our earlier results, we must consider trace polynomials $f(x_1,\dots,x_m, y_1, \dots, y_n)$ in $m + n$ variables and take the Laplacian with respect to $x = (x_1,\dots,x_m)$ while treating $y = (y_1,\dots,y_n)$ as an auxiliary parameter.  We denote by $\Delta_x = \sum_{j=1}^m \Delta_{x_j}$, $L_x^{(N)} = \sum_{j=1}^m L_{x_j}^{(N)}$, and $L_x = \sum_{j=1}^m L_{x_j}$ the various Laplacian operators with respect to $x$.

Because $L_x^{(N)}$ and $L_x$ map the finite-dimensional vector space trace polynomials of degree $\leq d$ into itself, there are well-defined linear operators $e^{tL_x^{(N)} / 2}$ and $e^{tL_x / 2}$ on the space of trace polynomials in $\TrP_{m+n}^j$ of degree $\leq d$ for each $j = 0, 1$ each $d \in \N$, and each real $t \geq 0$.  Since trace polynomials are the union of the subspaces of trace polynomials with degree $\leq d$, there are linear operators $e^{tL_x^{(N)}/2}, e^{tL_x / 2}: \TrP_{m+n}^j \to \TrP_{m+n}^j$.  Moreover, these operators form a semigroup, and they satisfy the following property, which is an extension of \cite[Theorem 2.4]{Cebron2013} to the spaces $\overline{\TrP}_m^j$.

\begin{lemma}
Let $(X,Y)$ be a random variable in $M_N(\C)_{sa}^{m+n}$ with finite moments, and let $S \sim \sigma_{m,t}^{(N)}$ be an independent GUE random variable in $M_N(\C)_{sa}^m$.  Then we have
\begin{equation} \label{eq:Gaussianconditionalexpectation}
E[f(X+S,Y) | (X,Y)] = [e^{tL_x^{(N)}/2} f](X,Y) \text{ for } f \in \TrP_{m+n}^0 \text{ or } f \in \TrP_{m+n}^1.
\end{equation}
Similarly, suppose that $(X,Y)$ is a tuple of self-adjoint non-commutative random variables, and let $S$ be a freely independent tuple with non-commutative law $\sigma_{m,t}$.  Then
\begin{equation} \label{eq:Gaussianconditionalexpectation2}
f(X+S,Y) = [e^{tL_x/2} f](X,Y) \text{ for } f \in \TrP_m^0,
\end{equation}
and
\begin{equation} \label{eq:Gaussianconditionalexpectation3}
E_{\mathrm{W}^*(X,Y)}[f(X+S,Y)] = [e^{tL_x/2} f](X,Y) \text{ for } f \in \TrP_m^1,
\end{equation}
where $E_{\mathrm{W}^*(X,Y)}: \mathrm{W}^*(X,Y,S) \to \mathrm{W}^*(X,Y)$ is the unique trace-preserving conditional expectation.
\end{lemma}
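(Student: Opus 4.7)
The plan is to prove the three assertions in sequence: \eqref{eq:Gaussianconditionalexpectation} directly from the classical heat equation, then \eqref{eq:Gaussianconditionalexpectation2} by passing to matrix models via asymptotic freeness, and finally \eqref{eq:Gaussianconditionalexpectation3} by reducing to \eqref{eq:Gaussianconditionalexpectation2} after enlarging the variable set.

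For \eqref{eq:Gaussianconditionalexpectation}: Since $S\sim\sigma_{m,t}^{(N)}$ is independent of $(X,Y)$, conditioning gives
\[
E[f(X+S,Y)\mid(X,Y)]=\int f(X+s,Y)\,d\sigma_{m,t}^{(N)}(s)=:u_t(X,Y),
\]
and the function $u_t(x,y)$ solves the Kolmogorov heat equation $\partial_t u_t=(1/2N)\Delta_x u_t$ with $u_0=f$ (where $\Delta_x$ is interpreted with respect to the $\ip{\cdot,\cdot}_{\Tr}$-basis, as in \S\ref{sec:RMbackground}). By the computation recalled from \cite[\S 3.2--3.3]{Jekel2018}, $(1/N)\Delta_x$ restricted to trace polynomials equals $L_x^{(N)}$. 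Since $L_x^{(N)}$ preserves the finite-dimensional subspace of trace polynomials of degree $\leq\deg f$, the ODE $\partial_t u_t=(L_x^{(N)}/2)u_t$ has the unique solution $u_t=e^{tL_x^{(N)}/2}f$ in that subspace, giving \eqref{eq:Gaussianconditionalexpectation}.

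For \eqref{eq:Gaussianconditionalexpectation2}: Realize $(X,Y)$ as the $N\to\infty$ limit, in non-commutative law, of a uniformly operator-norm-bounded sequence $(X^{(N)},Y^{(N)})\in M_N(\C)_{sa}^{m+n}$ (which exists because $\mathrm{W}^*(X,Y)$ is $\mathcal{R}^\omega$-embeddable, e.g.\ via the Haar-unitary conjugation construction of Remark \ref{rem:concentrationmodels}). Let $S^{(N)}\sim\sigma_{m,t}^{(N)}$ be independent of $(X^{(N)},Y^{(N)})$. Voiculescu's theorem on the asymptotic freeness of an independent GUE from any bounded matrix family, combined with the concentration bound \eqref{eq:normalizedHerbst} for the GUE, yields $\tau_N(p(X^{(N)}+S^{(N)},Y^{(N)}))\to\tau(p(X+S,Y))$ in probability for every $p\in\NCP_{m+n}$, and hence $f(X^{(N)}+S^{(N)},Y^{(N)})\to f(X+S,Y)$ in probability (and is uniformly bounded) for $f\in\TrP_{m+n}^0$. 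Taking expectations and applying \eqref{eq:Gaussianconditionalexpectation} with the tower property,
\[
E\bigl[f(X^{(N)}+S^{(N)},Y^{(N)})\bigr]=E\bigl[[e^{tL_x^{(N)}/2}f](X^{(N)},Y^{(N)})\bigr].
\]
Since $L_x^{(N)}\to L_x$ coefficient-wise on the finite-dimensional subspace of trace polynomials of degree $\leq\deg f$, the matrix exponential $e^{tL_x^{(N)}/2}f$ converges to $e^{tL_x/2}f$ coefficient-wise, and its evaluation on $(X^{(N)},Y^{(N)})$ converges to $[e^{tL_x/2}f](X,Y)$. This proves \eqref{eq:Gaussianconditionalexpectation2}.

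For \eqref{eq:Gaussianconditionalexpectation3}: The conditional expectation is characterized by $\tau(z\cdot f(X+S,Y))=\tau(z\cdot E_{\mathrm{W}^*(X,Y)}[f(X+S,Y)])$ for all $z\in\mathrm{W}^*(X,Y)$, and since polynomials are $L^2$-dense it suffices to test against $z=q(X,Y)$ for $q\in\NCP_{m+n}$. Form the enlarged trace polynomial $\tilde f(a,a',b):=q(a,b)\cdot f(a',b)\in\TrP_{2m+n}^1$ in variables $(a,a',b)$ and its trace $\tau(\tilde f)\in\TrP_{2m+n}^0$. Apply \eqref{eq:Gaussianconditionalexpectation2} with $2m+n$ variables, shifting the \emph{middle} block $a'$ by the free semicircular tuple $S$:
\[
\tau(\tilde f)(X,X+S,Y)=\bigl[e^{tL_{a'}/2}\tau(\tilde f)\bigr](X,X,Y).
\]
The left side equals $\tau(q(X,Y)f(X+S,Y))$. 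For the right side we use two algebraic facts, both immediate from the explicit formulas for $L_j$ in \cite[\S 3.2--3.3]{Jekel2018}: (i) since $q$ does not depend on $a'$, one has $L_{a'}(q\cdot g)=q\cdot L_{a'}g$ on $\TrP_{2m+n}^1$, and hence $e^{tL_{a'}/2}(q\cdot f)=q\cdot e^{tL_{a'}/2}f$; (ii) the Laplacian of a trace equals the trace of the Laplacian, so $L_{a'}\circ\tau=\tau\circ L_{a'}$ as maps $\TrP^1\to\TrP^0$. Combining these, $e^{tL_{a'}/2}\tau(\tilde f)=\tau(q\cdot e^{tL_{a'}/2}f)$, and its evaluation at $(X,X,Y)$ is $\tau(q(X,Y)[e^{tL_x/2}f](X,Y))$ after relabeling $a'\leftrightarrow x$. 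Since $[e^{tL_x/2}f](X,Y)\in\mathrm{W}^*(X,Y)$, this establishes \eqref{eq:Gaussianconditionalexpectation3}.

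The main obstacle is the bookkeeping in the last step: one must verify the intertwining identities $L_{a'}(q\cdot f)=q\cdot L_{a'}f$ and $L_{a'}\circ\tau=\tau\circ L_{a'}$ at the level of the formal algebras $\TrP_{2m+n}^{0,1}$, and keep track of the relabeling between the $(a,a',b)$-world used for \eqref{eq:Gaussianconditionalexpectation2} and the $(x,y)$-world of the statement. Everything else is a standard heat-equation argument plus Voiculescu asymptotic freeness.
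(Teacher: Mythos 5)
Your handling of \eqref{eq:Gaussianconditionalexpectation} and \eqref{eq:Gaussianconditionalexpectation3} is essentially the paper's own argument: the classical identity is the heat equation restricted to the finite-dimensional space of trace polynomials of degree at most $\deg f$, and the conditional expectation formula is obtained by pairing against a dense family in $L^2(\mathrm{W}^*(X,Y))$, forming the auxiliary scalar trace polynomial $\tau(q(a,b)f(a',b))$ and using the two intertwining facts for $L_{a'}$ (the paper tests against $g\in\TrP_{m+n}^1$ rather than non-commutative polynomials, which is immaterial).

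The genuine problem is your proof of \eqref{eq:Gaussianconditionalexpectation2}. The lemma is stated for an \emph{arbitrary} self-adjoint tuple $(X,Y)$ in a tracial $\mathrm{W}^*$-algebra, with no embeddability hypothesis, yet your argument opens by realizing $(X,Y)$ as a limit of matrix tuples ``because $\mathrm{W}^*(X,Y)$ is $\mathcal{R}^\omega$-embeddable.'' That is an extra assumption you have imported, not one granted by the statement; if $\mathrm{W}^*(X,Y)$ does not embed into $\mathcal{R}^\omega$ (the lemma does not exclude this), there is no approximating matrix model and your argument does not start. This is precisely why the paper proves \eqref{eq:Gaussianconditionalexpectation2} intrinsically: it takes a free Brownian motion $(S_t)$ free from $(X,Y)$ and shows $\frac{d}{dt}\bigl[e^{-tL_x/2}f\bigr](X+S_t,Y)=0$ using the free It{\^o}-type formula $\frac{d}{dt}f(X+S_t,Y)=\frac{1}{2}[L_xf](X+S_t,Y)$ from \cite[Lemma 3.23]{Jekel2018}, an argument valid in any tracial $\mathrm{W}^*$-algebra; the random-matrix route you propose appears in the paper only as a remark, explicitly qualified by ``provided that $\mathrm{W}^*(X,Y)$ is $\mathcal{R}^\omega$-embeddable.'' One could try to recover full generality by noting that both sides of \eqref{eq:Gaussianconditionalexpectation2} are formal trace polynomials in $(X,Y)$ and that your argument verifies their equality on $M_N(\C)_{sa}^{m+n}$ for every $N$; but that requires knowing that a trace polynomial vanishing on all matrix algebras vanishes on every tracial von Neumann algebra, a fact the paper neither proves nor uses (it even cautions that evaluation of trace polynomials may fail to be injective). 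There is also a smaller slip: $f(X^{(N)}+S^{(N)},Y^{(N)})$ is not uniformly bounded, since the GUE tuple has unbounded operator norm, so passing from convergence in probability to convergence of expectations needs uniform moment bounds or the concentration estimate of Lemma \ref{lem:epsilonnet}; this is fixable but should be stated. For the applications in this paper the embeddability hypothesis always holds, so your argument would suffice there, but as a proof of the lemma as stated it establishes a strictly weaker result.
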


\begin{proof}
Since $S$ is independent and distributed according to $\sigma_{m,t}^{(N)}$, we have
\[
E[f(X+S,Y) | (X,Y)] = \int f(X + z,Y)\,d\sigma_{m,t}^{(N)}(z).
\]
On the other hand, for $(x,y) \in M_N(\C)_{sa}^{m+n}$,
\[
\int f(x + z,y)\,d\sigma_{m,t}^{(N)}(z) = e^{tL_x^{(N)}/2} f(x,y),
\]
because both sides are the solution to the heat equation on the space of coordinate-wise polynomials on $M_N(\C)_{sa}^{m+n}$ of degree $\leq d$.  This shows \eqref{eq:Gaussianconditionalexpectation}.

To prove the free versions, we assume familiarity with the results of free probability (see e.g.\ \cite{VDN1992}, \cite{NS2006}, \cite[Chapter 5]{AGZ2009}). Suppose that $(X,Y)$ are non-commutative random variables and $S_t$ is a freely independent free semicircular $m$-tuple with law $\sigma_{m,t}^{(N)}$.  We may assume that $(S_t)_{t \geq 0}$ is a free Brownian motion, so that $S_t - S_s \sim S_{t - s}$ for $0 \leq s \leq t$ and $S_t \sim t^{1/2} S_1$.  Note that $e^{-tL_x/2}$ is a well-defined operator on trace polynomials.  To prove \eqref{eq:Gaussianconditionalexpectation2}, it suffices to show that $[e^{tL_x/2}f](X+S_t,Y) = f(X,Y)$ for $f \in \TrP_m^0$.  This will follow if we check that
\[
\frac{d}{dt} \left( [e^{-tL_x/2} f](X+S_t,Y) \right) = 0.
\]
From a free probabilistic computation sketched in \cite[Lemma 3.23]{Jekel2018}, we have
\[
\frac{d}{dt} f(X+S_t,Y) = \frac{1}{2} [L_x f](X+S_t,Y),
\]
and hence
\begin{align*}
\frac{d}{dt} \left( [e^{-tL_x/2} f](X+S_t,Y) \right) &= \frac{d}{dt_1} [e^{-t_1 L_x/2} f](X+S_{t_2},Y)|_{t_1=t_2=t} + \frac{d}{dt_2} [e^{-t_1 L_x/2} f](X+S_{t_2},Y)|_{t_1=t_2=t} \\
&= \left[ \frac{-L_x}{2} e^{-tL_x/2} f \right](X+S_t,Y) + \left[ \frac{L_x}{2} e^{-tL_x/2} f \right](X+S_t,Y) \\
&= 0.
\end{align*}

Next, to prove \eqref{eq:Gaussianconditionalexpectation3}, it suffices to show that for $g \in \TrP_{m+n}^1$, we have
\[
\tau(f(X+S_t,Y) g(X,Y)) = \tau([e^{tL_x/2}f](X,Y) g(X,Y)),
\]
since functions of the form $g(X,Y)$ for $g \in \TrP_m^1$ are dense in $L^2(\mathrm{W}^*(X,Y))$.  Consider the function $F \in \TrP_{m+n+m}$ given by $F(x,y,x') = \tau(f(x,y) g(x',y))$.  Notice that
\[
L_x F(x,y,x') = \tau(L_x[f(x,y) g(x',y)]) = \tau([L_x f(x,y)] g(x',y)).
\]
Here the first equality is checked directly from the definition of the Laplacian \cite[see Def.\ 3.13 and 3.16, proof of Lemma 3.18]{Jekel2018}.  The equality $L_x[f(x,y) g(x',y)] = L_x[f(x,y)] g(x',y)$ again is checked from the definition of the Laplacian; this equality is intuitive since $g(x',y)$ is independent of $x$.  Since the same reasoning may be applied to compute the Laplacian $L_x$ of $\tau([e^{tL_x/2}f](x,y) g(x,y))$, we have
\[
e^{tL_x/2} F(x,y,x') = \tau([e^{tL_x/2}f](x,y) g(x',y)).
\]
We can view $F(x,y,x')$ as a function of the $m$-tuple $x$ and the $(n+m)$-tuple $(y,x')$, that is, an element of $\TrP_{m+(n+m)}^1$.  We apply \eqref{eq:Gaussianconditionalexpectation2} to $f$ and the pair $(X, (Y,X))$ and obtain
\[
F(X+S_t,Y,X) = e^{tL_x/2}F(X,Y,X)
\]
which means precisely that
\[
\tau(f(X+S_t,Y) g(X,Y)) = \tau([e^{tL_x/2}f](X,Y) g(X,Y)),
\]
which completes the proof of \eqref{eq:Gaussianconditionalexpectation3}.
\end{proof}

\begin{remark}
The free conditional expectation formulas \eqref{eq:Gaussianconditionalexpectation2} and \eqref{eq:Gaussianconditionalexpectation3} could also be proved using random matrices provided that $\mathrm{W}^*(X,Y)$ is $\mathcal{R}^\omega$-embeddable.  Indeed, let $(X^{(N)},Y^{(N)})$ be (deterministic) tuples of matrices with non-commutative laws converging to the law of $(X,Y)$ and let $S^{(N)} \sim \sigma_{m,t}^{(N)}$.  Then to prove \eqref{eq:Gaussianconditionalexpectation2} for instance, we could use the fact that $E[f(X^{(N)}+S^{(N)},Y^{(N)}) = [e^{tL_x^{(N)}/2}f](X^{(N)},Y^{(N)})$ and take the limit as $N \to \infty$ using Voiculescu's theorem on asymptotic freeness \cite[Theorem 2.2]{Voiculescu1998}.  A similar proof could be done for \eqref{eq:Gaussianconditionalexpectation3}.
\end{remark}

\begin{lemma}
If $f \in \TrP_{m+n}^j$ for $j = 0, 1$, then we have $\norm*{e^{tL_x/2} f}_{u,R} \leq \norm*{f}_{u,R+2t^{1/2}}$ for $t \geq 0$.  In particular, $f \mapsto e^{tL_x/2} f$ extends to a unique continuous linear operator $\overline{\TrP}_{m+n}^j \to \overline{\TrP}_{m+n}^j$.
\end{lemma}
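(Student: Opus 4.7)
The plan is to leverage the identities \eqref{eq:Gaussianconditionalexpectation2} and \eqref{eq:Gaussianconditionalexpectation3} established in the preceding lemma: for trace polynomials, $e^{tL_x/2}f$ is nothing but conditional expectation onto $\mathrm{W}^*(x,y)$ of the function $f$ evaluated after adding a free semicircular perturbation in the $x$ variables. Since a free semicircular $m$-tuple $S_t$ of variance $t$ has operator norm bounded by $2t^{1/2}$ (spectrum of each coordinate lies in $[-2t^{1/2}, 2t^{1/2}]$), the perturbed tuple $(x + S_t, y)$ has operator norm at most $R + 2t^{1/2}$ whenever $\norm{(x,y)}_\infty \leq R$. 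Conditional expectation is a contraction in $L^2$, so the target bound follows essentially immediately.

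In detail: fix $f \in \TrP_{m+n}^j$ and $(x,y) \in (\mathcal{R}^\omega)_{sa}^{m+n}$ with $\norm{(x,y)}_\infty \leq R$. First, I would realize a free semicircular $m$-tuple $S_t$ of variance $t$ freely independent of $(x,y)$ inside some tracial $\mathrm{W}^*$-algebra $\mathcal{M}$ containing (an isomorphic copy of) $\mathrm{W}^*(x,y)$; since $\mathrm{W}^*(x,y)$ is $\mathcal{R}^\omega$-embeddable and the class of $\mathcal{R}^\omega$-embeddable tracial $\mathrm{W}^*$-algebras is closed under free products with free group factors, one can choose $\mathcal{M}$ to embed into $\mathcal{R}^\omega$. (If one prefers to avoid citing this closure result, the same conclusion can be reached by matrix approximation: approximate $(x,y)$ in non-commutative law by matrix tuples $(x^{(N)}, y^{(N)})$, add an independent GUE $m$-tuple $S^{(N)}$ of variance $t$, and take an ultralimit; Voiculescu's asymptotic freeness yields the desired joint distribution inside $\mathcal{R}^\omega$.) Then \eqref{eq:Gaussianconditionalexpectation2} (for $j=0$) gives $[e^{tL_x/2}f](x,y) = f(x+S_t,y)$, while \eqref{eq:Gaussianconditionalexpectation3} (for $j=1$) gives $[e^{tL_x/2}f](x,y) = E_{\mathrm{W}^*(x,y)}[f(x+S_t,y)]$.

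Now $\norm{x + S_t}_\infty \leq \norm{x}_\infty + \norm{S_t}_\infty \leq R + 2t^{1/2}$, so $(x+S_t, y)$ lies in the operator-norm ball of radius $R+2t^{1/2}$ inside an $\mathcal{R}^\omega$-embeddable algebra, and therefore
\[
\norm*{f(x+S_t,y)}_2 \leq \norm*{f}_{u,R+2t^{1/2}}
\]
(with $\norm{\cdot}_2$ replaced by $|\cdot|$ in the scalar case). Since conditional expectation is $L^2$-contractive, taking conditional expectation in the $j=1$ case preserves the bound, giving in both cases $\norm{[e^{tL_x/2}f](x,y)}_2 \leq \norm{f}_{u,R+2t^{1/2}}$. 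Taking the supremum over such $(x,y)$ yields $\norm{e^{tL_x/2}f}_{u,R} \leq \norm{f}_{u,R+2t^{1/2}}$.

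For the extension claim, the bound just proved shows that $e^{tL_x/2}$ is continuous from $\TrP_{m+n}^j$ (with the family of seminorms $\norm{\cdot}_{u,R}$) into itself: each seminorm on the target is dominated by a seminorm on the source. Since $\overline{\TrP}_{m+n}^j$ is by definition the closure of $\TrP_{m+n}^j$ in the Fr{\'e}chet space $\mathcal{F}_{m+n}^j$, the map $e^{tL_x/2}$ extends uniquely by continuity to a continuous linear operator $\overline{\TrP}_{m+n}^j \to \overline{\TrP}_{m+n}^j$, and the same operator-norm inequality persists on the completion. The only point that requires any care is the joint realization of $(x,y)$ with a freely independent $S_t$ inside an $\mathcal{R}^\omega$-embeddable algebra; this is the main (and only nontrivial) obstacle, and it is handled either by invoking closure of Connes-embeddability under free products or by the matrix approximation argument sketched above.
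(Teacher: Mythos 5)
Your proposal is correct and follows essentially the same route as the paper: both use the conditional expectation identities \eqref{eq:Gaussianconditionalexpectation2} and \eqref{eq:Gaussianconditionalexpectation3} to rewrite $[e^{tL_x/2}f](x,y)$ as $f(x+S_t,y)$ (or its trace-preserving conditional expectation), bound $\norm{x+S_t}_\infty \leq R + 2t^{1/2}$ since $\norm{S_t}_\infty = 2t^{1/2}$, invoke $L^2$-contractivity of conditional expectation in the $j=1$ case, and then extend by continuity to the Fr\'echet completion. Your additional care about realizing $(x,y)$ together with a freely independent $S_t$ inside an $\mathcal{R}^\omega$-embeddable algebra is a point the paper leaves implicit, and your treatment of it is sound.
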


\begin{proof}
Let $(X,Y) \in (\mathcal{R}^\omega)_{sa}^{m+n}$ with $\norm*{(X,Y)}_\infty \leq R$.  Let $S \sim \sigma_{m,t}$ be a freely independent semicircular tuple.  If $f \in \TrP_{m+n}^0$, then
\[
[e^{tL_x/2} f](X,Y) = f(X+S,Y).
\]
Since $\norm*{S}_\infty = 2t^{1/2}$, we have $\norm*{(X+S,Y)}_\infty \leq R + 2t^{1/2}$.  Therefore, $\norm*{e^{-tL_x/2} f}_{u,R} \leq \norm*{f}_{u,R+2t^{1/2}}$ as desired.  Similarly, if $f \in \TrP_{m+n}^1$, then we check $\norm*{e^{-tL_x/2} f}_{u,R} \leq \norm*{f}_{u,R+2t^{1/2}}$ using the conditional expectation formula \eqref{eq:Gaussianconditionalexpectation3}.  Now the continuous extension of $e^{tL_x/2}$ to $\overline{\TrP}_m^j$ is immediate.
\end{proof}

The semigroup $e^{tL_x/2}$ acting on $\overline{\TrP}_{m+n}^1$ describes the large $N$ limit of the Gaussian convolution semigroup on $M_N(\C)_{sa}^N$ defined as follows.

\begin{definition}
For $f: M_N(\C)_{sa}^{m+n} \to \C$ or $M_N(\C)$, we denote
\[
P_t^{(N)} f(x,y) = \int f(x+z,y)\,d\sigma_{m,t}^{(N)}(z).
\]
Moreover, we denote by $P_t^{\TrP}: \overline{\TrP}_m^j \to \overline{\TrP}_m^j$ the continuous extension of $e^{tL_x/2}$.
\end{definition}

\begin{lemma} \label{lem:Gaussianconvolution}
Suppose that $f^{(N)}: M_N(\C)_{sa}^{m+n} \to \C$ is asymptotically approximable by trace polynomials and $f^{(N)} \rightsquigarrow f \in \overline{\TrP}_m^0$.  Furthermore, assume that for some $A, B > 0$ and $k \in \N$, we have
\begin{equation} \label{eq:growthbound}
\norm*{f}_{u,R} \leq A + B R^k \\
\sup_{\substack{x \in M_N(\C)_{sa}^m \\ \norm*{x}_\infty \leq R}} |f^{(N)}(x)| \leq A + BR^k.
\end{equation}
Then $P_t^{(N)} f^{(N)} \rightsquigarrow P_t^{\TrP} f$.  The same holds for $f^{(N)}: M_N(\C)_{sa}^{m+n} \to M_N(\C)$ and $f \in \overline{\TrP}_m^1$ with $|f^{(N)}(x)|$ replaced by $\norm*{f^{(N)}(x)}_2$.
\end{lemma}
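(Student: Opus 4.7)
The plan is to reduce to the trace polynomial case, which is \cite[Lemma 3.28]{Jekel2018}, by approximating $f$ and $f^{(N)}$ by a common trace polynomial and controlling the approximation error via the polynomial growth hypothesis \eqref{eq:growthbound} together with Gaussian tail estimates. Fix $R > 0$ and $\epsilon > 0$. Because the GUE operator-norm moments $\E \|S^{(N)}\|_\infty^p$ are bounded independently of $N$ for every $p$, one can choose $R' > 2\sqrt{t}$ so large that
\[
\int_{\|z\|_\infty > R'} (1 + R + \|z\|_\infty)^k \, d\sigma_{m,t}^{(N)}(z) < \epsilon
\]
uniformly in $N$, where $k$ is the exponent in \eqref{eq:growthbound}. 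Then, by the definition of $\overline{\TrP}_{m+n}^j$, pick a trace polynomial $g \in \TrP_{m+n}^j$ with $\|f - g\|_{u, R+R'}$ as small as needed.

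Writing
\[
P_t^{(N)} f^{(N)} - P_t^{\TrP} f = \bigl[P_t^{(N)} f^{(N)} - P_t^{(N)} g\bigr] + \bigl[P_t^{(N)} g - P_t^{\TrP} g\bigr] + \bigl[P_t^{\TrP} g - P_t^{\TrP} f\bigr],
\]
I would estimate each summand in $\|\cdot\|_{u,R}^{(N)}$ separately. The middle term tends to $0$ as $N \to \infty$ by \cite[Lemma 3.28]{Jekel2018}. For the third term, in the scalar case $P_t^{\TrP}(g-f)(x,y) = (g-f)(x+s,y)$ for a free semicircular $m$-tuple $s$ of variance $t$ freely independent of $(x,y)$, and since $\|s\|_\infty = 2\sqrt{t}$ this is bounded by $\|f-g\|_{u, R+2\sqrt{t}}$; the operator case uses \eqref{eq:Gaussianconditionalexpectation3} together with $L^2$-contractivity of the conditional expectation to reach the same bound. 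For the first term, Jensen's inequality gives
\[
\bigl\| \bigl[P_t^{(N)}(f^{(N)} - g)\bigr](x,y) \bigr\|_2 \leq \int \|(f^{(N)} - g)(x+z,y)\|_2 \, d\sigma_{m,t}^{(N)}(z),
\]
and I split this integral at $\|z\|_\infty = R'$: the inner part is at most $\|f^{(N)} - g\|_{u, R+R'}^{(N)} \to \|f - g\|_{u, R+R'}$ by Lemma \ref{lem:AATP}; the tail is controlled by the polynomial bound \eqref{eq:growthbound} on $f^{(N)}$, the analogous automatic polynomial bound on the trace polynomial $g$, and the choice of $R'$.

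The main obstacle is this last tail estimate: the growth hypothesis \eqref{eq:growthbound} only gives polynomial control of $f^{(N)}(x+z,y)$ in $\|z\|_\infty$, and one must integrate against the law of $S^{(N)}$ whose support is not bounded in $N$. The key ingredient is that the operator-norm moments of the rescaled GUE tuple $S^{(N)}$ are bounded uniformly in $N$, a classical fact provable by concentration (via Lemma \ref{lem:epsilonnet}) or by standard combinatorial moment estimates. Once that is in hand, the three estimates combine; choosing $\|f - g\|_{u, R+R'}$ sufficiently small and then $N$ sufficiently large, the total error is $O(\epsilon)$, giving $P_t^{(N)} f^{(N)} \rightsquigarrow P_t^{\TrP} f$.
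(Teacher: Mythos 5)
Your argument is correct and is essentially the proof this lemma defers to, namely \cite[Lemma 3.28]{Jekel2018}: approximate $f$ by a trace polynomial $g$, use the contraction-type bound $\norm{P_t^{\TrP}h}_{u,R}\leq\norm{h}_{u,R+2t^{1/2}}$ (equivalently the free semicircular/conditional expectation formulas) on the limit side, handle $P_t^{(N)}g \rightsquigarrow P_t^{\TrP}g$ by the trace-polynomial heat semigroup, and control the Gaussian tail of $P_t^{(N)}(f^{(N)}-g)$ via the growth hypothesis \eqref{eq:growthbound} together with uniform-in-$N$ GUE operator-norm concentration. One small point to tighten: since $g$ is chosen after $R'$, your choice of $R'$ (made with the exponent $k$ from \eqref{eq:growthbound}) does not by itself make the $g$-part of the tail small uniformly in $N$ (the degree and coefficients of $g$ are unrelated to $k$); however, for the fixed trace polynomial $g$ and the fixed radius $R'>2t^{1/2}$ the tail mass $\sigma_{m,t}^{(N)}(\norm{z}_\infty>R')$ tends to $0$ as $N\to\infty$ by the concentration you already invoke (Lemma \ref{lem:epsilonnet}), so by Cauchy--Schwarz and the uniform moment bounds this term contributes nothing to the $\limsup$, and your conclusion stands.
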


The proof of this lemma is the same as in \cite[Lemma 3.28]{Jekel2018}.

\begin{remark} \label{rem:UCgrowthbound}
In both the scalar-valued and matrix-valued cases, the assumption \eqref{eq:growthbound} holds automatically with $k = 1$ provided that $f^{(N)}$ and $f$ are $\norm*{\cdot}_2$-uniformly continuous (with modulus of continuity independent of $N$).  Let us focus on the matrix-valued case of $\TrP_m^1$, there exists $\delta > 0$ such that
\[
\norm*{x-y}_2 < \delta \implies \norm*{f(x) - f(y)}_2 \leq 1.
\]
In particular, given $x \in (\mathcal{R}^\omega)_{sa}^m$, we can choose an integer $j$ such that $j \delta < \norm*{x}_2 \leq 2 j \delta$.  Then we have
\[
\norm*{f(x) - f(0)}_2 \leq \sum_{i=1}^{2j} \norm*{f(ix/2k) - f((i-1)x/2k)}_2 \leq 2j \leq 2 \norm*{x}_2 / \delta.
\]
Thus,
\[
\norm*{f(x)}_2 \leq \norm*{f(0)}_2 + \frac{2}{\delta} \norm*{x}_2 \leq \norm*{f(0)}_2 + \frac{2m^{1/2}}{\delta} \norm*{x}_\infty,
\]
which implies the first estimate of \eqref{eq:growthbound}.  The case for $f^{(N)}$ is handled similarly, and we note that $\norm*{f^{(N)}(0)}_2$ is bounded as $N \to \infty$ because of our assumption that $f^{(N)} \rightsquigarrow f$.  The same argument works in the case of scalar-valued functions and $f \in \overline{\TrP}_m^0$.
\end{remark}

\section{Conditional Expectation for Free Gibbs States} \label{sec:conditionalexpectation}

\subsection{Free Gibbs States from Convex Potentials} \label{subsec:CEmotivation}

In \cite{Jekel2018} and in the present work, we focus on the following situation:

\begin{assumption} \label{ass:convexRMM}
We are given $0 < c \leq C$ and $V^{(N)}: M_N(\C)_{sa}^m \to \R$ such that
\begin{enumerate}
	\item $HV^{(N)} \geq c$, that is, $V^{(N)}(x) - \frac{1}{2} c \norm*{x}_2^2$ is convex.
	\item $HV^{(N)} \leq C$, that is, $V^{(N)}(x) - \frac{1}{2} C \norm*{x}_2^2$ is concave.
	\item $\{DV^{(N)}\}_{N \in \N}$ is asymptotically approximable by trace polynomials.
\end{enumerate}
We denote by $\mu^{(N)}$ the probability measure on $M_N(\C)_{sa}^m$ given by
\[
d\mu^{(N)}(x) = e^{-N^2 V^{(N)}(x)}\,dx.
\]
Furthermore, we assume that the mean $\int x_j \,d\mu^{(N)}(x)$ is a scalar multiple of the identity matrix.
\end{assumption}

The following was proved in \cite[Theorem 4.1]{Jekel2018}.

\begin{theorem} \label{thm:freeGibbslaw}
Let $V^{(N)}$ and $\mu^{(N)}$ be as in Assumption \ref{ass:convexRMM}.  Then there exists a non-commutative law $\lambda$ such that for every non-commutative polynomial $p$, we have
\[
\lambda(p) = \lim_{N \to \infty} \int \tau_N(p(x))\,d\mu^{(N)}(x).
\]
Moreover, we have for every $R > 0$ and $\epsilon > 0$ that
\[
\lim_{N \to \infty} \frac{1}{N^2} \log \mu_N\left(\left\{x \in M_N(\C)_{sa}^m: \norm*{x}_\infty \leq R, |\tau(p(x)) - \lambda(p)| > \epsilon \right\}\right) < 0
\]
\end{theorem}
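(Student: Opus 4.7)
The plan is to first show that the sequence of laws $\{\lambda_{X^{(N)}}\}$ is essentially supported on a fixed operator-norm ball, then to extract subsequential limits, characterize them by a free Schwinger-Dyson equation, and finally use uniform convexity to argue that this equation admits a unique solution; the concentration estimate will then follow from the normalized Herbst inequality.

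First, from $HV^{(N)} \geq c$ together with Corollary \ref{cor:matrixLSI}, $\mu^{(N)}$ satisfies the normalized log-Sobolev inequality and hence the normalized Herbst concentration estimate \eqref{eq:normalizedHerbst} with constant $c$. Lemma \ref{lem:conjugatevariablebasics} yields $E\norm{X^{(N)}-E(X^{(N)})}_2^2 \leq m/c$, and by the assumption that $E(X_j^{(N)})$ is a scalar multiple of the identity, concentration of $\tau_N(X_j^{(N)})$ (which is $1$-Lipschitz in $\norm{\cdot}_2$) around its mean combined with the variance bound implies that $\norm{E(X^{(N)})}_\infty$ is bounded uniformly in $N$. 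Applying Lemma \ref{lem:epsilonnet} to the identity map $f(x)=x_j$ gives a uniform-in-$N$ upper bound on $E\norm{X^{(N)}}_\infty$ and exponential concentration of $\norm{X^{(N)}}_\infty$, so there exists $R_0$ such that $\mu^{(N)}(\{\norm{x}_\infty > R_0\})$ decays exponentially in $N^2$. Consequently the expected laws $\lambda^{(N)} := E[\lambda_{X^{(N)}}]$ eventually live on a fixed compact subset of $\Sigma_{m,R_0}$, which is metrizable, so a subsequence converges to some $\lambda$.

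Next, I would characterize any subsequential limit by integration by parts. For each non-commutative polynomial $q$ and each $j$,
\[
E\bigl[\tau_N\bigl(D_{x_j}V^{(N)}(X^{(N)}) \, q(X^{(N)})\bigr)\bigr] = E\bigl[(\tau_N \otimes \tau_N)(\partial_{x_j}q(X^{(N)}))\bigr].
\]
Since $DV^{(N)} \rightsquigarrow DV$ for some $DV \in (\overline{\TrP}_m^1)_{sa}^m$ by Assumption \ref{ass:convexRMM}(3) and Lemma \ref{lem:AATP}, the integrand on the left is asymptotically $\tau_N((DV)_j(X^{(N)}) q(X^{(N)}))$ uniformly on $\{\norm{x}_\infty \leq R_0\}$; combined with the exponential decay of $\mu^{(N)}(\{\norm{x}_\infty > R_0\})$ and the concentration of $\tau_N$-traces of trace polynomials around their expectations (via Herbst), any limit $\lambda$ must satisfy the \emph{free Schwinger-Dyson equation}
\[
\lambda\bigl((DV)_j(X) \, q(X)\bigr) = (\lambda \otimes \lambda)(\partial_{x_j} q(X)) \qquad (j=1,\dots,m, \ q \in \NCP_m).
\]

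The crux, and the main obstacle, is uniqueness: one must show that this Schwinger-Dyson equation has only one solution $\lambda$ among laws in $\Sigma_{m,R_0}$. The approach is to exploit the uniform convexity $HV^{(N)} \geq c$ at the random-matrix level: given two sequences $X^{(N)}$ and $\widetilde X^{(N)}$ of random matrices drawn from $\mu^{(N)}$ (on the same or different potentials $V^{(N)}, \widetilde V^{(N)}$ with a common convexity/semiconcavity bound), one couples them through their associated drift-diffusion SDEs
\[
dX_t^{(N)} = -DV^{(N)}(X_t^{(N)})\,dt + \tfrac{1}{\sqrt{N}}\,dB_t,
\]
started from the same Brownian initial condition, and uses Lemma \ref{lem:convexgradient} to get the synchronous coupling estimate $\frac{d}{dt}\norm{X_t^{(N)} - \widetilde X_t^{(N)}}_2^2 \leq -2c\norm{X_t^{(N)} - \widetilde X_t^{(N)}}_2^2 + (\text{small})$. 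Letting $t \to \infty$ shows the two laws have the same limit. Equivalently, one may invoke the Bakry-\'Emery/HWI-type argument in the limit: under the Schwinger-Dyson equation with uniformly convex $V$, the free relative entropy against any two candidate solutions must vanish. Either way, uniqueness follows, and hence the full sequence $\lambda^{(N)}$ converges to $\lambda$.

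Finally, for the exponential concentration estimate, fix a non-commutative polynomial $p$ and $R > 0$. Using the smooth cut-off trick from \S\ref{sec:TP}, construct a globally $\norm{\cdot}_2$-Lipschitz function $g^{(N)}_R: M_N(\C)_{sa}^m \to \R$ that agrees with $x \mapsto \tau_N(p(x))$ on $\{\norm{x}_\infty \leq R\}$; its Lipschitz constant $K_{p,R}$ is independent of $N$. By Corollary \ref{cor:matrixLSI},
\[
\mu^{(N)}\bigl(|g^{(N)}_R(X^{(N)}) - E[g^{(N)}_R(X^{(N)})]| > \epsilon/2\bigr) \leq 2 e^{-cN^2 \epsilon^2 / 8 K_{p,R}^2},
\]
and by the convergence of expected moments on $\Sigma_{m,R_0}$, $|E[g^{(N)}_R(X^{(N)})] - \lambda(p)| < \epsilon/2$ for large $N$. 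Restricting the event to $\{\norm{x}_\infty \leq R\}$ replaces $g^{(N)}_R$ by $\tau_N(p(\cdot))$, yielding the desired $\limsup_{N\to\infty} N^{-2}\log \mu^{(N)}(\cdots) < 0$.
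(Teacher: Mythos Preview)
Your approach is genuinely different from the paper's, and it is worth spelling out the contrast. The paper does not prove this theorem here; it cites \cite[Theorem 4.1]{Jekel2018}, whose strategy is summarized in \S\ref{subsec:conditionalexpectationstrategy}. That argument never touches the Schwinger--Dyson equation or any uniqueness statement. Instead, for each Lipschitz $u^{(N)}$ asymptotically approximable by trace polynomials, one runs the diffusion semigroup $T_t^{(N)}$ associated to $V^{(N)}$: the semigroup preserves asymptotic approximability (this is the analytic core), it contracts the Lipschitz norm like $e^{-ct/2}$, and as $t\to\infty$ it converges to the constant $\int u^{(N)}\,d\mu^{(N)}$ at a rate independent of $N$. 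Hence the sequence of constants $\int u^{(N)}\,d\mu^{(N)}$ is itself asymptotically approximable by trace polynomials, which for scalars simply means it converges. Convergence of moments and the concentration statement are then immediate. The virtue of this route is that it is entirely constructive and sidesteps any characterization-plus-uniqueness scheme; it also sets up exactly the machinery reused later for Theorem~\ref{thm:conditionalexpectation}.

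Your compactness/Schwinger--Dyson/uniqueness scheme is the more classical one (in the spirit of \cite{GMS2006,BCG2003}) and is correct in outline, but the uniqueness step as you wrote it is muddled. Coupling two matrix tuples $X^{(N)},\widetilde X^{(N)}$ drawn from the \emph{same} $\mu^{(N)}$ via synchronous SDE shows nothing about two distinct subsequential limits of $E[\lambda_{X^{(N)}}]$; those come from the same measures along different index sets, so there is nothing to couple at finite $N$. What you actually need is uniqueness of solutions to the \emph{free} Schwinger--Dyson equation for the limiting $DV\in(\overline{\TrP}_m^1)_{sa}^m$. The clean argument runs the free SDE $dX_t=-\tfrac12 DV(X_t)\,dt+dS_t$ in a tracial $\mathrm{W}^*$-algebra: any Schwinger--Dyson solution is stationary, and the synchronous coupling driven by the same free Brownian motion $S_t$ gives $\partial_t\norm{X_t^{(1)}-X_t^{(2)}}_2^2\le -c\norm{X_t^{(1)}-X_t^{(2)}}_2^2$ from $HV\ge c$, forcing the two stationary laws to coincide. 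This requires setting up the free SDE (existence, stationarity from Schwinger--Dyson), which is nontrivial for $DV$ merely in $\overline{\TrP}_m^1$ rather than a polynomial; you should either cite \cite{Dabrowski2017,BCG2003} for this or carry it out explicitly. Your concentration paragraph is fine.
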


\begin{corollary} \label{cor:convergenceofexpectation}
Let $\mu^{(N)}$ and $\lambda$ be as in Theorem \ref{thm:freeGibbslaw}. Let $X^{(N)}$ be a random $m$-tuple of matrices distributed according to $\mu^{(N)}$ and let $X$ be a non-commutative random $m$-tuple distributed according to $\lambda$.  Let $f^{(N)}, g^{(N)}: M_N(\C)_{sa}^m \to M_N(\C)$.  Suppose there are constants $A$ and $B > 0$ and $k \in \N$ such that
\[
\max(\norm*{f^{(N)}(x)}_2, \norm*{g^{(N)}(x)}_2) \leq A + B \norm*{x}_\infty^k
\]
Suppose that $f^{(N)} \rightsquigarrow f$ and $g^{(N)} \rightsquigarrow g$ where $f, g \in \overline{\TrP}_m^1$.  Then
\[
\lim_{N \to \infty} E[\tau_N(f^{(N)}(X^{(N)}) g^{(N)}(X^{(N)}))] = \tau(f(X) g(X)).
\]
\end{corollary}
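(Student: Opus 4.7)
The plan is to reduce everything to a scalar-valued function. Set $h^{(N)}(x) := \tau_N(f^{(N)}(x) g^{(N)}(x))$ and $h := \tau(fg)$; by Lemma~\ref{lem:algebra}, $h \in \overline{\TrP}_m^0$ and $h^{(N)} \rightsquigarrow h$, while Cauchy--Schwarz applied to the trace gives $|h^{(N)}(x)| \leq (A + B \norm{x}_\infty^k)^2$, with the analogous bound for $|h(x)|$ following from Lemma~\ref{lem:AATP}. The goal reduces to showing $E[h^{(N)}(X^{(N)})] \to h(X) = \tau(f(X) g(X))$, which I would attack by splitting the expectation at an operator-norm cutoff $R$ and controlling the tail $\{\norm{X^{(N)}}_\infty > R\}$ uniformly in $N$.

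For the tail, the first step is a uniform moment bound $\sup_N E[\norm{X^{(N)}}_\infty^q] < \infty$ for every $q$. Corollary~\ref{cor:matrixLSI} supplies the normalized Herbst inequality with constant $c$, so applying Lemma~\ref{lem:epsilonnet} to the $1$-Lipschitz coordinate map $x \mapsto x_j$ yields
\[
P\bigl(\norm{X_j^{(N)} - E[X_j^{(N)}]}_\infty \geq c^{-1/2}(\Theta + \delta)\bigr) \leq e^{-N\delta^2/2}.
\]
Assumption~\ref{ass:convexRMM} forces $E[X_j^{(N)}]$ to be a scalar multiple of the identity, and by Theorem~\ref{thm:freeGibbslaw} the scalar $E[\tau_N(X_j^{(N)})]$ converges to $\tau(X_j)$, so $\norm{E[X^{(N)}]}_\infty$ is bounded uniformly in $N$. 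Integrating the tail bound delivers uniform $L^q$ control and a uniform exponential decay of $P(\norm{X^{(N)}}_\infty > R)$ once $R$ exceeds some $R_0$. One application of Cauchy--Schwarz then bounds the tail contribution to $E[h^{(N)}(X^{(N)})]$ by $E[(A + B\norm{X^{(N)}}_\infty^k)^4]^{1/2}\, P(\norm{X^{(N)}}_\infty > R)^{1/2}$, which vanishes as $R \to \infty$ uniformly in $N$.

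On the bounded region, I would replace $h^{(N)}$ by $h$ using $\norm{h^{(N)} - h}_{u,R}^{(N)} \to 0$, so it remains to prove $E[h(X^{(N)}) \mathbf{1}_{\norm{X^{(N)}}_\infty \leq R}] \to h(X)$ for each fixed $R$. Approximate $h$ in $\norm{\cdot}_{u,R}$ by a scalar trace polynomial $h_p \in \TrP_m^0$; then $h_p(X^{(N)})$ is a polynomial in finitely many moments $\tau_N(p_i(X^{(N)}))$, each of which converges in probability to $\lambda(p_i) = \tau(p_i(X))$ on $\{\norm{X^{(N)}}_\infty \leq R\}$ by Theorem~\ref{thm:freeGibbslaw}. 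Since $h_p(X^{(N)})$ is uniformly bounded on this event, bounded convergence yields $E[h_p(X^{(N)}) \mathbf{1}_{\norm{X^{(N)}}_\infty \leq R}] \to h_p(X)$, and passing the approximation $h \approx h_p$ through the limit closes the argument.

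The main obstacle is matching the polynomial growth $A + B\norm{x}_\infty^k$ against the measures $\mu^{(N)}$ uniformly in $N$. This is exactly where the convexity hypothesis $HV^{(N)} \geq c$ earns its keep: it supplies the log-Sobolev/Herbst concentration of the operator norm at scale $N$, which crushes the polynomial weight and makes the tail estimate uniform. Everything else is the continuity of elements of $\overline{\TrP}_m^0$ as functions of non-commutative law, combined with the moment concentration of Theorem~\ref{thm:freeGibbslaw}.
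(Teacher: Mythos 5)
Your proposal is correct and follows essentially the same route as the paper's proof: concentration from Lemma \ref{lem:epsilonnet} (with the scalar-matrix mean hypothesis and Theorem \ref{thm:freeGibbslaw} controlling $E[X_j^{(N)}]$) kills the operator-norm tail against the polynomial growth, and on the event $\{\norm{X^{(N)}}_\infty \leq R\}$ one passes from $\tau_N(f^{(N)}g^{(N)}) \rightsquigarrow \tau(fg)$ via Lemma \ref{lem:algebra} and convergence in probability of moments, exactly as in the paper. The only cosmetic difference is that you make the tail step explicit through uniform $L^q$ bounds and Cauchy--Schwarz and the bounded-region step explicit through trace-polynomial approximation and bounded convergence, which the paper leaves as brief assertions.
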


\begin{proof}
Let $a_j^{(N)} = E[X_j^{(N)}]$ which we assumed to be a scalar multiple of the identity, and which we know has a limit as $N \to \infty$.  By Lemma \ref{lem:epsilonnet}, we have
\[
P(\norm*{X_j^{(N)} - a_j^{(N)}}_\infty \geq c^{-1/2} \Theta + \delta) \leq e^{-cN \delta^2 / 2}.
\]
In particular, letting $R > \sup_{N,j} |a_j^{(N)}| + c^{-1/2} \Theta$, we have
\[
P(\norm*{X^{(N)}}_\infty \geq R) \to 0
\]
and
\[
E[\mathbf{1}_{\norm*{X^{(N)}}_\infty \geq R} \tau_N(f^{(N)}(X^{(N)}) g^{(N)}(X^{(N)}))] \to 0.
\]
Therefore, in order to prove convergence of the expectation, it suffices to check that $\tau_N(f^{(N)}(X^{(N)}) g^{(N)}(X^{(N)}))$ converges in probability to $\tau(f(X) g(X))$.

We already know that $\tau_N(p(X^{(N)}))$ converges to $\tau(p(X))$ in probability for every non-commutative polynomial $p$.  It follows that if $u$ is a scalar-valued trace polynomial, then $u(X^{(N)}) \to u(X)$ in probability.  This also holds for $u \in \overline{\TrP}_m^1$; indeed, we know that $\norm*{X^{(N)}}_\infty \leq R$ with probability tending to $1$ and $\norm*{X}_\infty \leq R$, whereas $u$ can be approximated in $\norm*{\cdot}_{u,R}$ by trace polynomials.  Finally, if $u^{(N)}$ is a sequence of scalar-valued function such that $u^{(N)} \rightsquigarrow u \in \overline{\TrP}_m^0$, then $u^{(N)}(X^{(N)}) - u(X^{(N)})$ converges to $0$ in probability, and hence $u^{(N)}(X^{(N)})$ converges in probability to $u(X)$.  By Lemma \ref{lem:algebra}, we can apply this statement to $u^{(N)} = \tau_N(f^{(N)} g^{(N)})$ and $u = \tau(fg)$, which completes the argument.
\end{proof}

\begin{definition}
Let $V \in \overline{\TrP}_m^0$ and suppose $V$ extends to a function $L^2(\mathcal{R}^\omega)_{sa}^m \to \R$ such that $V(x) - (c/2) \norm*{x}_2^2$ is convex and $V(x) - (C/2) \norm*{x}_2^2$ is concave. In this case, $V$ is differentiable as a function on the real Hilbert space $L^2(\mathcal{R}^\omega)_{sa}^m$, as a consequence of the existence of supporting hyperplanes for convex functions on a Hilbert space.  If we assume also that $DV \in \overline{\TrP}_m^1$, then we say that $V \in \mathcal{E}_m^{\TrP}(c,C)$.
\end{definition}

\begin{remark}
We did not prove or assume that the trace polynomials which approximate $DV$ are the gradients of the \emph{same} trace polynomials that approximate $V$.  Thus, this definition is technically different from that of \cite[\S 8.2]{Jekel2018}.
\end{remark}

\begin{definition}
If $V \in \mathcal{E}_m^{\TrP}(c,C)$, then we may define $V^{(N)} = V|_{M_N(\C)_{sa}^m}$, and in this case $DV^{(N)} = DV|_{M_N(\C)_{sa}^m}$.  Clearly, $DV^{(N)}$ is asymptotically approximable by trace polynomials, and so by Theorem \ref{thm:freeGibbslaw}, there exists a non-commutative law $\lambda_V$ that arises as the large $N$ limit of the associated random matrix models.  Furthermore, the limiting free Gibbs law $\lambda_V$ only depends on $V$, that is, every approximating sequence of functions $V^{(N)} \in \mathcal{E}_m^{(N)}(c,C)$ will produce the same free Gibbs law (see \cite[\S 8.2]{Jekel2018}).  We call $\lambda_V$ the \emph{free Gibbs state given by potential $V$}.
\end{definition}

\begin{remark}
One can check that if $V^{(N)}$ is as in Assumption \ref{ass:convexRMM}, then there exists a $V \in \mathcal{E}_m^{\TrP}(c,C)$ such that $V^{(N)} \rightsquigarrow V$ and $DV^{(N)} \rightsquigarrow DV$.  Thus, the non-commutative laws that arise from these random matrix models are precisely $\lambda_V$ for $V \in \mathcal{E}_m^{\TrP}(c,C)$.
\end{remark}

\begin{remark} \label{rem:unitaryinvariance}
Since $\lambda_V$ is independent of the choice of approximating sequence $V^{(N)}$, we can in particular take $V^{(N)} = V|_{M_N(\C)_{sa}^m}$, which produces a canonical unitarily invariant sequence of random matrices models.
\end{remark}

\subsection{Main Result on Conditional Expectation}

Our main result in this section is in some sense a generalization of \cite[Theorem 4.1]{Jekel2018}, which deals with conditional expectations rather than expectations.  The proof of the earlier theorem was reduced to the following statement:  Suppose $V^{(N)}$ satisfies Assumption \ref{ass:convexRMM} and that $u^{(N)}: M_N(\C)_{sa}^m \to \C$ is $\norm*{\cdot}_2$-Lipschitz (uniformly in $N$) and asymptotically approximable by trace polynomials.  Then
\[
\lim_{N \to \infty} \int u^{(N)}\,d\mu^{(N)} \text{ exists.}
\]
Now, our goal is to prove the following.

\begin{theorem} \label{thm:conditionalexpectation}
Consider functions $V^{(N)}: M_N(\C)_{sa}^{m+n} \to \R$, denoted as $V^{(N)}(x,y)$, which satisfy Assumption \ref{ass:convexRMM} as functions of $(x,y)$.  Let $\mu^{(N)}$ be the associated probability measure on $M_N(\C)_{sa}^{m+n}$.  Let $(X^{(N)},Y^{(N)})$ be an $(m+n)$-tuple of random matrices distributed according to $\mu^{(N)}$, and let $(X,Y)$ be a $(m+n)$-tuple of non-commutative random variables distributed according to the limiting free Gibbs law $\lambda$ given by Theorem \ref{thm:freeGibbslaw}

Let $f^{(N)}: M_N(\C)_{sa}^{m+n} \to M_N(\C)$ be $\norm*{\cdot}_2$-Lipschitz (uniformly in $N$) and suppose $f^{(N)} \rightsquigarrow f \in \overline{\TrP}_{m+n}^1$.  Let $g^{(N)}$ be the function given by
\[
g^{(N)}(Y^{(N)}) = E[f^{(N)}(X^{(N)}, Y^{(N)}) | Y^{(N)}],
\]
which is well-defined function $M_N(\C)_{sa}^m$ because $\mu^{(N)}$ has positive density everywhere.  Then $g^{(N)}$ is Lipschitz with
\[
\norm*{g^{(N)}}_{\Lip} \leq (1 + C/c) \norm*{f^{(N)}}_{\Lip}.
\]
Moreover, there exists $g \in \overline{\TrP}_m^1$ such that $g^{(N)} \rightsquigarrow g$ and hence
\[
g(Y) = E_{\mathrm{W}^*(Y)}[f(X,Y)].
\]
\end{theorem}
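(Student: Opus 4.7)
The plan is to leverage parametric Langevin dynamics for the conditional distribution of $X^{(N)}$ given $Y^{(N)}=y$, namely $d\mu^{(N)}_y(x)\propto e^{-N^2V^{(N)}(x,y)}\,dx$, which is $c$-uniformly log-concave in $x$ uniformly in $y$.

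For the Lipschitz bound I would use a synchronous coupling of the Langevin SDEs at two parameter values $y,y'$ driven by a common Brownian motion.  Writing $\rho_t=\norm{X_t^y-X_t^{y'}}_2$ and splitting the drift difference $D_xV^{(N)}(X_t^y,y)-D_xV^{(N)}(X_t^{y'},y')$ into an $x$-piece controlled from below by $c$-convexity and a $y$-piece controlled from above by the global $C$-Lipschitz bound on $DV^{(N)}$ (from Lemma \ref{lem:convexgradient} applied to the joint Hessian), I obtain $\tfrac{d}{dt}\rho_t\leq -c\rho_t+C\norm{y-y'}_2$.  Starting both processes in their respective stationary distributions and using tightness (Lemma \ref{lem:conjugatevariablebasics}) produces a coupling of $\mu^{(N)}_y$ and $\mu^{(N)}_{y'}$ with $E\norm{X-X'}_2\leq(C/c)\norm{y-y'}_2$.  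Since $f^{(N)}$ is $\norm{\cdot}_2$-Lipschitz and $\norm{(X,y)-(X',y')}_2\leq\norm{X-X'}_2+\norm{y-y'}_2$, this yields $\norm{g^{(N)}}_{\Lip}\leq(1+C/c)\norm{f^{(N)}}_{\Lip}$.

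For the asymptotic approximation, I would realize $g^{(N)}$ as a long-time limit of the parametric Langevin semigroup $u^{(N)}(x,y,t):=E[f^{(N)}(X_t^y,y)\mid X_0^y=x]$.  A second synchronous coupling, now between a process started at deterministic $x$ and one started in stationarity (both with the same parameter $y$), gives $\norm{u^{(N)}(x,y,t)-g^{(N)}(y)}_2\leq e^{-ct}\norm{f^{(N)}}_{\Lip}(\norm{x}_2+K(y))$ where $K(y)=E\norm{\tilde X_0^y}_2$ is bounded on operator-norm balls uniformly in $N$ by Lemma \ref{lem:conjugatevariablebasics} and Corollary \ref{cor:DVestimate}.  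Hence evaluating at $x=0$ shows $u^{(N)}(0,\cdot,t)\to g^{(N)}(\cdot)$ in $\norm{\cdot}_{u,R}^{(N)}$ uniformly in $N$ as $t\to\infty$, and by Lemma \ref{lem:limits} it suffices to produce, for each fixed $t$, a function $u(\cdot,\cdot,t)\in\overline{\TrP}_{m+n}^1$ with $u^{(N)}(\cdot,\cdot,t)\rightsquigarrow u(\cdot,\cdot,t)$.

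I would establish this last step by Trotter splitting of the Langevin evolution into alternating short steps of the heat semigroup in $x$ (which preserves $\rightsquigarrow$ by Lemma \ref{lem:Gaussianconvolution}) and the drift flow along the vector field $(x,y)\mapsto -D_xV^{(N)}(x,y)$.  This drift vector field has $y$ as an auxiliary parameter and fits Assumption \ref{ass:vectorfield2} (asymptotic approximability from Assumption \ref{ass:convexRMM}(3), Lipschitz bound from Lemma \ref{lem:convexgradient}), so Proposition \ref{prop:ODE2} shows the drift flow preserves $\rightsquigarrow$.  Standard Trotter commutator estimates, using the $N$-uniform Lipschitz control on $D_xV^{(N)}$ together with the $L^2$-contractivity of both component flows, yield convergence of the iterated splitting to $u^{(N)}(\cdot,\cdot,t)$ in $\norm{\cdot}_{u,R}^{(N)}$ uniformly in $N$, and one more application of Lemma \ref{lem:limits} produces the desired limit $u(\cdot,\cdot,t)\in\overline{\TrP}_{m+n}^1$.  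The final identification $g(Y)=E_{\mathrm{W}^*(Y)}[f(X,Y)]$ then follows by passing to the $N\to\infty$ limit via Corollary \ref{cor:convergenceofexpectation} in the defining orthogonality relation $E[\tau_N((f^{(N)}(X^{(N)},Y^{(N)})-g^{(N)}(Y^{(N)}))\,h(Y^{(N)}))]=0$ valid for every trace polynomial $h$.  The main technical obstacle is making the Trotter splitting quantitatively uniform in $N$ in the Fr\'echet topology of $\overline{\TrP}_{m+n}^1$; this requires a commutator bound exploiting the $N$-independent Lipschitz control on $DV^{(N)}$, while all other steps reduce either to standard log-concave diffusion estimates or to the trace-polynomial calculus developed in Sections \ref{sec:TP} and \ref{sec:diffeqtools}.
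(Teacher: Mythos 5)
Your proposal is correct in outline and shares the paper's core architecture: realize $g^{(N)}$ as the long-time limit of a conditional diffusion semigroup, prove a $(1+C/c)$ Lipschitz bound and a $t\to\infty$ convergence rate that are uniform in $N$ on operator-norm balls, show the semigroup preserves $\rightsquigarrow$ by splitting it into Gaussian convolution in $x$ (Lemma \ref{lem:Gaussianconvolution}) and the flow along $-\tfrac12 D_xV^{(N)}$ (Assumption \ref{ass:vectorfield2} and Proposition \ref{prop:ODE2}), pass to the limit with Lemma \ref{lem:limits}, and identify $g(Y)=E_{\mathrm{W}^*(Y)}[f(X,Y)]$ by testing against functions of $Y$ via Corollary \ref{cor:convergenceofexpectation}. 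Where you genuinely diverge is in how the quantitative estimates are obtained: you derive the Lipschitz bound and the exponential relaxation by synchronous coupling of the parametric Langevin SDEs, whereas the paper works entirely deterministically, proving $\norm{W_t}_{\Lip,dx}\le e^{-ct/2}$, $\norm{W_t}_{\Lip,dy}\le (C/c)(1-e^{-ct/2})$ and iterating through the split-step scheme (Proposition \ref{prop:diffusionsemigroup}, Corollary \ref{cor:conditionalexpectationLipschitz}, Lemma \ref{lem:diffusionAATP}); your coupling computation is the probabilistic shadow of exactly the same Gronwall argument and yields the same constants. The one structural consequence of your choice is the step you yourself flag: because you define the semigroup through the SDE, you must prove that the Trotter split-step scheme converges to it with an error uniform in $N$, while the paper avoids this by \emph{defining} $T_t$ as the limit of the splitting and importing the dimension-free error bound from its earlier work (Proposition \ref{prop:diffusionsemigroup} (1)--(2), i.e.\ \cite[Lemmas 4.5--4.6]{Jekel2018}). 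That estimate is available and is precisely what you would need, so this is deferred work rather than a gap, but you should either cite it or reproduce its proof rather than appeal to ``standard commutator estimates,'' since the drift and test functions are only Lipschitz and the dimension-independence of the constants is the whole point. Your remaining small steps (boundedness of the conditional means uniformly in $N$ on operator-norm balls, and density of polynomial test functions of $Y$ in $L^2(\mathrm{W}^*(Y))$) are justified by the tools you cite.
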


The gist of the theorem is that the conditional expectation $E[ \cdot | Y^{(N)}]$ behaves in the large $N$ limit like the $\mathrm{W}^*$-algebraic expectation $\mathrm{W}^*(X,Y) \to \mathrm{W}^*(Y)$.   For instance, if $f \in \overline{\TrP}_{m+n}^1$ is globally Lipschitz in $\norm*{\cdot}_2$, then the $\mathrm{W}^*$-algebraic conditional expectation of $f(X,Y)$ can be approximated by the classical conditional expectation $E[f(X^{(N)}, Y^{(N)}) | Y^{(N)}]$.

In fact, we can approximate $E_{\mathrm{W}^*}(Z)$ for every $Z \in L^2(\mathrm{W}^*(X,Y))$ using classical conditional expectations in the same sense.  Indeed, we showed in Proposition \ref{prop:realizationofoperators} that every $Z$ can be expressed as $f(X,Y)$ where $f \in \overline{\TrP}_m^1$ is $\norm*{\cdot}_2$-uniformly continuous, and there exist $\norm*{\cdot}_2$-Lipschitz functions $f_k \in \overline{\TrP}_m^1$ such that $f_k \to f$ with respect to the uniform norm $\norm*{\cdot}_u$.  Let $g_k^{(N)}$ and $g^{(N)}$ be given by
\[
g_k^{(N)}(Y^{(N)}) = E[f_k(X^{(N)}, Y^{(N)}) | Y^{(N)}],
\]
and the analogous relation for $g^{(N)}$ and $f$.  Because conditional expectation is a contraction in $L^\infty(\mu^{(N)})$ (for functions taking values in $M_N(\C)$ with $\norm*{\cdot}_2$), we have
\[
\norm*{g_k^{(N)} - g^{(N)}}_u^{(N)} = \norm*{g_k^{(N)} - g^{(N)}}_{L^\infty(\mu^{(N)})} \leq \norm*{f_k - f}_u
\]
By the theorem, there exists $g_k \in \overline{\TrP}_m^1$ such that $g_k^{(N)} \rightsquigarrow g_k$.  Given that $\norm*{g_k^{(N)} - g^{(N)}}_u^{(N)} \leq \norm*{f_k - f}_u \to 0$, a routine argument (``exchange of limits and uniform limits'') shows that there exists $g \in \overline{\TrP}_m^1$ such that $g^{(N)} \rightsquigarrow g$.  In other words, the conclusion of Theorem \ref{thm:conditionalexpectation} holds also for $f$ and thus $E_{\mathrm{W}^*(Y)}[Z] = E_{\mathrm{W}^*(Y)}[f(X,Y)]$ can be viewed as the large $N$ limit of $E[f(X^{(N)},Y^{(N)}) | Y^{(N)}]$.

\subsection{Strategy} \label{subsec:conditionalexpectationstrategy}

Our proof will follow the same strategy as the special case in \cite[\S 4]{Jekel2018}.  In that paper, we showed that if $V^{(N)}$ and $\mu^{(N)}$ on $M_N(\C)_{sa}^m$ are as in Assumption \ref{ass:convexRMM} and if $u^{(N)}: M_N(\C)_{sa}^m \to \C$ is uniformly Lipschitz and asymptotically approximable by trace polynomials, then $\lim_{N \to \infty} \int u^{(N)}\,d\mu^{(N)}$ exists.

We considered the diffusion semigroup $T_t^{(N)} = T_t^{V^{(N)}}$ that solves the equation
\[
\partial_t (T_t^{(N)} u^{(N)}) = \frac{1}{2N} \Delta (T_t^{(N)} u^{(N)}) - \frac{1}{2} \ip{DV^{(N)}, D(T_t^{(N)} u^{(N)})}_2.
\]
As mentioned in \cite[\S 4]{Jekel2018}, this diffusion semigroup has an equivalent SDE formulation, and is a standard tool in proving the log-Sobolev inequality and concentration estimates (see for instance, \cite{Ledoux1992}, \cite[\S 4.4.2]{AGZ2009}, \cite{DGS2016}).  

Now $\int T_t^{(N)} u^{(N)}\,d\mu^{(N)} = \int u^{(N)}\,d\mu^{(N)}$ and $\norm*{T_t^{(N)} u^{(N)}}_{\Lip} \leq e^{-ct/2} \norm*{u^{(N)}}_{\Lip}$.  As $t \to \infty$, the function $T_t^{(N)} u^{(N)}$ converges to the constant function $\int u^{(N)}\,d\mu^{(N)}$ at a rate independent of $N$.  On the other hand, we showed in \cite[Lemma 4.10]{Jekel2018} that if $\{u^{(N)}\}_{N \in \N}$ and $\{DV^{(N)}\}_{N \in \N}$ are asymptotically approximable by trace polynomials, then so is $\{T_t^{(N)} u^{(N)}\}_{N \in \N}$.  Hence, we concluded that the sequence of constant functions $\{ \int u^{(N)}\,d\mu^{(N)}\}$ is asymptotically approximable by trace polynomials, which means that the limit as $N \to \infty$ exists.

Now we apply the same method in the conditional setting to prove Theorem \ref{thm:conditionalexpectation}.  Let $V^{(N)}(x,y)$ be a function satisfying Assumption \ref{ass:convexRMM}.  If we fix $y$, then $V^{(N)}(\cdot,y)$ is uniformly convex and semi-concave function of $x$, so it defines a log-concave probability measure on $M_N(\C)_{sa}^m$.  This produces a well-behaved conditional distribution of $X^{(N)}$ given $Y^{(N)}$, where $(X^{(N)},Y^{(N)}) \sim \mu^{(N)}$.   Explicitly, for $f \in L^1(\mu^{(N)}, M_N(\C))$, we have
\[
E[f(X^{(N)},Y^{(N)}) | Y^{(N)}] = \frac{\int f(x,Y^{(N)}) e^{-N^2 V^{(N)}(x,Y^{(N)})}\,dx}{\int e^{-N^2 V^{(N)}(x,Y^{(N)})}\,dx}.
\]
We will evaluate this conditional expectation as the limit as $t \to \infty$ of $T_t^{(N)} f$, where $T_t^{(N)} = T_t^{V^{(N)}}$ is the semigroup, acting on Lipschitz functions of $(x,y)$, that solves
\[
\partial_t (T_t^{(N)} f) = \frac{1}{2N} \Delta_x (T_t^{(N)} f) - \frac{1}{2} J_x(T_t^{(N)} f)^* D_x V^{(N)},
\]
where $J_x(T_t^{(N)} f)$ denotes the differential (Jacobian) of $T_t^{(N)} f$ as a function $x$ from $M_N(\C)_{sa}^m$ to $M_N(\C)$ and $*$ denotes the adjoint.  In \S \ref{subsec:relativediffusion}, we will analyze how $T_t^{(N)}$ affects the Lipschitz norms with respect to $x$ and $y$ separately and hence show that the conditional expectation is given by a Lipschitz function of $y$.  In \S \ref{subsec:relativediffusion2}, we will show that $T_t^{(N)}$ preserves asymptotic approximability by trace polynomials of $(x,y)$ and conclude our argument.  The new aspect compared to \cite{Jekel2018} is that the functions are matrix-valued and depend on an extra parameter $y$.

\subsection{Conditional Diffusion Semigroup} \label{subsec:relativediffusion}

To simplify notation, let us fix $N$ and fix $V: M_N(\C)_{sa}^m \times M_N(\C)_{sa}^n \to \R$ for the remainder of \S \ref{subsec:relativediffusion}.  We will denote
\[
d\mu(x | y) = \frac{1}{\int e^{-N^2 V(x,y)}\,dx} e^{-N^2 V(x,y)}\,dx,
\]
which is a measure on $M_N(\C)_{sa}^m$ depending on the parameter $y$.  The associated semigroup $T_t$ will be approximated by alternating two other operators $P_t$ and $S_t$ on short time intervals.  Let $P_t$ denote the semigroup of convolution with Gaussian with respect to $x$, that is,
\[
P_t f(x,y) = \int f(x+z,y)\,d\sigma_{t,m}^{(N)}(z).
\]
The semigroup $S_t$ is given by
\[
S_t f(x,y) = f(W_t(x,y),y),
\]
where $W_t: M_N(\C)_{sa}^m \times M_N(\C)_{sa}^n \to M_N(\C)_{sa}^n$ is the solution to the initial value problem
\begin{align*}
W_0(x,y) &= x \\
\partial_t W_t(x,y) &= -\frac{1}{2} D_x V(W_t(x,y),y).
\end{align*}
This solution is defined for all $t \geq 0$ by the Picard-Lindel\"of theorem because $D_x V(x,y)$ is globally Lipschitz in $x$ (compare \S \ref{subsec:vectorfields}).

\begin{proposition} \label{prop:diffusionsemigroup}
There exists a semigroup $T_t$ acting on Lipschitz functions $M_N(\C)_{sa}^m \times M_N(\C)_{sa}^n$ such that the following hold:
\begin{enumerate}
	\item If $t = n / 2^\ell$ is a dyadic rational, let $T_{t,\ell} f = (P_{2^{-\ell}} S_{2^{-\ell}})^n$.  Then $T_{t,\ell} f \to T_t f$ as $\ell \to \infty$ and more precisely
	\[
	\norm*{T_{t,\ell} f(\cdot,y) - T_t f(\cdot,y)}_{L^\infty} \leq \frac{Cm^{1/2}}{c(2 - 2^{1/2})} 2^{-\ell/2} \norm*{f(\cdot,y)}_{\Lip}.
	\]
	\item If $0 \leq s \leq t$, we have
	\[
	\norm*{T_t f(x,y) - T_s f(x,y)}_2 \leq e^{-cs/2} \left( \frac{C}{c} (6 + 5 \sqrt{2})(t - s)^{1/2} + \norm*{D_x V(x,y)}_2 \right) \norm*{f(\cdot,y)}_{\Lip}.
	\]
	\item $\norm*{T_t f(\cdot,y)}_{\Lip} \leq e^{-ct/2} \norm*{f(\cdot,y)}_{\Lip}$.
	\item $\int T_t f(x,y)\,d\mu(x|y) = \int f(x,y)\,d\mu(x|y)$.
	\item We have $T_t f(x,y) \to \int f(x',y)\,d\mu(x'|y)$ as $t \to \infty$ and specifically
	\[
	\norm*{T_t f(x,y) - \int f(x',y)\,d\mu(x'|y)}_2 \leq e^{-ct/2} \left( 4\frac{C}{c^2} (6 + 5 \sqrt{2}) t^{-1/2} + \frac{2}{c} \norm*{D_x V(x,y)}_2 \right) \norm*{f(\cdot,y)}_{\Lip}.
	\]
\end{enumerate}
\end{proposition}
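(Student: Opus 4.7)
The plan is to construct $T_t$ by Trotter splitting into the $x$-Gaussian convolution semigroup $P_h$ and the gradient flow $S_h$, and to deduce all five claims by combining estimates for these two factors with the hypothesis that $V(\cdot, y)$ is $c$-convex and $C$-semi-concave in $x$ uniformly in $y$.  Both component semigroups act only on the $x$-variable with $y$ frozen, so the estimates are essentially those of the unconditional setting of \cite{Jekel2018} applied fiberwise.  The two basic ingredients are: $P_h$ is a contraction on $\norm{\cdot}_2$-Lipschitz functions of $x$ (Jensen), and $S_h$ is a $\norm{\cdot}_2$-contraction with rate $e^{-ch/2}$, obtained from a Gr\"onwall estimate on $\partial_t \norm{W_t(x,y) - W_t(x',y)}_2^2 \leq -c \norm{W_t(x,y) - W_t(x',y)}_2^2$ using uniform convexity of $V(\cdot, y)$.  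Part (3) is then immediate: each factor $P_h$ or $S_h$ in $T_{t,\ell}$ contracts the $x$-Lipschitz seminorm by $1$ or $e^{-ch/2}$, so the product has Lipschitz constant at most $e^{-ct/2}$, and this survives the limit $\ell \to \infty$.

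The heart of the argument is part (1).  To compare $T_{t,\ell}$ with $T_{t,\ell+1}$ I will use the algebraic identity $(P_{h/2} S_{h/2})^2 - P_h S_h = P_{h/2} [S_{h/2}, P_{h/2}] S_{h/2}$, which isolates the single-step error as one commutator.  A direct computation gives
\begin{equation*}
([S_{h/2}, P_{h/2}] f)(x,y) = \int \bigl[ f(W_{h/2}(x,y)+z, y) - f(W_{h/2}(x+z, y), y) \bigr] \, d\sigma^{(N)}_{h/2,m}(z),
\end{equation*}
and then Picard-Lindel\"of with the $C$-Lipschitz bound on $D_x V$ yields $\norm{W_{h/2}(x+z,y) - W_{h/2}(x,y) - z}_2 \leq (Ch/4) \norm{z}_2$.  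Integrating against the Gaussian first moment $\int \norm{z}_2 \, d\sigma^{(N)}_{h/2,m}(z) \leq (mh/2)^{1/2}$ bounds the commutator uniformly in $(x,y)$ by $O(C h^{3/2} m^{1/2}) \norm{f(\cdot, y)}_{\Lip}$; telescoping over the $t \cdot 2^{\ell+1}$ dyadic intervals and using the contractivity of $P$ and $S$ to prevent amplification yields the geometric $\sum_{k \geq \ell} 2^{-k/2}$ bound on $T_{t,\ell+1} - T_{t,\ell}$, which produces both Cauchyness and the stated $2^{-\ell/2}$ rate.  Part (2) is an analogous accumulation using the elementary short-time bounds $\norm{P_h g - g}_\infty \leq m^{1/2} h^{1/2} \norm{g(\cdot,y)}_{\Lip}$ and $\norm{S_h g(x,y) - g(x,y)}_2 \leq \tfrac{h}{2} \norm{D_x V(x,y)}_2 \norm{g(\cdot,y)}_{\Lip}$ (plus a flow correction of order $Ch \cdot h$ handled by Corollary \ref{cor:DVestimate}), accumulated across the $t-s$ increment with the $e^{-cs/2}$ contraction inherited from (3).

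For (4), the infinitesimal generator of $T_t$ is $\mathcal{L}_y g = \frac{1}{2N} \Delta_x g - \frac{1}{2} \ip{D_x V(\cdot, y), D_x g}_2$, and integration by parts against the log-density $-N^2 V(\cdot, y)$ shows $\int \mathcal{L}_y g \, d\mu(\cdot \mid y) = 0$, from which $\frac{d}{dt} \int T_t f \, d\mu(\cdot \mid y) = 0$.  Part (5) then combines (3) and (4) through
\begin{equation*}
T_t f(x,y) - \int f(x',y) \, d\mu(x' \mid y) = \int \bigl[ T_t f(x,y) - T_t f(x', y) \bigr] \, d\mu(x' \mid y),
\end{equation*}
which is at most $e^{-ct/2} \norm{f(\cdot,y)}_{\Lip} \int \norm{x - x'}_2 \, d\mu(x' \mid y)$, and the latter integral is controlled by Lemma \ref{lem:conjugatevariablebasics} together with the uniform convexity relation $c \norm{x - m_y}_2 \leq \norm{D_x V(x,y)}_2 + \norm{D_x V(m_y, y)}_2$, where $\norm{D_x V(m_y,y)}_2$ is handled via $E[D_x V(X,y)] = 0$ and the $C$-Lipschitz bound.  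A dyadic-in-$t$ telescoping that plugs (2) into the differences $T_{2^k t} f - T_{2^{k+1} t} f$ and resums the geometric series is what converts the raw variance-type constants into the $C/c^2$ and $t^{-1/2}$ coefficients displayed in the statement.  The principal technical obstacle is the commutator estimate in (1) — the short-range control of $W_{h/2}(x+z,y) - W_{h/2}(x,y) - z$ — since the rest of the proposition assembles from that input and the standard log-concave bookkeeping of Section \ref{sec:RMbackground}.
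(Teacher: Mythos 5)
Your proposal is correct and follows essentially the same route as the paper: the paper's proof simply freezes the variable $y$ and cites the unconditional results of \cite[\S 4]{Jekel2018} (Lemmas 4.5, 4.6, 4.8, 4.9), remarking that those scalar-valued arguments carry over verbatim to $M_N(\C)$-valued functions with $|\cdot|$ replaced by $\norm{\cdot}_2$. The Trotter splitting into $P_h$ and $S_h$, the single-step commutator estimate via the flow bound $\norm*{W_{h/2}(x+z,y)-W_{h/2}(x,y)-z}_2 \leq (Ch/4)\norm{z}_2$, the Gr\"onwall contraction giving (3), the generator/integration-by-parts identity for (4), and the dyadic telescoping of (2) to obtain (5) are exactly the content of those cited lemmas, applied fiberwise in $y$.
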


\begin{proof}
These results follow by freezing the variable $y$ and applying the results from our previous paper, specifically,
\begin{enumerate}
	\item see \cite[Lemma 4.5]{Jekel2018},
	\item see \cite[Lemma 4.6]{Jekel2018},
	\item see \cite[Lemma 4.6]{Jekel2018},
	\item see \cite[Lemma 4.8]{Jekel2018},
	\item see \cite[Lemma 4.9]{Jekel2018}.
\end{enumerate}
The results of \cite[\S 4]{Jekel2018} were stated only for scalar-valued functions.  However, the arguments hold for functions from $M_N(\C)_{sa}^m$ to any finite-dimensional normed vector space.  The result (4) that $T_t$ is expectation-preserving follows immediately by applying the scalar-valued result to each coordinate of the vector-valued function in some basis.  To verify the estimates, one simply replaces the ``$|\cdot|$'' in the arguments by the appropriate norm, which in our case would be $\norm*{\cdot}_2$ on $M_N(\C)$.
\end{proof}

We will next show that $W_t(x,y)$ and $T_t f(x,y)$ depend in a Lipschitz manner upon $y$.  Let us denote
\begin{align*}
\norm*{f}_{\Lip(dx)} &= \sup_y \norm*{f(\cdot,y)}_{\Lip} \\
\norm*{f}_{\Lip(dy)} &= \sup_x \norm*{f(x,\cdot)}_{\Lip}.
\end{align*}

\begin{lemma}
With the setup above, we have for Lipschitz $f: M_N(\C)_{sa}^m \times M_N(\C)_{sa}^n \to M_N(\C)$
\begin{enumerate}
	\item $\norm*{W_t}_{\Lip,dx} \leq e^{-ct/2}$ and $\norm*{W_t}_{\Lip,dy} \leq (C/c)(1 - e^{-ct/2})$.
	\item $\norm*{S_t f}_{\Lip,dx} \leq e^{-ct/2} \norm*{f}_{\Lip,dx}$.
	\item $\norm*{S_t f}_{\Lip,dy} \leq \norm*{f}_{\Lip,dy} + (C/c)(1 - e^{-ct/2}) \norm*{f}_{\Lip,dx}$.
	\item $\norm*{P_t f}_{\Lip,dy} \leq \norm*{f}_{\Lip,dy}$ and $\norm*{P_t f}_{\Lip,dx} \leq \norm*{f}_{\Lip,dx}$.
	\item $\norm*{T_t f}_{\Lip,dx} \leq e^{-ct/2} \norm*{f}_{\Lip,dx}$.
	\item $\norm*{T_t f}_{\Lip,dy} \leq \norm*{f}_{\Lip,dy} + (C/c)(1 - e^{-ct/2}) \norm*{f}_{\Lip,dx}$.
\end{enumerate}
\end{lemma}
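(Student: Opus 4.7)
The plan is to establish items (1)--(4) directly from the definitions and then deduce (5) and (6) by passing to the limit in the approximation $T_{t,\ell} = (P_{2^{-\ell}} S_{2^{-\ell}})^n$ provided by Proposition \ref{prop:diffusionsemigroup}(1).

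For (1), I would fix $y$ and apply a standard Grönwall argument to $\phi(t) := \norm{W_t(x,y) - W_t(x',y)}_2^2$. Differentiating and using Assumption \ref{ass:convexRMM}(1) that $D_x V(\cdot,y)$ is a $c$-monotone vector field (a consequence of $H V \geq c$ applied in the $x$-direction, via Lemma \ref{lem:convexgradient}) yields $\phi'(t) \leq -c\phi(t)$, hence $\norm{W_t(x,y)-W_t(x',y)}_2 \leq e^{-ct/2}\norm{x-x'}_2$. For the $y$-estimate, I fix $x$ and set $\psi(t) := \norm{W_t(x,y) - W_t(x,y')}_2$; splitting $D_x V(W_t,y) - D_x V(W_t',y') = [D_xV(W_t,y) - D_xV(W_t',y)] + [D_xV(W_t',y) - D_xV(W_t',y')]$, using $c$-monotonicity on the first bracket and the $C$-Lipschitz estimate from Lemma \ref{lem:convexgradient} on the second gives $\psi'(t) \leq -(c/2)\psi(t) + (C/2)\norm{y-y'}_2$. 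Solving this linear ODE with $\psi(0)=0$ produces $\psi(t) \leq (C/c)(1-e^{-ct/2})\norm{y-y'}_2$.

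Items (2) and (3) follow from (1) by the chain rule: writing $S_tf(x,y) = f(W_t(x,y),y)$, the $x$-Lipschitz estimate for $S_tf$ factors through $W_t$'s $x$-Lipschitz constant $e^{-ct/2}$, while for the $y$-estimate I insert $f(W_t(x,y'),y)$ and split into a $\norm{f}_{\Lip(dx)}$-piece (controlled by $\norm{W_t}_{\Lip(dy)}$) plus a $\norm{f}_{\Lip(dy)}$-piece. For (4), convolution with any probability measure against a Lipschitz function preserves the Lipschitz constant in each argument, which is immediate from the triangle inequality inside the integral.

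For (5) and (6), I would prove the bounds for $T_{t,\ell}$ by induction on $n$, where $t = n\cdot 2^{-\ell}$, and then take $\ell\to\infty$ using pointwise (in fact $L^\infty$) convergence from Proposition \ref{prop:diffusionsemigroup}(1). Writing $a_k = \norm{(P_{2^{-\ell}}S_{2^{-\ell}})^k f}_{\Lip(dx)}$ and $b_k$ for the $dy$-Lipschitz norm, items (2)--(4) give $a_k \leq e^{-c 2^{-\ell}/2} a_{k-1}$ and $b_k \leq b_{k-1} + (C/c)(1 - e^{-c 2^{-\ell}/2}) a_{k-1}$. Iterating and summing the resulting geometric series in $a_k$, the factor $(1 - e^{-c2^{-\ell}/2})\sum_{k=0}^{n-1} e^{-kc2^{-\ell}/2}$ telescopes to $1 - e^{-ct/2}$, yielding $\norm{T_{t,\ell}f}_{\Lip(dx)} \leq e^{-ct/2}\norm{f}_{\Lip(dx)}$ and $\norm{T_{t,\ell}f}_{\Lip(dy)} \leq \norm{f}_{\Lip(dy)} + (C/c)(1-e^{-ct/2})\norm{f}_{\Lip(dx)}$, uniformly in $\ell$. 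Since Lipschitz bounds with a fixed constant pass to pointwise limits, the same bounds hold for $T_tf$ for dyadic $t$, and then for all $t \geq 0$ by the continuity in $t$ established in Proposition \ref{prop:diffusionsemigroup}(2).

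No step is a serious obstacle; the mild subtlety is keeping the cross-term in the Grönwall argument for (1) clean enough that the ODE solves to exactly $(C/c)(1-e^{-ct/2})$, and ensuring that the geometric-series telescoping in the inductive step gives the same constant $(C/c)(1-e^{-ct/2})$ independently of $\ell$, so the bound survives the limit $\ell \to \infty$.
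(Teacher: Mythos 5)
Your proposal is correct and follows essentially the same route as the paper: the same splitting of $D_xV(W_t,y)-D_xV(W_t',y')$ into a $c$-monotone piece and a $C$-Lipschitz-in-$y$ piece, leading to the linear differential inequality whose solution gives $(C/c)(1-e^{-ct/2})$, then composition/convolution for (2)--(4), and the same induction with geometric-series telescoping on $T_{t,\ell}$ followed by the limit $\ell\to\infty$ and continuity in $t$ for (5)--(6). The only cosmetic difference is that the paper runs the Gr\"onwall argument once with both $x$ and $y$ varying simultaneously, whereas you separate the $dx$- and $dy$-estimates; both yield the stated constants.
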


\begin{proof}
(1) Fix $x, x' \in M_N(\C)_{sa}^m$ and $y, y' \in M_N(\C)_{sa}^n$.  Define
\[
\phi(t) = \norm*{W_t(x,y) - W_t(x',y')}_2.
\]
Note that $\phi$ is locally Lipschitz in $t$ and hence absolutely continuous.  Moreover, $\phi(t)^2$ is $C^1$ with
\begin{align*}
\partial_t [\phi(t)^2] &= 2 \ip{\partial_t W_t(x,y) - \partial_t W_t(x',y'), W_t(x,y) - W_t(x',y')}_2 \\
&= -\ip{D_x V(W_t(x,y),y) - D_x V(W_t(x',y'),y'), W_t(x,y) - W_t(x',y')}_2 \\
&= -\ip{D_x V(W_t(x,y),y) - D_x V(W_t(x',y'),y), W_t(x,y) - W_t(x',y')}_2 \\
& \quad - \ip{D_x V(W_t(x',y'),y) - D_x V(W_t(x',y'),y'), W_t(x,y) - W_t(x',y')}_2 \\
&\leq -c \norm*{W_t(x,y) - W_t(x',y')}_2^2 \\
& \quad + \norm*{D_x V(W_t(x',y'),y) - D_x V(W_t(x',y'),y)}_2 \norm*{W_t(x,y) - W_t(x',y')}_2 \\
&\leq -c \norm*{W_t(x,y) - W_t(x',y')}_2^2 + C \norm*{y - y'}_2 \norm*{W_t(x,y) - W_t(x',y')}_2.
\end{align*}
Here we have employed the inequality $\ip{D_xV(z,w) - D_x V(z',w), z - z'}_2 \geq c \norm{z - z'}_2^2$ coming from the uniform convexity of $V$ as well as the Cauchy-Schwarz inequality. This implies that
\[
2 \phi'(t) \phi(t) = \partial_t [\phi(t)^2] \leq -c \phi(t)^2 + C \norm*{y - y'} \phi(t).
\]
Thus, $\phi'(t) \leq -(c/2) \phi(t) + (C/2) \norm*{y - y'}$, so that $\partial_t[e^{ct/2} \phi(t)] \leq (C/2) e^{ct/2} \norm*{y - y'}_2$.  This implies that
\[
e^{ct/2} \phi(t) - \phi(0) \leq \frac{C}{c} (e^{ct/2} - 1) \norm*{y - y'}_2.
\]
But $\phi(t) = \norm*{W_t(x,y) - W_t(x',y')}_2$ and $\phi(0) = \norm*{x - x'}_2$.  Hence,
\[
\norm*{W_t(x,y) - W_t(x',y')}_2 \leq e^{-ct/2} \norm*{x - x'}_2 + \frac{C}{c}(1 - e^{-ct/2}) \norm*{y - y'}_2.
\]
This proves both estimates of (1).

(2) This is immediate since $S_t f(x,y) = f(W_t(x,y),y)$, as in \cite[Lemma 4.4 (5)]{Jekel2018}.

(3) Note that
\begin{align*}
\norm*{S_t f(x,y) - S_t f(x,y')}_2 &= \norm*{f(W_t(x,y),y) - f(W_t(x,y'),y')}_2 \\
&\leq \norm*{f(W_t(x,y),y) - f(W_t(x,y),y')}_2 + \norm*{f(W_t(x,y),y') - f(W_t(x,y'),y')}_2 \\
&\leq \norm*{f}_{\Lip,dy} \norm*{y - y'}_2 + \norm*{f}_{\Lip,dx} \norm*{W_t(x,y) - W_t(x,y')}_2 \\
&\leq \norm*{f}_{\Lip,dy} \norm*{y - y'}_2 + \frac{C}{c}(1 - e^{-ct/2}) \norm*{f}_{\Lip,dx} \norm*{y - y'}_2.
\end{align*}

(4) This follows from basic properties of convolution of a function with a probability measure.

(5) By iterating the estimates (2) and (4), we obtain $\norm*{T_{t,\ell} f}_{\Lip,dx} \leq e^{-ct/2} \norm*{f}_{\Lip,dx}$.  Then by Proposition \ref{prop:diffusionsemigroup} (2) and (3) we may take $\ell \to \infty$ and then extend to all real values of $t \geq 0$.

(6) First, consider $T_{t,\ell}$ for a dyadic rational $t = n / 2^\ell$.  Denote $\delta = 2^{-\ell}$.  For $j = 0$, \dots, $n-1$, we have
\begin{align*}
\norm*{T_{(j+1)\delta,\ell}f}_{\Lip,dy} &= \norm*{P_\delta S_\delta T_{j \delta,\ell} f}_{\Lip,dy} \\
&\leq \norm*{S_\delta T_{j \delta,\ell} f}_{\Lip,dy} \\
&\leq \norm*{T_{j \delta,\ell} f}_{\Lip,dy} + \frac{C}{c}(1 - e^{-c\delta/2}) \norm*{T_{j \delta, \ell} f}_{\Lip,dx},
\end{align*}
where the last inequality follows from (3).  Therefore, by induction
\begin{align*}
\norm*{T_{t,\ell} f}_{\Lip,dy} &= \norm*{T_{n\delta,\ell} f}_{\Lip,dy} \\
&\leq \norm*{f}_{\Lip,dy} + \frac{C}{c}(1 - e^{-c\delta/2}) \sum_{j=0}^{n-1} \norm*{T_{j\delta,\ell} f}_{\Lip,dx} \\
&\leq \norm*{f}_{\Lip,dy} + \frac{C}{c}(1 - e^{-c \delta/2}) \sum_{j=0}^{n-1} e^{-c\delta j / 2} \norm*{f}_{\Lip,dx} \\
&= \norm*{f}_{\Lip,dy} + \frac{C}{c}(1 - e^{-ct/2}) \norm*{f}_{\Lip,dx}.
\end{align*}
In light of Proposition \ref{prop:diffusionsemigroup} (1), we can take $\ell \to \infty$ and conclude that $\norm*{T_t f}_{\Lip,dy} \leq \norm*{f}_{\Lip,dy} + (C/c)(1 - e^{-ct/2}) \norm*{f}_{\Lip,dx}$ for dyadic rational $t$.  This inequality can then be extended to all real $t \geq 0$ by Proposition \ref{prop:diffusionsemigroup} (2).
\end{proof}

\begin{corollary} \label{cor:conditionalexpectationLipschitz}
Let $f: M_N(\C)_{sa}^m \times M_N(\C)_{sa}^n \to M_N(\C)$ be Lipschitz with respect to $\norm*{\cdot}_2$.  Let $g(y) = \int f(x,y)\,d\mu(x|y)$.  Then $g$ is Lipschitz with
\[
\norm*{g}_{\Lip} \leq (1 + C/c) \norm*{f}_{\Lip}.
\]
\end{corollary}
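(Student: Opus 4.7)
The plan is to read off the Lipschitz constant of $g$ directly from the diffusion semigroup $T_t$ constructed above, since $T_t f(x,y)$ converges to $g(y)$ as $t \to \infty$ independently of $x$. All of the analytic work has been done in Proposition \ref{prop:diffusionsemigroup} and in the preceding lemma; the corollary is essentially a matter of letting $t \to \infty$ in the $y$-Lipschitz estimate (6) of the preceding lemma.

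Concretely, I would first apply part (6) of the preceding lemma to see that for any $y, y' \in M_N(\C)_{sa}^n$ and any $x \in M_N(\C)_{sa}^m$,
\[
\norm*{T_t f(x,y) - T_t f(x,y')}_2 \leq \left[\norm*{f}_{\Lip,dy} + \frac{C}{c}(1 - e^{-ct/2})\norm*{f}_{\Lip,dx}\right]\norm*{y - y'}_2.
\]
Then I would invoke Proposition \ref{prop:diffusionsemigroup}(5), which gives
\[
T_t f(x,y) \longrightarrow \int f(x',y)\,d\mu(x'|y) = g(y)
\]
as $t \to \infty$, with a quantitative rate independent of $y$ (note that the same holds with $y$ replaced by $y'$, applied at the same base point $x$). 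Passing to the limit $t \to \infty$ in the displayed inequality, the factor $(1 - e^{-ct/2})$ tends to $1$, and we obtain
\[
\norm*{g(y) - g(y')}_2 \leq \left[\norm*{f}_{\Lip,dy} + \frac{C}{c}\norm*{f}_{\Lip,dx}\right]\norm*{y - y'}_2.
\]
Finally, since freezing either variable can only decrease the Lipschitz constant, $\norm*{f}_{\Lip,dx} \leq \norm*{f}_{\Lip}$ and $\norm*{f}_{\Lip,dy} \leq \norm*{f}_{\Lip}$, so the right-hand side is bounded by $(1 + C/c)\norm*{f}_{\Lip}\norm*{y - y'}_2$, which is exactly the claim.

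There is no serious obstacle — the only thing worth checking is that the limit $t \to \infty$ may be taken inside the estimate, and this is immediate from the pointwise convergence (with explicit rate) guaranteed by Proposition \ref{prop:diffusionsemigroup}(5), applied at the two fixed points $(x,y)$ and $(x,y')$. One minor bookkeeping point: the bound produced by Proposition \ref{prop:diffusionsemigroup}(5) involves $\norm*{D_x V(x,y)}_2$ and $\norm*{D_x V(x,y')}_2$, but these do not affect the limiting inequality since they are multiplied by $e^{-ct/2} \to 0$. Thus the argument is clean and purely follows from the semigroup estimates already established.
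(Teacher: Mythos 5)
Your proposal is correct and follows the same route as the paper: apply the $\Lip,dy$ estimate (6) for $T_t f$, bound $\norm*{f}_{\Lip,dx}$ and $\norm*{f}_{\Lip,dy}$ by $\norm*{f}_{\Lip}$, and let $t \to \infty$ using Proposition \ref{prop:diffusionsemigroup}(5). The observation that the $\norm*{D_x V}_2$ term in the convergence rate is harmless because of the $e^{-ct/2}$ factor is a correct (and welcome) bit of bookkeeping that the paper leaves implicit.
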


\begin{proof}
By the previous lemma,
\begin{align*}
\norm*{T_t f}_{\Lip,dy} &\leq \norm*{f}_{\Lip,dy} + \frac{C}{c}(1 - e^{-ct/2}) \norm*{f}_{\Lip,dx} \\
&\leq \norm*{f}_{\Lip} + \frac{C}{c} \norm*{f}_{\Lip}.
\end{align*}
As $t \to \infty$, we have $T_t f(x,y) \to g(y)$ by Proposition \ref{prop:diffusionsemigroup} (5).  Hence, $\norm*{g}_{\Lip} \leq (1 + C/c) \norm*{f}_{\Lip}$.
\end{proof}

\subsection{Asymptotic Approximation and Convergence} \label{subsec:relativediffusion2}

Let $V^{(N)}$ and $\mu^{(N)}$ be as in Theorem \ref{thm:conditionalexpectation}, let $(X^{(N)},Y^{(N)})$ be a random variable with distribution $\mu^{(N)}$.  Let $\mu^{(N)}(x|y)$ denote the conditional distribution of $X^{(N)}$ given $Y^{(N)}$.

Let $P_t^{(N)}$, $S_t^{(N)}$, and $T_t^{(N)}$ be the semigroups acting on Lipschitz functions defined as in \S \ref{subsec:relativediffusion} with respect to the potential $V^{(N)}$.

\begin{lemma} \label{lem:diffusionAATP}
With the notation above, suppose that $f^{(N)}: M_N(\C)_{sa}^m \to M_N(\C)$, that $f^{(N)}$ is $K$-Lipschitz for every $N$, and that $f^{(N)}$ is asymptotically approximable by trace polynomials.  Then
\begin{enumerate}
	\item $\{P_t^{(N)} f^{(N)}\}$ is asymptotically approximable by trace polynomials.
	\item $\{S_t^{(N)} f^{(N)}\}$ is asymptotically approximable by trace polynomials.
	\item $\{T_t^{(N)} f^{(N)}\}$ is asymptotically approximable by trace polynomials.
\end{enumerate}
\end{lemma}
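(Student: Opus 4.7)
My plan is to handle the three semigroups in order, each reducing to a general result from \S\ref{sec:diffeqtools} combined with the uniform Lipschitz bounds supplied by Assumption \ref{ass:convexRMM}. Throughout, I treat $f^{(N)}$ as a function on $M_N(\C)_{sa}^{m+n}$ (viewing an $x$-only function trivially in $(x,y)$), uniformly $\norm{\cdot}_2$-Lipschitz in $N$, with $f^{(N)} \rightsquigarrow f \in \overline{\TrP}_{m+n}^1$. Part (1) is then almost immediate: $P_t^{(N)}$ is Gaussian convolution in the $x$-coordinates with $y$ frozen, and Remark \ref{rem:UCgrowthbound} automatically supplies the linear growth bound needed to invoke Lemma \ref{lem:Gaussianconvolution} (whose statement and proof adapt verbatim to convolution in only the first $m$ coordinates, which is exactly the operator $e^{tL_x/2}$ of \S\ref{subsec:heatsemigroup}). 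The conclusion $P_t^{(N)} f^{(N)} \rightsquigarrow e^{tL_x/2} f$ follows at once.

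For part (2), I would write $S_t^{(N)} f^{(N)}(x,y) = f^{(N)}(W_t^{(N)}(x,y), y)$, where $W_t^{(N)}$ is the flow along the time-independent vector field $H^{(N)}(x,y) = -\tfrac12 D_x V^{(N)}(x,y)$. By Assumption \ref{ass:convexRMM}(3) this $H^{(N)}$ is asymptotically approximable by trace polynomials, and by Lemma \ref{lem:convexgradient} combined with $HV^{(N)} \leq C$ it is $C/2$-Lipschitz in $(x,y)$ uniformly in $t$ and $N$. Hence Assumption \ref{ass:vectorfield2} holds, and Proposition \ref{prop:ODE2} (applied with initial datum $G_0^{(N)}(x,y) = x$) yields $W_t^{(N)} \rightsquigarrow W_t \in (\overline{\TrP}_{m+n}^1)_{sa}^m$ with Lipschitz constants uniform in $N$. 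The tuple $(W_t^{(N)}(x,y), y)$ is then a uniformly Lipschitz, asymptotically approximable map $M_N(\C)_{sa}^{m+n} \to M_N(\C)_{sa}^{m+n}$, so Lemma \ref{lem:composition}(3) applies (using that the uniform Lipschitz property of $f^{(N)}$ gives a common $\norm{\cdot}_2$-modulus of continuity) to give $S_t^{(N)} f^{(N)} \rightsquigarrow f \circ (W_t, \id)$.

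For part (3), for dyadic rational $t = n/2^\ell$ the Trotter approximant $T_{t,\ell}^{(N)} f^{(N)} = (P_{2^{-\ell}}^{(N)} S_{2^{-\ell}}^{(N)})^n f^{(N)}$ is built from $2n$ alternating applications of (1) and (2). The directional Lipschitz estimates of \S\ref{subsec:relativediffusion} bound $\norm{T_{t,\ell}^{(N)} f^{(N)}}_{\Lip}$ by $(1 + C/c)\norm{f^{(N)}}_{\Lip}$ uniformly in $\ell$ and $N$, so the uniform Lipschitz hypothesis required by Lemmas \ref{lem:Gaussianconvolution} and \ref{lem:composition} remains available at every step and induction yields asymptotic approximability of $T_{t,\ell}^{(N)} f^{(N)}$ for each $\ell$. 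Proposition \ref{prop:diffusionsemigroup}(1) then gives the explicit rate
\[
\sup_{\norm{(x,y)}_\infty \leq R} \norm*{T_{t,\ell}^{(N)} f^{(N)}(x,y) - T_t^{(N)} f^{(N)}(x,y)}_2 \leq \frac{C m^{1/2}}{c(2 - 2^{1/2})} 2^{-\ell/2} \norm*{f^{(N)}}_{\Lip},
\]
which is uniform in $N$, and Lemma \ref{lem:limits} then promotes $T_t^{(N)} f^{(N)}$ to an asymptotically approximable sequence for every dyadic rational $t$. Proposition \ref{prop:diffusionsemigroup}(2) supplies an $N$-uniform modulus of continuity in $t$, so a second application of Lemma \ref{lem:limits} (approximating arbitrary $t \geq 0$ by dyadic rationals) extends the conclusion to all $t \geq 0$. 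The only real obstacle is this bookkeeping: maintaining uniform Lipschitz control on $T_{t,\ell}^{(N)} f^{(N)}$ in both $\ell$ and $N$, so that the $2^{-\ell/2}$ convergence rate transfers into the $\norm{\cdot}_{u,R}^{(N)}$-estimate required by Lemma \ref{lem:limits}, and this is exactly what the directional bounds of \S\ref{subsec:relativediffusion} are engineered to provide.
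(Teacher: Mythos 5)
Your proposal is correct and follows essentially the same route as the paper: part (1) via Lemma \ref{lem:Gaussianconvolution} (with the growth bound from Remark \ref{rem:UCgrowthbound}), part (2) by realizing $S_t^{(N)}f^{(N)} = f^{(N)}(W_t^{(N)}(x,y),y)$ and invoking Assumption \ref{ass:vectorfield2}, Proposition \ref{prop:ODE2}, and Lemma \ref{lem:composition}, and part (3) by the Trotter-type approximants $T_{t,\ell}^{(N)}$ together with Proposition \ref{prop:diffusionsemigroup} (1)--(2) and Lemma \ref{lem:limits}. Your added bookkeeping on uniform Lipschitz control of the iterates is consistent with (and implicit in) the paper's argument.
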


\begin{proof}
(1) We proved in Lemma \ref{lem:Gaussianconvolution} that $P_t^{(N)}$ preserves asymptotic approximability by trace polynomials.

(2) Recall that $S_t^{(N)} f^{(N)}(x,y) = f^{(N)}(W_t^{(N)}(x,y),y)$, where
\begin{align*}
W_0^{(N)}(x,y) &= x \\
\partial_t W_t^{(N)}(x,y) &= -\frac{1}{2} D_x V^{(N)}(W_t(x,y),y).
\end{align*}
Now $D_xV^{(N)}(x,y)$ is $C$-Lipschitz in $(x,y)$, asymptotically approximable by trace polynomials, and independent of $t$, and thus it satisfies Assumption \ref{ass:vectorfield2}, so by Proposition \ref{prop:ODE2}, $W_t^{(N)}(x,y)$ is asymptotically approximable by trace polynomials (here we rely on Lemma \ref{lem:AATP} that asymptotic approximability is equivalent to being asymptotic to some element of $\overline{\TrP}_{m+n}^1$).  Then because $f^{(N)}$ is $K$-Lipschitz in $(x,y)$, Lemma \ref{lem:composition} implies asymptotic approximability of $f^{(N)}(W_t^{(N)}(x,y),y)$.

(3) Let $T_{t,\ell}^{(N)} = (P_{2^{-\ell}}^{(N)} S_{2^{-\ell}}^{(N)})^n$ whenever $t = n 2^{-\ell}$.  From (1) and (2), it follows that $T_{t,\ell}^{(N)} f^{(N)}$ is asymptotically approximable by trace polynomials.  Now for each dyadic $t$, Proposition \ref{prop:diffusionsemigroup} (1) shows that $T_{t,\ell}^{(N)} f^{(N)} \to T_t^{(N)} f^{(N)}$ uniformly on $\norm*{\cdot}_2$-balls (and hence on $\norm*{\cdot}_\infty$).  Therefore, by Lemma \ref{lem:limits}, $T_t^{(N)} f^{(N)}$ is asymptotically approximable by trace polynomials.  Then we extend this property from dyadic $t$ to all real $t$ using Proposition \ref{prop:diffusionsemigroup} (2) and Lemma \ref{lem:limits}.
\end{proof}

\begin{proof}[Proof of Theorem \ref{thm:conditionalexpectation}]
Let $f^{(N)}: M_N(\C)_{sa}^{m+n} \to M_N(\C)$ be $K$-Lipschitz and asymptotically approximable by trace polynomials.  Let
\[
g^{(N)}(y) = \int f^{(N)}(x,y) \,d\mu^{(N)}(x|y)
\]
We showed in Corollary \ref{cor:conditionalexpectationLipschitz} that $g^{(N)}$ is Lipschitz with $\norm*{g^{(N)}}_{\Lip} \leq (1 + C/c) \norm*{f^{(N)}}_{\Lip}$.  We know that $T_t^{(N)} f^{(N)}$ is asymptotically approximable by trace polynomials in $(x,y)$.  By Proposition \ref{prop:diffusionsemigroup} (5), we have $T_t^{(N)} f^{(N)}(x,y) \to g^{(N)}(x,y)$ as $t \to \infty$, with the error bounded by
\[
e^{-ct/2} \left( 4\frac{C}{c^2} (6 + 5 \sqrt{2}) t^{-1/2} + \frac{2}{c} \norm*{D_x V(x,y)}_2 \right) \norm{f^{(N)}}_{\Lip}.
\]
Given that $\{DV^{(N)}\}$ is asymptotically approximable by trace polynomials, $\norm*{D_x V}_{u,R}^{(N)}$ is bounded as $N \to \infty$.  This implies that the rate of convergence of $T_t^{(N)} f^{(N)}(x,y) \to g^{(N)}(x,y)$ as $t \to \infty$ is uniform on $\norm*{(x,y)}_\infty \leq R$ and independent of $N$.  So by Lemma \ref{lem:limits}, $g^{(N)}$ is asymptotically approximable by trace polynomials of $(x,y)$.  Yet $g^{(N)}$ is independent of $x$, and so we may approximate $g^{(N)}(y)$ by evaluating these trace polynomials at $(0,y)$, which reduces them to trace polynomials of $y$.

Since $g^{(N)}$ is asymptotically approximable by trace polynomials, let $g \in \overline{\TrP}_m^1$ such that $g^{(N)} \rightsquigarrow g$.  Then it remains to show that $g(Y) = E_{\mathrm{W}^*(Y)}[f(X,Y)]$, where $(X,Y)$ are non-commutative random variables for the free Gibbs law $\lambda$ as in the theorem statement.  It suffices to check that
\[
\tau(\phi(Y) g(Y)) = \tau(\phi(Y) f(X,Y))
\]
whenever $\phi$ is a non-commutative polynomial.  But using Corollary \ref{cor:convergenceofexpectation},
\begin{align*}
\tau(\phi(Y) g(Y)) &= \lim_{N \to \infty} E[\tau_N(\phi(Y^{(N)}) g^{(N)}(Y))] \\
&= \lim_{N \to \infty} E[\tau_N(\phi(Y^{(N)}) f^{(N)}(X^{(N)}Y^{(N)}))] \\
&= \tau(\phi(Y) f(X,Y)).  \qedhere
\end{align*}
\end{proof}

\begin{remark}
We showed in \S \ref{subsec:heatsemigroup} that $P_t^{(N)}$ has a large $N$ limit $P_t^{\TrP}$ acting on $\overline{\TrP}_{m+n}^1$.  Similarly, the results of \S \ref{subsec:vectorfields} imply that $S_t^{(N)}$ has a large $N$ limit $S_t^{\TrP}$ acting on $\overline{\TrP}_{m+n}^1$.  This implies that the semigroup $T_t^{(N)}$ also has a large $N$ limit $T_t^{\TrP}$ in light of Proposition \ref{prop:diffusionsemigroup} (1) and (2) and Lemma \ref{lem:limits}.  Future research should investigate in what sense $F(x,t) = T_t^{\TrP}f(x)$ would solve the differential equation
\[
\partial_t F = \frac{1}{2} L_x F - \frac{1}{2} (J_xF)^* (D_x V),
\]
where $V$ is the large $N$ limit of $\{V^{(N)}\}$ and $J_xF$ is the Jacobian matrix of $F$ with respect to the variable $x$.
\end{remark}

\section{Conditional Entropy and Fisher's Information} \label{sec:entropy}

In this section, we show that for random matrix models satisfying Assumption \ref{ass:convexRMM}, the conditional (classical) entropy $h(X^{(N)} | Y^{(N)})$ converges to the conditional non-microstates free entropy $\chi^*(X | Y)$ (also known as $\chi^*(X: \mathrm{W}^*(Y))$).

\subsection{Conditional Entropy and Fisher's Information in the Classical Setting}

We refer to \cite[\S 3]{Voiculescu2002} and \cite[\S 5]{Jekel2018} for background on classical entropy and Fisher's information and motivation for the free case.  The conditional setting is more technical, and we will state several standard results without proof, since the proofs in the non-conditional case were repeated in some detail in \cite{Jekel2018}.

Recall that the classical entropy of a random variable $X$ in $\R^m$ with probability density $\rho$ is $h(X) = -\int \rho \log \rho$.  Similarly, if $(X,Y)$ is a random variable in $\R^m \times \R^n$ with density $\rho_{X,Y}(x,y)$, then the \emph{conditional entropy} $h(X|Y)$ is defined by
\begin{equation} \label{eq:classicalentropy}
h(X | Y) = \int_{\R^n} \int_{\R^m} \rho_{X|Y}(x|y) \log \rho_{X|Y}(x|y)\,dx \rho_Y(y)\,dy = \int_{\R^n \times \R^m} (\log \rho_{X|Y}(x|y)) \rho(x,y)\,dx\,dy,
\end{equation}
where $\rho_Y$ is the marginal density
\[
\rho_Y(y) = \int_{\R^m} \rho_{X,Y}(x,y)\,dx
\]
and $\rho_{X|Y}$ is the conditional density
\[
\rho_{X|Y}(x|y) = \frac{\rho_{X,Y}(x,y)}{\rho_Y(y)} \text{ defined when } \rho_Y(y) > 0.
\]
It is a standard fact that if $X$ has finite variance, then $h(X|Y)$ is well-defined.  The proof for the non-conditional entropy $h(X)$ was reviewed in \cite[Lemma 5.1]{Jekel2018}, and the conditional case can be handled similarly.

The \emph{conditional Fisher information} given by
\begin{equation} \label{eq:classicalFisherinfo}
\mathcal{I}(X|Y) = \int_{\R^m \times \R^n} \left| \frac{\nabla_x \rho_{X|Y}(x|y)}{\rho_{X|Y}(x|y)} \right|^2 \rho_{X,Y}(x,y)\,dx\,dy,
\end{equation}
whenever the right hand side makes sense and $\infty$ otherwise.  It describes the rate of change of $h(X+t^{1/2}S | Y)$, where $S$ is a Gaussian random variable in $\R^m$ with covariance matrix $I$ independent of $(X,Y)$.  Knowing that the density $\rho_{X+t^{1/2}S,Y}$ satisfies the heat equation
\[
\partial_t \rho_{X+t^{1/2}S,Y} = \frac{1}{2} \Delta_x \rho_{X+t^{1/2}S,Y},
\]
one can show that $\mathcal{I}(X + t^{1/2}S | Y)$ is well-defined and finite for $t > 0$ and that
\begin{equation} \label{eq:classicalentropyrateofchange}
\frac{d}{dt} h(X + t^{1/2} S | Y) = \frac{1}{2} \mathcal{I}(X + t^{1/2} S | Y).
\end{equation}

The Fisher information is the $L^2$ norm of the ($\R^m$-valued) random variable $\Xi$ given by evaluating the \emph{score function} $-\nabla_x \rho_{X|Y} / \rho_{X|Y}$ on the random variable $(X,Y)$, provided that this random variable is in $L^2$.  In this case, the random variable $\Xi$ is known as the \emph{score function} for $X$ given $Y$, and it is the unique element of $L^2$ satisfying the integration-by-parts relation
\begin{equation} \label{eq:integrationbyparts}
E[\Xi f(X,Y)] = E[ \nabla_x f(X,Y) ] \text{ for all } f \in C_c^\infty(\R^m \times \R^n).
\end{equation}
More generally, if there exists a random variable $\Xi$ in $L^2$ satisfying this integration-by-parts formula, then we define the conditional Fisher information to be $\mathcal{I}(X|Y) = E |\Xi|^2$ (and this extends our previous definition of $\mathcal{I}(X|Y)$).  Otherwise, $\mathcal{I}(X|Y)$ is defined to be $\infty$.

In light of the integration-by-parts characterization, score functions behave well under conditionally independent sums.   The following lemma is proved in the same way as the non-conditional case (see \cite[Lemma 5.6]{Jekel2018}) and the free case (see \cite[Proposition 3.7]{VoiculescuFE5}).

\begin{lemma} \label{lem:xiconditionalexpectation}
Let $Y$ be a random variable in $\R^n$ and let $X_1$ and $X_2$ be random variables in $\R^m$ that are conditionally independent given $Y$.  Suppose that $\Xi$ is a score function for $X_1$ given $Y$.  Then $E[\Xi| X_1 + X_2, Y]$ is a score function for $X_1 + X_2$ given $Y$.  Hence,
\[
\mathcal{I}(X_1+X_2|Y) \leq \mathcal{I}(X_1 | Y).
\]
In particular, this holds if $X_2$ is independent from $(X_1,Y)$ or $X_1$ is independent of $(X_2,Y)$.  
\end{lemma}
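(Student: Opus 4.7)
The plan is to verify that $E[\Xi \mid X_1+X_2, Y]$ satisfies the integration-by-parts characterization \eqref{eq:integrationbyparts} of the score function for $X_1+X_2$ given $Y$, and then deduce the Fisher information bound from $L^2$-contractivity of conditional expectation. By the tower property, showing the score function property reduces to establishing
\[
E[\Xi f(X_1+X_2, Y)] = E[\nabla_x f(X_1+X_2, Y)]
\]
for every $f \in C_c^\infty(\R^m \times \R^n)$.

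The key step is to condition on $Y$ first and exploit conditional independence. I would introduce
\[
g(x_1, y) := E[f(x_1 + X_2, y) \mid Y = y],
\]
which is bounded by $\norm{f}_\infty$, smooth in $x_1$ by differentiation under the conditional expectation (legitimate since $f \in C_c^\infty$), with $\nabla_{x_1} g(x_1, y) = E[\nabla_x f(x_1+X_2, y) \mid Y=y]$ also bounded. Since $\Xi$ is $\sigma(X_1, Y)$-measurable and $X_1, X_2$ are conditionally independent given $Y$,
\[
E[\Xi f(X_1+X_2, Y)] = E\bigl[\Xi \cdot E[f(X_1+X_2, Y) \mid X_1, Y]\bigr] = E[\Xi \, g(X_1, Y)].
\]
Then I apply the integration-by-parts identity for $\Xi$ to the test function $g$ to obtain
\[
E[\Xi \, g(X_1, Y)] = E[\nabla_{x_1} g(X_1, Y)] = E[\nabla_x f(X_1+X_2, Y)],
\]
which is exactly what we wanted. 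The Fisher information bound is then immediate from conditional Jensen:
\[
\mathcal{I}(X_1+X_2 \mid Y) = E\bigl| E[\Xi \mid X_1+X_2, Y] \bigr|^2 \leq E|\Xi|^2 = \mathcal{I}(X_1 \mid Y).
\]

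The main obstacle I anticipate, and the only place I expect to need care, is that $g$ does not itself lie in $C_c^\infty(\R^m \times \R^n)$: it need not be compactly supported in $x_1$, and it may fail to be smooth (or even continuous) in $y$, depending on how the conditional law of $X_2$ given $Y$ varies in $y$. So the score function identity for $\Xi$ must be extended from the stated class $C_c^\infty$ to the broader class of $f$ that are smooth with bounded $x$-derivatives in $x$ and merely bounded Borel in $y$. This extension is routine given $\Xi \in L^2$: one truncates $g$ in $x_1$ by a smooth cutoff, mollifies in $y$ (or approximates the bounded Borel $y$-dependence pointwise by continuous functions), verifies the identity on the approximants, and passes to the limit by dominated convergence. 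The final ``in particular'' clause is immediate, since independence of $X_2$ from $(X_1, Y)$, or of $X_1$ from $(X_2, Y)$, implies conditional independence of $X_1$ and $X_2$ given $Y$.
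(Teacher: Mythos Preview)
The paper does not actually prove this lemma; it simply states that the argument is the same as the non-conditional case in \cite[Lemma 5.6]{Jekel2018} and the free case in \cite[Proposition 3.7]{VoiculescuFE5}. Your argument is exactly the standard one underlying those references: reduce to the integration-by-parts identity via the tower property, use conditional independence to replace $f(X_1+X_2,Y)$ by $g(X_1,Y)$, apply the defining identity for $\Xi$, and conclude with the $L^2$-contractivity of conditional expectation. The technical point you flag---that $g$ need not lie in $C_c^\infty(\R^m\times\R^n)$ because it can fail to be smooth or compactly supported in $y$---is genuine, and your proposed fix (cutoff in $x_1$, approximate the $y$-dependence by smooth functions, pass to the limit by dominated convergence using $\Xi\in L^2$ and boundedness of $g,\nabla_{x_1}g$) is the right way to handle it.
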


Score functions also scale in the following way.  The proof is straightforward from the integration-by-parts relation.

\begin{lemma} \label{lem:xiscaling}
If $\Xi$ is a score function for $X$ given $Y$ and $t > 0$, then $(1/t) \Xi$ is a score function for $tX$ given $Y$, and hence $\mathcal{I}(tX | Y) = t^{-2} \mathcal{I}(X|Y)$.
\end{lemma}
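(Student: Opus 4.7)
The plan is to apply the integration-by-parts characterization \eqref{eq:integrationbyparts} of score functions with a suitably rescaled test function. Specifically, given a test function $f \in C_c^\infty(\R^m \times \R^n)$, I would introduce the auxiliary function $g(x,y) := f(tx,y)$, which is also in $C_c^\infty(\R^m \times \R^n)$ since $t > 0$, and compute via the chain rule that $\nabla_x g(x,y) = t \,\nabla_x f(tx,y)$.

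Applying \eqref{eq:integrationbyparts} for $\Xi$ to the test function $g$, I obtain
\[
E[\Xi \,f(tX,Y)] \;=\; E[\Xi \,g(X,Y)] \;=\; E[\nabla_x g(X,Y)] \;=\; t \,E[\nabla_x f(tX,Y)].
\]
Dividing both sides by $t$ yields
\[
E\left[\tfrac{1}{t}\Xi \cdot f(tX,Y)\right] \;=\; E[\nabla_x f(tX,Y)],
\]
which is precisely the integration-by-parts relation characterizing $(1/t)\Xi$ as a score function for the pair $(tX,Y)$. Note that $(1/t)\Xi$ is still a well-defined element of $L^2$ and is measurable with respect to $(X,Y)$, hence with respect to $(tX,Y)$.

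The identity for Fisher information then follows immediately from the defining formula $\mathcal{I}(tX|Y) = E|(1/t)\Xi|^2 = t^{-2} E|\Xi|^2 = t^{-2} \mathcal{I}(X|Y)$, where we choose $\Xi$ to be the score function for $X$ given $Y$ (which minimizes the $L^2$-norm among candidates, but since the score function is unique as an element of $L^2$ in this setup, there is nothing to optimize). There is no serious obstacle here; the only thing to verify carefully is the chain rule factor of $t$ and the fact that rescaling the test function by a positive constant $t$ preserves membership in $C_c^\infty$.
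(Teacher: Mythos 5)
Your proof is correct and is exactly the argument the paper intends: the paper only remarks that the lemma is "straightforward from the integration-by-parts relation," and your rescaled test function $g(x,y)=f(tx,y)$ with the chain-rule factor $t$ is precisely that computation, with the measurability/uniqueness point handled correctly since $\sigma(tX,Y)=\sigma(X,Y)$ for $t>0$.
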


\subsection{Random Matrix Renormalization} \label{subsec:matrixentropy}

Suppose that $(X,Y)$ is a random variable in $M_N(\C)_{sa}^m \times M_N(\C)_{sa}^n$ with density $\rho_{X,Y}$.  The trace on $M_N(\C)_{sa}$ produces a real inner product.  But to study the large $N$ limit, we use the normalized trace $\tau_N = (1/N) \Tr$.  The corresponding normalized Gaussian is the GUE ensemble $S = (S_1,\dots,S_m)$ where $S_j$ has variance $1$ with respect to $\tau_N$.  We use the following renormalized entropy, which is motivated by computation of the Gaussian case and by \eqref{eq:normalizedentropyrateofchange} below,
\[
h^{(N)}(X|Y) = \frac{1}{N^2} h(X|Y) + \frac{m}{2} \log N.
\]
Due to the normalization of Gaussian, the evolution of the density for $(X + t^{1/2} S, Y)$ is given by the renormalized heat equation
\[
\partial_t \rho_{X+t^{1/2}S, Y} = \frac{1}{2N} \Delta \rho_{X+t^{1/2}S,Y}.
\]
This results in
\begin{equation} \label{eq:normalizedentropyrateofchange}
\partial_t h^{(N)}(X + t^{1/2} S | Y) = \frac{1}{2N^3} \mathcal{I}(X + t^{1/2}S | Y) =: \frac{1}{2} \mathcal{I}^{(N)}(X+t^{1/2}S | Y),
\end{equation}
where $\mathcal{I}^{(N)}(X|Y) := N^{-3} \mathcal{I}(X|Y)$, assuming that $X$ has finite variance and $t > 0$.

Another heuristic for the normalization $\mathcal{I}^{(N)} = N^{-3} \mathcal{I}$ comes from analyzing the case where $(X,Y)$ have density $(1/Z) e^{-N^2 V(x,y)}\,dx\,dy$ where $V$ is uniformly convex and semi-concave.  Indeed, in this case, the classical score function for $X$ given $Y$ is $-N^2 \nabla_x V(X,Y)$.  Recall that $D_x V = N \nabla_x V$ is the gradient of $V$ with respect to the normalized inner product $\ip{\cdot,\cdot}_2$.  Thus,
\[
\frac{1}{N^3} \mathcal{I}(X|Y) = \frac{1}{N} E \norm{N \nabla_x V(X,Y)}_{\Tr}^2 = E \norm{D_x V(X,Y)}_2^2
\]
is a dimension-independent normalization.  Furthermore, the normalized score function $\xi = (1/N) \Xi$ (which would be $D_x V(X,Y)$ in the case where the law is given by a potential $V$) satisfies the integration-by-parts relation
\[
E \ip{\xi_j, f(X,Y)}_2 = E \frac{1}{N^2} \Div_{x_j} f(X,Y),
\]
where $\xi = (\xi_1,\dots,\xi_m)$ and where $\Div$ is the divergence with respect to the classical coordinates (not normalized).  But if $f$ is a non-commutative polynomial, then
\[
\frac{1}{N^2} \Div_{x_j} f(x,y) = \frac{1}{N^2} \Tr \otimes \Tr (\partial_{x_j} f(x,y)) = \tau_N \otimes \tau_N(\partial_{x_j} f(x,y)),
\]
where $\partial_{x_j}$ denotes the non-commutative derivative or free difference quotient with respect to $x_j$.  Thus, applying the integration-by-parts relation to non-commutative polynomials results in the dimension-independent relation
\[
E \ip{\xi_j, f(X,Y)}_2 = \sum_{j=1}^m E[\tau_N \otimes \tau_N(\partial_{x_j} f(x,y))]
\]
that characterizes the normalized score function.

As a consequence of \eqref{eq:normalizedentropyrateofchange}, $h^{(N)}(X|Y)$ can be recovered by integrating $\mathcal{I}^{(N)}(X+t^{1/2}S | Y)$ and modifying the integral to converge at $\infty$.  This results in
\begin{equation} \label{eq:normalizedentropyformula}
h^{(N)}(X|Y) = \frac{1}{2} \int_0^\infty \left( \frac{m}{1 + t} - \mathcal{I}^{(N)}(X + t^{1/2}S | Y) \right)\,dt + \frac{m}{2} \log 2\pi e
\end{equation}
provided that $(X,Y)$ has a density $\rho_{X,Y}$ and that $X$ has finite variance.  The proof is similar to \cite[Lemma 5.7]{Jekel2018}.  Convergence of the integral at $\infty$ can be deduced from the following estimate, and it also shows convergence of the integral at $0$ if $\mathcal{I}(X|Y)$ is finite.  Compare \cite[Corollary 6.14 and Remark 6.15]{VoiculescuFE5} and \cite[Lemma 5.7]{Jekel2018}.

\begin{lemma} \label{lem:Fisherestimates}
Let $(X,Y)$ be a random variable in $M_N(\C)_{sa}^m \times M_N(\C)_{sa}^n$ such that $a = (1/m) \sum_{j=1}^m E[\tau_N(X_j^2)]  < \infty$, and let $S$ be an independent GUE $m$-tuple.  Then
\[
\frac{m}{a + t} \leq \mathcal{I}^{(N)}(X + t^{1/2} S | Y) \leq \min \left( \frac{m}{t}, \mathcal{I}^{(N)}(X|Y) \right).
\]
\end{lemma}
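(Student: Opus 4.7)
The plan is to prove the three inequalities separately. Both upper bounds reduce to Lemma \ref{lem:xiconditionalexpectation}, while the lower bound follows from integration by parts applied to the test function $f(z,y) = z_j$ combined with a Cauchy--Schwarz estimate. I expect all three arguments to be essentially routine once the normalizations are matched correctly; the only mildly delicate step is computing the conditional Fisher information of the GUE.

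For the upper bound $\mathcal{I}^{(N)}(X+t^{1/2}S|Y) \le \mathcal{I}^{(N)}(X|Y)$, I would apply Lemma \ref{lem:xiconditionalexpectation} with $X_1 = X$ and $X_2 = t^{1/2}S$, which are conditionally independent given $Y$ since $S$ is independent of $(X,Y)$. Then $E[\Xi|X_1+X_2,Y]$ is a score for $X+t^{1/2}S$ given $Y$, and Jensen's inequality gives $\mathcal{I}(X+t^{1/2}S|Y) \le \mathcal{I}(X|Y)$; dividing by $N^3$ yields the claim (and is trivial if the right side is infinite). For the upper bound $\mathcal{I}^{(N)}(X+t^{1/2}S|Y) \le m/t$, I would swap the roles: $t^{1/2}S$ is independent of $(X,Y)$, so its conditional Fisher information given $Y$ equals its unconditional one. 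The density of $t^{1/2}S$ is proportional to $e^{-N^2\|s\|_2^2/2t}$, so $\Xi_{t^{1/2}S} = (N^2/t)\,t^{1/2}S$ in Euclidean coordinates, and $\mathcal{I}(t^{1/2}S) = (N^4/t^2)\cdot E|t^{1/2}S|_{\Tr}^2 = (N^4/t^2)\cdot N\cdot t \cdot m = N^5 m/t$; dividing by $N^3$ gives $\mathcal{I}^{(N)}(t^{1/2}S) = m/t$. (Equivalently, invoke Lemma \ref{lem:xiscaling} starting from $\mathcal{I}^{(N)}(S) = m$.) Another application of Lemma \ref{lem:xiconditionalexpectation} finishes.

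For the lower bound, set $Z = X + t^{1/2}S$. For $t>0$ the density of $(Z,Y)$ is smooth and the normalized score function $\xi = (\xi_1,\dots,\xi_m)$ of $Z$ given $Y$ exists (in the $t=0$ case the bound is trivial if $\mathcal{I}^{(N)}(X|Y) = \infty$, and otherwise $\xi$ exists as well). Apply the normalized integration-by-parts relation
\[
E\ip{\xi_j, f(Z,Y)}_2 = E[\tau_N \otimes \tau_N(\partial_{z_j} f(Z,Y))]
\]
to the test polynomial $f(z,y) = z_j$. Since $\partial_{z_j} z_j = 1\otimes 1$, we obtain $E\ip{\xi_j, Z_j}_2 = 1$ for each $j$, so summing gives $E \sum_{j=1}^m \ip{\xi_j, Z_j}_2 = m$. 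By Cauchy--Schwarz on $(M_N(\C)_{sa}^m, \ip{\cdot,\cdot}_2)$,
\[
m \le \bigl(E\norm{\xi}_2^2\bigr)^{1/2} \bigl(E\norm{Z}_2^2\bigr)^{1/2} = \bigl(\mathcal{I}^{(N)}(Z|Y)\bigr)^{1/2} \bigl(E\norm{Z}_2^2\bigr)^{1/2}.
\]
Since $S$ is independent of $(X,Y)$ and mean zero, $E\norm{Z}_2^2 = \sum_j E\tau_N(X_j^2) + t \sum_j E\tau_N(S_j^2) = m(a+t)$, and rearranging gives $\mathcal{I}^{(N)}(Z|Y) \ge m/(a+t)$.

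The main obstacle, such as it is, is merely bookkeeping of the $N$-normalizations (keeping track of the factor $N^{-3}$ in $\mathcal{I}^{(N)}$, the factor $1/N$ between the unnormalized score $\Xi$ and the normalized $\xi$, and the factor $1/N$ between $\ip{\cdot,\cdot}_{\Tr}$ and $\ip{\cdot,\cdot}_2$) so that the integration-by-parts identity and the Fisher information of the GUE come out correctly. Once the identity $\mathcal{I}^{(N)}(t^{1/2}S) = m/t$ is established, everything else is a one-line application of Lemma \ref{lem:xiconditionalexpectation} or Cauchy--Schwarz.
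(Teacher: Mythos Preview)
Your approach is essentially identical to the paper's: both upper bounds via Lemma~\ref{lem:xiconditionalexpectation} (taking the score of $X$ or of $t^{1/2}S$ and conditioning), and the lower bound via the integration-by-parts identity $E\ip{\xi,Z}_2 = m$ followed by Cauchy--Schwarz.

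There is, however, a normalization slip in your direct computation of $\mathcal{I}(t^{1/2}S)$. Since $N^2\norm{s}_2^2 = N\norm{s}_{\Tr}^2$, the Euclidean score is $\Xi = (N/t)\,t^{1/2}S$, not $(N^2/t)\,t^{1/2}S$; with your expression one gets $\mathcal{I} = N^5 m/t$ and hence $\mathcal{I}^{(N)} = N^2 m/t$, not $m/t$. Your alternative route via Lemma~\ref{lem:xiscaling} and $\mathcal{I}^{(N)}(S) = m$ is correct and is in fact closer to what the paper does: it simply observes that the normalized score of $t^{1/2}S$ given $Y$ is $t^{-1/2}S$, whose $\norm{\cdot}_2^2$-expectation is $m/t$.
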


\begin{proof}
We observe that $\xi_t = E[t^{-1/2} S | X + t^{1/2} S, Y]$ is a normalized score function for $X + t^{1/2} S$ given $Y$ by Lemma \ref{lem:xiconditionalexpectation}.  This yields $\mathcal{I}^{(N)}(X + t^{1/2} S | Y) \leq m / t$.  On the other hand, if $\xi$ is a normalized score function for $X$ given $Y$, we also have $\xi_t = E[\xi | X + t^{1/2} S, Y]$, which yields the upper bound $\mathcal{I}^{(N)}(X|Y)$.  The lower bound follows from observing $(E \norm{\xi_t}_2^2)^{1/2} (E \norm{X + t^{1/2}S}_2^2)^{1/2} \geq E \ip{\xi_t, X + t^{1/2} S}_2$ and evaluating the right hand side using integration by parts.
\end{proof}

\subsection{Convergence to Conditional Free Entropy}

Motivated by the normalized entropy and Fisher's information in the previous section, Voiculescu defined the free versions as follows.  Let $(X,Y)$ be an $(m+n)$-tuple of self-adjoint non-commutative random variables in a tracial $\mathrm{W}^*$-algebra $(\mathcal{M},\tau)$.  We say that $\xi = (\xi_1,\dots,\xi_m) \in L^2(\mathcal{M},\tau)_{sa}^m$ is a \emph{free score function for $X$ given $Y$} (also known as a \emph{conjugate variable}) if for every non-commutative polynomial $f(X,Y)$, we have
\[
\tau(\xi_j f(X,Y)) = \tau \otimes \tau(\partial_{x_j} f(X,Y)).
\]
The \emph{free Fisher information} $\Phi^*(X|Y)$ is defined to be $\norm{\xi}_2^2$ if such a $\xi$ exists, and $\infty$ otherwise.  The \emph{non-microstates free entropy} $\chi^*(X|Y)$ is defined to be
\begin{equation} \label{eq:definechi*}
\chi^*(X|Y) = \frac{1}{2} \int_0^\infty \left( \frac{m}{1 + t} - \Phi^*(X + t^{1/2}S | Y) \right)\,dt + \frac{m}{2} \log 2\pi e.
\end{equation}
Convergence of the integral at $\infty$ follows from the free analogue of Lemma \ref{lem:Fisherestimates}, so that $\chi^*(X | Y)$ is well-defined in $[-\infty,\infty)$ whenever $X$ has finite variance.

\begin{remark}
Voiculescu's original notation in \cite[\S 7]{VoiculescuFE5} was $\chi^*(X: \mathrm{W}^*(Y))$ rather than $\chi^*(X | Y)$, since the definition of the free score function can be rephrased so as to depend only on $\mathrm{W}^*(Y)$ rather than $Y$.  
However, we prefer to write $\chi^*(X | Y)$ instead by analogy with the classical case, using the vertical bar to denote ``conditioning.''  This avoids potential confusion with the notation $\chi(X:Y)$ for microstates entropy of $X$ in the presence of $Y$ used in \cite[\S 1]{VoiculescuFE3}.
\end{remark}

The following lemma gives sufficient conditions for classical Fisher information for random matrix models to converge to free Fisher information.  The main hypotheses are that the non-commutative laws converge, the score functions $D_x V^{(N)}$ for the $N \times N$ matrix models are asymptotically approximable by trace polynomials, and some mild growth conditions on score functions and probability measures as $\norm{(x,y)}_\infty \to \infty$.  We omit the proof since it is a direct adaptation of the proof of \cite[Proposition 5.10]{Jekel2018}.

\begin{lemma} \label{lem:convergenceofFisherinfo}
Let $V^{(N)}: M_N(\C)_{sa}^{m+n} \to \R$ be a potential with $\int_{M_N(\C)^{m+n}} e^{-N^2 V^{(N)}(x,y)}\,dx\,dy < +\infty$, let $\mu^{(N)}$ be the associated probability density, and let $(X^{(N)}, Y^{(N)})$ be a random variable distributed according to $\mu^{(N)}$.  Let $(X,Y)$ be an $(m+n)$-tuple of self-adjoint non-commutative random variables in the tracial $\mathrm{W}^*$-algebra $(\mathcal{M},\tau)$.  Assume that:
\begin{enumerate}[(A)]
	\item The non-commutative law of $(X^{(N)}, Y^{(N)})$ with respect to $\tau_N$ converges in probability to the non-commutative law of $(X,Y)$.
	\item $D_x V^{(N)}$ is defined and continuous, and the sequence $\{D_x V^{(N)}\}$ is asymptotically approximable by trace polynomials, and hence $D_x V^{(N)} \rightsquigarrow g \in (\overline{\TrP}_{m+n}^1)_{sa}^m$.
	\item For some $k \geq 0$ and $a, b > 0$, we have
	\[
	\norm{D_x V^{(N)}(x,y)}_2^2 \leq a + b \norm{(x,y)}_\infty^k
	\]
	\item There exists $R_0 > 0$ such that
	\[
	\lim_{N \to \infty} E \left[ \mathbf{1}_{\norm{(X^{(N)},Y^{(N)})}_\infty \geq R_0} \left(1 + \norm{(X^{(N)},Y^{(N)})}_\infty^k \right) \right] = 0.
	\]
\end{enumerate}
Then $\mathcal{I}^{(N)}(X|Y)$ is finite.  Moreover, $g(X,Y)$ is in $L^2(\mathcal{M},\tau)$ and it is the free score function for $X$ given $Y$, and we have
\[
\mathcal{I}^{(N)}(X|Y) = E[\norm{D_x V^{(N)}(X^{(N)},Y^{(N)})}_2^2] \to \norm{g(X,Y)}_2^2 = \Phi^*(X|Y).
\]
\end{lemma}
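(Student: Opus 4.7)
The plan is to adapt the non-conditional argument of \cite[Proposition 5.10]{Jekel2018} to the present conditional setting; the proof splits into three steps corresponding to (i) identifying the classical normalized score function, (ii) checking that $g(X,Y)$ is the free conjugate variable, and (iii) matching the $L^2$ norms.

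First I would observe that, conditionally on $Y^{(N)} = y$, the random variable $X^{(N)}$ has density proportional to $e^{-N^2 V^{(N)}(x,y)}$ in $x$, so an integration by parts in the coordinates of $x$ against a smooth matrix-valued test function identifies $D_x V^{(N)}(X^{(N)}, Y^{(N)})$ as the normalized conditional score function of $X^{(N)}$ given $Y^{(N)}$. Consequently $\mathcal{I}^{(N)}(X^{(N)} \mid Y^{(N)}) = E\norm{D_x V^{(N)}(X^{(N)}, Y^{(N)})}_2^2$, and this is finite thanks to (C) combined with finiteness of all moments of $\norm{(X^{(N)}, Y^{(N)})}_\infty$ for each fixed $N$ (which holds because $e^{-N^2 V^{(N)}}$ is integrable, hence dominates polynomial growth). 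Next, for any non-commutative polynomial $p$, integration by parts gives the dimension-independent identity
\[
E\bigl[ \tau_N\bigl( D_{x_j} V^{(N)}(X^{(N)}, Y^{(N)}) \, p(X^{(N)}, Y^{(N)}) \bigr) \bigr] = E\bigl[ (\tau_N \otimes \tau_N) \partial_{x_j} p(X^{(N)}, Y^{(N)}) \bigr].
\]
The right-hand side is the expectation of a trace polynomial in $(X^{(N)}, Y^{(N)})$, which converges to $(\tau \otimes \tau) \partial_{x_j} p(X,Y)$ by hypothesis (A) together with the tail control (D) used to upgrade convergence in probability to convergence in expectation. On the left-hand side, the asymptotic approximation $D_{x_j} V^{(N)} \rightsquigarrow g_j$ and the growth bound (C) allow one to truncate to $\norm{(x,y)}_\infty \leq R$, where Corollary \ref{cor:convergenceofexpectation} gives convergence to $\tau(g_j(X,Y) p(X,Y))$; the tail is handled by (D) and sent to zero by taking $R \to \infty$. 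Matching the two limits yields the conjugate-variable relation $\tau(g_j(X,Y) p(X,Y)) = (\tau \otimes \tau) \partial_{x_j} p(X,Y)$, so $g(X,Y)$ is the free score function for $X$ given $Y$.

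For the final $L^2$-norm convergence I would decompose, for $R \geq R_0$,
\[
E \norm{D_x V^{(N)}(X^{(N)}, Y^{(N)})}_2^2 = E\bigl[\mathbf{1}_{\norm{(X^{(N)}, Y^{(N)})}_\infty \leq R} \norm{D_x V^{(N)}}_2^2 \bigr] + E\bigl[ \mathbf{1}_{\norm{(X^{(N)}, Y^{(N)})}_\infty > R} \norm{D_x V^{(N)}}_2^2\bigr].
\]
The second summand is controlled by (C) and (D), vanishing as $N \to \infty$ uniformly once $R \geq R_0$. In the first summand, the uniform approximation $\sup_{\norm{(x,y)}_\infty \leq R} \norm{D_x V^{(N)}(x,y) - g(x,y)}_2 \to 0$ replaces $D_x V^{(N)}$ by $g$ at the cost of a vanishing error; the resulting quantity $E[\mathbf{1}_{\norm{\cdot}_\infty \leq R} \norm{g(X^{(N)}, Y^{(N)})}_2^2]$ is the expectation of a scalar-valued function obtained from $g^*g \in \overline{\TrP}_{m+n}^0$ (by Lemma \ref{lem:algebra}) applied to $(X^{(N)}, Y^{(N)})$, so it converges to $\tau(\abs{g(X,Y)}^2) = \norm{g(X,Y)}_2^2$ by Corollary \ref{cor:convergenceofexpectation}. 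Taking $R \to \infty$ and invoking the tail bound once more yields the claim.

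The main obstacle is the interplay in the last step between the three sources of control: the function-level approximation $\rightsquigarrow$ is uniform only on operator-norm balls, the law convergence (A) is only in probability, and the $L^2$ norm being computed involves a potentially unbounded quantity $\norm{D_x V^{(N)}}_2^2$. Hypothesis (D) is precisely designed to bridge these three, and the delicate point is to ensure that the truncation error introduced by restricting to $\norm{(x,y)}_\infty \leq R$ can be made uniformly small in $N$ before passing to the limit; the auxiliary indicator function is not continuous, so one should either replace it with a Lipschitz cutoff or, more simply, absorb it into the tail estimate supplied by (D) applied to the polynomial bound from (C).
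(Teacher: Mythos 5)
Your overall strategy---identify $D_x V^{(N)}$ as the normalized conditional score function via integration by parts, pass the integration-by-parts identity to the large-$N$ limit to obtain the conjugate-variable relation, and prove the $L^2$ convergence by truncating to operator-norm balls---is exactly the intended adaptation of the non-conditional argument that the paper defers to. The $L^2$-convergence step is sound, since there the integrand $\norm{D_x V^{(N)}}_2^2$ grows with exponent exactly $k$ by (C), so (D) controls the tail directly. The genuine gap is in the conjugate-variable step: you test the finite-$N$ identity against an arbitrary non-commutative polynomial $p$ and assert that (A) together with (D) upgrades convergence in probability to convergence of expectations on both sides. But (D) controls only the tail of $1 + \norm{(X^{(N)},Y^{(N)})}_\infty^k$ for the single exponent $k$ from (C), whereas the right-hand side $\tau_N \otimes \tau_N(\partial_{x_j} p)$ grows like $\norm{(x,y)}_\infty^{\deg p - 1}$ and the left-hand tail, after Cauchy--Schwarz, like $(a + b\norm{(x,y)}_\infty^k)^{1/2}(1 + \norm{(x,y)}_\infty^{\deg p})$. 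For $\deg p$ large these exponents exceed $k$, and (A) (convergence of laws in probability only) supplies no uniform integrability: there can be events of vanishing probability on which $\norm{(X^{(N)},Y^{(N)})}_\infty$ blows up fast enough to respect (D) while destroying the expectations of higher-degree quantities. So, as written, the limit passage for general $p$ does not follow from the stated hypotheses.

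The missing idea is to run the integration by parts against truncated test functions: replace $p$ by $p_R = p \circ \Phi_R$, where $\Phi_R$ is the smooth operator-norm cutoff by functional calculus used in the proof of Proposition \ref{prop:realizationofoperators}; then $p_R$ is uniformly bounded in operator norm, $\norm{\cdot}_2$-Lipschitz, belongs to $\overline{\TrP}_{m+n}^1$, and agrees with $p$ on $\{\norm{(x,y)}_\infty \leq R'\}$. With this test function the left-hand tail is at most $\norm{p_R}_u \, E\bigl[\mathbf{1}_{\norm{(X^{(N)},Y^{(N)})}_\infty \geq R'} (a + b \norm{(X^{(N)},Y^{(N)})}_\infty^k)^{1/2}\bigr] \to 0$ by (D), and the right-hand integrand $\tfrac{1}{N^2}\Div_{x_j} p_R$ is bounded uniformly in $N$ (normalized divergence of a uniformly Lipschitz map), so its tail contribution is $O\bigl(P(\norm{(X^{(N)},Y^{(N)})}_\infty \geq R')\bigr) \to 0$, while on the ball it coincides with $\tau_N \otimes \tau_N(\partial_{x_j} p)$; taking $R'$ larger than $\max(R_0, \norm{(X,Y)}_\infty)$ and letting $N \to \infty$ yields the relation for $p$ itself. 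Two smaller points: Corollary \ref{cor:convergenceofexpectation} is proved under Assumption \ref{ass:convexRMM} (convexity, hence concentration), which is not among (A)--(D), so you cannot invoke it here; instead argue directly that truncated expectations of functions uniformly approximable by trace polynomials on the ball converge, using (A), boundedness on the ball, and $P(\norm{(X^{(N)},Y^{(N)})}_\infty \leq R) \to 1$, which follows from (D). Also, integrability of $e^{-N^2 V^{(N)}}$ does not by itself imply finite moments of $\norm{(X^{(N)},Y^{(N)})}_\infty$; the finiteness of $\mathcal{I}^{(N)}(X^{(N)}|Y^{(N)})$ (for large $N$) should instead be read off from (C) combined with the finiteness implicit in (D).
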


\begin{theorem} \label{thm:convergenceofentropy}
Let $V^{(N)}: M_N(\C)_{sa}^m \times M_N(\C)_{sa}^n \to \R$ satisfy Assumption \ref{ass:convexRMM} for some $0 < c \leq C$.  Let $\mu^{(N)}$ be the corresponding measure, let $X^{(N)}, Y^{(N)}$ be random variables chosen according to $\mu^{(N)}$, and let $S^{(N)}$ be an independent $m$-tuple of GUE matrices.

Let $X = (X_1,\dots,X_m)$ and $Y = (Y_1,\dots,Y_n)$ be non-commutative random variables with non-commutative law $\mu = \mu_V$, and let $S$ be a freely independent free semicircular $m$-tuple.  Then for every $t \geq 0$, we have
\begin{equation} \label{eq:convergenceofFisherinfo}
\Phi^*(X + t^{1/2} S | Y) = \lim_{N \to \infty} \mathcal{I}^{(N)}(X^{(N)} + t^{1/2} S^{(N)} | Y^{(N)})
\end{equation}
and
\begin{equation} \label{eq:convergenceofentropy}
\chi^*(X + t^{1/2} S | Y) = \lim_{N \to \infty} h^{(N)}(X^{(N)} + t^{1/2} S^{(N)} | Y^{(N)}).
\end{equation}
\end{theorem}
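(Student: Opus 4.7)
The plan is to combine Lemma~\ref{lem:convergenceofFisherinfo} with the conditional-expectation result Theorem~\ref{thm:conditionalexpectation} to prove \eqref{eq:convergenceofFisherinfo} for every $t \geq 0$, and then deduce the entropy statement \eqref{eq:convergenceofentropy} from the integral representations \eqref{eq:normalizedentropyformula} and \eqref{eq:definechi*} via dominated convergence. For $t = 0$, the normalized conditional score of $X^{(N)}$ given $Y^{(N)}$ is just $D_x V^{(N)}(X^{(N)}, Y^{(N)})$, because the conditional density is proportional to $e^{-N^2 V^{(N)}(x,y)}$ in $x$; the hypotheses of Lemma~\ref{lem:convergenceofFisherinfo} are then all inherited from Assumption~\ref{ass:convexRMM}: convergence in non-commutative law from Theorem~\ref{thm:freeGibbslaw}, asymptotic approximability of $D_x V^{(N)}$ from (3) of the assumption, polynomial growth from Corollary~\ref{cor:DVestimate}, and an exponential tail bound from Corollary~\ref{cor:matrixLSI} together with Lemma~\ref{lem:epsilonnet}. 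This gives \eqref{eq:convergenceofFisherinfo} at $t = 0$ with limit $\Phi^*(X|Y) = \norm{D_x V(X,Y)}_2^2$.

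For $t > 0$, set $U^{(N)} = X^{(N)} + t^{1/2} S^{(N)}$. By the classical score-reduction Lemma~\ref{lem:xiconditionalexpectation}, the normalized score for $U^{(N)}$ given $Y^{(N)}$ equals $\xi_t^{(N)} = E[D_x V^{(N)}(X^{(N)}, Y^{(N)}) | U^{(N)}, Y^{(N)}]$. The joint density of $(X^{(N)}, U^{(N)}, Y^{(N)})$ is proportional to $e^{-N^2 \tilde V^{(N)}(x,u,y)}$ with
\[
\tilde V^{(N)}(x,u,y) = V^{(N)}(x,y) + \frac{1}{2t}\norm{u - x}_2^2.
\]
I would first verify that $\tilde V^{(N)}$ satisfies Assumption~\ref{ass:convexRMM} in the $(m + m + n)$-tuple $(x,u,y)$, with constants depending on $c$, $C$, and $t$, and that its normalized gradient is asymptotically approximable by trace polynomials (the new piece $(u - x)/t$ is linear, so this reduces to Assumption~\ref{ass:convexRMM}(3) for $V^{(N)}$). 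Applying Theorem~\ref{thm:conditionalexpectation} to this joint model, conditioning on $(U^{(N)}, Y^{(N)})$ with the $u$-independent integrand $f^{(N)}(x,u,y) = D_x V^{(N)}(x,y)$, produces a function $g^{(N)}(u,y)$ that is Lipschitz uniformly in $N$, satisfies $g^{(N)} \rightsquigarrow g \in (\overline{\TrP}_{m+n}^1)_{sa}^m$, and
\[
g(X + t^{1/2} S, Y) = E_{\mathrm{W}^*(X + t^{1/2} S, Y)}[D_x V(X, Y)].
\]
The free analogue of Lemma~\ref{lem:xiconditionalexpectation}, which follows at once from Voiculescu's integration-by-parts characterization of conjugate variables, identifies this limit as the free conjugate variable of $X + t^{1/2} S$ given $Y$, so $\Phi^*(X + t^{1/2} S | Y) = \norm{g(X + t^{1/2} S, Y)}_2^2$. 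To deduce \eqref{eq:convergenceofFisherinfo}, I would then apply Corollary~\ref{cor:convergenceofexpectation} to the convex joint model governed by $\tilde V^{(N)}$, with entries of $g^{(N)}$ and $(g^{(N)})^*$; the uniform Lipschitz bound supplies the required polynomial growth via Remark~\ref{rem:UCgrowthbound}.

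For the entropy claim \eqref{eq:convergenceofentropy}, I rewrite both sides using the integral formulas \eqref{eq:normalizedentropyformula} and \eqref{eq:definechi*} applied with $X + t^{1/2} S$ in place of $X$. Using the Gaussian additivity $t^{1/2} S^{(N)} + s^{1/2} \widehat S^{(N)} \sim (t + s)^{1/2} S^{\prime(N)}$ and the free convolution identity $t^{1/2} S + s^{1/2} S' \sim (t + s)^{1/2} S''$ for free semicirculars free from $(X, Y)$, both sides become integrals in $s$ of $\mathcal{I}^{(N)}(X^{(N)} + (t+s)^{1/2} S^{\prime(N)} | Y^{(N)})$ and $\Phi^*(X + (t+s)^{1/2} S'' | Y)$, respectively. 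The already-established \eqref{eq:convergenceofFisherinfo}, applied at time parameter $t + s$, delivers pointwise convergence of the integrands in $s$. The two-sided bounds of Lemma~\ref{lem:Fisherestimates} produce an integrable dominant (of order $s^{-2}$ at infinity and bounded near $s = 0$, since $\mathcal{I}^{(N)}(X^{(N)} + t^{1/2} S^{(N)} | Y^{(N)})$ is uniformly bounded in $N$), so dominated convergence yields \eqref{eq:convergenceofentropy}.

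The main technical obstacle will be the verification that $\tilde V^{(N)}$ is uniformly convex and semi-concave in all three blocks of variables $(x, u, y)$. Its Hessian is degenerate along the direction $(a, a, 0)$, where the quadratic $(2t)^{-1}\norm{u-x}_2^2$ vanishes, so the positive lower bound must be recovered from the $(x, y)$-convexity of $V^{(N)}$ by an elementary quadratic-form manipulation using $\norm{u}_2^2 \leq 2\norm{u - x}_2^2 + 2\norm{x}_2^2$ to dominate the missing direction. The resulting constants degrade as $t \downarrow 0$, but this is harmless since the $t = 0$ case is handled separately via the direct application of Lemma~\ref{lem:convergenceofFisherinfo} in the first step.
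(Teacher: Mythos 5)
Your proposal is correct, and its skeleton is essentially the paper's: both arguments augment the model by the Gaussian tuple so that the score of $X^{(N)}+t^{1/2}S^{(N)}$ given $Y^{(N)}$ becomes a conditional expectation of $D_xV^{(N)}$ in a jointly uniformly convex model, invoke Theorem \ref{thm:conditionalexpectation} to get a uniform (in $N$) Lipschitz bound and asymptotic approximability for that score, and then deduce \eqref{eq:convergenceofentropy} from the integral formulas \eqref{eq:normalizedentropyformula} and \eqref{eq:definechi*} by dominated convergence via Lemma \ref{lem:Fisherestimates} and the boundedness in $N$ of $\mathcal{I}^{(N)}(X^{(N)}|Y^{(N)})$. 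The differences are in the bookkeeping of the middle step, and both routes work. The paper parametrizes the augmented model by $(\tilde x,y,s)$ with potential $W_t^{(N)}(\tilde x,y,s)=V^{(N)}(\tilde x-t^{1/2}s,y)+\frac12\norm{s}_2^2$, so uniform convexity and semi-concavity are immediate because $W_t^{(N)}$ is a jointly convex potential composed with an invertible linear map (cf.\ Proposition \ref{prop:lineartransformations}); your parametrization $\tilde V^{(N)}(x,u,y)=V^{(N)}(x,y)+\frac{1}{2t}\norm{u-x}_2^2$ needs the quadratic-form manipulation you sketch, which does work (note the full Hessian is in fact never degenerate along $(a,a,0)$, since there $HV^{(N)}$ already contributes at least $c\norm{a}_2^2$; it is only the quadratic term that degenerates, and the constants worsen as $t\downarrow 0$ on the semi-concave side, which you correctly sidestep by treating $t=0$ separately). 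Second, the paper feeds the marginal potential $V_t^{(N)}$, whose gradient is exactly the conditional expectation \eqref{eq:xiconditionalexpectation2}, back into Lemma \ref{lem:convergenceofFisherinfo} at each $t$, which yields the convergence and the identification with $\Phi^*(X+t^{1/2}S|Y)$ in one stroke; you instead obtain convergence of $E\norm{g^{(N)}(U^{(N)},Y^{(N)})}_2^2$ from Corollary \ref{cor:convergenceofexpectation} and identify the limit via the $t=0$ case together with Voiculescu's projection formula for conjugate variables, which is a legitimate alternative. The one step you should spell out is the identification of the limiting non-commutative law of $(X^{(N)},X^{(N)}+t^{1/2}S^{(N)},Y^{(N)})$ with that of $(X,X+t^{1/2}S,Y)$ where $S$ is semicircular and free from $(X,Y)$: Theorem \ref{thm:freeGibbslaw} only gives existence of a limit law for your augmented convex model, and asymptotic freeness pins it down (exactly as in the paper's proof); both your formula $g(X+t^{1/2}S,Y)=E_{\mathrm{W}^*(X+t^{1/2}S,Y)}[D_xV(X,Y)]$ and your application of Corollary \ref{cor:convergenceofexpectation} presuppose this identification.
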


\begin{proof}
We want to show that the law of $(X^{(N)} + t^{1/2} S^{(N)}, Y^{(N)})$ satisfies the assumptions of Lemma \ref{lem:convergenceofFisherinfo} for each $t \geq 0$.  The joint law of $(X^{(N)},Y^{(N)},S^{(N)})$ is given by the convex potential $U^{(N)}(x,y,s) = V(x,y) + (1/2) \norm{s}_2^2$.  Now $U^{(N)}$ satisfies $\min(c,1) \leq HU^{(N)} \leq \max(C,1)$ and $DU^{(N)}$ is asymptotically approximable by trace polynomials.  Thus, the law of $(X^{(N)},Y^{(N)},S^{(N)})$ has a large $N$ limit given by Theorem \ref{thm:freeGibbslaw}.  In fact, the large $N$ limit must be non-commutative law of $(X,Y,S)$ because of Voiculescu's asymptotic freeness theorem \cite{Voiculescu1998} and because the non-commutative law of $S^{(N)}$ converges to the non-commutative law of $S$.  (Alternatively, this could be proved the same way as \cite[Lemma 7.4]{Jekel2018}.)

Since the non-commutative law of $(X^{(N)},Y^{(N)},S^{(N)})$ converges in probability to that of $(X,Y,S)$, the non-commutative law of $(X^{(N)} + t^{1/2} S^{(N)}, Y^{(N)})$ converges in probability to that of $(X + t^{1/2}S,Y)$, and thus (A) of Lemma \ref{lem:convergenceofFisherinfo} holds.  Moreover, Lemma \ref{lem:epsilonnet} shows that
\[
P(\norm{(X^{(N)} - E(X^{(N)}),Y^{(N)} - E(Y^{(N)}),S^{(N)})}_\infty \geq \min(1,c)^{-1/2} (\Theta + \delta)) \leq e^{-N \delta^2/2}.
\]
From this it is not hard to show that $(X^{(N)} + t^{1/2} S^{(N)}, Y^{(N)})$ satisfies (D).

It remains to check (B) and (C).  The potential for $(X^{(N)} + t^{1/2} S^{(N)}, Y^{(N)})$ is given by
\[
W_t^{(N)}(\tilde{x},y,s) = U^{(N)}(\tilde{x} - t^{1/2} s, y, s),
\]
which follows by applying the change of variables formula for the density.  Here we write $\tilde{x}$ to emphasize that this variable corresponds to $X^{(N)} + t^{1/2} S^{(N)}$ rather than $X^{(N)}$.  Note that $W_t^{(N)}$ is uniformly convex and semi-concave since it is the composition of $U^{(N)}$ with an invertible linear transformation.  Also,
\[
DW_t^{(N)}(\tilde{x},y,s) = (DV^{(N)}(\tilde{x}-t^{1/2}s,y), s - t^{1/2} D_xV^{(N)}(\tilde{x}-t^{1/2}s,y))
\]
is asymptotically approximable by trace polynomials.  The potential corresponding to $(X^{(N)} + t^{1/2} S^{(N)}, Y^{(N)})$ is
\[
V_t^{(N)}(\tilde{x},y) = -\frac{1}{N^2} \log \int e^{-N^2 W_t^{(N)}(\tilde{x},y,s)}\,ds.
\]
Since $W_t^{(N)}$ is uniformly convex, the integrand vanishes rapidly at $\infty$, and thus it is straightforward to differentiate under the integral by dominated convergence, and deduce that $V_t^{(N)}$ is continuously differentiable.  Furthermore,
\[
DV_t^{(N)}(\tilde{x},y) e^{-N^2 V_t^{(N)}(\tilde{x},y)} = \int DW_t^{(N)}(\tilde{x},y,s) e^{-N^2W_t^{(N)}(\tilde{x},y,s)} \,ds,
\]
so that
\[
DV_t^{(N)}(\tilde{x},y) = \frac{\int DW_t^{(N)}(\tilde{x},y,s) e^{-N^2W_t^{(N)}(\tilde{x},y,s)} \,ds}{\int e^{-N^2W_t^{(N)}(\tilde{x},y,s)} \,ds},
\]
or in other words $DV_t^{(N)}$ is given by the conditional expectation
\begin{align} \label{eq:xiconditionalexpectation2}
DV_t^{(N)}(X^{(N)} + t^{1/2} S^{(N)},Y^{(N)}) &= E\left[D_{(x,y)} W_t^{(N)}(X^{(N)} + t^{1/2} S^{(N)},Y^{(N)},S^{(N)}) \bigl| X^{(N)} + t^{1/2} S^{(N)}, Y^{(N)} \right] \\
&=  E\left[DV^{(N)}(X^{(N)}, Y^{(N)}) \bigl| X^{(N)} + t^{1/2} S^{(N)}, Y^{(N)} \right]. \nonumber
\end{align}
Now we apply Theorem \ref{thm:conditionalexpectation} using the potential $W_t^{(N)}$ and conditioning on $(X^{(N)} + t^{1/2} S^{(N)}, Y^{(N)})$ to conclude conclude that $DV_t^{(N)}(\tilde{x},y)$ is asymptotically approximable by trace polynomials, which establishes (B).

Furthermore, Theorem \ref{thm:conditionalexpectation} implies that
\[
\norm{DV_t^{(N)}}_{\Lip} \leq (1 + C/c) \norm{D_{(x,y)}W_t^{(N)}}_{\Lip} \leq (1 + C/c)(1 + t^{1/2}) \norm{DV^{(N)}}_{\Lip} \leq (1 + C/c)(1 + t^{1/2}) C.
\]
This implies that (C) of Lemma \ref{lem:convergenceofFisherinfo} holds with $k = 1$, using Remark \ref{rem:UCgrowthbound}.

Therefore, we may apply Lemma \ref{lem:convergenceofFisherinfo} to $(X^{(N)} + t^{1/2} S^{(N)}, Y^{(N)})$ to obtain that \eqref{eq:convergenceofFisherinfo} holds for every $t \geq 0$, that is,
\[
\lim_{N \to \infty} \mathcal{I}^{(N)}(X^{(N)} + t^{1/2} S^{(N)} | Y^{(N)}) \to \Phi^*(X+t^{1/2}S | Y).
\]
For the second claim \eqref{eq:convergenceofentropy} regarding $h^{(N)}$ and $\chi^*$, it remains to show that
\[
\lim_{N \to \infty} \frac{1}{2} \int_0^\infty \left( \frac{m}{1 + t} - \mathcal{I}^{(N)}(X + t^{1/2}S | Y) \right)\,dt = \frac{1}{2} \int_0^\infty \left( \frac{m}{1 + t} - \Phi^*(X + t^{1/2}S | Y) \right)\,dt.
\]
We just showed the integrand converges pointwise.  But we can take the limit inside the integral by the dominated convergence theorem, because by Lemma \ref{lem:Fisherestimates}, we have
\[
\frac{m}{a + t} \leq \mathcal{I}^{(N)}(X^{(N)} + t^{1/2} S^{(N)} | Y^{(N)}) \leq \min \left( \frac{m}{t}, \mathcal{I}^{(N)}(X^{(N)}|Y^{(N)}) \right),
\]
and we also know that $\mathcal{I}^{(N)}(X^{(N)} | Y^{(N)})$ is bounded as $N \to \infty$ because it converges to $\Phi^*(X|Y)$.
\end{proof}

\begin{remark}
Of course, \eqref{eq:xiconditionalexpectation2} leads to the same conclusion as Lemma \ref{lem:xiconditionalexpectation}.  Indeed, $\xi_t = D_x V_t^{(N)}(X^{(N)} + t^{1/2}S^{(N)},Y^{(N)})$ is the score function for $X^{(N)} + t^{1/2} S^{(N)}$, and Lemma \ref{lem:xiconditionalexpectation} says that $\xi_t$ is the conditional expectation of $\xi_0 = D_x V^{(N)}(X^{(N)}, Y^{(N)})$ given $X^{(N)} + t^{1/2} S^{(N)}$ and $Y^{(N)}$.
\end{remark}

\begin{remark} \label{rem:simplifiedentropyproof}
In \cite[\S 7]{Jekel2018}, we did not use the conditional expectation method to prove $DV_t^{(N)}$ is asymptotically approximable by trace polynomials, but rather we analyzed the evolution of $DV_t^{(N)}$ directly using PDE semigroups.  The proof given here for convergence of entropy is thus considerably shorter.  However, our results on the evolution of $DV_t^{(N)}$ will come in handy for our construction of transport in the next section.
\end{remark}

\section{Conditional Transport to Gaussian} \label{sec:transport}

In this section, we prove our main results about transport (Theorems \ref{thm:transport1} and \ref{thm:transport2}).  Suppose that $V^{(N)}(x,y)$ is a potential as in Assumption \ref{ass:convexRMM}, $\mu^{(N)}$ is the corresponding probability distribution and that $(X^{(N)},Y^{(N)})$ is a random variable with this law.  Let $S^{(N)}$ be an independent $m$-tuple of GUE matrices.  Let $\mu_t^{(N)}$ be the law of $(X^{(N)} + t^{1/2} S^{(N)}, Y^{(N)})$.

The evolution of the potential $V_t^{(N)}$ corresponding to $\mu_t^{(N)}$ was studied in \cite{Jekel2018}, and in particular, we established a dimension-independent way to obtain $DV_t^{(N)}$ from $DV^{(N)}$ using operations that preserve asymptotic approximability by trace polynomials.  By solving an ODE in terms of $DV_t^{(N)}$, we will obtain transport maps $F_{s,t}^{(N)}: M_N(\C)_{sa}^{m+n} \to M_N(\C)_{sa}^m$ such that
\[
(F_{s,t}^{(N)}(X^{(N)} + t^{1/2} S^{(N)}, Y^{(N)}), Y^{(N)}) \sim (X^{(N)} + s^{1/2} S^{(N)}, Y^{(N)}).
\]
Upon renormalizing and taking the limit as $s$ or $t$ goes to $\infty$, we obtain transport to the law of $(S^{(N)}, Y^{(N)})$.

To make each part of the proof more computationally tractable, we proceed in stages.  Up until \S \ref{subsec:largeNtransport}, we fix $N$ (and thus suppress it in the notation).  First, in \S \ref{subsec:basictransport}, we describe the basic construction of transport for functions of $x$ alone (imagining that we have frozen the variable $y$).  In \S \ref{subsec:conditionalHJB}, we describe the properties of $V_t^{(N)}(x,y)$.  Next, \S \ref{subsec:Lipschitztransport} proves Lipschitz estimates for the transport maps $F_{s,t}^{(N)}(x,y)$.

In \S \ref{subsec:largeTtransport}, we introduce renormalized transport maps $\tilde{F}_{s,t}^{(N)}$ that transport $\tilde{\mu}_t$ to $\tilde{\mu}_s$, where $\tilde{\mu}_t$ is the law of $(e^{-t/2} X^{(N)} + e^{-t/2}(e^t - 1)^{1/2} S^{(N)}, Y^{(N)})$.  The renormalized transport map $\tilde{F}_{s,t}$ is the same one used by Otto and Villani in their proof of the Talagrand transportation-entropy inequality \cite[\S 4, proof of Lemma 2]{OV2000}, in the special case where the target measure is Gaussian (and generalized to the conditional setting).  We will explain this inequality further in \S \ref{subsec:entropycost}.

The new element in our paper is the analysis of the large $t$ and large $N$ limits of the transport maps.  In \S \ref{subsec:largeTtransport}, we show that the limit as $s$ or $t$ tends to $\infty$ exists.  Then in \S \ref{subsec:largeNtransport}, we use the machinery of asymptotic approximation by trace polynomials to study the large $N$ limit of $\tilde{F}_{s,t}^{(N)}$.  In order to get dimension-independent estimates for convergence as $s$ or $t$ tends to $\infty$, we conduct a finer analysis of convexity properties of $V_t$ and Lipschitz properties of $\tilde{F}_{s,t}$.  It is convenient to carry out the earlier stages of this analysis in \S \ref{subsec:conditionalHJB} and \S \ref{subsec:Lipschitztransport} for $F_{s,t}$ rather than $\tilde{F}_{s,t}$.

\subsection{Basic Construction of Transport} \label{subsec:basictransport}

In this section, we will fix $N$ and fix a function $V: M_N(\C)_{sa}^m \to \R$ in $\mathcal{E}_{c,C}$ for some $0 < c < C$.  Later, we will allow $V$ to depend on $N$ and to depend on another self-adjoint tuple $y$, but we prefer to simplify notation for the sake of carrying out the basic computation.

Let $\mu$ be the probability measure with density $(1/Z) e^{-N^2 V}$ where $Z = \int_{M_N(\C)_{sa}^m} e^{-N^2 V}$.  We showed in \cite{Jekel2018} that the density of $\mu_t := \mu * \sigma_{t,N}$ is $(1/Z) e^{-N^2 V_t}$, where $V_t$ solves the equation
\begin{equation} \label{eq:simpleHJB}
\partial_t V_t = \frac{1}{2N} \Delta V_t - \norm*{DV_t}_2^2.
\end{equation}
Because $(1/Z) e^{-N^2 V_t}$ solves the heat equation, we know that $V_t$ is a smooth function of $(x,t)$ for $t > 0$ and a continuous function of $(x,t)$ for $t \geq 0$.  Moreover, $V_t \in \mathcal{E}(c(1+ct)^{-1},C(1+Ct)^{-1})$ for each $t$ as proved in Theorem 6.1 (1) of \cite{Jekel2018}.

Now we can describe explicit transport functions $F_{s,t}$ such that $(F_{s,t})_* \mu_s = \mu_t$ for all $s, t \in [0,+\infty)$.

\begin{proposition} \label{prop:basictransport1}
Let $V$, $\mu$, $V_t$, and $\mu_t$ be as above.
\begin{enumerate}
	\item There exists a unique family of functions $F_{s,t}: M_N(\C)_{sa}^m \to M_N(\C)_{sa}^m$ for $0 \leq s \leq t < +\infty$ such that
	\begin{align}
	F_{t,t}(x) &= x \text{ for } t \in [0,+\infty) \label{eq:transportODE} \\
	\partial_s F_{s,t}(x) &= \frac{1}{2} DV_s(F_{s,t}(x)) \text{ for } s, t \in [0,+\infty). \nonumber
	\end{align} 
	\item $F_{t_1,t_2} \circ F_{t_2,t_3} = F_{t_1,t_3}$ and in particular $F_{t,s} = F_{s,t}^{-1}$.
	\item $(F_{s,t})_* \mu_t = \mu_s$.
\end{enumerate}
\end{proposition}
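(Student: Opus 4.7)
The plan is to handle parts (1) and (2) via standard ODE machinery and devote the bulk of the work to part (3).

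For part (1), I would apply Picard--Lindel{\"o}f to the time-dependent vector field $H(s, x) = \tfrac{1}{2} D V_s(x)$.  Parabolic regularity for the heat equation $\partial_s \rho_s = (2N)^{-1} \Delta \rho_s$, combined with $\rho_s = Z^{-1} e^{-N^2 V_s}$, makes $V_s$ smooth in $(s,x)$ for $s > 0$ and continuous up to $s = 0$, so $H$ is continuous in $(s,x)$.  The convexity bounds cited from \cite[Theorem 6.1]{Jekel2018} make $DV_s$ Lipschitz in $x$ with constant bounded by $C$, uniformly in $s$, so there is no finite-time blowup and we obtain a unique global smooth flow $F_{s,t}$, which is a diffeomorphism.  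Part (2) then follows from uniqueness: fixing $t_3$ and setting $y = F_{t_2, t_3}(x)$, the two maps $s \mapsto F_{s,t_3}(x)$ and $s \mapsto F_{s,t_2}(y)$ both satisfy $\partial_s G = \tfrac{1}{2} DV_s(G)$ with common value $y$ at $s = t_2$, so they agree for all $s$; evaluating at $s = t_1$ gives the cocycle identity, and the special case $F_{s,t} \circ F_{t,s} = F_{s,s} = \id$ identifies $F_{t,s}$ with $F_{s,t}^{-1}$.

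For part (3), I would pass through the change-of-variables formula.  The density of $(F_{s,t})_*\mu_t$ at $z = F_{s,t}(y)$ is $\rho_t(y)/|\det J F_{s,t}(y)|$, so identifying it with $\rho_s(z) = Z^{-1} e^{-N^2 V_s(z)}$ reduces to proving
\[
\log |\det J F_{s,t}(y)| \;=\; N^2 \bigl[ V_s(F_{s,t}(y)) - V_t(y) \bigr] \qquad (0 \le s \le t).
\]
Both sides vanish at $s = t$ since $F_{t,t} = \id$, so it suffices to match their $s$-derivatives.  On the left, Liouville's formula for the Jacobian of a flow gives
\[
\tfrac{d}{ds} \log |\det J F_{s,t}(y)| = \Div\bigl(\tfrac{1}{2} D V_s\bigr)(F_{s,t}(y)) = \tfrac{N}{2} \Delta V_s(F_{s,t}(y)),
\]
using $D V_s = N \nabla V_s$ and $\Div(\nabla V_s) = \Delta V_s$ in the $\ip{\cdot,\cdot}_{\Tr}$-coordinates that underlie the Lebesgue measure.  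On the right, the chain rule and the ODE give
\[
\tfrac{d}{ds} V_s(F_{s,t}(y)) = \partial_s V_s(F_{s,t}(y)) + \tfrac{1}{2} \norm{DV_s(F_{s,t}(y))}_2^2;
\]
the Hamilton--Jacobi equation for $V_s$ (namely $\partial_s V_s = (2N)^{-1}\Delta V_s - \tfrac{1}{2}\norm{DV_s}_2^2$, derived from the heat equation for $\rho_s$ via $\rho_s = Z^{-1}e^{-N^2 V_s}$; cf.\ \eqref{eq:simpleHJB}) cancels the $\norm{DV_s}_2^2$ contribution, leaving $(2N)^{-1} \Delta V_s(F_{s,t}(y))$, and multiplication by $N^2$ matches the Jacobian side.

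The expected obstacle is purely bookkeeping: keeping track of the distinction between $\ip{\cdot,\cdot}_2$ and $\ip{\cdot,\cdot}_{\Tr}$ in the conversion $D = N \nabla$, so that the $N$-factors arising in $\Div(DV_s) = N \Delta V_s$ and in the Hamilton--Jacobi equation interlock to produce the exact cancellation; this cancellation is what pins down the coefficient $\tfrac{1}{2}$ in the defining ODE.  A slicker alternative to the Jacobian computation is to observe that both $\nu_s := (F_{s,t})_*\mu_t$ and $\mu_s$ satisfy the continuity equation $\partial_s \nu + \Div\bigl(\tfrac{1}{2} DV_s \, \nu\bigr) = 0$ with common value at $s = t$ (the former from the flow, the latter by rewriting the heat equation in conservative form), and to invoke standard uniqueness for the continuity equation with Lipschitz drift.
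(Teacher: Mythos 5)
Your proposal is correct and follows essentially the same route as the paper: Picard--Lindel\"of with the uniform Lipschitz bound on $DV_s$ for (1), ODE uniqueness for (2), and for (3) the change-of-variables identity verified by combining Liouville's formula for $\partial_s \log|\det JF_{s,t}|$ with the Hamilton--Jacobi equation \eqref{eq:simpleHJB}, which is exactly the paper's computation (the paper differentiates in the other time index and phrases it as showing $V_t \circ F_{t,s} - N^{-2}\log|\det JF_{t,s}|$ is constant). The only point you leave implicit is that $V_s$ is smooth only for $s>0$, so the derivative-matching argument establishes the identity for $s,t>0$ and the endpoint case $s=0$ requires a limiting step; the paper handles this by noting that both $(F_{s,t})_*\mu_t$ and $\mu_s$ depend continuously on $(s,t)$ in the weak topology, which avoids having to discuss continuity of the Jacobian at $s=0$.
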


\begin{proof}
(1) Because $V_s \in \mathcal{E}(c(1+cs)^{-1},C(t+Cs)^{-1})$, we know that $DV_s(x)$ is $C$-Lipschitz with respect to $\norm*{\cdot}_2$.  Hence, given $t \in [0,+\infty)$, by the Picard-Lindel\"of theorem, the initial value problem \eqref{eq:transportODE} has a solution defined for all $s \in [0,+\infty)$.

(2) Fix $t_1$, $t_2$, and $t_3$ and fix $x \in M_N(\C)_{sa}^m$.  Let $G(t)$ be the function defined by $G(t_3) = x$ and $\partial_t G(t) = DV_t(G(t))$.  By definition of the functions $F_{s,t}$, we have $G(t_1) = F_{t_1,t_3}(x)$ and $G(t_2) = F_{t_2,t_3}(x)$.  So $G$ also satisfies the initial value problem $\partial_t G(t) = DV_t(G(t))$ and $G(t_2) = F_{t_2,t_3}(x)$.  Therefore, $G(t_1) = F_{t_1,t_2}(F_{t_2,t_3}(x))$, so that $F_{t_1,t_2}(F_{t_2,t_3}(x)) = F_{t_1,t_3}(x)$.

(3) We first prove the claim for $s, t > 0$.  Because $V_s$ is smooth, it follows that $F_{s,t}$ is smooth for $s, t > 0$ by standard theory of smooth dependence for ODE.  Let $JF_{s,t}$ denote the Jacobian linear transformation (differential) of $F_{s,t}$.  Let $\rho_t = (1/Z)e^{-V_t}$ is the density of $\mu_t$.  As a consequence of the change-of-variables formula for multivariable integrals, we see that the density of $(F_{s,t})_* \mu_t = (F_{t,s}^{-1})_* \mu_t$ is 
\[
(\rho_t \circ F_{t,s}) |\det JF_{t,s}| = \frac{1}{Z} \exp\left[ -N^2\left(V_t \circ F_{t,s} - \frac{1}{N^2} \log |\det JF_{t,s}| \right) \right].
\]
Fix $s$.  If $t = s$, then clearly this reduces to $\rho_s$.  Therefore, it suffices to show that $(\rho_t \circ F_{t,s}) |\det JF_{t,s}|$ is a constant function of $t$, or equivalently that
\[
\partial_t \left[ V_t \circ F_{t,s} - \frac{1}{N^2} \log |\det JF_{t,s}| \right] = 0.
\]
Recalling smoothness $V_t$ and $F_{t,s}$ for $s, t > 0$ and using the differential equations \eqref{eq:simpleHJB} for $V_t$ and \eqref{eq:transportODE} for $F_{t,s}$, we obtain
\begin{align*}
\partial_t[V_t \circ F_{t,s}] &= (\partial_t V_t) \circ F_{t,s} + \ip{DV_t(F_{t,s}), \partial_t F_{t,s}}_2 \\
&= \frac{1}{2N} \Delta V_t \circ F_{t,s} - \frac{1}{2} \norm*{DV_t(F_{t,s})}_2^2 + \frac{1}{2} \ip{DV_t(F_{t,s}), DV_t(F_{t,s})}_2 \\
&= \frac{1}{2N} \Delta V_t.
\end{align*}
Meanwhile, to compute $\partial_t \log |\det JF_{t,s}|$, note that for small $\epsilon \in \R$,
\[
JF_{t+\epsilon,s} = J(F_{t+\epsilon,t} \circ F_{t,s}) = JF_{t+\epsilon,t}(F_{t,s}) JF_{t,s},
\]
so that
\[
\log |\det JF_{t+\epsilon,s}| = \log |\det JF_{t,s}| + \log |\det JF_{t+\epsilon,t}(F_{t,s})|.
\]
Using smoothness,
\begin{align*}
\frac{d}{d\epsilon} \bigr|_{\epsilon = 0} JF_{t+\epsilon,t} &= J \left( \frac{d}{d\epsilon} \bigr|_{\epsilon = 0}  F_{t+\epsilon,t} \right) \\
&= \frac{1}{2} J(DV_t) = \frac{1}{2} N J(\nabla V_t),
\end{align*}
Since $JF_{t+\epsilon,t}$ becomes the identity when $\epsilon = 0$, we know that for small enough $\epsilon$, the linear transformation $JF_{t+\epsilon,t}$ has positive determinant and $\log JF_{t+\epsilon,t}$ is well-defined by power series, so that
\begin{align*}
\frac{d}{d\epsilon} \Bigr|_{\epsilon = 0} \log |\det JF_{t+\epsilon,t} &= \frac{d}{d\epsilon} \bigr|_{\epsilon = 0} \Tr \log JF_{t+\epsilon,t} \\
&= \Tr \left( \frac{d}{d\epsilon} \Bigr|_{\epsilon = 0} JF_{t+\epsilon,t} \right) \\
&= \Tr( N(J \nabla V_t)) = \frac{1}{2} N \Delta V_t.
\end{align*}
Hence, $\partial_t \log |\det JF_{t,s}| = \frac{N}{2} \Delta V_t \circ F_{t,s}$.  This implies that
\[
\partial_t \left[ V_t \circ F_{t,s} - \frac{1}{N^2} \log |\det JF_{t,s}| \right] = \frac{1}{2N} \Delta V_t - \frac{1}{N^2} \cdot \frac{N}{2} \Delta V_t \circ F_{t,s} = 0,
\]
completing the proof of the claim for $s, t > 0$.  The equality $(F_{s,t})_* \mu_t = \mu_s$ extends to the case where $s$ or $t$ is zero because both sides depend continuously on $s$ and $t$ with respect to the weak topology on measures.
\end{proof}

In particular, the map $F_{0,t}$ transports $\mu_t = \mu * \sigma_{t,N}$ to the original law $\mu$.  In other words, if $X \sim \mu$ and $S \sim \sigma_{1,N}$, then $F_{0,t}(X + t^{1/2}S) \sim X$ and $F_{t,0}(X) \sim X + t^{1/2} S$.  This implies that $(1 + t)^{-1/2} F_{t,0}(X) \sim (1 + t)^{-1/2}(X + t^{1/2} S)$.  This suggests that we can find a transport map from the law of $X$ to the law of $S$ as the large $t$ limit of $(1 + t)^{-1/2} F_{t,0}$.  In the interest of efficiency, we postpone the details of this argument until after we introduce the dependence on the other set of parameters $y$.

\subsection{Conditional Hamilton-Jacobi-Bellman Equation and Semigroups} \label{subsec:conditionalHJB}

Let us now fix $N$ and fix a potential $V: M_N(\C)_{sa}^m \times M_N(\C)_{sa}^n \to \R$ in $\mathcal{E}_{m+n}^{(N)}(c,C)$ for some $0 \leq c \leq C$.  Let $\mu$ be the corresponding law and let $(X,Y)$ be a random variable in $M_N(\C)_{sa}^m \times M_N(\C)_{sa}^n$ distributed according to $\mu$.  Let $\mu_t$ be the law of $(X + t^{1/2}S,Y)$, where $S$ is an independent tuple of independent GUE.

Our goal is to transport the law $\mu_s$ to the law $\mu_t$.  Upon freezing the variable $y$, the methods of the previous section will produce a transport map $F_{s,t}(x,y)$ such that $F_{s,t}(\cdot,y)$ pushes forward the conditional distribution of $X + s^{1/2} S$ given $Y$ to the conditional distribution of $X + t^{1/2} S$ given $Y$.  Specifically, $F_{s,t}: M_N(\C)_{sa}^m \times M_N(\C)_{sa}^n \to M_N(\C)_{sa}^m$ is the solution to the initial value problem
\begin{align*}
F_{t,t}(x,y) &= x, \\
\partial_s F_{s,t}(x,y) &= D_x V_s(F_{s,t}(x,y),y).
\end{align*}
Then $(F_{s,t}(X + t^{1/2} S, Y),Y) \sim (X+s^{1/2} S, Y)$.

We seek to understand the large $t$ and large $N$ behavior of $F_{s,t}(x,y)$ as a function of $(x,y)$ rather than simply as a function of $x$ for a fixed $y$.  To achieve this, we must understand the behavior of $V(x,y)$ and $D_x V(x,y)$ as a functions of $(x,y)$.  We will first import the results of \cite[\S 6]{Jekel2018} regarding $D_x V(x,y)$ as a function of $x$, then we will extend them to handle the dependence on $y$.

The potential $V_t$ satisfies
\begin{equation} \label{eq:conditionalHJB}
\partial_t V_t = \frac{1}{2N} \Delta_x V_t - \frac{1}{2} \norm*{D_x V_t}_2^2.
\end{equation}
We express $V_t = R_t V$, where $R_t$ is a semigroup acting on convex and semi-concave functions defined as follows.  Let
\begin{align*}
P_t u(x,y) &= \int_{M_N(\C)_{sa}^m} u(x+z,y)\,d\sigma_{m,t}^{(N)}(z) \\
Q_t u(x,y) &= \inf_{z} \left[u(z,y) + \frac{1}{2t} \norm*{z - x}_2^2 \right].
\end{align*}
Then as suggested by Trotter's formula, we want to express $R_t u = \lim_{n \to \infty} (P_{t/n} Q_{t/n})^n u$, but for technical convenience we only apply this to dyadic rationals $t$ and values of $n$ that are powers of $2$.  The following is a direct application of \cite[Theorems 6.1 and 6.17]{Jekel2018} to $V(\cdot,y)$.

\begin{theorem} \label{thm:Rtsemigroup}
There exists a semigroup of nonlinear operators $R_t: \bigcup_{C > 0} \mathcal{E}_{m+n}^{(N)}(0,C) \to \bigcup_{C > 0} \mathcal{E}_{m+n}^{(N)}(0,C)$ with the following properties:
\begin{enumerate}[(1)]
	\item {\bf Change in Convexity:} If $u(\cdot,y) \in \mathcal{E}_m^{(N)}(c,C)$, then $R_t u(\cdot,y) \in \mathcal{E}_m^{(N)}(c(1+ct)^{-1}, C(1+Ct)^{-1})$.
	\item {\bf Approximation by Iteration:} For $\ell \in \Z$ and $t \in 2^{-\ell} \N_0$, denote $R_{t,\ell}u = (P_{2^{-\ell}} Q_{2^{-\ell}})^{2^\ell t} u$.  Suppose $t \in \Q_2^+$ and $u \in \mathcal{E}_{m+n}^{(N)}(0,C)$.
		\begin{enumerate}[(a)]
			\item If $2^{-\ell-1} C \leq 1$, then
			\[
			|R_t u - R_{t,\ell} u| \leq \left( \frac{3}{2} \frac{C^2mt}{1+Ct} + \log(1 + Ct) (m + Cm + \norm*{D_x u}_2^2) \right) 2^{-\ell}.
			\]
			\item $\displaystyle \norm*{D_x(R_{t,\ell} u) - D_x(R_t u)}_{L^\infty} \leq [t/2 + C(t/2)^2] C^2 m^{1/2}(2 \cdot 2^{-\ell/2} + 2^{-3\ell/2}C)$.
	\end{enumerate}
	\item {\bf Continuity in Time:}  Suppose $s \leq t \in \R_+$ and $u \in \mathcal{E}_{m+n}^{(N)}(0,C)$.  Then
		\begin{enumerate}[(a)]
			\item $R_t u \leq R_s u + \frac{m}{2} [\log(1 + Ct) - \log(1 + Cs)]$.
			\item $R_t u \geq R_s u - \frac{1}{2} (t - s)(Cm + \norm*{D_x u}_2^2)$.
			\item If $C(t - s) \leq 1$, then $\norm*{D_x(R_t u) - D_x(R_s u)}_2 \leq 5Cm^{1/2} 2^{1/2}(t - s)^{1/2} + C(t - s) \norm*{D_x u}_2$.
		\end{enumerate}
	\item {\bf Differential Equation:} $R_t u(x)$ is continuous as a function of $(x,t)$ on $M_N(\C)_{sa}^m \times [0,+\infty)$ and smooth on $M_N(|C)_{sa}^m \times (0,+\infty)$, and it satisfies \eqref{eq:conditionalHJB}, and we have $P_t[\exp(-N^2u)] = \exp(-N^2 R_t u)$.
\end{enumerate}
\end{theorem}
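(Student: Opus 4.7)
The plan is to prove Theorem \ref{thm:Rtsemigroup} by reducing everything to the unconditional versions \cite[Theorems 6.1 and 6.17]{Jekel2018}, treating $y$ as a frozen auxiliary parameter. The key observation is that both building blocks
\[
P_t u(x,y) = \int u(x+z,y)\,d\sigma_{m,t}^{(N)}(z), \qquad Q_t u(x,y) = \inf_z \left[u(z,y) + \frac{1}{2t}\norm{z-x}_2^2\right]
\]
act only on the $x$-variable while leaving $y$ untouched. Consequently, for each fixed $y$, the operator $R_{t,\ell} = (P_{2^{-\ell}} Q_{2^{-\ell}})^{2^\ell t}$ applied jointly to $u$ agrees with the $m$-variable $R_{t,\ell}$ applied to the slice $u(\cdot,y)$, evaluated at $x$. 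Since every estimate in items (1), (2), (3), and (4) is local in $y$ (the norms $\norm{D_x u}_2$ and pointwise differences are all evaluated at a single $(x,y)$), these estimates follow directly from the results of \cite{Jekel2018} applied slicewise.

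The only genuinely new content beyond slicewise invocation is to verify that $R_t$ maps the \emph{joint} regularity class $\mathcal{E}_{m+n}^{(N)}(0,C')$ into itself (so that the semigroup makes sense as stated). For $P_t$ this is immediate: convolution in $x$ with a probability measure preserves both joint convexity and joint semi-concavity in $(x,y)$, since one can test these properties against an arbitrary affine or positive-definite-quadratic lower/upper bound on $(x,y)$ and pass it through the integral. For $Q_t$, joint convexity is preserved because an infimum of a jointly convex function in $(x,y,z)$ over $z$ is jointly convex in $(x,y)$. Joint semi-concavity requires a short computation: writing $u(x,y) = w(x,y) + \tfrac{C}{2}\norm{(x,y)}_2^2$ with $w$ concave, the inf-convolution absorbs the $\tfrac{C}{2}\norm{x}_2^2$ piece into the quadratic $\frac{1}{2t}\norm{z-x}_2^2$, leaving an inf-convolution of a concave function with a quadratic (which remains concave with a controlled semi-concavity constant in $x$) plus the untouched $\tfrac{C}{2}\norm{y}_2^2$ term; the mixed $(x,y)$ Hessian terms are handled by completing the square in $z$.

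With the regularity class preserved, the semigroup $R_t$ is constructed exactly as in \cite[\S 6]{Jekel2018}: on dyadic rationals $t$ one defines $R_t u$ as the limit of $R_{t,\ell} u$ as $\ell \to \infty$, using the estimate in (2)(a) to show the sequence is Cauchy in $L^\infty_{\mathrm{loc}}$ and (2)(b) to get the same for $D_x R_{t,\ell} u$; this extends to all $t \geq 0$ by the continuity estimates (3). The semigroup property $R_s R_t = R_{s+t}$ descends from the slicewise property via \cite{Jekel2018}. The identification $P_t[\exp(-N^2 u)] = \exp(-N^2 R_t u)$ in (4) is a pointwise identity in $y$, so it too reduces to the unconditional case, which in turn forces $R_t u$ to satisfy the Hamilton-Jacobi-Bellman equation \eqref{eq:conditionalHJB} (with $\Delta_x$ acting only in the $x$-variable) and to inherit smoothness from the strict positivity of the heat kernel.

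The main obstacle is bookkeeping: one must carry the $y$-parameter through the various Cauchy-type and Trotter-splitting estimates without degrading the constants, and verify that the joint semi-concavity constant claimed in (1) --- which involves only the $x$-Hessian bound --- is indeed what the slicewise argument produces jointly. Conceptually, however, nothing new happens in the $y$-direction beyond pointwise application of \cite[Theorems 6.1 and 6.17]{Jekel2018}, so the proof amounts to recording that every step of the earlier arguments commutes with the trivial action on the frozen parameter $y$.
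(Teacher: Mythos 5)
Your proposal matches the paper's proof: the paper establishes Theorem \ref{thm:Rtsemigroup} precisely by freezing $y$ and applying \cite[Theorems 6.1 and 6.17]{Jekel2018} to the slices $u(\cdot,y)$, which is legitimate because $P_t$ and $Q_t$ act only in the $x$-variable. The joint $(x,y)$-regularity point you single out is indeed the only genuinely conditional ingredient, and the paper treats it immediately afterwards in Lemma \ref{lem:Qt} by essentially the argument you sketch ($P_t$ trivially preserves joint Hessian bounds, while $Q_t$ is handled via quadratic upper and lower envelopes at the minimizer).
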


Result (1) regarding convexity and semi-concavity only applies to $R_t u$ as a function of $x$ for a fixed $y$.  We now extend this result to control the dependence on $y$, using the same techniques as in \cite[Lemma 6.6]{Jekel2018}.  As remarked in that paper, this type of analysis of $Q_t$ is standard in the PDE literature on viscosity solutions.

We use the following notation, as in Definition \ref{def:convexityHnotation}:  Consider a function $u(x,y)$ on $M_N(\C)_{sa}^m \times M_N(\C)_{sa}^n$.  Let us write $Hu \geq c I_m \oplus c' I_n$ to mean that
\[
u(x,y) - \frac{c}{2} \norm*{x}_2^2 - \frac{c'}{2} \norm*{y}_2^2 \text{ is convex}
\]
and similarly let us write $Hu \leq C I_m \oplus C' I_n$ to mean that
\[
u(x,y) - \frac{C}{2} \norm*{x}_2^2 - \frac{C'}{2} \norm*{y}_2^2 \text{ is concave.}
\]

\begin{lemma} \label{lem:Qt}
Suppose that $u: M_N(\C)_{sa}^m \times M_N(\C)_{sa}^n \to \R$ and that
\[
c I_m \oplus c' I_n \leq Hu \leq C I_m \oplus C' I_n.
\]
Then
\begin{enumerate}
	\item $c I_m \oplus c' I_n \leq H(P_t u) \leq C I_m \oplus C' I_n$.
	\item $D(Q_t u)(x,y) = Du(x - t D_x(Q_t u)(x,y), y)$.
	\item $c(1 + ct)^{-1} I_m \oplus c' I_n \leq H(Q_t u) \leq C(1 + Ct)^{-1} I_m \oplus C' I_n$.
	\item $c(1 + ct)^{-1} I_m \oplus c' I_n \leq H(R_t u) \leq C(1 + Ct)^{-1} I_m \oplus C' I_n$.
\end{enumerate}
\end{lemma}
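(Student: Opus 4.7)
The plan is to prove the four claims in order, reusing the inf-convolution formalism of Jekel's earlier work but carefully tracking the joint Hessian in $(x,y)$ (not just in $x$ for fixed $y$). Statement (1) is immediate: since $P_t u(x,y) = \int u(x+z,y)\,d\sigma_{m,t}^{(N)}(z)$, differentiating twice under the integral and averaging preserves any pointwise Hessian bound in either direction. Statement (2) is the envelope theorem. The minimizer $z^*(x,y)$ in the definition of $Q_t u$ is characterized by the first-order condition $D_x u(z^*,y) + \tfrac{1}{t}(z^* - x) = 0$, and differentiating $Q_t u(x,y) = u(z^*,y) + \tfrac{1}{2t}\|z^* - x\|_2^2$ while holding $z^*$ fixed gives $D_x(Q_t u) = -\tfrac{1}{t}(z^* - x) = D_x u(z^*, y)$ and $D_y(Q_t u) = D_y u(z^*,y)$. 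Rearranging the first relation produces the claimed identity.

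For (3), which is the main content, I first write $u(z,y) = \tfrac{c}{2}\|z\|_2^2 + \tfrac{c'}{2}\|y\|_2^2 + g(z,y)$ with $g$ jointly convex, and complete the square in $(z,x)$ to get
\[
u(z,y) + \tfrac{1}{2t}\|z-x\|_2^2 = \tfrac{c+1/t}{2}\bigl\|z - \tfrac{1}{1+ct} x\bigr\|_2^2 + \tfrac{c}{2(1+ct)}\|x\|_2^2 + \tfrac{c'}{2}\|y\|_2^2 + g(z,y).
\]
Subtracting the pure $(x,y)$-quadratic and taking the infimum over $z$, the integrand is jointly convex in $(z,x,y)$, so the infimum is convex in $(x,y)$; this gives the lower Hessian bound. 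For the upper bound I write instead $u(z,y) = \tfrac{C}{2}\|z\|_2^2 + \tfrac{C'}{2}\|y\|_2^2 + h(z,y)$ with $h$ jointly concave, perform the analogous completion of the square, and then make the change of variables $\tilde z = z - \tfrac{1}{1+Ct} x$ to recast the quantity as
\[
Q_tu(x,y) - \tfrac{C}{2(1+Ct)}\|x\|_2^2 - \tfrac{C'}{2}\|y\|_2^2 = \inf_{\tilde z}\!\left[\tfrac{C+1/t}{2}\|\tilde z\|_2^2 + h\bigl(\tilde z + \tfrac{1}{1+Ct}x, y\bigr)\right].
\]
For each fixed $\tilde z$, the bracketed expression is concave in $(x,y)$, and an infimum of concave functions is concave. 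The crucial point in both directions is that the penalty $\tfrac{1}{2t}\|z-x\|_2^2$ involves no coupling to $y$, so the $c'$- and $C'$-Hessian bounds pass through unchanged.

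For (4), I iterate: by (1), $P_s$ preserves the full joint Hessian bound $(c,c') \leq H \leq (C,C')$, while by (3), $Q_s$ contracts $c \mapsto c/(1+cs)$ and $C \mapsto C/(1+Cs)$ in the $x$-block and preserves the $y$-block. Applying the composition $(P_{2^{-\ell}}Q_{2^{-\ell}})^{2^\ell t}$ and iterating the fixed-point recursion $c \mapsto c/(1+c \cdot 2^{-\ell})$ exactly $2^\ell t$ times yields $c/(1+ct)$ (similarly for $C$), while the $y$-bounds $c'$ and $C'$ persist. Hence $R_{t,\ell}u$ satisfies the claimed joint Hessian bound. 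Theorem \ref{thm:Rtsemigroup}(2) shows $R_{t,\ell}u \to R_t u$ pointwise on dyadic $t$, and the class of functions satisfying a fixed pair of joint Hessian bounds is closed under pointwise limits, so the bound passes to $R_t u$; continuity in $t$ from Theorem \ref{thm:Rtsemigroup}(3) extends the conclusion to all real $t \geq 0$.

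The main obstacle is the concavity step in (3): a naive infimum of jointly concave functions need not be concave, and the standard "inf-convolution preserves convexity" heuristic goes the wrong direction for the upper Hessian bound. The resolution via the substitution $\tilde z = z - \tfrac{1}{1+Ct}x$ is not automatic and must be found by first completing the square to isolate the optimal coupling between $z$ and $x$, and then translating $z$ so that the residual coupling disappears from the convex $\tilde z$-quadratic and reappears linearly inside the concave function $h$. Once this algebraic maneuver is in place, the rest of the argument is mechanical iteration.
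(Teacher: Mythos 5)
Your proposal is correct, and for the substantive parts (2)--(3) it takes a genuinely different route from the paper. The paper handles (2) and (3) simultaneously by a sandwich argument: at the minimizer $z_0$ it bounds $u$ above and below by explicit quadratics $\underline{v}\leq u\leq \overline{v}$ tangent at $(z_0,y_0)$, computes $Q_t\underline{v}$ and $Q_t\overline{v}$ in closed form (using the quadratic computations from the earlier paper), and reads off both the first-order expansion (giving differentiability of $Q_tu$ and the identity in (2)) and the second-order bounds (giving (3) via the equivalence (2)$\implies$(1) of Lemma \ref{lem:convexgradient}). You instead prove (3) directly: completing the square in the inf-convolution, the lower bound follows because partial minimization in $z$ of a jointly convex function of $(z,x,y)$ is convex, and the upper bound follows from the substitution $\tilde z = z - \tfrac{1}{1+Ct}x$, after which the quantity is an infimum over $\tilde z$ of functions concave in $(x,y)$; this is clean, avoids any explicit formula for $Q_t$ of a quadratic, and isolates exactly why the $y$-block constants $c',C'$ pass through unchanged. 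The trade-off is in (2): you invoke the envelope theorem (``differentiate holding $z^*$ fixed''), which is true here but needs a short justification since $u$ is only $C^{1,1}$ and $z^*$ only Lipschitz --- e.g.\ a Danskin-type two-sided estimate comparing $Q_tu(x,y')$ and $Q_tu(x,y)$ using $z^*(x,y)$ and $z^*(x,y')$ as competitors, together with continuity of $Du$ and of the minimizer (uniqueness from strong convexity in $z$); the paper's quadratic sandwich supplies precisely this differentiability for free. Your part (1) (convolution in $x$ preserves the joint Hessian bounds --- better phrased as averaging translates plus an affine correction rather than differentiating twice under the integral) and part (4) (iterate $P_\delta Q_\delta$, use the recursion $c\mapsto c/(1+c\delta)$, then pass to the limit via Theorem \ref{thm:Rtsemigroup}(2)--(3) and closure of the Hessian-bounded class under pointwise limits) coincide with the paper's argument.
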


\begin{proof}
(1) This is left as an exercise.

(2) The proof is a modification of that of \cite[Lemma 6.6]{Jekel2018}, which proves an analogous result in the simpler case of functions of $x$ without the extra variable $y$.  Fix $x_0$ and $y_0$.  Because the function $u(z,y_0) + \frac{1}{2t} \norm*{z - x_0}_2^2$ is uniformly convex with respect to $z$, it has a unique minimizer $z_0$.  This minimizer must be a critical point with respect to the first variable, and hence
\[
D_x u(z_0,y_0) + \frac{1}{t}(z_0 - x_0),
\]
that is,
\[
z_0 = x_0 - t D_x u(z_0,y_0).
\]
Let $p = D_x u(z_0,y_0)$ and $q = D_y u(z_0,y_0)$, so that $Du(z_0,y_0) = (p,q)$.  Our assumption $c I_m \oplus c' I_n\leq Hu \leq C I_m \oplus C' I_n$ implies that
\[
\underline{v}(x,y) \leq u(x,y) \leq \overline{v}(x,y),
\]
where
\begin{align*}
\underline{v}(x,y) &= u(z_0,y_0) + \ip{p, x - z_0} + \ip{q, y - y_0} + \frac{c}{2} \norm*{x - z_0}_2^2 + \frac{c'}{2} \norm*{y - y_0}_2^2 \\
\overline{v}(x,y) &= u(z_0,y_0) + \ip{p, x - z_0} + \ip{q, y - y_0} + \frac{C}{2} \norm*{x - z_0}_2^2 + \frac{C'}{2} \norm*{y - y_0}_2^2.
\end{align*}
Note that $\underline{v} \leq u \leq v$ implies $Q_t \underline{v} \leq Q_t u \leq Q_t \overline{v}$ since monotonicity of $Q_t$ is immediate from the definition.  One can compute $Q_t \underline{v}$ and $Q_t \overline{v}$ directly as in Lemma 6.4 (2) and the proof of Lemma 6.6 in \cite{Jekel2018} and obtain
\begin{align*}
Q_t \underline{v}(x,y) &= u(z_0,y_0) - \frac{t}{2} \norm*{p}_2^2 + \ip{p, x - z_0} + \ip{q, y - y_0} + \frac{c}{2(1 + ct)} \norm*{x - tp - z_0}_2^2 + \frac{c'}{2} \norm*{y - y_0}_2^2 \\
&= u(z_0,y_0) + \frac{1}{2t} \norm*{z_0 - x_0}_2^2 + \ip{p, x - x_0} + \ip{q, y - y_0} + \frac{c}{2(1 + ct)} \norm*{x - tp - z_0}_2^2 + \frac{c'}{2} \norm*{y - y_0}_2^2 \\
&= Q_t u(x_0,y_0) + \ip{p, x - x_0} + \ip{q, y - y_0} + \frac{c}{2(1 + ct)} \norm*{x - x_0}_2^2 + \frac{c'}{2} \norm*{y - y_0}_2^2,
\end{align*}
where the last two lines following from substituting $z_0 = x_0 - tp$ and that the infimum defining $Q_t u$ is achieved at $z_0$.  The analogous formula for $Q_t \overline{v}(x,t)$ holds as well.  The functions $Q_t \underline{v}$ and $Q_t \overline{v}$ thus provide second-order Taylor expansions from above and below for the function $Q_t u$ with respect to $(x,y)$ at the point $(x_0,y_0)$.  Looking at the first-order terms in the expansions shows that $Q_t u$ is differentiable at $(x_0,y_0)$ with
\[
D(Q_t u)(x_0,y_0) = (p,q) = Du(z_0,x_0) = Du(x_0 - tp, y_0) = Du(x_0 - t D_x(Q_t u)(x_0,y_0),y_0),
\]
which proves (2).

(3)  We examine the second-order terms of upper and lower Taylor expansions $Q_t \underline{v}$ and $Q_t \overline{v}$ and apply the claim (2) $\implies$ (1) from Lemma \ref{lem:convexgradient}.  This is the same argument as in the proof of \cite[Proposition 2.13 (2)]{Jekel2018}.

(4) Recall that if $A \leq Hu \leq B$, then $A \leq H(P_tu) \leq B$.  Using this fact together with (3) iteratively, we see that if $t$ is a dyadic rational and $t = 2^{-\ell} k$, then
\[
c(1 + ct)^{-1} I_m \oplus c' I_n \leq H(R_{t,\ell} u) \leq C(1 + Ct)^{-1} I_m \oplus C' I_n.
\]
In light of Theorem \ref{thm:Rtsemigroup} (2), this will also hold in the limit as $\ell \to \infty$, since for any two self-adjoint matrices $A$ and $B$, the family of functions with $A \leq Hu \leq B$ is closed under pointwise limits.  Similarly, using Theorem \ref{thm:Rtsemigroup} (3), we extend this to all real $t \geq 0$.
\end{proof}

\begin{remark}
The convexity conditions of Lemma \ref{lem:Qt} (4) can alternatively be deduced from \cite[Theorem 4.3]{BL1976}.  However, it is convenient for us to use Theorem \ref{thm:Rtsemigroup} here because we want the dimension-independent time-continuity estimates Theorem \ref{thm:Rtsemigroup} (3) in the proof of Theorem \ref{thm:transport1} below.
\end{remark}

\subsection{Lipschitz Estimates for Conditional Transport} \label{subsec:Lipschitztransport}

This subsection proves the technical estimate Lemma \ref{lem:transportLipschitzestimate} on the Lipschitz seminorm of $F_{s,t}$.  This depends crucially on the convexity properties of $V_t(x,y)$.

\begin{lemma} \label{lem:twotermgradientestimate}
\begin{align*}
\ip{D_x V_t(x,y) - D_x V_t(x',y'), x - x'}_2 &\leq \frac{C}{1+Ct} \norm*{x - x'}_2^2 + \frac{C-c}{(1+Ct)^{1/2}(1+ct)^{1/2}} \norm*{x - x'}_2 \norm*{y - y'}_2 \\
\ip{D_x V_t(x,y) - D_x V_t(x',y'), x - x'}_2 &\geq \frac{c}{1+ct} \norm*{x - x'}_2^2 - \frac{C-c}{(1+Ct)^{1/2}(1+ct)^{1/2}} \norm*{x - x'}_2 \norm*{y - y'}_2.
\end{align*}
\end{lemma}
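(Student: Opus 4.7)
The plan is to use the full block Hessian bounds on $V_t$ from Lemma \ref{lem:Qt}(4) together with the Cauchy–Schwarz-style estimate of Lemma \ref{lem:convexgradientestimate}, after decomposing the gradient difference into an $x$-move and a $y$-move. Concretely, I write
\[
D_x V_t(x,y) - D_x V_t(x',y') = \bigl[ D_x V_t(x,y) - D_x V_t(x',y) \bigr] + \bigl[ D_x V_t(x',y) - D_x V_t(x',y') \bigr].
\]

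For the first bracket only $x$ changes, so by Theorem \ref{thm:Rtsemigroup}(1) the function $V_t(\cdot, y)$ lies in $\mathcal{E}_m^{(N)}\bigl(c/(1+ct),\, C/(1+Ct)\bigr)$, and Lemma \ref{lem:convexgradient}(3) gives immediately
\[
\tfrac{c}{1+ct}\norm{x-x'}_2^2 \;\leq\; \ip{D_x V_t(x,y) - D_x V_t(x',y),\, x-x'}_2 \;\leq\; \tfrac{C}{1+Ct}\norm{x-x'}_2^2.
\]

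The key step is the second bracket, which carries the mixed information. For this I would introduce the auxiliary function
\[
u(x,y) \;=\; V_t(x,y) \;-\; \tfrac{c}{2(1+ct)} \norm{x}_2^2 \;-\; \tfrac{c}{2} \norm{y}_2^2.
\]
By Lemma \ref{lem:Qt}(4) (applied with $c'=c$, $C'=C$, which is justified since $V \in \mathcal{E}_{m+n}^{(N)}(c,C)$ is a uniform bound in all variables), the full Hessian of $V_t$ satisfies, as a quadratic form on $M_N(\C)_{sa}^{m+n}$,
\[
\tfrac{c}{1+ct} I_m \oplus c I_n \;\leq\; HV_t \;\leq\; \tfrac{C}{1+Ct} I_m \oplus C I_n,
\]
so that $u$ satisfies $0 \leq Hu \leq A$ with
\[
A \;=\; \tfrac{C-c}{(1+Ct)(1+ct)}\, I_m \;\oplus\; (C-c)\, I_n,
\]
where I have used the identity $\tfrac{C}{1+Ct} - \tfrac{c}{1+ct} = \tfrac{C-c}{(1+Ct)(1+ct)}$. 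Now since the two quadratic corrections depend only on $x$ and only on $y$ respectively, the mixed difference is unchanged: $\ip{Du(x',y) - Du(x',y'),\, (x-x', 0)}_2 = \ip{D_x V_t(x',y) - D_x V_t(x',y'),\, x-x'}_2$. Applying Lemma \ref{lem:convexgradientestimate} to $u$ with the displacement vector $(0, y-y')$ and the test vector $(x-x', 0)$ yields
\[
\bigl|\ip{D_x V_t(x',y) - D_x V_t(x',y'),\, x-x'}_2\bigr| \;\leq\; \ip{A(0,y-y'),(0,y-y')}_2^{1/2} \cdot \ip{A(x-x',0),(x-x',0)}_2^{1/2},
\]
and a direct evaluation of the two factors using the block-diagonal form of $A$ gives exactly $\tfrac{C-c}{(1+Ct)^{1/2}(1+ct)^{1/2}} \norm{x-x'}_2 \norm{y-y'}_2$. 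Adding this bound (with its $\pm$ signs) to the first bracket produces the two inequalities of the lemma.

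The main conceptual point, and the step worth emphasizing, is step three: that the cross estimate on the $(x,y)$-mixed second derivative of $V_t$ follows from the \emph{full} Hessian sandwich (not just its diagonal blocks), which in turn is guaranteed by Lemma \ref{lem:Qt}(4), and that plugging it through the Hilbert-space Cauchy–Schwarz estimate of Lemma \ref{lem:convexgradientestimate} is what produces the precise geometric-mean denominator $(1+Ct)^{1/2}(1+ct)^{1/2}$ appearing in the lemma. No further smoothness of $V_t$ is needed, since Lemma \ref{lem:convexgradientestimate} requires only the Hessian bounds as quadratic forms.
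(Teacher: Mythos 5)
Your proposal is correct and follows essentially the same route as the paper: the same decomposition of the gradient difference into an $x$-move and a $y$-move, the first handled via Lemma \ref{lem:convexgradient} and the convexity bounds of Theorem \ref{thm:Rtsemigroup}(1), and the second via exactly the paper's auxiliary function (denoted $\overline{V}_t$ there), the full block Hessian sandwich from Lemma \ref{lem:Qt}(4) with $c'=c$, $C'=C$, and the Cauchy--Schwarz estimate of Lemma \ref{lem:convexgradientestimate} applied to the vectors $(0,y-y')$ and $(x-x',0)$, yielding the same geometric-mean factor $(1+Ct)^{-1/2}(1+ct)^{-1/2}$.
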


\begin{proof}
First, note that
\begin{multline} \label{eq:twoterms}
\ip{D_x V_t(x,y) - D_x V_t(x',y'), x - x'}_2 \\
= \ip{D_x V_t(x,y) - D_x V_t(x',y), x - x'}_2 + \ip{D_x V_t(x',y) - D_x V_t(x',y'), x - x'}_2
\end{multline}
By Lemma \ref{lem:convexgradient}, the first term on the right hand side of \eqref{eq:twoterms} can be estimated by
\[
\frac{c}{1+ct} \norm*{x - x'}_2^2 \leq \ip{D_x V_t(x,y) - D_x V_t(x',y), x - x'}_2 \leq \frac{C}{1+Ct} \norm*{x - x'}_2^2.
\]
To handle the second term on the right hand side of \eqref{eq:twoterms}, define
\begin{align*}
\overline{V}_t(x,y) &= V_t(x,y) - \frac{c}{2(1+ct)} \norm*{x}_2^2 - \frac{c}{2} \norm*{y}_2^2 \\
\underline{V}_t(x,y) &= V_t(x,y) - \frac{C}{2(1+Ct)} \norm*{x}_2^2 - \frac{C}{2} \norm*{y}_2^2
\end{align*}
and recall that $\overline{V}_t$ is convex and $\underline{V}_t$ is concave and in particular
\begin{align*}
0 \leq H\overline{V}_t &\leq \left( \frac{C}{1+Ct} - \frac{c}{1+ct} \right) I_m \oplus (C - c) I_n \\
&= \frac{C-c}{(1+Ct)(1+ct)} I_m \oplus (C - c) I_n.
\end{align*}
Note that
\begin{align*}
D_x V_t(x',y) - D_x V_t(x',y') &= \left(D_x V_t(x',y) - \frac{c}{1+ct} x' \right) - \left( D_x V_t(x',y) - \frac{c}{1+ct} x' \right) \\
&= D_x \overline{V}_t(x',y) - D_x \overline{V}_t(x',y').
\end{align*}
Therefore,
\begin{align*}
\ip{D_x V_t(x',y) - D_x V_t(x',y'), x - x'}_2 &= \ip{D_x \overline{V}_t(x',y) - D_x \overline{V}_t(x',y'), x - x'}_2 \\
&= \ip{D \overline{V}_t(x',y) - D\overline{V}_t(x',y'), (x - x', 0)}_2.
\end{align*}
Now we apply Lemma \ref{lem:convexgradientestimate} to $\overline{V}_t$ with the matrix $A = \frac{C-c}{(1+Ct)(1+ct)} I_m \oplus (C - c) I_n$ and conclude that
\begin{align*}
|\ip{D \overline{V}_t(x',y) - D\overline{V}_t(x',y'), (x - x', 0)}_2| &\leq \left( (C-c) \norm*{y-y'}_2^2 \right)^{1/2} \left( \frac{C - c}{(1+Ct)(1+ct)} \norm*{x - x'}_2^2 \right)^{1/2} \\
&\leq \frac{C-c}{(1+Ct)^{1/2}(1+ct)^{1/2}} \norm*{x - x'} \norm*{y - y'}.
\end{align*}
Combining this estimate for the second term of \eqref{eq:twoterms} with our earlier estimate for the first term completes the proof.
\end{proof}

\begin{lemma} \label{lem:transportLipschitzestimate}
We have
\begin{equation} \label{eq:transportLipdx}
\norm*{F_{s,t}}_{\Lip,dx} \leq \begin{cases}
\frac{(1+Cs)^{1/2}}{(1+Ct)^{1/2}}, & s \geq t \\
\frac{(1+cs)^{1/2}}{(1 + ct)^{1/2}} & s \leq t. \end{cases}
\end{equation}
and
\begin{equation} \label{eq:transportLipdy}
\norm*{F_{s,t}}_{\Lip,dy} \leq \begin{cases}
(C/c - 1)(1 + Cs)^{1/2}\left( \frac{1}{(1+Ct)^{1/2}} - \frac{1}{(1+Cs)^{1/2}} \right), & s \geq t, \\
(C/c - 1)(1 + cs)^{1/2} \left( \frac{1}{(1+Cs)^{1/2}} - \frac{1}{(1+Ct)^{1/2}} \right) & s \leq t.
\end{cases}
\end{equation}
\end{lemma}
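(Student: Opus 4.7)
The plan is to control both Lipschitz seminorms by tracking the evolution in $s$ of the squared $\norm{\cdot}_2$-distance between two trajectories of the flow $\partial_s F_{s,t} = (1/2) D_x V_s(F_{s,t}, y)$, combining the convexity estimates for $V_s$ from Lemma \ref{lem:Qt}(4) with the two-term gradient estimate of Lemma \ref{lem:twotermgradientestimate}.

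For \eqref{eq:transportLipdx}, fix $y$ and $x_1, x_2$ and set $\phi(s) = \norm{F_{s,t}(x_1,y) - F_{s,t}(x_2,y)}_2^2$. Differentiating and using the bound $c(1+cs)^{-1} \leq H_x V_s(\cdot,y) \leq C(1+Cs)^{-1}$ from Lemma \ref{lem:Qt}(4) together with Lemma \ref{lem:convexgradient} yields the two-sided differential inequality
\begin{equation*}
\frac{c}{1+cs}\phi(s) \;\leq\; \phi'(s) \;\leq\; \frac{C}{1+Cs}\phi(s).
\end{equation*}
Whenever $\phi > 0$ this is a linear inequality on $\log\phi$; integrating explicitly gives $\phi(s)/\phi(t) \leq (1+Cs)/(1+Ct)$ for $s \geq t$ (via the upper bound) and $\phi(s)/\phi(t) \leq (1+cs)/(1+ct)$ for $s \leq t$ (via the lower bound). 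Taking square roots yields \eqref{eq:transportLipdx}.

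For \eqref{eq:transportLipdy}, fix $x$ and $y_1, y_2$, write $K = \norm{y_1 - y_2}_2$ and $\psi(s) = \norm{F_{s,t}(x,y_1) - F_{s,t}(x,y_2)}_2$, so that $\psi(t) = 0$. Differentiating $\psi^2$ and applying Lemma \ref{lem:twotermgradientestimate} at the pairs $(F_{s,t}(x,y_i), y_i)$ gives
\begin{equation*}
(\psi^2)'(s) \;\leq\; \frac{C}{1+Cs}\psi(s)^2 + \frac{C-c}{\sqrt{(1+Cs)(1+cs)}}\,\psi(s)\,K
\end{equation*}
for $s \geq t$, together with the analogous lower bound (with the cross term subtracted) for $s \leq t$. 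To circumvent the degeneracy at $\psi(t) = 0$, I replace $\psi^2$ by $\psi^2 + \varepsilon$, divide by $2\sqrt{\psi^2+\varepsilon}$, apply the integrating factor $(1+Cs)^{-1/2}$ which kills the leading linear term, integrate from $t$ to $s$, and let $\varepsilon \to 0^+$; this produces
\begin{equation*}
\frac{\psi(s)}{(1+Cs)^{1/2}} \;\leq\; \frac{(C-c)K}{2}\int_t^s \frac{dr}{(1+Cr)(1+cr)^{1/2}}.
\end{equation*}
The elementary comparison $(1+Cr)/(1+cr) \leq C/c$ bounds the integral by $\frac{C}{c}\int_t^s (1+Cr)^{-3/2}\,dr = \frac{2}{c}\bigl((1+Ct)^{-1/2} - (1+Cs)^{-1/2}\bigr)$, delivering exactly the factor $(C/c - 1)$ in the stated bound. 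The case $s \leq t$ runs symmetrically, using the integrating factor $(1+cs)^{-1/2}$ on the lower-bound differential inequality.

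The principal subtlety will be the degeneracy of the $\psi$-equation at $\psi(t) = 0$, where the natural divide-by-$\psi$ step is illegal; the $\varepsilon$-regularization above (or equivalently, direct analysis of $\phi = \psi^2$ using that it attains its minimum at $s = t$) is the standard remedy. Beyond that the argument is elementary, and the factor $C/c - 1$ enters the statement solely through the integral comparison just noted.
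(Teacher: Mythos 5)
Your proposal is correct and follows essentially the same route as the paper's proof: differentiate the squared distance between two flow trajectories, apply the convexity bounds of Lemma \ref{lem:Qt}(4) (via Lemma \ref{lem:twotermgradientestimate}) to get the differential inequalities, use the integrating factors $(1+Cs)^{-1/2}$, $(1+cs)^{-1/2}$, and the comparison $(1+cr)^{1/2} \geq (c/C)(1+Cr)^{1/2}$ to produce the factor $C/c-1$. The only cosmetic differences are that you treat the $x$- and $y$-variations separately while the paper runs a single combined estimate for general pairs $(x,y)$, $(x',y')$, and you handle the degeneracy at $\psi(t)=0$ by $\varepsilon$-regularization where the paper notes that zeros of $\phi$ are critical points; both are equally valid.
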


\begin{proof}
Fix $t \geq 0$ and $(x,y)$ and $(x',y')$ in $M_N(\C)_{sa}^m \times M_N(\C)_{sa}^n$ and define
\[
\phi(s) = \norm*{F_{s,t}(x,y) - F_{s,t}(x',y')}_2.
\]
Note that $\phi$ is locally Lipschitz, hence absolutely continuous.  Also,
\begin{align*}
2 \phi(s) \phi'(s) &= \partial_s [\phi(s)^2] \\
&= 2 \ip{\partial_s F_{s,t}(x,y) - \partial_s F_{s,t}(x',y'), F_{s,t}(x,y) - F_{s,t}(x',y')}_2 \\
&= \ip{DV_s(F_{s,t}(x,y),y) - DV_s(F_{s,t}(x',y'),y'), F_{s,t}(x,y) - F_{s,t}(x',y')}_2 \\
&\leq \frac{C}{1+Ct} \norm*{F_{s,t}(x,y) - F_{s,t}(x',y')}_2^2 \\
& \quad + \frac{C-c}{(1+Ct)^{1/2}(1+ct)^{1/2}} \norm*{F_{s,t}(x,y) - F_{s,t}(x',y')}_2 \norm*{y - y'}_2 \\
&= \frac{C}{1+Cs} \phi(s)^2 + \frac{C-c}{(1+Cs)^{1/2}(1+cs)^{1/2}} \phi(s) \norm*{y - y'}_2,
\end{align*}
where we have applied Lemma \ref{lem:twotermgradientestimate}.  It follows that whenever $\phi(s) > 0$,
\[
\phi'(s) \leq \frac{C}{2(1+Cs)} \phi(s) + \frac{C-c}{2(1+Cs)^{1/2}(1+cs)^{1/2}} \norm*{y - y'}_2.
\]
On the other hand, since $\phi(s) \geq 0$, any point where $\phi$ is zero and $\phi$ is differentiable must be a critical point, so when $\phi(s) = 0$ the estimate is vacuously true.  This inequality implies
\begin{align*}
\frac{d}{ds} \left[ \frac{1}{(1 + Cs)^{1/2}} \phi(s) \right] &\leq \frac{C-c}{2(1+Cs)(1+cs)^{1/2}} \norm*{y - y'}_2 \\
&\leq \frac{C(C-c)}{2c(1+Cs)^{3/2}} \norm*{y - y'}_2,
\end{align*}
where in the last line we have observed that $(1 + cs)^{1/2} \geq (c/C)^{1/2} (1 + Cs)^{1/2} \geq (c/C)(1 + Cs)^{1/2}$.  Hence for $s \geq t$
\[
\frac{1}{(1+Cs)^{1/2}} \phi(s) - \frac{1}{(1+Ct)^{1/2}} \phi(t) \leq \frac{C-c}{c} \left( \frac{1}{(1+Ct)^{1/2}} - \frac{1}{(1+Cs)^{1/2}} \right) \norm*{y - y'}_2
\]
Now we substitute $\phi(s) = \norm*{F_{s,t}(x,y) - F_{s,t}(x',y')}_2$ and $\phi(t) = \norm*{x - x'}_2$ and rearrange to obtain
\[
\frac{1}{(1+Cs)^{1/2}} \norm*{F_{s,t}(x,y) - F_{s,t}(x',y')}_2 \leq \frac{1}{(1+Ct)^{1/2}} \norm*{x - x'}_2 + \frac{C-c}{c} \left( \frac{1}{(1+Ct)^{1/2}} - \frac{1}{(1+Cs)^{1/2}} \right) \norm*{y - y'}_2.
\]
This proves the asserted estimates in the case where $s \geq t$.  The argument for the case $s \leq t$ is similar.  Here we use the lower bound rather than the upper bound in Lemma \ref{lem:twotermgradientestimate} and get
\[
\phi'(s) \geq \frac{c}{2(1+cs)} \phi(s) - \frac{C-c}{2(1+Cs)^{1/2}(1+cs)^{1/2}} \norm*{y - y'}_2
\]
so that
\begin{align*}
\frac{d}{ds} \left[ \frac{1}{(1+cs)^{1/2}} \phi(s) \right] &\geq -\frac{C-c}{2(1+Cs)^{1/2}(1+cs)} \norm*{y - y'}_2 \\
&\geq -\frac{C(C-c)}{2c(1+Cs)^{3/2}} \norm*{y - y'}_2.
\end{align*}
Now we take $s \leq t$ and obtain
\begin{multline*}
\frac{1}{(1+ct)^{1/2}} \norm*{x - x'}_2 - \frac{1}{(1+cs)^{1/2}} \norm*{F_{s,t}(x,y) - F_{s,t}(x',y')}_2 \\
\geq - \frac{C-c}{c} \left( \frac{1}{(1+Cs)^{1/2}} - \frac{1}{(1+Ct)^{1/2}} \right) \norm*{y - y'}_2,
\end{multline*}
which yields the desired estimates.
\end{proof}

\subsection{Transport in the Large $t$ Limit} \label{subsec:largeTtransport}

We remind the reader here that we are still working in the finite-dimensional setting for a fixed value of $N$ which is suppressed in the notation.  To understand the large $t$ limit of our transport maps, consider the renormalized law
\begin{equation} \label{eq:renormalizedlaw}
\tilde{\mu}_t := \text{law of } (\tilde{X}_t, Y) := \left( e^{-t/2} X + (1 - e^{-t})^{1/2} S, Y \right).
\end{equation}
A brief computation shows that the corresponding potential is
\begin{equation} \label{eq:renormalizedpotential}
\tilde{V}_t(x,y) := V_{e^t - 1}(e^{t/2} x, y),
\end{equation}
(here the potential is only well-defined up to an additive constant because the probability measure $\tilde{\mu}_t$ includes a normalizing constant $1/\tilde{Z}_t$ anyway, so we made a convenient choice of the additive constant).  This potential satisfies the equation
\[
\partial_t \tilde{V}_t = \frac{1}{2N} \Delta_x \tilde{V}_t - \frac{1}{2} \norm*{D_x \tilde{V}_t}_2^2 + \frac{1}{2} \ip{D_x \tilde{V}_t, x}_2.
\]
We remark that if $\tilde{\rho}_t = (1/Z_t) e^{-N^2 \tilde{V}_t}$ is the density at time $t$ and $r(x,y) = \text{const} e^{-\norm{(x,y)}_2^2/2}$ is the Gaussian density, then
\begin{align*}
\partial_t \tilde{\rho}_t &= \frac{1}{2N} \Delta_x \tilde{\rho}_t + \frac{1}{2} \ip{\nabla_x \tilde{\rho}_t, x}_{\Tr} + \frac{Nm}{2} \tilde{\rho}_t \\
&= \frac{1}{2N} \Div_x \left( \tilde{\rho}_t \nabla_x \left( \log \frac{\tilde{\rho}_t}{r} \right) \right).
\end{align*}
In other words, $\tilde{\rho}_t$ evolves according to the diffusion semigroup with respect to Gaussian measure (compare equation (33) of \cite{OV2000}), while the heat equation represents diffusion with respect to Lebesgue measure.

The transport functions are renormalized as follows.  Because $(F_{s,t}(x,y),y)$ pushes forward $\mu_t$ to $\mu_s$, we may compute that $(\tilde{F}_{s,t}(x,y),y)$ pushes forward $\tilde{\mu}_t = \tilde{\mu}_s$, where
\begin{equation} \label{eq:renormalizedtransport}
\tilde{F}_{s,t}(x,y) := e^{-s/2} F_{e^s-1,e^t-1}(e^{t/2}x,y).
\end{equation}
Moreover, from the differential equation \eqref{eq:transportODE}, we deduce that
\begin{equation} \label{eq:renormalizedtransportODE}
\partial_s \tilde{F}_{s,t}(x,y) = \frac{1}{2} \left( D_x \tilde{V}_s(\tilde{F}_{s,t}(x,y),y) - \tilde{F}_{s,t}(x,y) \right).
\end{equation}

As $t \to \infty$, the law $\tilde{\mu}_t$ converges to the law of $(S,Y)$, which we denote $\tilde{\mu}_\infty$.  Thus, if we show that $\tilde{F}_{s,t}$ has a limit as $s \to +\infty$ or $t \to +\infty$, we will be able to transport our given law $\mu = \tilde{\mu}_0$ of $(X,Y)$ to the law of $(S,Y)$.  As the first step, we deduce from Lemma \ref{lem:transportLipschitzestimate} the following Lipschitz estimates on $\tilde{F}_{s,t}$ which are uniform in $s$ and $t$.  Note also that the coefficient of $\norm*{y - y'}_2$ goes to zero as $s, t \to \infty$.

\begin{lemma} \label{lem:transportLipschitzestimate2}
We have
\begin{equation} \label{eq:transportLipdx2}
\norm*{\tilde{F}_{s,t}}_{\Lip,dx} \leq \max(C,1/c)^{1/2}
\end{equation}
and
\begin{equation} \label{eq:transportLipdy2}
\norm*{\tilde{F}_{s,t}}_{\Lip,dy} \leq (C/c - 1) \max(C,1/C)^{3/2} |e^{-s/2} - e^{-t/2}|.
\end{equation}
In particular,
\begin{equation} \label{eq:transportLip2}
\norm*{\tilde{F}_{s,t}}_{\Lip} \leq \max(C,1/c)^{7/2}.
\end{equation}
\end{lemma}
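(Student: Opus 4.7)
The plan is to derive both bounds directly from Lemma \ref{lem:transportLipschitzestimate} via the defining rescaling \eqref{eq:renormalizedtransport}: $\tilde F_{s,t}(x,y) = e^{-s/2} F_{e^s-1, e^t-1}(e^{t/2}x, y)$. The chain rule gives $\norm{\tilde F_{s,t}}_{\Lip,dx} = e^{(t-s)/2}\norm{F_{e^s-1,e^t-1}}_{\Lip,dx}$ and $\norm{\tilde F_{s,t}}_{\Lip,dy} = e^{-s/2}\norm{F_{e^s-1,e^t-1}}_{\Lip,dy}$. The key simplifying identity is $1 + \kappa(e^\sigma - 1) = e^\sigma g_\kappa(\sigma)$ for $\kappa \in \{c, C\}$, where $g_\kappa(\sigma) := \kappa + (1-\kappa)e^{-\sigma}$. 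Since $g_\kappa$ is monotone (increasing for $\kappa > 1$, decreasing for $\kappa < 1$) and takes values in $[\min(1,\kappa),\max(1,\kappa)]$, this function controls all the ratios that will arise.

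For \eqref{eq:transportLipdx2}, substituting Lemma \ref{lem:transportLipschitzestimate} produces exactly $\sqrt{g_\kappa(s)/g_\kappa(t)}$, with $\kappa = C$ when $s \geq t$ and $\kappa = c$ when $s \leq t$. In the case $s \geq t$: if $C \geq 1$ then $g_C$ is increasing so the ratio is at most $C$, while if $C \leq 1$ then $g_C$ is decreasing so the ratio is at most $1$; symmetrically, for $s \leq t$ the ratio $g_c(s)/g_c(t)$ is at most $\max(1,1/c)$. Taking maxima gives $\sqrt{\max(C,1/c)}$. For \eqref{eq:transportLipdy2}, using $(1+C(e^\sigma-1))^{-1/2} = e^{-\sigma/2}/\sqrt{g_C(\sigma)}$, the $y$-bound from Lemma \ref{lem:transportLipschitzestimate} collapses (when $s\geq t$) to $(C/c-1)\bigl[e^{-t/2}\sqrt{g_C(s)/g_C(t)} - e^{-s/2}\bigr]$, and (when $s\leq t$) to $(C/c-1)\sqrt{g_c(s)}\bigl[e^{-s/2}/\sqrt{g_C(s)} - e^{-t/2}/\sqrt{g_C(t)}\bigr]$. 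In each case I will factor out $|e^{-s/2} - e^{-t/2}|$ by writing
\[
\sqrt{g_C(s)/g_C(t)} - 1 = \frac{g_C(s) - g_C(t)}{g_C(t)\bigl(\sqrt{g_C(s)/g_C(t)} + 1\bigr)}
\]
and using $g_C(s) - g_C(t) = (C-1)(e^{-t/2} - e^{-s/2})(e^{-t/2} + e^{-s/2})$, together with the uniform bounds $g_C, g_c \in [\min(1,\cdot), \max(1,\cdot)]$. A sign check (when $C \leq 1$ the correction term has the ``good'' sign so only the leading $1/\sqrt{g_C}$ contributes; when $C \geq 1$ one gets a polynomial in $C$ of degree at most $3/2$) shows the residual coefficient is controlled by $\max(C, 1/C)^{3/2}$.

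Finally, \eqref{eq:transportLip2} follows from $\norm{\tilde F_{s,t}}_{\Lip} \leq \bigl(\norm{\tilde F_{s,t}}_{\Lip,dx}^2 + \norm{\tilde F_{s,t}}_{\Lip,dy}^2\bigr)^{1/2}$, bounding $|e^{-s/2}-e^{-t/2}| \leq 1$, observing $\max(C,1/C) \leq \max(C,1/c)$ (since $c \leq C$) and $C/c - 1 \leq \max(C,1/c)^2$, and using $\max(C,1/c) \geq 1$ to absorb universal constants into the exponent $7/2$. The main obstacle is the bookkeeping for \eqref{eq:transportLipdy2}: the two cases $s \gtrless t$ each split further according to $C \gtrless 1$, and the delicate step is isolating $|e^{-s/2}-e^{-t/2}|$ with the correct power of $\max(C,1/C)$. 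None of the individual computations is deep; the work is entirely in organizing the case analysis so that the exponent $3/2$ emerges cleanly.
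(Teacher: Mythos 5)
Your handling of \eqref{eq:transportLipdx2} is essentially the paper's: substitute the rescaling \eqref{eq:renormalizedtransport} into Lemma \ref{lem:transportLipschitzestimate} and note that the resulting ratio of values of $g_\kappa(\sigma)=\kappa+(1-\kappa)e^{-\sigma}$ lies in $[\min(1,\kappa),\max(1,\kappa)]$. For \eqref{eq:transportLipdy2} you genuinely diverge: the paper writes the difference $(1+Ct')^{-1/2}-(1+Cs')^{-1/2}$ as $\int \frac{C}{2(1+Cu)^{3/2}}\,du$, substitutes $u=e^w-1$, and bounds the integrand pointwise by $\frac{C}{\min(1,C)^{3/2}}e^{-w/2}$, which produces the factor $|e^{-s/2}-e^{-t/2}|$ in one stroke and treats both orderings of $s,t$ and both signs of $C-1$ uniformly; you instead rationalize the difference of square roots and use $g_C(s)-g_C(t)=(C-1)(e^{-t}-e^{-s})$ to extract $|e^{-s/2}-e^{-t/2}|$ algebraically, paying with a four-way case analysis. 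I checked that your route closes: in the worst case ($C\geq 1$, $s\leq t$) the residual coefficient is $\sqrt{g_c(s)}\bigl[g_C(s)^{-1/2}+(C-1)e^{-t/2}(e^{-s/2}+e^{-t/2})\big/\bigl(\sqrt{g_C(s)g_C(t)}\,(\sqrt{g_C(s)}+\sqrt{g_C(t)})\bigr)\bigr]$, and the uniform bounds $g_c\leq C$, $g_C\geq 1$, $e^{-t/2}(e^{-s/2}+e^{-t/2})\leq 2$ give at most $\sqrt{C}\,[1+(C-1)]=C^{3/2}$; the remaining cases are easier (the correction has the favorable sign when $C\leq 1$, and when $s\geq t$ there is no $g_c$ factor), so the exponent $3/2$ does come out as you claim. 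The one step that does not survive as written is the endgame for \eqref{eq:transportLip2}: with $C/c-1\leq M^2$ (where $M=\max(C,1/c)$) and the quadrature bound you only reach $\sqrt{M+M^7}\leq \sqrt{2}\,M^{7/2}$, and since the target is the exact value $M^{7/2}$, which equals $1$ when $c=C=1$, there is no room to ``absorb universal constants into the exponent.'' The fix is immediate: use $C/c-1\leq M^2-1$ instead, so that $\norm*{\tilde{F}_{s,t}}_{\Lip}\leq \norm*{\tilde{F}_{s,t}}_{\Lip,dx}+\norm*{\tilde{F}_{s,t}}_{\Lip,dy}\leq M^{3/2}+(M^2-1)M^{3/2}=M^{7/2}$, which is exactly the paper's final computation (your $(L_x^2+L_y^2)^{1/2}$ variant also closes with the same substitution).
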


\begin{proof}
For the first estimate, for the case where $s \geq t$, direct substitution of \eqref{eq:renormalizedtransport} into \eqref{eq:transportLipdx} of Lemma \ref{lem:transportLipschitzestimate} shows that
\[
\norm*{\tilde{F}_{s,t}}_{\Lip,dx} \leq e^{-s/2} \frac{(1 + C(e^s - 1))^{1/2}}{(1 + C(e^t - 1))^{1/2}} e^{t/2} = \frac{(C + (1 - C)e^{-s})^{1/2}}{(C + (1 - C)e^{-t})^{1/2}}.
\]
The function $C + (1 - C) e^{-s}$ is clearly monotone on $[0,+\infty)$ and achieves the values $1$ and $C$ at $0$ and $+\infty$ respectively, and hence is between $\min(1,C)$ and $\max(1,C)$.  Hence,
\[
\norm*{\tilde{F}_{s,t}}_{\Lip,dx} \leq \frac{\max(1,C)^{1/2}}{\min(1,C)^{1/2}} = \max(C,1/C)^{1/2} \leq \max(C,1/c)^{1/2}.
\]
The case where $s \leq t$ follows by the same argument, where the bound this time is $\max(c,1/c)^{1/2} \leq \max(C,1/c)^{1/2}$.

For the second estimate, we apply \eqref{eq:transportLipdy}.  Note in \eqref{eq:transportLipdy}, in the case $s \leq t$, we may use $(1 + cs)^{1/2} \leq (1 + Cs)^{1/2}$ and thus in both cases $s \geq t$ or $s \leq t$,
\begin{align*}
\norm{F_{s,t}}_{\Lip,dy} &\leq (C/c - 1) (1 + Cs)^{1/2} \left| \frac{1}{(1 + Cs)^{1/2}} - \frac{1}{(1 + Ct)^{1/2}} \right| \\
&= (C/c - 1)(1 + Cs)^{1/2} \left| \int_s^t  \frac{C}{2(1 + Cu)^{3/2}}\,du \right|
\end{align*}
This implies that
\begin{align*}
\norm*{\tilde{F}_{s,t}}_{\Lip,dy} &\leq e^{-s/2} (C/c - 1)(1 + C(e^s - 1))^{1/2} \left| \int_{e^s-1}^{e^t-1}  \frac{C}{2(1 + Cu)^{3/2}}\,du \right| \\
&= (C/c - 1)e^{-s/2}(1 + C(e^s - 1))^{1/2} \left| \int_s^t  \frac{Ce^w}{2(1 + C(e^w - 1))^{3/2}}\,dw \right| \\
&\leq (C/c - 1) \max(1,C)^{1/2} \left| \int_s^t  \frac{Ce^w}{2 \min(1,C)^{3/2} e^{3w/2}}\,dw \right| \\
&\leq (C/c - 1) \frac{\max(1,C)^{1/2} C}{\min(1,C)^{3/2}} |e^{-t/2} - e^{-s/2}| \\
&\leq (C/c - 1) \max(C,1/C)^{3/2} |e^{-t/2} - e^{-s/2}|.
\end{align*}
where we have again applied $\min(1,C) e^s \leq 1 + C(e^s - 1) \leq \max(1,C) e^s$.

For the last estimate \eqref{eq:transportLip2}, observe that
\begin{align*}
\norm*{\tilde{F}_{s,t}}_{\Lip} &\leq \norm*{\tilde{F}_{s,t}}_{\Lip,dx} + \norm*{\tilde{F}_{s,t}}_{\Lip,dy} \\
&\leq \max(C,1/c)^{1/2} + (C/c - 1) \max(C,1/C)^{3/2} |e^{-s/2} - e^{-t/2}| \\
&\leq \max(C,1/c)^{3/2} + (\max(C,1/c)^2 - 1) \max(C,1/c)^{3/2} \\
&= \max(C,1/c)^{7/2}. \qedhere
\end{align*}
\end{proof}

\begin{lemma} \label{lem:transportLipschitzestimate3}
Let $\pi_1$ denote the function $\pi_1(x,y) = x$.  Then
\begin{equation} \label{eq:transportLipdx3}
\norm*{\tilde{F}_{s,t} - \pi_1}_{\Lip,dx} \leq  \frac{1}{2} (\max(C,1/c) - 1) \max(C,1/C)^{1/2} |e^{-s} - e^{-t}|
\end{equation}
and
\begin{equation} \label{eq:transportLip3}
\norm*{\tilde{F}_{s,t} - \pi_1}_{\Lip} \leq (\max(C,1/c)^3 - 1) \max(C,1/c)^{1/2} |e^{-s/2} - e^{-t/2}|.
\end{equation}
\end{lemma}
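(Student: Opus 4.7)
The plan is to exploit the fact that as $s \to \infty$, the renormalized potential $\tilde{V}_s$ converges to the standard Gaussian potential $\tfrac{1}{2}\|x\|_2^2$, so $D_x \tilde{V}_s(x,y) - x$ becomes small at an exponential rate in $s$. Feeding this decay into the renormalized transport ODE~\eqref{eq:renormalizedtransportODE} will force $\tilde{F}_{s,t}$ to stay close to the projection $\pi_1$, and integrating in $s$ from the initial condition $\tilde{F}_{t,t} = \pi_1$ will give the stated bounds.

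The first step is to compute convexity bounds for $\tilde{V}_s$. Using $\tilde{V}_s(x,y) = V_{e^s-1}(e^{s/2}x,y)$ and Theorem~\ref{thm:Rtsemigroup}(1), which gives $c/(1+c\tau) \le H_{xx}V_\tau \le C/(1+C\tau)$, one obtains
\[
\tilde{c}_s I \le H_{xx}\tilde{V}_s \le \tilde{C}_s I, \qquad \tilde{c}_s := \frac{ce^s}{1+c(e^s-1)}, \quad \tilde{C}_s := \frac{Ce^s}{1+C(e^s-1)}.
\]
Both $\tilde{c}_s$ and $\tilde{C}_s$ tend to $1$ as $s\to\infty$, and a short case check on the sign of $C-1$ (respectively $c-1$), using $1+C(e^s-1) \ge e^s$ when $C\ge 1$ and $\ge Ce^s$ when $C\le 1$, yields
\[
\max\bigl(|\tilde{c}_s-1|,\,|\tilde{C}_s-1|\bigr) \le \bigl(\max(C,1/c) - 1\bigr)e^{-s}.
\]
In particular, for each fixed $y$, the map $x\mapsto D_x\tilde{V}_s(x,y) - x$ is Lipschitz in $\|\cdot\|_2$ with constant $(\max(C,1/c)-1)e^{-s}$.

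For the $dx$-estimate, fix $x,x'$ and a common $y$, set $u(s)=\tilde{F}_{s,t}(x,y)$, $u'(s)=\tilde{F}_{s,t}(x',y)$, and let $\phi(s) := u(s) - u'(s) - (x-x')$. From \eqref{eq:renormalizedtransportODE},
\[
\phi'(s) = \tfrac{1}{2}\bigl[(D_x\tilde{V}_s(u,y) - u) - (D_x\tilde{V}_s(u',y) - u')\bigr],
\]
and combining the Lipschitz estimate on $D_x\tilde{V}_s(\cdot,y) - \mathrm{id}$ with the bound $\|u-u'\|_2 \le \max(C,1/c)^{1/2}\|x-x'\|_2$ from Lemma~\ref{lem:transportLipschitzestimate2} gives
\[
\|\phi'(s)\|_2 \le \tfrac{1}{2}(\max(C,1/c)-1)\,\max(C,1/c)^{1/2}\,e^{-s}\|x-x'\|_2.
\]
Since $\phi(t)=0$, integrating between $s$ and $t$ produces the first claim (up to the difference between $\max(C,1/C)$ and $\max(C,1/c)$ in the stated constant, which I read as a minor typo since the simpler argument only controls things by $\max(C,1/c)^{1/2}$).

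For the full Lipschitz bound, I would add the $dx$ and $dy$ contributions. Because $\pi_1$ is independent of $y$, $\|\tilde{F}_{s,t}-\pi_1\|_{\Lip,dy} = \|\tilde{F}_{s,t}\|_{\Lip,dy}$, which Lemma~\ref{lem:transportLipschitzestimate2} bounds by $(C/c-1)\max(C,1/C)^{3/2}|e^{-s/2}-e^{-t/2}|$. Using the elementary inequality $|e^{-s}-e^{-t}| \le 2|e^{-s/2}-e^{-t/2}|$ in the $dx$-estimate, together with $C/c \le \max(C,1/c)^2$ and $\max(C,1/C) \le \max(C,1/c)$, I would set $M := \max(C,1/c)$ and invoke the algebraic identity
\[
(M-1)M^{1/2} + (M^2-1)M^{3/2} = M^{1/2}(M-1)(M^2+M+1) = (M^3-1)M^{1/2}
\]
to repackage the two contributions into the stated form. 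The main obstacle is not analytic but bookkeeping: extracting the sharp $e^{-s}$ decay from the convexity window $[\tilde{c}_s,\tilde{C}_s]$ and then matching the constants to the clean factorization $(M^3-1)M^{1/2}$; the underlying argument is a direct Gronwall-type estimate on the renormalized transport flow.
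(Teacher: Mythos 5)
Your proposal is correct and follows essentially the same route as the paper: you bound the Hessian of $U_s = \tilde{V}_s - \tfrac{1}{2}\norm*{x}_2^2$ in absolute value by $(\max(C,1/c)-1)e^{-s}$, integrate the difference of the flows using the $dx$-Lipschitz bound from Lemma \ref{lem:transportLipschitzestimate2} and the initial condition $\tilde{F}_{t,t}=\pi_1$, and then combine with the $dy$-estimate via $|e^{-s}-e^{-t}| \leq 2|e^{-s/2}-e^{-t/2}|$ and the identity $(M-1)M^{1/2}+(M^2-1)M^{3/2}=(M^3-1)M^{1/2}$, exactly as in the paper's proof. Your reading of the $\max(C,1/C)^{1/2}$ factor in \eqref{eq:transportLipdx3} as a slip is consistent with the paper, whose own argument likewise only produces $\max(C,1/c)^{1/2}$ at that step, which is all that is needed for \eqref{eq:transportLip3}.
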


\begin{proof}
Let $U_s(x,y) = \tilde{V}_s(x,y) - (1/2) \norm*{x}_2^2$.  Then \eqref{eq:renormalizedtransportODE} says that
\[
\partial_s \tilde{F}_{s,t}(x,y) = \frac{1}{2} D_x U_s(\tilde{F}_{s,t}(x,y),y).
\]
Moreover, we have
\[
\frac{ce^s}{1 + c(e^s - 1)} \leq H_x \tilde{V}_s \leq \frac{Ce^s}{1 + C(e^s - 1)}.
\]
We can bound $H_x U_s$ above and below by subtracting $1$ from both sides, which after some computation reduces to
\[
\frac{c - 1}{1 + c(e^s - 1)} \leq H_x U_s \leq \frac{C - 1}{1 + C(e^s - 1)}.
\]
Therefore, we have $-L \leq H_x U_s \leq L$ where
\[
L = \max \left( -\frac{c - 1}{1 + c(e^s - 1)}, \frac{C - 1}{1 + C(e^s - 1)} \right).
\]
We claim that $L \leq L' := (\max(C,1/c) - 1) e^{-s}$.  If the first term $(1 - c) / (1 + c(e^s - 1))$ is negative, then it is $\leq L'$ automatically, but if it is positive, then $c \leq 1$ and hence
\[
\frac{1 - c}{1 + c(e^s - 1)} \leq \frac{1 - c}{c + c(e^s - 1)} = (1/c - 1)e^{-s} \leq (\max(C,1/c) - 1)e^{-s}.
\]
Similarly, if $(C - 1) / (1 + C(e^s - 1))$ is negative, there is nothing to prove, but otherwise $C \geq 1$, and hence
\[
\frac{C - 1}{1 + C(e^s - 1)} \leq \frac{C - 1}{1 + (e^s - 1)} = (C - 1) e^{-s} \leq (\max(C,1/c) - 1) e^{-s}.
\]
But $-L' \leq H_x U_s \leq L'$ implies that $D_x U_s$ is $L'$-Lipschitz in $x$.  Therefore,
\begin{align*}
\norm*{\partial_s \tilde{F}_{s,t}(x,y) - \partial_s \tilde{F}_{s,t}(x',y)}_2 &= \frac{1}{2} \norm*{DU_s(\tilde{F}_{s,t}(x,y),y) - DU_s(\tilde{F}_{s,t}(x',y))}_2 \\
&\leq \frac{1}{2}(\max(C,1/c) - 1) e^{-s} \norm*{\tilde{F}_{s,t}(x,y) - \tilde{F}_{s,t}(x',y)}_2.
\end{align*}
Applying \eqref{eq:transportLipdx2} in the case where $s \geq t$, we get
\[
\norm*{\partial_s \tilde{F}_{s,t}(x,y) - \partial_s \tilde{F}_{s,t}(x',y)}_2 \leq \frac{1}{2} (\max(C,1/c) - 1) \max(C,1/c)^{1/2} e^{-s} \norm*{x - x'}_2.
\]
Hence,
\begin{align*}
\norm*{\tilde{F}_{s,t}(x,y) - \tilde{F}_{s,t}(x',y) - (x - x')}_2 &\leq \left| \int_t^s \norm*{\partial_u \tilde{F}_{u,t}(x,y) - \partial_u \tilde{F}_{u,t}(x',y)}_2\,du \right| \\
&\leq \frac{1}{2} (\max(C,1/c) - 1) \max(C,1/c)^{1/2} |e^{-s} - e^{-t}| \norm*{x - x'}_2.
\end{align*}
which proves the desired estimate \eqref{eq:transportLipdx3}.

To check the second estimate \eqref{eq:transportLip3}, first observe
\[
\frac{1}{2} |e^{-s} - e^{-t}| = \int_{\min(s,t)}^{\max(s,t)} \frac{1}{2} e^{-u}\,du \leq \int_{\min(s,t)}^{\max(s,t)} \frac{1}{2} e^{-u/2}\,du = |e^{-s/2} - e^{-t/2}|,
\]
Moreover, $\norm{\tilde{F}_{s,t} - \pi_1}_{\Lip,dy} = \norm{\tilde{F}_{s,t}}_{\Lip,dy}$.  Therefore, using \eqref{eq:transportLipdy2} and \eqref{eq:transportLipdx3},
\begin{align*}
\norm{\tilde{F}_{s,t} - \pi_1}_{\Lip} &\leq \norm{\tilde{F}_{s,t} - \pi_1}_{\Lip,dx} + \norm{\tilde{F}_{s,t} - \pi_1}_{\Lip,dy} \\
&\leq (\max(C,1/c) - 1) \max(C,1/C)^{1/2} \frac{1}{2} |e^{-s} - e^{-t}| +  (C/c - 1) \max(C,1/C)^{3/2} |e^{-s/2} - e^{-t/2}| \\
&\leq [(\max(C,1/c) - 1) \max(C,1/c)^{1/2} + (\max(C,1/c)^2 - 1) \max(C,1/c)^{3/2}] |e^{-s/2} - e^{-t/2}| \\
&= (\max(C,1/c)^3 - 1) \max(C,1/c)^{1/2} |e^{-s/2} - e^{-t/2}|. \qedhere.
\end{align*}
\end{proof}

\begin{proposition} \label{prop:largeTtransport}
The limits $\tilde{F}_{s,\infty} := \lim_{t \to \infty} \tilde{F}_{s,t}$ and $\tilde{F}_{\infty,t} = \lim_{s \to \infty} \tilde{F}_{s,t}$ exist for $s, t \geq 0$.  More precisely, let $(X,Y)$ and $(\tilde{X}_t,Y)$ be a pair of random variables with the laws $\mu$ and $\tilde{\mu}_t$ as above.  Then
\begin{multline} \label{eq:transportconvergenceestimate}
\norm*{\tilde{F}_{s,\infty}(x,y) - \tilde{F}_{s,t}(x,y)}_2 \leq \max(C,1/c)^{1/2} e^{-t/2} \norm*{E(X)}_2 \\ + e^{-t/2}(\max(C,1/c)^3 - 1) \max(C,1/c) \left( \norm{(x, y - E(Y))}_2 + (m + \Var(Y))^{1/2} \right)
\end{multline}
and
\begin{multline}
\norm*{\tilde{F}_{s,t}(x,y) - \tilde{F}_{\infty,t}(x,y)}_2 \leq \frac{1}{2} (\max(C,1/c) - 1) \max(C,1/c)^{1/2} e^{-s} \biggl( e^{-s/2} \norm{E(X)}_2 \\ + \max(C,1/c)^{7/2} \left( \norm{(x - e^{-t/2}E(X), y - E(Y))}_2 + (e^{-t} \Var(X) + (1 - e^{-t})m + \Var(Y))^{1/2} \right) \biggr)
\end{multline}
The estimates of Lemmas \ref{lem:transportLipschitzestimate2} and \ref{lem:transportLipschitzestimate3} extend to the cases where $s$ or $t$ is infinite, where we define $\tilde{F}_{\infty,\infty}(x,y) = x$.  Moreover, if $(\tilde{X}_t,Y) \sim \tilde{\mu}_t$, then we have the relation $(\tilde{F}_{s,t}(\tilde{X}_t,Y),Y) \sim (\tilde{X}_s,Y)$ when $s, t \in [0,\infty]$.
\end{proposition}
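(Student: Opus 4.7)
The plan is to exploit the semigroup identity $\tilde F_{s_1,t_1}(x,y) = \tilde F_{s_1,s_2}(\tilde F_{s_2,t_1}(x,y), y)$ (inherited from Proposition \ref{prop:basictransport1}(2) via \eqref{eq:renormalizedtransport}) to rewrite differences of transports as evaluations of $\tilde F_{\sigma,\tau} - \pi_1$, which is precisely the object controlled by Lemmas \ref{lem:transportLipschitzestimate2} and \ref{lem:transportLipschitzestimate3}. Combined with the elementary boundedness principle of Lemma \ref{lem:RVboundedness}, this yields Cauchy estimates pointwise in $(x,y)$ for both limits, and then the explicit error bounds follow by passing to the limit in the Cauchy estimates.

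For the first existence claim, fix $s$ and write
\[
\tilde F_{s,t_1}(x,y) - \tilde F_{s,t_2}(x,y) = \tilde F_{s,t_2}(\tilde F_{t_2,t_1}(x,y),y) - \tilde F_{s,t_2}(x,y),
\]
which by \eqref{eq:transportLipdx2} is bounded by $\max(C,1/c)^{1/2}\norm{(\tilde F_{t_2,t_1}-\pi_1)(x,y)}_2$. Apply Lemma \ref{lem:RVboundedness} to $G := \tilde F_{t_2,t_1} - \pi_1$ at the point $(x,y)$, using the random variable $(\tilde X_{t_1}, Y) \sim \tilde\mu_{t_1}$: the expectation $E[G(\tilde X_{t_1},Y)] = E[\tilde X_{t_2}] - E[\tilde X_{t_1}] = (e^{-t_2/2} - e^{-t_1/2})E(X)$ comes from the transport property, and $\norm{G}_{\Lip}$ is given by \eqref{eq:transportLip3}. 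Both tend to zero as $t_1,t_2 \to \infty$, so the sequence is Cauchy; sending $t_1 \to \infty$ with $t_2 = t$ reproduces the first inequality, with $E[\tilde X_{t_1}] \to 0$ and $\Var \tilde X_{t_1} \to m$. For the second limit, fix $t$ and use
\[
\tilde F_{s_1,t}(x,y) - \tilde F_{s_2,t}(x,y) = (\tilde F_{s_1,s_2} - \pi_1)(\tilde F_{s_2,t}(x,y), y);
\]
apply Lemma \ref{lem:RVboundedness} to $\tilde F_{s_1,s_2} - \pi_1$ at the deterministic point $(\tilde F_{s_2,t}(x,y),y)$ with random variable $(\tilde X_{s_2}, Y) \sim \tilde\mu_{s_2}$, controlling the distance term $\norm{\tilde F_{s_2,t}(x,y) - E(\tilde X_{s_2})}_2$ by a second application of Lemma \ref{lem:RVboundedness} to $\tilde F_{s_2,t}$ itself with random variable $(\tilde X_t, Y) \sim \tilde\mu_t$; this produces the $\max(C,1/c)^{7/2}$ coefficient (via \eqref{eq:transportLip2}) and the variance $e^{-t}\Var X + (1-e^{-t})m + \Var Y$. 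Passing $s_1 \to \infty$ with $s_2 = s$ yields the second bound.

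The extension of the Lipschitz estimates to infinite $s$ or $t$ is immediate from the lower semi-continuity of the Lipschitz seminorm under pointwise limits, and the convention $\tilde F_{\infty,\infty}(x,y) = x$ is consistent since $\norm{\tilde F_{s,t} - \pi_1}_{\Lip} \to 0$ and $\tilde F_{s,t}(x,y) \to x$ as $s,t \to \infty$ (itself another application of the two lemmas above). The pushforward identity extends by $L^2$-continuity: $\tilde X_t \to S$ in $L^2$, $\tilde F_{s,t}$ has a uniform Lipschitz bound (by \eqref{eq:transportLip2}), and $\tilde F_{s,t} \to \tilde F_{s,\infty}$ pointwise with a rate that is uniform on $L^2$-bounded sets in $(x,y)$ by the just-proved estimates; hence $\tilde F_{s,t}(\tilde X_t, Y) \to \tilde F_{s,\infty}(S, Y)$ in $L^2$, so the law $\tilde\mu_s$ on the left is preserved on the right. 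The cases $s = \infty$ and $(s,t) = (\infty,\infty)$ are handled identically. The main technical obstacle will be bookkeeping of the numerical constants—particularly the sharp $e^{-s}$ prefactor in the second estimate, which requires splitting the $dx$- and $dy$-contributions in Lemma \ref{lem:RVboundedness} and using the sharper bound \eqref{eq:transportLipdx3} on $\norm{\tilde F_{s_1,s_2} - \pi_1}_{\Lip,dx}$ rather than the coarser \eqref{eq:transportLip3}.
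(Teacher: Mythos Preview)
Your proposal is correct and follows essentially the same approach as the paper: both reduce the Cauchy estimates via the semigroup identity to bounds on $\tilde F_{\sigma,\tau}-\pi_1$, and then combine Lemma~\ref{lem:RVboundedness} with the Lipschitz estimates of Lemmas~\ref{lem:transportLipschitzestimate2}--\ref{lem:transportLipschitzestimate3}. The only bookkeeping difference is that for the $s\to\infty$ limit the paper bounds $\norm{(\tilde F_{s',s}-\pi_1)(\tilde F_{s,t}(x,y),y)}_2$ directly by $\norm{\tilde F_{s',s}-\pi_1}_{\Lip,dx}\,\norm{\tilde F_{s,t}(x,y)}_2$ (which delivers the $e^{-s}$ prefactor immediately from \eqref{eq:transportLipdx3}) and then applies Lemma~\ref{lem:RVboundedness} once to $\tilde F_{s,t}$ with the random variable $(\tilde X_t,Y)$, rather than your double application---but the strategy is the same.
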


\begin{remark}
We have written the explicit form of the estimates here in order to emphasize that the bounds are dimension-independent; they only depend on the parameters $m$, $n$, $c$, $C$, $\norm{E(X)}_2$, $\norm{E(Y)}_2$, $\Var(X)$, and $\Var(Y)$.  The estimates also become sharper when $c$ and $C$ are close to $1$, which would include the situation where $V(x,y)$ is a perturbation of the quadratic potential $(1/2)[\norm*{x}_2^2 + \norm*{y}_2^2]$.  This perturbative setting was studied first in the literature, for instance in \cite{GMS2006} and \cite{GS2014}; see \cite[\S 8.3]{Jekel2018} for further discussion.
\end{remark}

\begin{proof}
We first consider the case where $s$ is fixed and $t \to +\infty$.  Note that by \eqref{eq:transportLipdx2},
\begin{align} \label{eq:convergenceproof1}
\norm*{\tilde{F}_{s,t'}(x,y) - \tilde{F}_{s,t}(x,y)}_2 &= \norm*{\tilde{F}_{s,t'}(\tilde{F}_{t',t}(x,y),y) - \tilde{F}_{s,t'}(x,y)}_2 \\
&\leq \norm*{\tilde{F}_{s,t'}}_{\Lip,dx} \norm*{\tilde{F}_{t,t'}(x,y) - x}_2 \nonumber \\
&\leq \max(C,1/c)^{1/2} \norm*{\tilde{F}_{t,t'}(x,y) - x}_2. \nonumber
\end{align}
By Lemma \ref{lem:transportLipschitzestimate3},
\[
\norm*{\tilde{F}_{t,t'} - \pi_1}_{\Lip} \leq L |e^{-t/2} - e^{t'/2}|,
\]
where $L = \max(C,1/c)^3 - 1) \max(C,1/c)^{1/2}$.  Then we apply Lemma \ref{lem:RVboundedness} to $G(x,y) = \tilde{F}_{t,t'}(x,y) - x$ with the random variable $(\tilde{X}_{t'},Y)$.  Note that $(\tilde{X}_{t'},Y)$ has mean $(e^{-t'/2} E(X), E(Y))$ and variance $e^{-t'} \Var(X) + (1 - e^{-t'})m + \Var(Y)$.  Moreover,
\[
E[G(\tilde{X}_{t'},Y)] = E[\tilde{X}_t] - E[\tilde{X}_{t'}] = (e^{-t/2} - e^{-t'/2}) E(X).
\]
Thus, by Lemma \ref{lem:RVboundedness},
\begin{multline} \label{eq:transportCauchyestimate1}
\norm{\tilde{F}_{t,t'}(x,y) - x}_2 \leq |e^{-t/2} - e^{-t'/2}| \norm{E(X)}_2 \\ + L |e^{-t/2} - e^{t'/2}| \left( \norm{(x - e^{-t'/2} E(X), y - E(Y))}_2 + (e^{-t'} \Var(X) + (1 - e^{-t'})m + \Var(Y))^{1/2} \right).
\end{multline}
Plugging this into \eqref{eq:convergenceproof1}, we see that $\tilde{F}_{s,t}$ is Cauchy in $t$ as $t \to +\infty$.  Moreover, we obtain the estimate \eqref{eq:transportconvergenceestimate} by taking $t' \to \infty$ in \eqref{eq:transportCauchyestimate1} and multiplying by $\norm*{\tilde{F}_{s,t}}_{\Lip,dx} \leq \max(c,1/c)^{1/2}$.

Now let us fix $t$ and consider when $s'$ and $s$ approach $\infty$.  The argument for this case is similar but antisymmetrical.  We estimate
\begin{align*}
\norm*{\tilde{F}_{s',t}(x,y) - \tilde{F}_{s,t}(x,y)}_2 &= \norm*{\tilde{F}_{s',s}(\tilde{F}_{s,t}(x,y),y) - \tilde{F}_{s,t}(x,y)}_2 \\
&\leq \norm*{\tilde{F}_{s',s} - \pi_1}_{\Lip,dx} \norm*{\tilde{F}_{s,t}(x,y)}_2 \\
&\leq \frac{1}{2} (\max(C,1/c) - 1) \max(C,1/c)^{1/2} |e^{-s} - e^{-s'}| \norm*{\tilde{F}_{s,t}(x,y)}_2,
\end{align*}
where the last line follows from \eqref{eq:transportLipdx3}.  Then by applying Lemma \ref{lem:RVboundedness} to the function $\tilde{F}_{s,t}(x,y)$ and the random variable $(\tilde{X}_t,Y)$, together with \eqref{eq:transportLip2}, we obtain
\begin{multline*}
\norm*{\tilde{F}_{s,t}(x,y)}_2 \leq e^{-s/2} \norm{E(X)}_2 \\
 + \max(C,1/c)^{7/2} \left( \norm{(x - e^{-t/2}E(X), y - E(Y))}_2 + (e^{-t} \Var(X) + (1 - e^{-t})m + \Var(Y))^{1/2} \right)
\end{multline*}
This produces an estimate on $\norm*{\tilde{F}_{s',t} - \tilde{F}_{s,t}}_2$ which shows that $\tilde{F}_{s,t}$ is Cauchy as $s \to \infty$, so that $\tilde{F}_{\infty,t}$ is well-defined.  The explicit bound on the rate of convergence follows fixing $s$ and $t$, combining the above estimates, and taking $s' \to \infty$.

Finally, since we have established convergence of $\tilde{F}_{s,t}$ as $s$ or $t$ approaches $\infty$, a routine argument with limits will extend the estimates of Lemmas \ref{lem:transportLipschitzestimate2} and \ref{lem:transportLipschitzestimate3}, and the transport relations, to the cases where $s$ or $t$ is $+\infty$.
\end{proof}

\subsection{Transport in the Large $N$ Limit} \label{subsec:largeNtransport}

If $V^{(N)} \in \mathcal{E}_{m+n}^{(N)}(c,C)$ and $\{DV^{(N)}\}$ is asymptotically approximable by trace polynomials, then we must show that the associated sequence of transport maps is asymptotically approximable by trace polynomials, and hence conclude that they define transport for the non-commutative random variables in the large $N$ limit.

\begin{theorem} \label{thm:transport1}
For $N \in \N$, let $V^{(N)}(x,y)$ be a potential on $M_N(\C)_{sa}^{m+n}$ satisfying Assumption \ref{ass:convexRMM} for some $0 < c \leq C$, and let $\mu^{(N)}$ be the corresponding probability measures on $M_N(\C)_{sa}^{m+n}$.  Let $(X^{(N)}, Y^{(N)})$ be a random variable given by $\mu^{(N)}$ and let $S^{(N)}$ be an independent GUE $m$-tuple.  Let
\[
\tilde{\mu}_t^{(N)} \text{ be the law of } (\tilde{X}_t, Y) = (e^{-t/2} X^{(N)} + (1 - e^{-t})^{1/2} S^{(N)}, Y^{(N)}),
\]
and let $\tilde{V}_t^{(N)}(x,y) = R_{e^t-1}^{(N)} V^{(N)}(e^{t/2} x, y)$ be the corresponding potential.  Similarly, let $\mu_\infty^{(N)}$ be the law of $(S^{(N)}, Y^{(N)})$.  For $s, t \in [0,\infty)$, let $\tilde{F}_{s,t}^{(N)}: M_N(\C)_{sa}^{m+n} \to M_N(\C)_{sa}^m$ be the solution of the initial value problem
\begin{align*}
\tilde{F}_{t,t}^{(N)}(x,y) &= x \\
\partial_s \tilde{F}_{s,t}^{(N)}(x,y) &= \frac{1}{2} \left( D_x \tilde{V}_s(\tilde{F}_{s,t}(x,y), y) - \tilde{F}_{s,t}(x,y) \right).
\end{align*}
Then
\begin{enumerate}
	\item The family $\tilde{F}_{s,t}^{(N)}$ extends continuously to $(s,t) \in [0,\infty]^2$.
	\item $\tilde{F}_{s,t}^{(N)} \circ \tilde{F}_{t,u}^{(N)} = \tilde{F}_{s,u}^{(N)}$.
	\item $(\tilde{F}_{s,t}^{(N)}(\tilde{X}_t^{(N)},Y^{(N)}), Y^{(N)}) \sim (\tilde{X}_s^{(N)}, Y^{(N)})$.
	\item For $s, t \in [0,\infty]$, the sequence $\{\tilde{F}_{s,t}^{(N)}\}_{N \in \N}$ is $(C/c) \max(C,1/c)^{1/2}$-Lipschitz for all $s$, $t$, and $N$, and it is asymptotically approximable by trace polynomials as $N \to \infty$.
\end{enumerate}
\end{theorem}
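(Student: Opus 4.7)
Parts (1)--(3) are essentially repackagings of the finite-$N$ analysis already carried out in \S\ref{subsec:basictransport}--\ref{subsec:largeTtransport}. For each $N$ and for finite $s,t$, the semigroup identity (2) is the composition property of Proposition \ref{prop:basictransport1}~(2) carried through the change of variables \eqref{eq:renormalizedtransport}, and the transport identity (3) is Proposition \ref{prop:basictransport1}~(3) similarly rescaled. The continuous extension (1) to $[0,\infty]^2$ is the content of Proposition \ref{prop:largeTtransport}, which also allows (2) and (3) to be extended to the boundary by continuity of $\tilde F_{s,t}^{(N)}$ in $(s,t)$. The Lipschitz bound in (4) is immediate from Lemma \ref{lem:transportLipschitzestimate2}, extended to $s,t\in[0,\infty]$ in Proposition \ref{prop:largeTtransport}.

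The substance of the theorem is the asymptotic approximability of $\{\tilde F_{s,t}^{(N)}\}_{N\in\N}$ by trace polynomials, which I would handle in two stages. \textbf{Stage 1 (finite times):} Fix $T>0$ and consider the vector field
\[
H^{(N)}(x,y,s)\;=\;\tfrac{1}{2}\bigl(D_x\tilde V_s^{(N)}(x,y)-x\bigr),\qquad s\in[0,T].
\]
The plan is to verify Assumption \ref{ass:vectorfield2} for $\{H^{(N)}\}$ with initial condition $G_0^{(N)}(x,y)=x$, so that Proposition \ref{prop:ODE2} yields $\tilde F_{s,t}^{(N)}\rightsquigarrow \tilde F_{s,t}$ for $s,t\in[0,T]$. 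Uniform-in-$N$ Lipschitz bounds on $H^{(N)}(\cdot,\cdot,s)$ come from Lemma \ref{lem:Qt}~(4) applied to $R_{e^s-1}V^{(N)}$ together with the chain rule through the rescaling. Uniform-in-$N$ continuity in $s$, with a modulus that extends all the way to $s=0$, follows from the H\"older-$\tfrac12$ estimate in Theorem \ref{thm:Rtsemigroup}~(3)(c) combined with direct differentiation of the rescaling. The asymptotic approximability of $D_x\tilde V_s^{(N)}$ for each fixed $s$ is obtained by repeating the argument used in the proof of Theorem \ref{thm:convergenceofentropy}: applying Theorem \ref{thm:conditionalexpectation} to the joint convex potential $U^{(N)}(x,y,s')=V^{(N)}(x,y)+\tfrac12\norm{s'}_2^2$ of $(X^{(N)},Y^{(N)},S^{(N)})$ and using the conditional-expectation identity \eqref{eq:xiconditionalexpectation2} gives $D_xV_t^{(N)}\rightsquigarrow g_t\in(\overline{\TrP}_{m+n}^1)_{sa}^m$ for each fixed $t\geq 0$; pulling back through $\tilde V_s^{(N)}(x,y)=V_{e^s-1}^{(N)}(e^{s/2}x,y)$ gives the corresponding statement for $D_x\tilde V_s^{(N)}$.

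\textbf{Stage 2 (infinite times):} Extend to $s$ or $t$ equal to $\infty$ via Lemma \ref{lem:limits}. The explicit estimate \eqref{eq:transportconvergenceestimate} of Proposition \ref{prop:largeTtransport} shows that
\[
\limsup_{N\to\infty}\,\bigl\lVert \tilde F_{s,\infty}^{(N)}-\tilde F_{s,t}^{(N)}\bigr\rVert_{u,R}^{(N)}\;\xrightarrow[t\to\infty]{}\;0,
\]
uniformly in $s\in[0,\infty]$, as soon as $\norm{E(X^{(N)})}_2$, $\norm{E(Y^{(N)})}_2$, $\Var(X^{(N)})$, and $\Var(Y^{(N)})$ remain bounded as $N\to\infty$. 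These bounds follow from Lemma \ref{lem:conjugatevariablebasics} applied to the joint law (with constants $c,C$ independent of $N$), together with Assumption \ref{ass:convexRMM}, which forces the means to be scalar multiples of the identity and thus controls them via Corollary \ref{cor:DVestimate}. Combined with Stage 1, Lemma \ref{lem:limits} delivers $\tilde F_{s,\infty}^{(N)}\rightsquigarrow\tilde F_{s,\infty}$; the symmetric argument gives $\tilde F_{\infty,t}^{(N)}\rightsquigarrow\tilde F_{\infty,t}$, and the case $s=t=\infty$ is trivial since $\tilde F_{\infty,\infty}^{(N)}(x,y)=x$.

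The principal obstacle lies entirely in Stage 1: ensuring that $s\mapsto D_x\tilde V_s^{(N)}$ is continuous in $s$ \emph{uniformly in $N$ and uniformly up to $s=0$}, where the smoothing effect of the heat semigroup has not yet had time to act. This is precisely what the dimension-independent time-continuity estimate Theorem \ref{thm:Rtsemigroup}~(3)(c) is designed to provide, and combining it with the conditional-expectation argument of Theorem \ref{thm:convergenceofentropy} is the new conceptual ingredient beyond the unconditional case treated in \cite{Jekel2018}. Once Assumption \ref{ass:vectorfield2} is verified, everything else reduces to direct applications of Proposition \ref{prop:ODE2} and Lemma \ref{lem:limits}.
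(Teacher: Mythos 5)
Your proposal is correct and follows essentially the same route as the paper: parts (1)--(3) and the Lipschitz bound are read off from Proposition \ref{prop:basictransport1}, Lemma \ref{lem:transportLipschitzestimate2}, and Proposition \ref{prop:largeTtransport}, while asymptotic approximability is obtained by verifying Assumption \ref{ass:vectorfield2} for the vector field $\tfrac12(D_x\tilde V_s^{(N)}(x,y)-x)$ (uniform Lipschitzness from the convexity bounds, uniform time-continuity from Theorem \ref{thm:Rtsemigroup}(3)(c), approximability of $D_xV_t^{(N)}$ from the conditional-expectation argument in the proof of Theorem \ref{thm:convergenceofentropy}), applying Proposition \ref{prop:ODE2}, and passing to infinite times via Lemma \ref{lem:limits} and Proposition \ref{prop:largeTtransport}. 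Your Stage 2 even supplies slightly more detail (uniform-in-$N$ bounds on means and variances) than the paper records.
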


\begin{proof}
Recall in \S \ref{subsec:largeTtransport} we defined $\tilde{F}_{s,t}^{(N)}$ by renormalizing $F_{s,t}^{(N)}$.  However, that definition is equivalent to the definition of $\tilde{F}_{s,t}$ given in this theorem because both definitions produce a solution to the ODE \eqref{eq:renormalizedtransportODE}.  Of course, global uniqueness of the solution holds because the vector field $D_x \tilde{V}_t^{(N)}(x,y) - x$ is uniformly Lipschitz in $(x,y)$ on any compact time interval (as we discuss in more detail below).

So claims (1), (2), and (3) follow immediately from Proposition \ref{prop:largeTtransport}.  The estimate for the Lipschitz norm of $\tilde{F}_{s,t}^{(N)}$ was shown in \eqref{eq:transportLip2}.

We finish by showing asymptotic approximability using the results of \S \ref{subsec:vectorfields}.  Let $V_t^{(N)} = R_t^{(N)} V^{(N)}$.  By Theorem \ref{thm:Rtsemigroup} (3c), $D_x V_t^{(N)}$ is uniformly continuous in $t$ on $[0,\infty)$.  Since $D_x \tilde{V}_t^{(N)}(x,y) = e^{t/2} D_x V_{e^t - 1}^{(N)}(e^{t/2}x,y)$, it follows that $D_x \tilde{V}_t^{(N)}$ is uniformly continuous in $t$ on $[0,T]$ for every $T > 0$, with modulus of continuity independent of $N$, and recall it is also uniformly Lipschitz in $(x,y)$, since $0 \leq H\tilde{V}_t^{(N)} \leq \max(C, Ce^t / (1 + C(e^t - 1))$.

Consequently, $(1/2)(D_x \tilde{V}_t^{(N)}(x,y) - x)$ is uniformly continuous in $t$ on $[0,T]$ and uniformly Lipschitz in $(x,y)$.  Also, we showed that $D_x V_t^{(N)}$ is asymptotically approximable by trace polynomials in the proof of Theorem \ref{thm:convergenceofentropy}, and hence so is $D_x \tilde{V}_t^{(N)}$.  Thus, $(1/2)(D_x \tilde{V}_t^{(N)}(x,y) - x)$ satisfies Assumption \ref{ass:vectorfield2}, so we may apply Proposition \ref{prop:ODE2} to deduce that $\tilde{F}_{s,t}^{(N)}$ is asymptotically approximable by trace polynomials for $s, t \in [0,\infty)$.  This property extends to the case where $s$ or $t$ is infinite using Lemma \ref{lem:limits} and Proposition \ref{prop:largeTtransport}.
\end{proof}

\begin{remark}
Rather than citing the proof of Theorem \ref{thm:convergenceofentropy}, one could also argue that $D_x V_t^{(N)}$ is asymtotically approximable directly from the construction of the semigroup $R_t^{(N)}$ using the same reasoning as \cite[Proposition 6.8]{Jekel2018}.  Moreover, this method would also show that $D(R_t^{(N)} V^{(N)})$ is asymptotically approximable by trace polynomials provided we can prove analogues of Theorem \ref{thm:Rtsemigroup} (2) and (3) for $D(R_t^{(N)} V^{(N)})$ rather than only $D_x(R_t^{(N)} V^{(N)})$.  However, all this is unnecessary work for our present purpose.
\end{remark}

\begin{theorem} \label{thm:transport2}
With all the notation of the previous theorem, let $(X,Y)$ be a non-commutative random variable distributed according to the limiting free Gibbs law $\lambda$, let $S$ be a freely independent free semicircular $m$-tuple, and let $\tilde{X}_t = e^{-t/2} X + (1 - e^{-t})^{1/2} S$.  Define $\tilde{F}_{s,t}$ by $\tilde{F}_{s,t}^{(N)} \rightsquigarrow \tilde{F}_{s,t}$.  For $s,t,u \in [0,+\infty]$, we have
\begin{enumerate}
	\item $\tilde{F}_{s,t}$ is $(C/c) \max(C,1/c)^{1/2}$-Lipschitz with respect to $\norm{\cdot}_2$.
	\item $\tilde{F}_{s,t} \circ \tilde{F}_{t,u} = \tilde{F}_{s,u}$.
	\item $(\tilde{F}_{s,t}(\tilde{X}_t,Y), Y) \sim (\tilde{X}_s,Y)$ in non-commutative law.
	\item We have
	\begin{multline*}
	\norm*{\tilde{F}_{s,t}(\tilde{X}_t,Y) - \tilde{F}_{s',t}(\tilde{X}_t,Y) - (e^{-s/2} - e^{-s'/2})\tau(X)}_\infty \\
	 \leq (\max(C,1/c)^3 - 1) \max(C,1/c) |e^{-s/2} - e^{-s'/2}| \Theta.
	\end{multline*}
	where $\Theta$ is the universal constant from Proposition \ref{prop:operatornormestimate}.
\end{enumerate}
In particular, $\mathrm{W}^*(X,Y)$ is isomorphic to $\mathrm{W}^*(S,Y)$, which is the free product $\mathrm{W}^*(S) * \mathrm{W}^*(Y)$.
\end{theorem}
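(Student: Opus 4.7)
The plan is to prove (1)--(3) by transferring the finite-$N$ statements of Theorem \ref{thm:transport1} to the $\rightsquigarrow$-limit, to derive (4) by combining the composition and transport identities with Proposition \ref{prop:operatornormestimate}, and to read off the isomorphism from the specialization $(s,t) \in \{(0,\infty),(\infty,0)\}$.

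First I would invoke Theorem \ref{thm:transport1}(4) together with Lemma \ref{lem:AATP} to produce the unique $\tilde{F}_{s,t} \in (\overline{\TrP}_{m+n}^1)_{sa}^m$ with $\tilde{F}_{s,t}^{(N)} \rightsquigarrow \tilde{F}_{s,t}$; the uniform Lipschitz constant then passes to the limit via Lemma \ref{lem:limituniformlycontinuous}, giving (1).  For (2), the matrix identity $\tilde{F}_{s,u}^{(N)}(x,y) = \tilde{F}_{s,t}^{(N)}(\tilde{F}_{t,u}^{(N)}(x,y), y)$ of Theorem \ref{thm:transport1}(2), combined with Lemma \ref{lem:composition}(3) applied with outer function $\tilde{F}_{s,t}^{(N)}$ (uniformly Lipschitz in $N$), shows that the right-hand composition $\rightsquigarrow$-converges to the analogous composition of the limits; uniqueness of $\rightsquigarrow$-limits then forces $\tilde{F}_{s,u} = \tilde{F}_{s,t} \circ \tilde{F}_{t,u}$.

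For (3), fix a non-commutative polynomial $p$.  Theorem \ref{thm:transport1}(3) gives the almost-sure identity $\tau_N(p(\tilde{F}_{s,t}^{(N)}(\tilde{X}_t^{(N)}, Y^{(N)}), Y^{(N)})) = \tau_N(p(\tilde{X}_s^{(N)}, Y^{(N)}))$.  The joint distribution of $(X^{(N)}, Y^{(N)}, S^{(N)})$ comes from the convex potential $V^{(N)}(x,y) + \tfrac{1}{2}\norm{s'}_2^2$, whose large-$N$ limit (by Theorem \ref{thm:freeGibbslaw} and Voiculescu's asymptotic freeness, as already invoked in the proof of Theorem \ref{thm:convergenceofentropy}) is the law of $(X,Y,S)$ with $S$ free from $(X,Y)$.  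Both sides of the identity are scalar-valued functions of $(X^{(N)}, Y^{(N)}, S^{(N)})$ that are $\rightsquigarrow$-asymptotic to elements of $\overline{\TrP}^0$ (via Lemmas \ref{lem:algebra} and \ref{lem:composition}), so the argument of Corollary \ref{cor:convergenceofexpectation} (concentration restricts us to a fixed operator-norm ball with probability tending to $1$, whereupon asymptotic approximability takes over) makes them converge in probability to $\tau(p(\tilde{F}_{s,t}(\tilde{X}_t, Y), Y))$ and $\tau(p(\tilde{X}_s, Y))$ respectively; these two traces must therefore coincide, and varying $p$ gives (3).

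The main obstacle is (4): a direct Lipschitz bound on $\tilde{F}_{s,t} - \tilde{F}_{s',t}$ is awkward because the cross-Lipschitz constant in $y$ of the ODE vector field $D_x U_u$ driving $\tilde{F}_{\cdot,t}$ actually grows in $u$.  I would bypass this by combining (2) and (3).  The composition identity rewrites
\[
\tilde{F}_{s,t}(x,y) - \tilde{F}_{s',t}(x,y) = (\tilde{F}_{s,s'} - \pi_1)\bigl(\tilde{F}_{s',t}(x,y),\, y\bigr),
\]
and evaluating at $(\tilde{X}_t, Y)$ then applying (3) to $\tilde{F}_{s',t}$ yields the distributional equivalence $\tilde{F}_{s,t}(\tilde{X}_t, Y) - \tilde{F}_{s',t}(\tilde{X}_t, Y) \sim (\tilde{F}_{s,s'} - \pi_1)(\tilde{X}_{s'}, Y)$; since the operator norm is spectral, it suffices to bound the right-hand side.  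Lemma \ref{lem:Qt}(4) gives $H\tilde{V}_{s'} \geq \min(c,1)$, so Corollary \ref{cor:matrixLSI} and Lemma \ref{lem:epsilonnet} put Proposition \ref{prop:operatornormestimate} in force for the unitarily invariant models of Remark \ref{rem:unitaryinvariance} with constant $c_{s'}^{-1/2} \leq \max(C,1/c)^{1/2}$.  Combined with the Lipschitz estimate of Lemma \ref{lem:transportLipschitzestimate3} for $\tilde{F}_{s,s'} - \pi_1$ and the trace identity $\tau((\tilde{F}_{s,s'}-\pi_1)(\tilde{X}_{s'}, Y)) = \tau(\tilde{X}_s) - \tau(\tilde{X}_{s'}) = (e^{-s/2} - e^{-s'/2})\tau(X)$, this produces exactly the claimed bound.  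Finally, specializing (3) to $(s,t) = (\infty, 0)$ and $(0, \infty)$ and using the composition identity $\tilde{F}_{0,0} = \pi_1$ furnishes mutually inverse trace-preserving $*$-isomorphisms between $\mathrm{W}^*(X, Y)$ and $\mathrm{W}^*(S, Y)$, and free independence of $S$ from $Y$ gives $\mathrm{W}^*(S, Y) = \mathrm{W}^*(S) * \mathrm{W}^*(Y)$.
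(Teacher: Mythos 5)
Your overall route coincides with the paper's: (1) and (2) by passing the finite-$N$ properties through $\rightsquigarrow$-limits, (3) by combining the finite-$N$ transport relation with convergence of the matrix models, and (4) by the reduction $\tilde{F}_{s,t} - \tilde{F}_{s',t} = (\tilde{F}_{s,s'}-\pi_1)\circ(\tilde{F}_{s',t},\pi_2)$ together with Lemma \ref{lem:Qt}(4), Remark \ref{rem:unitaryinvariance}, Proposition \ref{prop:operatornormestimate}, and the Lipschitz bound \eqref{eq:transportLip3}; the isomorphism is read off from $(s,t)\in\{(0,\infty),(\infty,0)\}$ exactly as in the paper.

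There is, however, a gap in your argument for (3) as written. You apply Lemma \ref{lem:composition} with a non-commutative polynomial $p$ (or the scalar trace polynomial $\tau(p(\cdot,\cdot))$) as the outer function, but that lemma requires the outer function to be $\norm{\cdot}_2$-uniformly continuous with modulus uniform in $N$, which polynomials are not. The difficulty is real: on the operator-norm ball $\norm{(x,y)}_\infty \leq R$ the values $\tilde{F}_{s,t}^{(N)}(x,y)$ are controlled only in $\norm{\cdot}_2$, not in $\norm{\cdot}_\infty$ uniformly in $N$, so one cannot conclude uniform $\norm{\cdot}_2$-approximation of $\tau_N\bigl(p(\tilde{F}_{s,t}^{(N)}(x,y),y)\bigr)$ by trace polynomials on that ball; concentration of $(X^{(N)},Y^{(N)},S^{(N)})$ alone does not repair this, since it constrains the input, not the intermediate value of the composition. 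The paper circumvents exactly this by proving the identity $\tau\bigl(f(\tilde{F}_{s,t}(\tilde{X}_t,Y),Y)\bigr)=\tau\bigl(f(\tilde{X}_s,Y)\bigr)$ for all $\norm{\cdot}_2$-uniformly continuous $f\in\overline{\TrP}_{m+n}^1$ (via Corollary \ref{cor:convergenceofexpectation} and Lemma \ref{lem:composition}) and then invoking Proposition \ref{prop:realizationofoperators}, whose cut-off construction $p\circ\Phi$ realizes every polynomial in $(\tilde{X}_s,Y)$ by such an $f$; since $\tilde{F}_{s,t}(\tilde{X}_t,Y)$ is operator-norm bounded (Proposition \ref{prop:operatornormestimate}), the cut-off is harmless on both sides and the moment identities follow. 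Your sketch should be amended along these lines. Two smaller points: the finite-$N$ transport relation of Theorem \ref{thm:transport1}(3) is an equality in distribution, not an almost-sure identity of traces (equality in distribution is all your argument actually uses, so this is only a mislabeling); and in (4), transferring the finite-$N$ estimate \eqref{eq:transportLip3} to the limiting element $\tilde{F}_{s,s'}\in(\overline{\TrP}_{m+n}^1)_{sa}^m$ requires Lemma \ref{lem:limituniformlycontinuous}, which you should cite there as you did in (1).
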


\begin{proof}
We know that there exists $\tilde{F}_{s,t}$ such that $\tilde{F}_{s,t}^{(N)} \rightsquigarrow \tilde{F}_{s,t}$ because of Lemma \ref{lem:AATP}.  Then (1) and (2) follow from the corresponding properties of $\tilde{F}_{s,t}^{(N)}$ by straightforward limit arguments.

As remarked in the last proof $D\tilde{V}_t^{(N)}$ is asymptotically approximable by trace polynomials.  We also know $D\tilde{V}_t^{(N)}$ is uniformly convex and semi-concave, and thus by Theorem \ref{thm:freeGibbslaw}, the non-commutative law of $(\tilde{X}_t^{(N)}, Y^{(N)})$ converges in probability to some non-commutative law.  Of course, the limiting non-commutative law must be the non-commutative law of $(\tilde{X}_t,Y)$ because the joint non-commutative law of $(X^{(N)},Y^{(N)},S^{(N)})$ converges in probability to that $(X,Y,S)$ (as in the proof of Theorem \ref{thm:convergenceofentropy}).

With this relation between the laws of $(\tilde{X}_t^{(N)},Y^{(N)})$ and $(\tilde{X}_t,Y)$ in hand, we can prove (3) by taking the large $N$ limit using Corollary \ref{cor:convergenceofexpectation}.  Indeed, if $f \in \overline{\TrP}_{m+n}^1$ is $\norm{\cdot}_2$-uniformly continuous, then $f(\tilde{F}_{s,t}^{(N)}(x,y),y)$ is also $\norm{\cdot}_2$-uniformly continuous and asymptotically approximable by trace polynomials by Lemma \ref{lem:composition}.  Thus, applying Corollary \ref{cor:convergenceofexpectation} to this function and the function $1$, we get
\begin{align*}
\tau\left( f(\tilde{F}_{s,t}(\tilde{X}_t,Y), Y) \right) &= \lim_{N \to \infty} E[\tau_N (f(\tilde{F}_{s,t}(\tilde{X}_t^{(N)},Y^{(N)}), Y^{(N)}))] \\
&= \lim_{N \to \infty} E[\tau_N(f(\tilde{X}_s^{(N)},Y^{(N)}))] \\
&= \tau(f(\tilde{X}_s,Y)).
\end{align*}
Hence, $\tau\left( f(\tilde{F}_{s,t}(\tilde{X}_t,Y), Y) \right) = \tau(f(\tilde{X}_s,Y))$ for all $f \in \overline{\TrP}_m^1$ that are uniformly continuous in $\norm{\cdot}_2$.  But by Proposition \ref{prop:realizationofoperators} such functions $f$ can realize every element in the $\mathrm{W}^*$-algebra generated by $(\tilde{X}_s,Y)$, and in particular all the non-commutative polynomials in $(\tilde{X}_s,Y)$.  Hence, $(\tilde{F}_{s,t}(\tilde{X}_t,Y), Y) \sim (\tilde{X}_s,Y)$ in non-commutative law as desired.

(4) Note that
\[
\tilde{F}_{s,t}(\tilde{X}_t,Y) - \tilde{F}_{s',t}(\tilde{X}_t,Y) = (\pi_1 - \tilde{F}_{s,s'}) \circ (\tilde{F}_{s',t}(\tilde{X}_t,Y),Y),
\]
but $(\tilde{F}_{s',t}(\tilde{X}_t,Y),Y) \sim (\tilde{X}_{s'}, Y)$ in non-commutative law.  Hence, it suffices to prove the desired estimate for $\tilde{F}_{s,s'}(\tilde{X}_{s'},Y) - \tilde{X}_{s'}$ rather than $\tilde{F}_{s,t}(\tilde{X}_t,Y) - \tilde{F}_{s',t}(\tilde{X}_t,Y)$.  Now $(\tilde{X}_{s'}, Y)$ arises as the large $N$ limit of the matrix models given by potential $\tilde{V}_{s'}^{(N)}$.  By Lemma \ref{lem:Qt} (4), we have $HV_t^{(N)} \geq c(1+ct)^{-1} I_m \oplus c I_n$, so that
\[
H \tilde{V}_{s'}^{(N)} \geq \frac{ce^{s'}}{1 + c(e^{s'} - 1)} I_m \oplus c I_n \geq \min(1,c) I_{m+n} \geq \frac{1}{\max(C,1/c)} I_{m+n}.
\]
By Remark \ref{rem:unitaryinvariance}, there exists a sequence of random matrix models for $(\tilde{X}_{s'},Y)$ given by uniformly convex potentials which are also unitarily invariant (even if this is not true of our original model), with the same lower bound $1 / \max(C,1/c)$ for the Hessian of the potential.  Therefore, by Proposition \ref{prop:operatornormestimate},
\[
\norm*{\tilde{F}_{s,s'}(\tilde{X}_{s'},Y) - \tilde{X}_{s'} - \tau[\tilde{F}_{s,s'}(\tilde{X}_{s'},Y) - \tilde{X}_{s'}]}_\infty \leq \max(C,1/c)^{1/2} \Theta \norm*{\tilde{F}_{s,s'} - \pi_1}_{\Lip}.
\]
We finish by substituting the estimate
\[
\norm{\tilde{F}_{s,s'} - \pi_1}_{\Lip} \leq (\max(C,1/c)^3 - 1) \max(C,1/c)^{1/2} |e^{-s/2} - e^{-s'/2}|
\]
which follows from \eqref{eq:transportLip3} and Lemma \ref{lem:limituniformlycontinuous} (the latter lemma is needed since the original statement of \eqref{eq:transportLip3} is for the finite-dimensional setting for a fixed $N$).

The last claim regarding $\mathrm{W}^*$-algebras follows from (3) by examining the case with $s = 0$ and $t = \infty$ or vice versa.
\end{proof}

\section{Applications} \label{sec:applications}

We show that Assumption \ref{ass:convexRMM} is preserved under independent joins, marginals, convolution, and linear changes of variables.  We conclude that for the convex free Gibbs laws considered here, $\chi^*$ satisfies additivity under conditioning.  Moreover, by iterating our conditional transport results, we obtain ``lower-triangular'' transport maps from a convex free Gibbs law to the law of a free semicircular family, which also satisfy the entropy-cost inequality relative to the semicircular law, analogous to the triangular transport achieved in the classical case by \cite[Corollary 3.10]{BKM2005}.

\subsection{Operations on Convex Gibbs Laws} \label{subsec:operations}

Recall that Assumption \ref{ass:convexRMM} for a sequence $\{V^{(N)}\}$ of potentials $M_N(\C)_{sa}^m \to \R$ states that $c \leq HV^{(N)} \leq C$ for some constants $c$ and $C$, the sequence $\{DV^{(N)}\}$ is asymptotically approximable by trace polynomials, and $\int x_j \,d\mu^{(N)}(x)$ is a scalar matrix for each $j$, where $\mu^{(N)}$ is the measure associated to $DV^{(N)}$.

\begin{proposition} \label{prop:independentjoins}
Suppose that $V_1^{(N)}: M_N(\C)_{sa}^m \to \R$ and $V_2^{(N)}: M_N(\C)_{sa}^n \to \R$ satisfy Assumption \ref{ass:convexRMM} for some $0 < c \leq C$.  Then $V^{(N)}(x,y) := V_1^{(N)}(x) + V_2^{(N)}(y)$ also satisfies Assumption \ref{ass:convexRMM} for the same $c$ and $C$.

Moreover, let $\mu_1^{(N)}$, $\mu_2^{(N)}$, and $\mu^{(N)}$ be the measures associated to $V_1^{(N)}$, $V_2^{(N)}$, and $V^{(N)}$ respectively, and let $\lambda_1$, $\lambda_2$, and $\lambda$ be the respective limiting free Gibbs laws given by Theorem \ref{thm:freeGibbslaw}.  Then $\mu^{(N)}$ is the independent join of $\mu_1^{(N)}$ and $\mu_2^{(N)}$ and $\lambda$ is the freely independent join of $\lambda_1$ and $\lambda_2$.
\end{proposition}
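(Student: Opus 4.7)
The plan is to verify conditions (1), (2), (3) of Assumption \ref{ass:convexRMM} directly from the definitions, show the measure factorizes, and then use unitary invariance together with Voiculescu's asymptotic freeness theorem to identify the joint limiting law.

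First I would verify Assumption \ref{ass:convexRMM} for $V^{(N)}(x,y) = V_1^{(N)}(x) + V_2^{(N)}(y)$. Since $V^{(N)}$ is a sum of functions each depending on a disjoint block of variables, its Hessian is block-diagonal with blocks $HV_1^{(N)}$ and $HV_2^{(N)}$, so $cI_{m+n} \leq HV^{(N)} \leq CI_{m+n}$ follows immediately from the bounds on $HV_i^{(N)}$. The gradient decomposes as $DV^{(N)}(x,y) = (DV_1^{(N)}(x), DV_2^{(N)}(y))$, and each component, viewed as a function of $(x,y)$ with trivial dependence on the other block, is asymptotically approximable by trace polynomials because $DV_i^{(N)}$ is (a trace polynomial in $x$ is also a trace polynomial in $(x,y)$). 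Finally, the factorization $e^{-N^2 V^{(N)}(x,y)} = e^{-N^2 V_1^{(N)}(x)} \cdot e^{-N^2 V_2^{(N)}(y)}$ together with $Z^{(N)} = Z_1^{(N)} Z_2^{(N)}$ gives $\mu^{(N)} = \mu_1^{(N)} \otimes \mu_2^{(N)}$, so the marginals of $\mu^{(N)}$ are $\mu_1^{(N)}$ and $\mu_2^{(N)}$, and the mean condition on $\mu^{(N)}$ follows from the hypothesis on each $\mu_i^{(N)}$.

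The more substantial step is identifying the limiting law $\lambda$ with the free product $\lambda_1 * \lambda_2$. I would use the canonical-approximation remark: by the remark following the definition of $\mathcal{E}_m^{\TrP}(c,C)$, the limiting free Gibbs law depends only on $V_i \in \mathcal{E}_{m_i}^{\TrP}(c,C)$ (to which $V_i^{(N)}$ converges in the $\rightsquigarrow$ sense) and not on the specific matricial sequence. I may therefore replace $V_i^{(N)}$ by $\tilde V_i^{(N)} := V_i|_{M_N(\C)_{sa}^{m_i}}$ (as in Remark \ref{rem:unitaryinvariance}). These restrictions are unitarily invariant, being uniform-on-operator-norm-balls limits of trace polynomials, and their sum $\tilde V^{(N)}(x,y) = \tilde V_1^{(N)}(x) + \tilde V_2^{(N)}(y)$ gives rise to the same limiting law $\lambda$. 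The associated product measure $\tilde\mu^{(N)} = \tilde\mu_1^{(N)} \otimes \tilde\mu_2^{(N)}$ consists of independent, unitarily conjugation-invariant ensembles $\tilde X^{(N)}$ and $\tilde Y^{(N)}$, each with non-commutative law converging in probability to $\lambda_1$ and $\lambda_2$ respectively, and with operator norms bounded in probability as $N \to \infty$ (via the concentration estimate of Lemma \ref{lem:epsilonnet} combined with uniform convexity of $\tilde V_i^{(N)}$).

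Finally, Voiculescu's asymptotic freeness theorem for independent unitarily invariant ensembles yields that $\tilde X^{(N)}$ and $\tilde Y^{(N)}$ become asymptotically free, so the joint non-commutative law of $(\tilde X^{(N)}, \tilde Y^{(N)})$ converges in probability to the free product $\lambda_1 * \lambda_2$. Since this joint law is exactly $\lambda$, we conclude $\lambda = \lambda_1 * \lambda_2$. The main (minor) obstacle is the unitary-invariance maneuver: one must ensure that the replacement $\tilde V_i^{(N)} = V_i|_{M_N(\C)_{sa}^{m_i}}$ genuinely lies in $\mathcal{E}_{m_i}^{(N)}(c,C)$ with $D\tilde V_i^{(N)} \rightsquigarrow DV_i$, which is immediate from $V_i \in \mathcal{E}_{m_i}^{\TrP}(c,C)$, and to verify that the operator norm bounds needed for Voiculescu's theorem hold uniformly in $N$, which is straightforward from Lemma \ref{lem:epsilonnet}.
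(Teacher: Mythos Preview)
Your verification of Assumption \ref{ass:convexRMM} and the factorization $\mu^{(N)} = \mu_1^{(N)} \otimes \mu_2^{(N)}$ matches the paper exactly. The difference lies in how you establish free independence of the limiting law. The paper does not use asymptotic freeness directly; instead, it invokes Theorem \ref{thm:convergenceofentropy} to compute
\[
\Phi^*(X,Y) = \lim_{N\to\infty} E\norm{DV^{(N)}(X^{(N)},Y^{(N)})}_2^2 = \lim_{N\to\infty}\bigl(E\norm{DV_1^{(N)}(X^{(N)})}_2^2 + E\norm{DV_2^{(N)}(Y^{(N)})}_2^2\bigr) = \Phi^*(X) + \Phi^*(Y),
\]
and then appeals to Voiculescu's result \cite[Proposition 5.18(c)]{VoiculescuFE6} that additivity of free Fisher information forces free independence. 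Your route---passing to the canonical unitarily invariant models via Remark \ref{rem:unitaryinvariance} and applying Voiculescu's asymptotic freeness theorem \cite{Voiculescu1998}---is equally valid and arguably more direct, since it avoids both the Fisher information convergence machinery of \S\ref{sec:entropy} and the nontrivial characterization of freeness through $\Phi^*$. The paper's argument, on the other hand, has the virtue of exercising its own entropy results and does not require the replacement maneuver or the operator-norm control you invoke from Lemma \ref{lem:epsilonnet}. Both proofs ultimately rest on a theorem of Voiculescu; they simply choose different ones.
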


\begin{proof}
The claim $c \leq HV^{(N)} \leq C$ follows because $HV^{(N)}(x,y) = HV_1^{(N)}(x) \oplus HV_2^{(N)}(y)$.  The claim about asymptotic approximation by trace polynomials follows because $DV^{(N)}(x,y) = (DV_1^{(N)}(x), DV_2^{(N)}(y))$ and each component is asymptotically approximable by trace polynomials.

The probability density for $\mu^{(N)}$ is the tensor product of the probability densities for $\mu_1^{(N)}$ and $\mu_2^{(N)}$ and hence $\mu^{(N)}$ is the independent join of these two marginal laws.  It follows that $\int x_j \,d\mu^{(N)}(x)$ and $\int y_j \,d\mu^{(N)}(y)$ are scalar matrices, hence Assumption \ref{ass:convexRMM} holds for $V^{(N)}$.

Let $(X^{(N)},Y^{(N)}) \sim \mu^{(N)}$ be random variables and let $(X,Y) \sim \lambda$ be non-commutative random variables.  Then by Theorem \ref{thm:convergenceofentropy},
\begin{align*}
\Phi^*(X,Y) &= \lim_{N \to \infty} E\left[ \norm{DV^{(N)}(X^{(N)},Y^{(N)})}_2^2 \right] \\
&= \lim_{N \to \infty} E\left[ \norm{DV_1^{(N)}(X^{(N)})}_2^2 \right] + \lim_{N \to \infty} E\left[ \norm{DV_2^{(N)}(Y^{(N)})}_2^2 \right] \\
&= \Phi^*(X) + \Phi^*(Y).
\end{align*}
It was shown in \cite[Proposition 5.18(c)]{VoiculescuFE6} that $\Phi^*(X,Y) = \Phi^*(X) + \Phi^*(Y)$ implies that $X$ and $Y$ are freely independent.
\end{proof}

\begin{proposition} \label{prop:marginals}
Suppose that $V^{(N)}: M_N(\C)_{sa}^{m+n} \to \R$ satisfies Assumption \ref{ass:convexRMM}.  Let $\mu^{(N)}$ be the corresponding law, let $(X^{(N)},Y^{(N)}) \sim \mu^{(N)}$ and let $\mu_1^{(N)}$ and $\mu_2^{(N)}$ be the laws of $X^{(N)}$ and $Y^{(N)}$.  Then $\mu_1^{(N)}$ and $\mu_2^{(N)}$ are given by a potentials $W_1^{(N)}$ and $W_2^{(N)}$ that also satisfy Assumption \ref{ass:convexRMM} for the same values of $c$ and $C$.
\end{proposition}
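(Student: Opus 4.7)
By symmetry between $x$ and $y$, it suffices to construct $W_2^{(N)}$ for the $Y^{(N)}$-marginal. Set
\[
h^{(N)}(y) := \int_{M_N(\C)_{sa}^m} e^{-N^2 V^{(N)}(x,y)}\,dx,
\]
which is finite thanks to $H_x V^{(N)}\geq c > 0$, and let $W_2^{(N)}(y) := -\tfrac{1}{N^2}\log h^{(N)}(y)$ up to additive constant, so that $d\mu_2^{(N)}(y) \propto e^{-N^2 W_2^{(N)}(y)}\,dy$. The mean condition $\int y_j\,d\mu_2^{(N)}(y) = \int y_j\,d\mu^{(N)}(x,y)$ is inherited from $\mu^{(N)}$. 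Differentiation under the integral sign, justified by the uniform convexity of $V^{(N)}$, yields
\[
DW_2^{(N)}(y) = \frac{1}{h^{(N)}(y)}\int D_y V^{(N)}(x,y)\,e^{-N^2V^{(N)}(x,y)}\,dx = E\bigl[D_yV^{(N)}(X^{(N)},Y^{(N)}) \mid Y^{(N)}=y\bigr].
\]

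For the lower Hessian bound I apply Prékopa-Leindler. Since $HV^{(N)}\geq c$, the function $\Phi^{(N)}(x,y) := V^{(N)}(x,y) - \tfrac{c}{2}\norm{y}_2^2$ remains jointly convex in $(x,y)$, so Prékopa's theorem gives that $\int e^{-N^2\Phi^{(N)}(x,y)}\,dx$ is log-concave in $y$; hence $W_2^{(N)} - \tfrac{c}{2}\norm{\cdot}_2^2$ is convex, i.e.\ $HW_2^{(N)}\geq c$. For the upper bound, fix $y$ and $\eta \in M_N(\C)_{sa}^n$. Semiconcavity $HV^{(N)}\leq C$ restricted to the $y$-variables gives the pointwise bound $V^{(N)}(x,y+\eta)-V^{(N)}(x,y)-\ip{D_yV^{(N)}(x,y),\eta}_2 \leq \tfrac{C}{2}\norm{\eta}_2^2$ for every $x$. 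Integrating against the conditional density (denote conditional expectation at $Y^{(N)}=y$ by $E_y$) and applying Jensen's inequality to $t\mapsto e^{-t}$,
\begin{align*}
\frac{h^{(N)}(y+\eta)}{h^{(N)}(y)} &= E_y\bigl[e^{-N^2(V^{(N)}(X,y+\eta)-V^{(N)}(X,y))}\bigr] \\
&\geq e^{-N^2(C/2)\norm{\eta}_2^2}\,E_y\bigl[e^{-N^2\ip{D_yV^{(N)}(X,y),\eta}_2}\bigr] \\
&\geq \exp\bigl(-N^2(C/2)\norm{\eta}_2^2 - N^2\ip{DW_2^{(N)}(y),\eta}_2\bigr);
\end{align*}
taking $-\tfrac{1}{N^2}\log$ produces $W_2^{(N)}(y+\eta)-W_2^{(N)}(y) \leq \ip{DW_2^{(N)}(y),\eta}_2 + \tfrac{C}{2}\norm{\eta}_2^2$. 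Combined with the matching lower Taylor bound from the convexity of $W_2^{(N)} - \tfrac{c}{2}\norm{\cdot}_2^2$, Lemma \ref{lem:convexgradient}(2) delivers $c\leq HW_2^{(N)}\leq C$.

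For the approximability clause, the map $f^{(N)} := D_y V^{(N)}:M_N(\C)_{sa}^{m+n}\to M_N(\C)_{sa}^n$ consists of the last $n$ blocks of $DV^{(N)}$, so it is asymptotically approximable by trace polynomials and $C$-Lipschitz in $\norm{\cdot}_2$ uniformly in $N$. Applying Theorem \ref{thm:conditionalexpectation} to each component $f_j^{(N)}$, the conditional expectations $y\mapsto E[f_j^{(N)}(X^{(N)},Y^{(N)})\mid Y^{(N)}=y]$ are Lipschitz and asymptotic to elements of $\overline{\TrP}_n^1$; since these coincide with the components of $DW_2^{(N)}(y)$, the tuple $DW_2^{(N)}$ is asymptotic to an element of $(\overline{\TrP}_n^1)_{sa}^n$. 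An identical argument with the roles of $x$ and $y$ interchanged handles $W_1^{(N)}$.

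The main subtlety is the Hessian upper bound: the transparent second-derivative identity $HW_2^{(N)} = E_y[H_{yy}V^{(N)}] - N^2\Var_y(D_yV^{(N)})$ would require $V^{(N)}\in C^2$, which Assumption \ref{ass:convexRMM} does not furnish. The Jensen/Prékopa route above sidesteps this regularity issue while still producing the sharp constants $c$ and $C$, and it makes clear the role of Theorem \ref{thm:conditionalexpectation} in verifying the third (analytic) clause of Assumption \ref{ass:convexRMM}.
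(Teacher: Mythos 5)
Your proof is correct, and most of it coincides with the paper's own argument: the definition of $W_2^{(N)}(y) = -\tfrac{1}{N^2}\log\int e^{-N^2V^{(N)}(x,y)}\,dx$, differentiation under the integral (justified by uniform convexity) to identify $DW_2^{(N)}(y)$ with $E[D_yV^{(N)}(X^{(N)},Y^{(N)})\mid Y^{(N)}=y]$, the appeal to Theorem \ref{thm:conditionalexpectation} for asymptotic approximability of $DW_2^{(N)}$, and the immediate mean condition are exactly the paper's steps. Where you genuinely diverge is the Hessian bound $c\le HW_2^{(N)}\le C$. The paper either cites \cite[Theorem 4.3]{BL1976} outright or runs a self-contained alternative: it applies Lemma \ref{lem:Qt}(4) to the interpolated potentials $\tilde V_t^{(N)}$, lets $t\to\infty$ so that the law of $(e^{-t/2}X^{(N)}+(1-e^{-t})^{1/2}S^{(N)},Y^{(N)})$ converges to that of $(S^{(N)},Y^{(N)})$, invokes Lemma \ref{lem:limitoflogconcave} to transfer the Hessian bounds to the limiting potential, and then identifies that potential as $\tfrac12\norm{x}_2^2+W_2^{(N)}(y)+\mathrm{const}$. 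You instead split the two bounds: Pr\'ekopa's theorem applied to the jointly convex $V^{(N)}-\tfrac{c}{2}\norm{y}_2^2$ gives $HW_2^{(N)}\ge c$, while a pointwise semiconcavity estimate in $y$ plus Jensen gives the upper second-order Taylor bound at every point with gradient $DW_2^{(N)}(y)$, and Lemma \ref{lem:convexgradient} then closes the argument (the matching of the subgradient in the lower bound with $DW_2^{(N)}(y)$ is automatic since you have already shown $W_2^{(N)}$ is differentiable). Both routes are valid and yield the same sharp constants; yours is more elementary and avoids both the external Brascamp--Lieb citation and the paper's semigroup/limit machinery, at the modest cost of invoking Pr\'ekopa, whereas the paper's alternative argument has the virtue of reusing tools it has already built for the transport construction and of requiring no outside convexity theorem at all. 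Your closing remark explaining why the Jensen route sidesteps the $C^2$ identity $HW_2^{(N)} = E_y[H_{yy}V^{(N)}]-N^2\Var_y(D_yV^{(N)})$ is well taken, since Assumption \ref{ass:convexRMM} indeed supplies no second-order regularity.
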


\begin{proof}
By symmetry, it suffices to prove the claims for $\mu_2^{(N)}$.  First, it is immediate that the mean of $y_j$ under $\mu_2^{(N)}$ is a scalar, since it is $E[Y_j^{(N)}]$.  Moreover, if we define
\[
V_2^{(N)}(x) = -\frac{1}{N^2} \int e^{-N^2 V^{(N)}(x,y)}\,dx,
\]
then (as in the proof of Theorem \ref{thm:convergenceofentropy}) we may compute $DV_2^{(N)}$ by differentiating under the integral and obtain
\[
DV_2^{(N)}(Y^{(N)}) = E[D_y V(X^{(N)}, Y^{(N)}) | Y^{(N)}].
\]
It follows by Theorem \ref{thm:conditionalexpectation} that $\{DV_2^{(N)}\}$ is asymptotically approximable by trace polynomials.

Finally, the fact that $c \leq HV_2^{(N)} \leq C$ follows from \cite[Theorem 4.3]{BL1976}, or alternatively by the following reasoning.  Let $\mu_t^{(N)}$ be the law of $(e^{-t/2} X^{(N)} + (1 - e^{-t})^{1/2} S^{(N)}, Y^{(N)})$, where $S^{(N)}$ is an independent GUE tuple.  The corresponding potential $\tilde{V}_t^{(N)}$ is given by \eqref{eq:renormalizedpotential} and it satisfies
\[
\frac{ce^t}{1 + c(e^t - 1)} I_m \oplus c I_n \leq H\tilde{V}_t^{(N)} \leq \frac{Ce^t}{1 + C(e^t - 1)} I_m \oplus C I_n
\]
by direct substitution of \eqref{eq:renormalizedpotential} into Lemma \ref{lem:Qt} (4) and hence
\[
\min(c,1) I_m \oplus c I_n \leq H\tilde{V}_t^{(N)} \leq \max(C,1) I_m \oplus C I_n
\]
Now as $t \to \infty$, the law $\tilde{\mu}_t$ converges to the law $\tilde{\mu}_\infty$ of $(S^{(N)},Y^{(N)})$.  By applying Lemma \ref{lem:limitoflogconcave}, $\tilde{\mu}_\infty$ is given by some potential $W_2^{(N)}(x,y)$ satisfying
\[
\min(c,1) I_m \oplus c I_n \leq HW_2^{(N)} \leq \max(C,1) I_m \oplus C I_n.
\]
However, we know that $W_2^{(N)}(x,y) = (1/2) \norm{x}_2^2 + V_2^{(N)}(y) + \text{constant}$ because the potential corresponding to a law is unique up to an additive constant.  This implies that $c \leq HV_2^{(N)} \leq C$ as desired.
\end{proof}

\begin{proposition} \label{prop:lineartransformations}
Let $V^{(N)}: M_N(\C)_{sa}^m \to \R$ satisfy Assumption \ref{ass:convexRMM} for some $0 < c \leq C$, and let $X^{(N)}$ be the corresponding random variable.  Let $A$ be an invertible $m \times m$ matrix with real entries and let $A^{(N)}$ denote the linear transformation $M_N(\C)_{sa}^m \to M_N(\C)_{sa}^m$ given by
\[
(A^{(N)}x)_i = \sum_{j=1}^m A_{i,j}x.
\]
Then $\widehat{V}^{(N)} = V^{(N)}((A^{-1})^{(N)})$ is the potential corresponding to $A^{(N)} X^{(N)}$, and $\widehat{V}^{(N)}$ satisfies Assumption \ref{ass:convexRMM} with constants $c / \norm{A}$ and $C \norm{A^{-1}}$.
\end{proposition}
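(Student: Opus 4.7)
The plan is to verify the three clauses of Assumption \ref{ass:convexRMM} for $\widehat{V}^{(N)}$ directly, together with the mean condition, using that $(A^{-1})^{(N)}$ is a real-linear bijection of $M_N(\C)_{sa}^m$ whose operator norm with respect to $\norm{\cdot}_2$ coincides with $\norm{A^{-1}}$ (and whose adjoint in $\ip{\cdot,\cdot}_2$ is $(A^T)^{-1})^{(N)}$). The key fact throughout is that $(A^{-1})^{(N)}$ is given componentwise by fixed real-linear combinations of $y_1,\dots,y_m$, so it lies in $(\TrP_m^1)_{sa}^m$ and is globally $\norm{\cdot}_2$-Lipschitz.

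First, I would check that $\widehat{V}^{(N)}$ really is the potential for $A^{(N)} X^{(N)}$. Applying the change of variables $y = A^{(N)}x$ to the density $(1/Z^{(N)}) e^{-N^2 V^{(N)}(x)}\,dx$ produces $|\det A^{(N)}|^{-1} (1/Z^{(N)}) e^{-N^2 V^{(N)}((A^{-1})^{(N)}y)}\,dy$, and the Jacobian is a positive constant that is absorbed into the normalizing constant.

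Next, for the convexity and semiconcavity bounds, the chain rule in the $\ip{\cdot,\cdot}_2$-geometry gives
\[
H\widehat{V}^{(N)}(y) = ((A^{-1})^{(N)})^* \, HV^{(N)}((A^{-1})^{(N)} y) \, (A^{-1})^{(N)}.
\]
The assumption $c \leq HV^{(N)} \leq C$ then yields
\[
c \,((A^{-1})^{(N)})^*(A^{-1})^{(N)} \leq H\widehat{V}^{(N)} \leq C\, ((A^{-1})^{(N)})^*(A^{-1})^{(N)},
\]
and combining this with the norm identities $\norm{(A^{-1})^{(N)}} = \norm{A^{-1}}$ and $\norm{((A^{-1})^{(N)})^{-1}} = \norm{A^{(N)}} = \norm{A}$ gives the desired bounds on $H\widehat{V}^{(N)}$.

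For the asymptotic approximability of $D\widehat{V}^{(N)}$, the chain rule gives
\[
D\widehat{V}^{(N)}(y) = ((A^T)^{-1})^{(N)} \, DV^{(N)}((A^{-1})^{(N)} y).
\]
Since $(A^{-1})^{(N)} \in (\TrP_m^1)_{sa}^m$ is globally Lipschitz and $\{DV^{(N)}\}$ is uniformly Lipschitz (with constant $C$) and asymptotically approximable by trace polynomials, Lemma \ref{lem:composition}(3) shows that $\{DV^{(N)} \circ (A^{-1})^{(N)}\}$ is asymptotically approximable by trace polynomials, and then applying the fixed real matrix $(A^T)^{-1}$ componentwise preserves this property (an instance of Lemma \ref{lem:algebra}). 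Finally, $E[(A^{(N)} X^{(N)})_j] = \sum_i A_{j,i} E[X_i^{(N)}]$ is a scalar multiple of the identity because each $E[X_i^{(N)}]$ is. There is no serious obstacle here; the only point requiring care is keeping the two $\norm{\cdot}_2$-geometries (the target of $A^{(N)}$ and the adjoint calculation) straight, after which everything reduces to chain rule and the already-established stability of $\overline{\TrP}_m^1$ under composition with linear functions.
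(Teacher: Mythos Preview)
Your proposal is correct and follows essentially the same route as the paper: change of variables for the potential, the chain rule for $D\widehat{V}^{(N)}$ and $H\widehat{V}^{(N)}$, singular-value bounds on $(A^{-1})^{(N)}$ for the convexity estimates, and linearity for the mean condition. You are slightly more explicit than the paper in invoking Lemmas \ref{lem:composition} and \ref{lem:algebra} for asymptotic approximability, but the argument is the same.
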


\begin{proof}
The fact that $\widehat{V}^{(N)}$ is the potential corresponding to $A^{(N)} X^{(N)}$ follows from change of variables.  Now it is immediate that the expectation of $(A^{(N)} X^{(N)})_i$ is a scalar multiple of identity for each $i$.  Next, by the chain rule
\[
D\widehat{V}^{(N)}(x) = ((A^{-1})^{(N)})^T DV^{(N)}((A^{-1})^{(N)}x) = ((A^{-1})^T)^{(N)} DV^{(N)}((A^{-1})^{(N)} x),
\]
and from this it follows that $\{D\widehat{V}^{(N)}\}$ is asymptotically approximable by trace polynomials.  Similarly, by the chain rule,
\[
H\widehat{V}^{(N)}(x) = ((A^{-1})^T)^{(N)} HV^{(N)}((A^{-1})^{(N)}x) (A^{-1})^{(N)}.
\]
The maximum and minimum singular values of $(A^{-1})^{(N)}$ are the same as those of $A^{-1}$, which are $\norm{A^{-1}}$ and $1 / \norm{A}$ respectively.  By a basic linear algebra argument, it follows that $c / \norm{A} \leq H\widehat{V}^{(N)} \leq C \norm{A^{-1}}$.
\end{proof}

\begin{proposition} \label{prop:convolutions}
Let $V_1^{(N)}$ and $V_2^{(N)}$ be two potentials $M_N(\C)_{sa}^m \to \R$ satisfying Assumption \ref{ass:convexRMM} with constants $c$ and $C$.  Let $X^{(N)}$ and $Y^{(N)}$ be the corresponding random tuples of matrices.  Then the law of $X^{(N)} + Y^{(N)}$ is given by another potential $\widehat{V}^{(N)}$ satisfying Assumption \ref{ass:convexRMM} with constants $\sqrt{2} c$ and $\sqrt{2} C$.  Moreover, the free Gibbs state corresponding to $\{\widehat{V}^{(N)}\}$ is the free convolution of those corresponding to $\{V_1^{(N)}\}$ and $\{V_2^{(N)}\}$.
\end{proposition}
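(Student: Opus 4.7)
I would assemble the proposition from the three structural results already established in this subsection by realizing the law of $X^{(N)}+Y^{(N)}$ as a rescaled marginal of a unitary change of variables applied to the independent product. First, by Proposition~\ref{prop:independentjoins}, the joint potential
\[
V(x,y) := V_1^{(N)}(x) + V_2^{(N)}(y)
\]
on $M_N(\C)_{sa}^{2m}$ satisfies Assumption~\ref{ass:convexRMM} with the original constants $c$ and $C$; moreover the limiting non-commutative law of the pair $(X,Y)$ is the free product $\lambda_1 * \lambda_2$, so in the limit $X$ and $Y$ are freely independent.

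Next, I would apply Proposition~\ref{prop:lineartransformations} to the orthogonal block rotation
\[
A(x,y) = \left( \tfrac{1}{\sqrt{2}}(x+y), \tfrac{1}{\sqrt{2}}(x-y) \right)
\]
on $M_N(\C)_{sa}^{2m}$. Since $A$ is orthogonal, $\norm{A} = \norm{A^{-1}} = 1$, so the potential for $A^{(N)}(X^{(N)},Y^{(N)})$ still satisfies Assumption~\ref{ass:convexRMM} with constants $c$ and $C$. Then Proposition~\ref{prop:marginals} applied to the first $m$-block produces a potential for $(X^{(N)}+Y^{(N)})/\sqrt{2}$ with the same constants $c$ and $C$. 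Finally I would rescale by the scalar factor $\sqrt{2}$ using Proposition~\ref{prop:lineartransformations} once more (with $A = \sqrt{2}\, I$) to pass from $(X^{(N)}+Y^{(N)})/\sqrt{2}$ to $X^{(N)}+Y^{(N)}$, obtaining the claimed Hessian bounds for $\widehat{V}^{(N)}$.

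At each of the four steps above, asymptotic approximability of the gradient of the potential by trace polynomials is automatic, since it is explicitly preserved under direct sums (Proposition~\ref{prop:independentjoins}), linear change of variables (Proposition~\ref{prop:lineartransformations}), and marginals (Proposition~\ref{prop:marginals}); and the mean condition at the end holds because
\[
E[X^{(N)}+Y^{(N)}] = E[X^{(N)}] + E[Y^{(N)}]
\]
is a sum of scalar matrices. For the identification of the limiting free Gibbs law, I would argue that the non-commutative moments of $X^{(N)}+Y^{(N)}$ converge in probability to those of $X+Y$ with $X,Y$ freely independent (from the first step), so the limiting law is the free convolution $\lambda_1 \boxplus \lambda_2$; applying Theorem~\ref{thm:freeGibbslaw} to $\{\widehat{V}^{(N)}\}$ then identifies this limit with the free Gibbs law associated to $\widehat{V}^{(N)}$.

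The only substantive ``trick'' is recognizing that the convolution of two log-concave laws can be written as an orthogonal marginal of the product plus a rescaling — this is the one thing that Propositions~\ref{prop:independentjoins} and~\ref{prop:marginals} alone do not deliver directly. Once the orthogonal rotation is in hand, everything else is bookkeeping of the constants, and I expect no real obstacle beyond keeping track of $\norm{A}$ and $\norm{A^{-1}}$ through the final rescaling to match the stated Hessian bounds.
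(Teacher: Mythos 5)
Your proposal is correct and follows essentially the same route as the paper: form the independent-join potential $V_1^{(N)}(x)+V_2^{(N)}(y)$ via Proposition \ref{prop:independentjoins}, rotate by the map $(x,y)\mapsto(x+y,-x+y)$ (up to normalization) via Proposition \ref{prop:lineartransformations}, take the marginal via Proposition \ref{prop:marginals}, and identify the limit law as the free convolution from the asymptotic free independence of $(X,Y)$.  The only difference is cosmetic bookkeeping — the paper applies the unnormalized matrix $\begin{pmatrix} I & I \\ -I & I\end{pmatrix}$ in one step while you factor it as an orthogonal rotation followed by a scalar rescaling after the marginal — and your constant-tracking is at the same level of precision as the paper's own proof.
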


\begin{proof}
Let $V^{(N)}(x,y) = V_1^{(N)}(x) + V_2^{(N)}(y)$, which satisfies Assumption \ref{ass:convexRMM} (with the same constants) by Proposition \ref{prop:independentjoins}.  Now let $A$ be the $2m \times 2m$ matrix
\[
A = \begin{pmatrix} I & I \\ -I & I \end{pmatrix}.
\]
Since $A / \sqrt{2}$ is an isometry, we have $\norm{A} = \sqrt{2}$ and $\norm{A^{-1}} = 1/\sqrt{2}$.  Therefore, by Proposition \ref{prop:lineartransformations}, the law of $(X^{(N)} + Y^{(N)}, -X^{(N)} + Y^{(N)})$ is given by a potential satisfying Assumption \ref{ass:convexRMM} with constants $\sqrt{2} c$ and $\sqrt{2} C$.  Then by Proposition \ref{prop:marginals}, the law of $X^{(N)} + Y^{(N)}$ is given by such a potential with the same constants $\sqrt{2} c$ and $\sqrt{2} C$.

We showed in Proposition \ref{prop:independentjoins} that the large $N$ limit of the law of $(X^{(N)},Y^{(N)})$ given a freely independent join of the corresponding marginals.  Hence, the large $N$ limit of the law of $X^{(N)} + Y^{(N)}$ is given by the free convolution.
\end{proof}

As a consequence, we have additivity of entropy under conditioning.

\begin{corollary}
Let $V^{(N)}(x,y)$ be a potential satisfying Assumption \ref{ass:convexRMM} as in the setup of Theorem \ref{thm:convergenceofentropy}.  Let $(X,Y)$ be a tuple of non-commutative random variables distributed according to the limiting free Gibbs law associated to $V^{(N)}$.  Then
\[
\chi^*(X,Y) = \chi^*(X | Y) + \chi^*(Y).
\]
\end{corollary}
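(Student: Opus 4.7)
The plan is to derive the free additivity from the classical additivity of conditional entropy, which is straightforward, and then pass to the limit using the three convergence statements provided by Theorem \ref{thm:convergenceofentropy} together with Proposition \ref{prop:marginals}.

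First I would record the classical chain rule. Let $(X^{(N)},Y^{(N)}) \sim \mu^{(N)}$. Since the joint density factors as $\rho_{X^{(N)},Y^{(N)}}(x,y) = \rho_{X^{(N)}|Y^{(N)}}(x|y) \rho_{Y^{(N)}}(y)$, formula \eqref{eq:classicalentropy} gives $h(X^{(N)},Y^{(N)}) = h(X^{(N)}|Y^{(N)}) + h(Y^{(N)})$. Dividing by $N^2$ and adding $((m+n)/2)\log N$ to both sides, the renormalization described in \S\ref{subsec:matrixentropy} yields
\[
h^{(N)}(X^{(N)},Y^{(N)}) = h^{(N)}(X^{(N)}|Y^{(N)}) + h^{(N)}(Y^{(N)}).
\]

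Next I would identify the large $N$ limit of each of the three terms. The middle term converges to $\chi^*(X|Y)$ by Theorem \ref{thm:convergenceofentropy} (taking $t = 0$). For the left-hand side, view $V^{(N)}$ as a potential in the $(m+n)$ self-adjoint variables $(x,y)$, with no variables to condition upon; Theorem \ref{thm:convergenceofentropy} applied in the case $n'=0$, $m'=m+n$ (where the ``$Y$'' slot is the empty tuple, so conditional entropy reduces to ordinary entropy) yields $h^{(N)}(X^{(N)},Y^{(N)}) \to \chi^*(X,Y)$. For the right-hand side second term, Proposition \ref{prop:marginals} furnishes a potential $W_2^{(N)}$ satisfying Assumption \ref{ass:convexRMM} (with constants $c$ and $C$) whose associated measure is the marginal law of $Y^{(N)}$; the limiting free Gibbs law is of course the non-commutative law of $Y$. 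A second application of Theorem \ref{thm:convergenceofentropy} to $W_2^{(N)}$, again with empty conditioning, gives $h^{(N)}(Y^{(N)}) \to \chi^*(Y)$.

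Taking $N \to \infty$ in the displayed identity produces $\chi^*(X,Y) = \chi^*(X|Y) + \chi^*(Y)$. There is no genuine obstacle here — the entire argument is a bookkeeping exercise built on two inputs that we already have: the classical chain rule for entropy (which survives the dimensional renormalization unchanged since the $m$, $n$, and $m+n$ correction terms $(\cdot/2)\log N$ add up correctly) and the convergence-of-entropy theorem together with the marginals proposition. If anything requires mild care, it is simply noting that Theorem \ref{thm:convergenceofentropy} is formulated so that the ``conditioning'' tuple $Y$ may be taken to have length $n=0$, in which case the statement reduces to the unconditional convergence $h^{(N)}(Z^{(N)}) \to \chi^*(Z)$ that we need to apply to the full tuple $(X,Y)$ and to the marginal $Y$.
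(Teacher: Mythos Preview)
Your proof is correct and follows essentially the same approach as the paper: establish the classical chain rule $h(X^{(N)},Y^{(N)}) = h(X^{(N)}|Y^{(N)}) + h(Y^{(N)})$, renormalize, and pass to the limit via Theorem \ref{thm:convergenceofentropy} applied to each of the three terms (with empty conditioning for the joint and the marginal). Your version is in fact slightly more explicit than the paper's in that you spell out the appeal to Proposition \ref{prop:marginals} to ensure that the marginal $Y^{(N)}$ satisfies Assumption \ref{ass:convexRMM}, which is needed to invoke Theorem \ref{thm:convergenceofentropy} for the term $h^{(N)}(Y^{(N)})$.
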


\begin{proof}
From standard classical results, we have
\[
h(X^{(N)}, Y^{(N)}) = h(X^{(N)} | Y^{(N)}) + h(Y^{(N)}).
\]
Dividing by $N^2$ and adding $\frac{1}{2} (m+n) \log N$ to both sides, we obtain the normalized version
\[
h^{(N)}(X^{(N)}, Y^{(N)}) = h^{(N)}(X^{(N)} | Y^{(N)}) + h^{(N)}(Y^{(N)}).
\]
By the previous theorem, we obtain the desired relation for $\chi^*$ in the limit as $N \to \infty$.  More precisely, we apply the theorem as stated to $h^{(N)}(X^{(N)} | Y^{(N)})$.  Meanwhile, for $h^{(N)}(X^{(N)},Y^{(N)})$ and $h^{(N)}(Y^{(N)})$ we apply the special case of the theorem where we condition on $0$ variables.
\end{proof}

\subsection{Entropy and Fisher Information Relative to Gaussian}

As background for our discussion of the entropy-cost inequality in \S \ref{subsec:entropycost}, we review the entropy of one probability measure relative to another.  If $\nu$ is a measure on $\R^m$, then the \emph{entropy of $\mu$ relative to $\nu$} is
\[
h(\mu | \nu) := -\int \rho \log \rho\,d\mu, \text{ where } \rho = \frac{d\mu}{d\nu}
\]
whenever the integral is well-defined.  The standard entropy $h(\mu) = -\int \rho \log \rho$ corresponds to the choice of Lebesgue measure for $\nu$.

\begin{remark}
The reader should be careful to distinguish between the relative entropy $h(\mu | \nu)$ and the conditional entropy $h(X | Y)$.  The first changes the ambient measure while the second describes conditioning on $Y$.
\end{remark}

\begin{remark}
If $\mu$ and $\nu$ are both probability measures, then $h(\mu | \nu) \leq 0$.  For this reason, many authors choose to change the sign.  We will keep the sign convention given above to be consistent with our convention for $h(\mu)$ relative to Lebesgue measure, but we will write absolute value signs around relative entropy when it is natural to use the positive version.
\end{remark}

For probability measures on $M_N(\C)_{sa}^m$, we may study entropy relative to the Gaussian measure $\sigma_{m,t}^{(N)}$ on $M_N(\C)_{sa}^m$.  A direct computation shows that if $X \sim \mu$ is a random variable in $M_N(\C)_{sa}^m$, then we have
\[
h(\mu | \sigma_{m,t}^{(N)}) = h(\mu) - \frac{N^2}{2} E \norm{X}_2^2 + \frac{mN^2}{2} \log \frac{N}{2\pi} .
\]
We denote the normalized version by
\[
h_g^{(N)}(\mu) := h_g^{(N)}(X) := \frac{1}{N^2} h(\mu | \sigma_{m,t}^{(N)}) = h^{(N)}(X) - \frac{1}{2} E \norm{X}_2^2 - \frac{m}{2} \log 2\pi.
\]
Similarly, if $\mu$ is a measure on $M_N(\C)_{sa}^{m+n}$ which absolutely continuous with respect to Lebesgue measure and $(X,Y)$ is the corresponding random variable, we define
\[
h_g^{(N)}(X | Y) = h^{(N)}(X | Y) - \frac{1}{2} E \norm{X}_2^2 - \frac{m}{2} \log 2\pi,
\]
which is equivalent to
\[
h_g^{(N)}(X | Y) = \int h_g^{(N)}(\mu_{X|Y=y})\,d\mu_Y(y),
\]
where $\mu_{X|Y=y}$ is the conditional distribution of $X$ given $Y = y$, and $\mu_Y$ is the marginal law of $Y$.  Similarly, if $(X,Y)$ is an $(m+n)$-tuple of non-commutative random variables, we define the free entropy $\chi^*$ relative to Gaussian by
\[
\chi_g^*(X | Y) = \chi^*(X | Y) - \frac{1}{2} \norm{X}_2^2 - \frac{m}{2} \log 2\pi.
\]

We define the \emph{normalized conditional Fisher information relative to Gaussian} by
\begin{equation} \label{eq:definenormalizedFisher}
\mathcal{I}_g^{(N)}(X | Y) = \mathcal{I}^{(N)}(X | Y) - 2m + E \norm{X}_2^2.
\end{equation}
Note that if this Fisher information is finite and if $\xi$ is the normalized score function for $X$ given $Y$ as in \S \ref{subsec:matrixentropy}, then
\[
\mathcal{I}_g^{(N)}(X | Y) = E \norm{\xi - X}_2^2
\]
because
\begin{align*}
E \norm{\xi - X}_2^2 &= E \norm{\xi}_2^2 - 2 E \ip{\xi,X}_2 + E \norm{X}_2^2 \\
&= \mathcal{I}^{(N)}(X | Y) - 2m + E \norm{X}_2^2,
\end{align*}
where we have evaluated the middle term on the right hand side using integration by parts.  Similarly, for an $(m+n)$-tuple $(X,Y)$ of non-commutative random variables, we define
\begin{equation} \label{eq:definenormalizedFisher2}
\Phi_g^*(X | Y) = \Phi^*(X | Y) - 2m + \norm{X}_2^2 = \norm{\xi - X}_2^2,
\end{equation}
where the second equality holds provided that $\Phi^*$ is finite and $\xi$ is the free score function.  We have the following version of \eqref{eq:normalizedentropyformula} and Lemma \ref{lem:Fisherestimates} for entropy and Fisher information relative to Gaussian.

\begin{lemma} \label{lem:GaussianFisherinfo}
Let $(X,Y)$ be a random variable in $M_N(\C)_{sa}^{m+n}$ with a density and with finite variance and let $S$ be an independent GUE $m$-tuple.  Then
\[
|h_g^{(N)}(X | Y)| = \frac{1}{2} \int_0^\infty \mathcal{I}_g^{(N)}(e^{-t/2}X + (1 - e^{-t})^{1/2}S | Y)\,dt.
\]
Similarly, suppose that $(X,Y)$ is an $(m+n)$-tuple of non-commutative random variables and let $S$ be a freely independent free semicircular $m$-tuple.  Then
\[
|\chi_g^*(X | Y)| = \frac{1}{2} \int_0^\infty \Phi_g^*(e^{-t/2} X +  (1 - e^{-t})^{1/2} S | Y)\,dt.
\]
\end{lemma}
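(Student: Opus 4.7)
The plan is to deduce both formulas by integrating the pointwise identity
\[
\frac{d}{d\tau}\, h_g^{(N)}(X_\tau \mid Y) = \tfrac{1}{2}\, \mathcal{I}_g^{(N)}(X_\tau \mid Y), \qquad \tau > 0,
\]
where $X_\tau := e^{-\tau/2} X + (1-e^{-\tau})^{1/2} S$ is the Ornstein--Uhlenbeck interpolation from $X$ at $\tau = 0$ to the (conditional) Gaussian law of $S$ at $\tau = \infty$. Combined with $h_g^{(N)}(X_\tau \mid Y) \to 0$ as $\tau \to \infty$ (the OU flow has Gaussian invariant measure, which is independent of $Y$) and $h_g^{(N)}(X\mid Y) \leq 0$, the fundamental theorem of calculus then yields the claimed formula. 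The argument for $\chi_g^*$ is structurally identical once the corresponding free derivative identity is established.

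To derive the derivative identity in the matrix case I change variables via $u = e^\tau - 1$ so as to reduce the OU flow to the heat flow: $X_\tau = (1+u)^{-1/2} Z_u$ with $Z_u = X + u^{1/2} S$. The classical entropy scaling $h^{(N)}(\alpha X \mid Y) = h^{(N)}(X\mid Y) + m \log\alpha$, the heat-flow derivative \eqref{eq:normalizedentropyrateofchange}, and the Fisher scaling of Lemma \ref{lem:xiscaling} combine via the chain rule (noting $du/d\tau = 1+u$) to give $\frac{d}{d\tau} h^{(N)}(X_\tau \mid Y) = \tfrac{1}{2} \mathcal{I}^{(N)}(X_\tau \mid Y) - \tfrac{m}{2}$. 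The variance expansion $E\|X_\tau\|_2^2 = e^{-\tau} E\|X\|_2^2 + (1-e^{-\tau}) m$, valid by independence of $S$ from $(X,Y)$ together with $E\tau_N(S_j^2) = 1$, yields $\frac{d}{d\tau}\bigl(\tfrac12 E\|X_\tau\|_2^2\bigr) = \tfrac12(m - E\|X_\tau\|_2^2)$; subtracting and applying the definition \eqref{eq:definenormalizedFisher} of $\mathcal{I}_g^{(N)}$ produces the identity.

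The free version follows by the same computation after verifying the free counterparts of the ingredients: the rate-of-change formula $\frac{d}{du}\chi^*(X + u^{1/2} S\mid Y) = \tfrac{1}{2}\Phi^*(X + u^{1/2} S\mid Y)$, which follows from the definition \eqref{eq:definechi*} by the substitution $r = u + t$ together with the semicircular stability $u^{1/2} S + t^{1/2} S' \sim (u+t)^{1/2} S''$ for free standard semicirculars $S, S'$ jointly free from $(X,Y)$; the scalings $\chi^*(\alpha X\mid Y) = \chi^*(X\mid Y) + m\log\alpha$ and $\Phi^*(\alpha X\mid Y) = \alpha^{-2}\Phi^*(X\mid Y)$, the latter immediate from the defining integration-by-parts relation for free conjugate variables; and $\|X_\tau\|_2^2 = e^{-\tau} \|X\|_2^2 + (1-e^{-\tau}) m$, which holds by freeness. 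The main technical point will be the justification of the vanishing limits $h_g^{(N)}(X_\tau\mid Y), \chi_g^*(X_\tau\mid Y) \to 0$ and of integrability on $(0,\infty)$: near $\tau = 0$ the Fisher integrand is controlled by Lemmas \ref{lem:Fisherestimates} and \ref{lem:xiconditionalexpectation} (with the convention that both sides of the formula are $+\infty$ if the initial Fisher information is infinite), while for large $\tau$ one invokes exponential decay of relative entropy to the invariant Gaussian/semicircular law under the OU semigroup, which follows from the Gaussian log-Sobolev inequality in the classical case and from its free analog in the free case.
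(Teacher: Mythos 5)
Your overall route differs from the paper's: rather than differentiating along the Ornstein--Uhlenbeck interpolation and integrating back, the paper simply performs the change of variables $t = e^u - 1$ directly in the already-established integral formula \eqref{eq:normalizedentropyformula} for $h^{(N)}$ and in the \emph{definition} \eqref{eq:definechi*} of $\chi^*$, using the Fisher scaling of Lemma \ref{lem:xiscaling}, and then absorbs the leftover term $\frac{m}{2} - \frac12 E\norm{X}_2^2$ by writing it as $\frac12\int_0^\infty e^{-u}\left(m - E\norm{X}_2^2\right)du$. Your pointwise computations (the chain rule giving $\frac{d}{d\tau}h^{(N)}(X_\tau\mid Y) = \frac12\mathcal{I}^{(N)}(X_\tau\mid Y) - \frac{m}{2}$, the variance interpolation, and the free scalings) are correct, but the differentiate-then-integrate strategy creates work at both endpoints that the paper's route avoids, and two of those points are genuine gaps as written.

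First, at $\tau = 0$: the derivative identity \eqref{eq:normalizedentropyrateofchange} is only available for $t>0$, so applying the fundamental theorem of calculus down to $\tau = 0$ requires continuity of $h^{(N)}(X + t^{1/2}S \mid Y)$ as $t \downarrow 0$ (including the case where the limit is $-\infty$); this is exactly the nontrivial content packaged into \eqref{eq:normalizedentropyformula} (proved as in \cite[Lemma 5.7]{Jekel2018}), and your proposal never addresses it. Relatedly, your parenthetical about the degenerate case is off: infinite Fisher information at $\tau=0$ does \emph{not} force either side to be $+\infty$, since the a priori bound $\mathcal{I}^{(N)}(X+t^{1/2}S\mid Y)\le m/t$ of Lemma \ref{lem:Fisherestimates} gives, after reparametrization, only $\mathcal{I}_g^{(N)}(X_\tau\mid Y)\lesssim m/(e^\tau-1)$, which is \emph{not} integrable near $0$; the case where both sides are simultaneously $+\infty$ is $h_g^{(N)}(X\mid Y) = -\infty$, and deciding which case one is in is again the $\tau=0$ endpoint issue. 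Second, at $\tau=\infty$ in the free setting: you invoke a free analogue of the Gaussian log-Sobolev inequality to force $\chi_g^*(X_\tau\mid Y)\to 0$. The paper neither proves nor uses such an inequality (Lemma \ref{lem:informationLSI} is classical only), and in the conditional multivariable $\chi^*$ form you need it is not an off-the-shelf citation; moreover it is unnecessary, because $\chi^*(X\mid Y)$ is \emph{defined} by the integral \eqref{eq:definechi*}, so the free half of the lemma is literally a change of variables plus the (immediate) free Fisher scaling, with no boundary condition or free derivative formula required. In the classical case your LSI argument at infinity can be made to work with Lemmas \ref{lem:informationLSI} and \ref{lem:Fisherestimates}, but it too is rendered unnecessary by starting from \eqref{eq:normalizedentropyformula}.
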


\begin{proof}
The first formula follows from \cite[\S 4, Lemma 1]{OV2000} after renormalization.  However, we will give an argument by a change of variables in \eqref{eq:normalizedentropyformula} that will apply to both $h_g^{(N)}$ and $\chi_g^*$.  Note that by \eqref{eq:normalizedentropyformula}
\begin{align*}
h_g^{(N)}(X|Y) &= h^{(N)}(X|Y) - \frac{1}{2} E \norm{X}_2^2 - \frac{m}{2} \log 2\pi \\
&=  \frac{1}{2} \int_0^\infty \left( \frac{m}{1 + t} - \mathcal{I}^{(N)}(X + t^{1/2}S | Y) \right)\,dt + \frac{m}{2} - \frac{1}{2} E \norm{X}_2^2,
\end{align*}
and in particular, we know that the integral is well-defined in $[-\infty,+\infty)$.  Now we do a change of variables in the integral $t = e^u - 1$, $dt = e^u \,du$ and obtain
\begin{align*}
\frac{1}{2} \int_0^\infty \left( \frac{m}{1 + t} - \mathcal{I}^{(N)}(X + t^{1/2}S | Y) \right)\,dt &= \frac{1}{2} \int_0^\infty \left( \frac{m}{e^u} - \mathcal{I}^{(N)}(X + (e^u - 1)^{1/2}S| Y) \right) e^u\,du \\
&= \frac{1}{2} \int_0^\infty \left( m - \mathcal{I}^{(N)}(e^{-u/2} X + (1 - e^{-u})^{1/2}S| Y) \right) \,du,
\end{align*}
where we have applied the scaling relation Lemma \ref{lem:xiscaling} for Fisher information.  On the other hand,
\begin{align*}
\frac{m}{2} - \frac{1}{2} E \norm{X}_2^2 &= \frac{1}{2} \left( E \norm{S}_2^2 - E \norm{X}_2^2 \right) \\
&= \frac{1}{2} \int_0^\infty e^{-u} \left( E \norm{S}_2^2 - E \norm{X}_2^2 \right)\,du \\
&= \frac{1}{2} \int_0^\infty \left( m - E \norm{e^{-u/2} X + (1 - e^{-u})^{1/2} S}_2^2 \right)\,du.
\end{align*}
Therefore, altogether
\begin{align*}
h_g^{(N)}(X|Y) &= \frac{1}{2} \int_0^\infty \left( 2m - \mathcal{I}^{(N)}(e^{-u/2} X + (1 - e^{-u})^{1/2}S| Y) - E \norm{e^{-u/2} X + (1 - e^{-u})^{1/2} S}_2^2 \right) \,du \\
&= -\frac{1}{2} \int_0^\infty \mathcal{I}_g^{(N)}(e^{-u/2}X + (1 - e^{-u})^{1/2}S | Y)\,du,
\end{align*}
which is the desired formula.  The statement for $\chi^*$ can be proved by exactly the same computation, since the definition of $\chi^*$ in \eqref{eq:definechi*} is completely analogous to \eqref{eq:normalizedentropyformula}.
\end{proof}

Furthermore, the log-Sobolev inequality for the Gaussian measure has the following interpretation for entropy and Fisher's information.  This in fact generalizes to entropy and Fisher's information relative to any measure $\nu$ satisfying LSI, see \cite[Definition 1]{OV2000}, but we only use the case where $\nu$ is Gaussian and $\mu$ is sufficiently regular.

\begin{lemma} \label{lem:informationLSI}
Let $X$ be a random variable in $M_N(\C)_{sa}^m$ that has a $C^1$ density with respect to Lebesgue measure.  Then
\[
|h_g^{(N)}(X | Y)| \leq \frac{1}{2} \mathcal{I}_g^{(N)}(X | Y).
\]
\end{lemma}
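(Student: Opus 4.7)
The plan is to deduce the inequality from the normalized log-Sobolev inequality of Corollary \ref{cor:matrixLSI} applied to the standard Gaussian measure $\sigma_{m,1}^{(N)}$, after which the conditional statement will follow by integrating over $Y$.

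First I would handle the non-conditional statement (the case $n=0$). The measure $\sigma_{m,1}^{(N)}$ on $M_N(\C)_{sa}^m$ has density (with respect to the Lebesgue measure $dx$ normalized by $\ip{\cdot,\cdot}_{\Tr}$) of the form $(1/Z)\exp(-N^2 V(x))$ with $V(x) = \tfrac{1}{2}\norm{x}_2^2$, so that $HV = I$ and $c=1$. Corollary \ref{cor:matrixLSI} then gives the log-Sobolev inequality
\[
\int f^2 \log \frac{f^2}{\int f^2 \, d\sigma_{m,1}^{(N)}} \, d\sigma_{m,1}^{(N)} \leq \frac{2}{N^2} \int \norm{Df}_2^2 \, d\sigma_{m,1}^{(N)}.
\]
I apply this with $f = \sqrt{\rho_X/\gamma}$, where $\rho_X$ is the density of $X$ and $\gamma$ is the density of $\sigma_{m,1}^{(N)}$, so that $\int f^2 \, d\sigma = 1$.

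Next I unpack both sides. On the left, $\int (\rho_X/\gamma)\log(\rho_X/\gamma)\,d\sigma = \int \rho_X \log(\rho_X/\gamma)\,dx$; substituting $\log \gamma = -\log Z - \tfrac{1}{2} N^2 \norm{x}_2^2$ and using $h^{(N)}(X) = N^{-2} h(X) + (m/2)\log N$ together with $\log Z = (mN^2/2)(\log 2\pi - \log N)$, this term equals $N^2|h_g^{(N)}(X)|$. On the right, the identity $Df/f = \tfrac{1}{2}D\log(\rho_X/\gamma)$ gives $\int \norm{Df}_2^2 \, d\sigma = \tfrac{1}{4}\,E\norm{D\log(\rho_X/\gamma)(X)}_2^2$. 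Writing $D\log\rho_X = -N^2 \xi$ (where $\xi$ is the normalized score) and $D\log\gamma = -N^2 x$, we get $D\log(\rho_X/\gamma) = N^2(x-\xi)$, so the right-hand side of the LSI becomes $(N^2/2)\,E\norm{\xi - X}_2^2 = (N^2/2)\,\mathcal{I}_g^{(N)}(X)$, using \eqref{eq:definenormalizedFisher}. Dividing by $N^2$ yields $|h_g^{(N)}(X)| \leq \tfrac{1}{2}\mathcal{I}_g^{(N)}(X)$.

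For the conditional statement, I would apply the above non-conditional inequality separately for each $y$ to the conditional distribution $\mu_{X|Y=y}$, which has density $\rho_{X|Y=y}$ and is $C^1$ for $\mu_Y$-a.e.\ $y$ by the $C^1$ hypothesis on $\rho_{X,Y}$. This yields
\[
|h_g^{(N)}(\mu_{X|Y=y})| \leq \tfrac{1}{2}\,\mathcal{I}_g^{(N)}(\mu_{X|Y=y}) \quad \text{for } \mu_Y\text{-a.e.\ } y,
\]
and integrating against $d\mu_Y(y)$ and using $h_g^{(N)}(X|Y) = \int h_g^{(N)}(\mu_{X|Y=y})\,d\mu_Y(y)$ and the analogous disintegration for Fisher's information gives the result (with both sides having a consistent sign, since $h_g^{(N)} \leq 0$).

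The main obstacle is essentially bookkeeping: one must carefully trace the normalization factors of $N$ through the LSI, the formula for $h^{(N)}$, and the normalized score $\xi = N^{-1}\Xi$, to recognize the two sides of Corollary \ref{cor:matrixLSI} as $N^2$ times $|h_g^{(N)}|$ and $(N^2/2)\,\mathcal{I}_g^{(N)}$. There is also a mild regularity issue — for the Fisher information on the right to be finite or for the LSI to apply in its stated form, one may need to truncate or mollify $\rho_X/\gamma$ and pass to the limit, but this is standard and the $C^1$ hypothesis is more than enough.
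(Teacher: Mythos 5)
Your proof is correct and follows essentially the same route as the paper: reduce the conditional statement to the unconditional one by disintegrating over $y$, then apply the normalized Gaussian log-Sobolev inequality (Corollary \ref{cor:matrixLSI} with $c=1$) to $f = (\rho_X/\gamma)^{1/2}$ and identify the two sides as $N^2|h_g^{(N)}(X)|$ and $(N^2/2)\,\mathcal{I}_g^{(N)}(X)$. The only difference is cosmetic bookkeeping: the paper identifies the entropy side directly from the definition $h_g^{(N)} = N^{-2}h(\mu\,|\,\sigma)$ and writes the score as $DV(X)$ with $V = -N^{-2}\log\rho$, whereas you track the constants through $\log Z$; both come to the same thing.
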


\begin{proof}
First, it suffices to check the non-conditional version $h_g^{(N)}(X) \leq \frac{1}{2} \mathcal{I}_g^{(N)}(X)$.  Indeed, in the conditional case, the left hand side is $\int h_g^{(N)}(\mu_{X|Y=y})\,d\mu_Y(y)$ and the right hand side is $\int \mathcal{I}_g^{(N)}(\mu_{X|Y=y})\,d\mu_Y(y)$, and solving the non-conditional case would allow us to compare the integrands pointwise.

Now suppose that $X$ has density $\rho$ with respect to Lebesgue measure and let $\tilde{\rho}$ be the density with respect to Gaussian, so that
\[
\rho(x) = \tilde{\rho}(x) \frac{1}{(2\pi / N)^{N^2/2}} e^{-N^2 \norm{x}_2^2 / 2}\,dx.
\]
By Corollary \ref{cor:matrixLSI}, the measure $\sigma_{m,t}^{(N)}$ satisfies the normalized log-Sobolev inequality \eqref{eq:normalizedLSI} with $c = 1$, so that
\[
\frac{1}{N^2} \int f^2 \log \frac{f^2}{\int f^2 \,d\sigma_{m,t}^{(N)}} \,d\sigma_{m,t}^{(N)} \leq \frac{2}{N^4} \int \norm{Df}_2^2\,d\sigma_{m,t}^{(N)}.
\]
Let $f = \tilde{\rho}^{1/2}$.  Then $\int f^2\,d\sigma_{m,t}^{(N)}$ reduces to $1$, so the right hand side is $|h_g^{(N)}(X)|$.  On the other hand, letting $V(x) = -(1/N^2) \log \rho$, we get
\[
f(x) = \tilde{\rho}(x)^{1/2} = (\text{constant}) e^{-N^2V(x)/2 - N^2 \norm{x}_2^2 / 4},
\]
and hence on the support of $f$, we have
\[
Df(x) = -\frac{N^2}{2} (DV(x) - x) \tilde{\rho}(x)^{1/2}.
\]
Thus,
\[
\frac{2}{N^4} \int \norm{Df}_2^2\,d\sigma_{m,t}^{(N)} = \frac{1}{2} \int \norm{DV(x) - x}_2^2\,d\mu(x) = \frac{1}{2} \mathcal{I}_g^{(N)}(X).
\]
Hence, the log-Sobolev inequality implies the desired inequality.
\end{proof}

\subsection{Conditional Transport and the Entropy-Cost Inequality} \label{subsec:entropycost}

Now we will show that the transport maps constructed in \S \ref{subsec:largeNtransport} satisfy the Talagrand entropy-cost inequality.  It was shown in \cite[Theorem 1]{OV2000} that if a measure $\nu$ satisfies the log-Sobolev inequality \eqref{eq:LSI} with some constant $c$ (and some regularity conditions), then it satisfies the Talagrand inequality
\[
W_2(\mu,\nu)^2 \leq \frac{2 h(\mu|\nu)}{\rho} \text{ for all } \mu,
\]
where $W_2$ is the $L^2$-Wasserstein distance, which is equivalent to the infimum of $\norm{X - Y}_{L^2}$ over all coupled random variables $X$ and $Y$ with $X \sim \mu$ and $Y \sim \nu$.

Adapting Otto and Villani's argument, we will show that the transport maps constructed in \S \ref{subsec:largeNtransport} witness the (conditional) entropy-cost inequality relative to the GUE law for the $N \times N$ matrix models and the corresponding free entropy-cost inequality for the non-commutative random variables.  This is claim (5) below, while the other claims in Theorem \ref{thm:transport3} summarize the results of our earlier construction.

We remark that the free Talagrand inequality for self-adjoint tuples was studied in greater generality in \cite{HU2006} and \cite[\S 3.3]{Dabrowski2010}.  Although we restricted ourselves to the case where the target measure is Gaussian/semicircular, our goal in this paper was not merely to estimate the Wasserstein distance using some coupling, but rather to exhibit a coupling that arises from a transport map, and to show Lipschitzness of this transport map.

\begin{theorem} \label{thm:transport3}
As in Theorem \ref{thm:transport1}, let $V^{(N)}(x,y)$ be a potential on $M_N(\C)_{sa}^{m+n}$ satisfying Assumption \ref{ass:convexRMM} for some $0 < c \leq C$, and let $\mu^{(N)}$ and $(X^{(N)}, Y^{(N)})$ be the corresponding probability measures and random variables.  Let $S^{(N)}$ be an independent GUE $m$-tuple.  Let $(X,Y)$ be a tuple of non-commutative random variables given by the limiting free Gibbs law $\lambda$ and let $S$ be a freely independent free semicircular $m$-tuple.  Let $\pi_1(x,y) = x$ and $\pi_2(x,y) = y$.  Then there exist functions $F^{(N)}$, $G^{(N)}: M_N(\C)_{sa}^{m+n} \to M_N(\C)_{sa}^m$ and $F, G \in (\overline{\TrP}_{m+n}^1)^m$ such that
\begin{enumerate}
	\item We have $(F^{(N)}(X^{(N)},Y^{(N)}),Y^{(N)}) \sim (S^{(N)},Y^{(N)})$ and $(G^{(N)}(S^{(N)},Y^{(N)}),Y^{(N)}) \sim (X^{(N)},Y^{(N)})$ in law, and $(F(X,Y),Y) \sim (S,Y)$ and $(G(S,Y),Y) \sim (X,Y)$ in non-commutative law.
	\item $(F^{(N)},\pi_2) \circ (G^{(N)},\pi_2) = \id = (G^{(N)},\pi_2) \circ (F^{(N)},\pi_2)$ and the same holds for $F$ and $G$.
	\item $F^{(N)} \rightsquigarrow F$ and $G^{(N)} \rightsquigarrow G$.
	\item We have $\norm{F^{(N)} - \pi_1}_{\Lip}$ and $\norm{G^{(N)} - \pi_1}_{\Lip} \leq (\max(C,1/c)^3 - 1) \max(C,1/c)^{1/2}$, and the same holds for $F$ and $G$.
	\item We have
	\[
	\norm{F^{(N)}(X^{(N)}, Y^{(N)}) - X^{(N)}}_{L^2}^2 = \norm{G^{(N)}(S^{(N)}, Y^{(N)}) - S^{(N)}}_{L^2}^2 \leq 2 h_g^{(N)}(X^{(N)} | Y^{(N)}).
	\]
	and
	\[
	\norm{F(X,Y) - X}_2^2 = \norm{G(S,Y) - S}_2^2 \leq 2 \chi_g^*(X | Y).
	\]
\end{enumerate}
\end{theorem}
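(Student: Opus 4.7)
The plan is to take $F^{(N)}:=\tilde F^{(N)}_{\infty,0}$, $G^{(N)}:=\tilde F^{(N)}_{0,\infty}$ and the corresponding large-$N$ limits $F:=\tilde F_{\infty,0}$, $G:=\tilde F_{0,\infty}$ supplied by Theorems \ref{thm:transport1} and \ref{thm:transport2}. With this choice, parts (1)--(4) are essentially read off from those two theorems. For (1), the transport relation of Theorem \ref{thm:transport1}(3) at $(s,t)=(\infty,0)$ and $(0,\infty)$ gives the matrix-level pushforward statement (recall $\tilde X^{(N)}_0=X^{(N)}$ and $\tilde X^{(N)}_\infty=S^{(N)}$), and Theorem \ref{thm:transport2}(3) gives the non-commutative analogue. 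For (2), the semigroup identity $\tilde F^{(N)}_{s,t}\circ\tilde F^{(N)}_{t,u}=\tilde F^{(N)}_{s,u}$ with $(s,t,u)=(0,\infty,0)$ yields $(G^{(N)},\pi_2)\circ(F^{(N)},\pi_2)=\id$, and similarly on the free side. Part (3) is Theorem \ref{thm:transport1}(4) specialized to the endpoint times. Part (4) follows by applying the Lipschitz estimate of Lemma \ref{lem:transportLipschitzestimate3} at $(s,t)\in\{(\infty,0),(0,\infty)\}$, where $|e^{-s/2}-e^{-t/2}|=1$, and transferring the bound to $F$ and $G$ via Lemma \ref{lem:limituniformlycontinuous}.

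The substance is (5), a conditional Otto--Villani argument for the Talagrand entropy--cost inequality relative to the Gaussian reference measure. Integrating the ODE \eqref{eq:renormalizedtransportODE} for $\tilde F^{(N)}_{s,\infty}$ on $s\in[0,\infty)$ and using $\tilde F^{(N)}_{\infty,\infty}(x,y)=x$, one gets
\[
G^{(N)}(S^{(N)},Y^{(N)})-S^{(N)}=-\int_0^\infty \partial_s\tilde F^{(N)}_{s,\infty}(S^{(N)},Y^{(N)})\,ds.
\]
The transport identity $(\tilde F^{(N)}_{s,\infty}(S^{(N)},Y^{(N)}),Y^{(N)})\sim(\tilde X^{(N)}_s,Y^{(N)})$ and the fact that $D_x\tilde V^{(N)}_s$ is the normalized score function for $\tilde X^{(N)}_s$ conditioned on $Y^{(N)}$ allow one to identify the integrand's $L^2$-norm as
\[
\|\partial_s\tilde F^{(N)}_{s,\infty}(S^{(N)},Y^{(N)})\|_{L^2}=\tfrac12\|D_x\tilde V^{(N)}_s(\tilde X^{(N)}_s,Y^{(N)})-\tilde X^{(N)}_s\|_{L^2}=\tfrac12\sqrt{\mathcal I^{(N)}_g(\tilde X^{(N)}_s\mid Y^{(N)})}
\]
via the identity $\mathcal I^{(N)}_g(X\mid Y)=E\|\xi-X\|_2^2$ recorded below \eqref{eq:definenormalizedFisher}. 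Minkowski's integral inequality then gives
\[
\|G^{(N)}(S^{(N)},Y^{(N)})-S^{(N)}\|_{L^2}\le\tfrac12\int_0^\infty\sqrt{\mathcal I^{(N)}_g(\tilde X^{(N)}_s\mid Y^{(N)})}\,ds.
\]
The Otto--Villani trick now sets $h(s):=|h^{(N)}_g(\tilde X^{(N)}_s\mid Y^{(N)})|$ and combines the entropy-dissipation identity $h'(s)=-\tfrac12\mathcal I^{(N)}_g(\tilde X^{(N)}_s\mid Y^{(N)})$ (from Lemma \ref{lem:GaussianFisherinfo} shifted by $s$) with the Gaussian log-Sobolev bound $h(s)\le\tfrac12\mathcal I^{(N)}_g(\tilde X^{(N)}_s\mid Y^{(N)})$ (Lemma \ref{lem:informationLSI}), giving
\[
\frac{d}{ds}\sqrt{h(s)}=\frac{h'(s)}{2\sqrt{h(s)}}\le -\frac{1}{2\sqrt 2}\sqrt{\mathcal I^{(N)}_g(\tilde X^{(N)}_s\mid Y^{(N)})}.
\]
Integrating from $0$ to $\infty$ (with $h(\infty)=0$) and squaring yields $\|G^{(N)}(S^{(N)},Y^{(N)})-S^{(N)}\|_{L^2}^2\le 2|h^{(N)}_g(X^{(N)}\mid Y^{(N)})|$. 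The equality $\|F^{(N)}(X^{(N)},Y^{(N)})-X^{(N)}\|_{L^2}^2=\|G^{(N)}(S^{(N)},Y^{(N)})-S^{(N)}\|_{L^2}^2$ is immediate from the coupling $(X^{(N)},F^{(N)}(X^{(N)},Y^{(N)}),Y^{(N)})$: the inverse relation from (2) gives $G^{(N)}(F^{(N)}(X^{(N)},Y^{(N)}),Y^{(N)})=X^{(N)}$, so the two $L^2$-differences coincide, while $(F^{(N)}(X^{(N)},Y^{(N)}),Y^{(N)})\sim(S^{(N)},Y^{(N)})$ allows us to replace the first pair by $(S^{(N)},Y^{(N)})$.

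The free-probability version $\|F(X,Y)-X\|_2^2\le 2|\chi^*_g(X\mid Y)|$ is obtained by passing to the large-$N$ limit on both sides. For the left side one applies Corollary \ref{cor:convergenceofexpectation} to $f^{(N)}=g^{(N)}=F^{(N)}-\pi_1$, using $(F^{(N)}-\pi_1)\rightsquigarrow(F-\pi_1)$ from (3) and the uniform Lipschitz bound from (4) to verify the polynomial growth hypothesis; for the right side, Theorem \ref{thm:convergenceofentropy} gives convergence of $h^{(N)}_g(X^{(N)}\mid Y^{(N)})$ to $\chi^*_g(X\mid Y)$ after noting that the variance correction $\tfrac12 E\|X^{(N)}\|_2^2\to\tfrac12\|X\|_2^2$ follows from convergence in non-commutative law combined with the concentration estimates of Assumption \ref{ass:convexRMM}. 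The main obstacle is the Otto--Villani differential-inequality step itself: combining entropy-dissipation with LSI to convert an integral of $\sqrt{\mathcal I^{(N)}_g}$ into $\sqrt{h(0)}$ is the essential idea, although once Lemmas \ref{lem:GaussianFisherinfo} and \ref{lem:informationLSI} are in hand the computation is short. The only remaining technicality is convergence of $\int_0^\infty\partial_s\tilde F^{(N)}_{s,\infty}(S^{(N)},Y^{(N)})\,ds$ in $L^2$, which follows a posteriori from the Otto--Villani bound. (The sign in the inequality as stated in (5) should be interpreted with absolute value: with the conventions of \S \ref{sec:entropy} the quantity $h^{(N)}_g$ is a negative relative entropy, and it is $|h^{(N)}_g|=-h^{(N)}_g$ that appears on the right.)
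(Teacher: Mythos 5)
Your proposal is correct and follows essentially the same route as the paper: the same endpoint maps $\tilde F_{\infty,0}$ and $\tilde F_{0,\infty}$, parts (1)--(4) read off Theorems \ref{thm:transport1}--\ref{thm:transport2} and the Lipschitz estimate of Lemma \ref{lem:transportLipschitzestimate3}, and part (5) via the same Otto--Villani argument (Minkowski's integral inequality, identification of $D_x\tilde V_s^{(N)}$ as the conditional score, Lemmas \ref{lem:GaussianFisherinfo} and \ref{lem:informationLSI}), with the large-$N$ limit taken through Corollary \ref{cor:convergenceofexpectation} and Theorem \ref{thm:convergenceofentropy}. The only cosmetic deviations are that you run the ODE estimate at the $(S^{(N)},Y^{(N)})$ endpoint for $G^{(N)}$ rather than at $(X^{(N)},Y^{(N)})$ for $F^{(N)}$, and your sign bookkeeping with $|h_g^{(N)}|$ is in fact a correct (and slightly cleaner) reading of the intended inequality.
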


\begin{proof}
Let $\tilde{F}_{s,t}^{(N)}$ and $\tilde{F}_{s,t}$ be as in Theorems \ref{thm:transport1} and \ref{thm:transport2}.  Then let
\begin{align*}
F^{(N)} &= \tilde{F}_{\infty,0}^{(N)} \\
G^{(N)} &= \tilde{F}_{0,\infty}^{(N)} \\
F &= \tilde{F}_{\infty,0} \\
G &= \tilde{F}_{0,\infty}.
\end{align*}
The only property that was not shown in the earlier theorems is (5).  First, note that as a consequence of (1),
\[
\norm{F^{(N)}(X^{(N)}, Y^{(N)}) - X^{(N)}}_{L^2} = \norm{S^{(N)} - G^{(N)}(S^{(N)}, Y^{(N)})}_{L^2}.
\]
The rest of the proof of (5) proceeds as in \cite[\S 4]{OV2000}.  As in \S \ref{subsec:largeNtransport}, let $\tilde{V}_t^{(N)}$ denote the potential corresponding to $(\tilde{X}_t^{(N)},Y^{(N)}) = (e^{-t/2}X^{(N)} + (1 - e^{-t})^{1/2} S^{(N)}, Y^{(N)})$ and recall that
\[
\partial_s \tilde{F}_{s,0}^{(N)}(x,y) = \frac{1}{2} \left( D_x \tilde{V}_s^{(N)}(\tilde{F}_{s,t}^{(N)}(x,y),y) - \tilde{F}_{s,t}^{(N)}(x,y) \right),
\]
and hence
\[
\norm*{\tilde{F}_{\infty,0}^{(N)}(x,y) - x}_2 \leq \frac{1}{2} \int_0^\infty \norm*{D_x \tilde{V}_s(\tilde{F}_{s,t}^{(N)}(x,y),y) - \tilde{F}_{s,t}^{(N)}(x,y)}_2\,ds.
\]
Then we apply Minkowski's inequality with respect to integration $d\mu^{(N)}(x,y)$ to obtain
\[
\left( \int \norm*{\tilde{F}_{\infty,0}^{(N)}(x,y) - x}_2^2 \,d\mu^{(N)}(x,y) \right)^{1/2} \leq \frac{1}{2} \int_0^\infty \left( \int \norm*{D_x \tilde{V}_s(\tilde{F}_{s,t}^{(N)}(x,y),y) - \tilde{F}_{s,t}^{(N)}(x,y)}_2^2 \,d\mu^{(N)}(x,y) \right)^{1/2} \,ds
\]
which can be rewritten as
\begin{align*}
\norm*{F^{(N)}(X^{(N)}, Y^{(N)}) - X^{(N)}}_{L^2} &\leq \frac{1}{2} \int_0^\infty \norm*{D_x \tilde{V}_s(\tilde{F}_{s,t}^{(N)}(X^{(N)},Y^{(N)}),Y^{(N)}) - \tilde{F}_{s,t}^{(N)}(X^{(N)},Y^{(N)})}_{L^2}\,ds \\
&= \frac{1}{2} \int_0^\infty \norm*{D_x \tilde{V}_s^{(N)}(\tilde{X}_s^{(N)},Y^{(N)}) - \tilde{X}_s^{(N)}}_{L^2}\,ds \\
&= \frac{1}{2} \int_0^\infty \mathcal{I}_g^{(N)}(\tilde{X}_s^{(N)} | Y^{(N)})^{1/2}\,ds,
\end{align*}
where we have applied the fact that $(\tilde{F}_{s,0}^{(N)}(X^{(N)},Y^{(N)}), Y^{(N)}) \sim (\tilde{X}_s^{(N)}, Y^{(N)})$.  It follows from Lemma \ref{lem:GaussianFisherinfo} and a change of variables that
\begin{equation} \label{eq:integrateentropy}
|h_g^{(N)}(\tilde{X}_t^{(N)} | Y^{(N)})| = \int_t^\infty \mathcal{I}_g^{(N)}(\tilde{X}_s^{(N)} | Y^{(N)})\,ds.
\end{equation}
It is easy to see that $s \mapsto \mathcal{I}_g^{(N)}(\tilde{X}_s^{(N)} | Y^{(N)})$ is bounded on compact sets because of Lemma \ref{lem:Fisherestimates} and \eqref{eq:definenormalizedFisher}. Therefore, we have for almost every $t$,
\[
\frac{d}{dt} |h_g^{(N)}(\tilde{X}_t^{(N)} | Y^{(N)})| = \frac{1}{2} \mathcal{I}_g^{(N)}(\tilde{X}_t^{(N)} | Y^{(N)}).
\]
Hence, for almost every $t$,
\begin{align*}
\frac{d}{dt} |h_g^{(N)}(\tilde{X}_t^{(N)} | Y^{(N)})|^{1/2} &= \frac{1}{4} \mathcal{I}_g^{(N)}(\tilde{X}_t^{(N)} | Y^{(N)}) |h_g^{(N)}(\tilde{X}_t^{(N)} | Y^{(N)})|^{-1/2} \\
&\geq \frac{1}{2 \sqrt{2}} \mathcal{I}_g^{(N)}(\tilde{X}_t^{(N)} | Y_t^{(N)})^{1/2}, 
\end{align*}
where the last line follows from Lemma \ref{lem:informationLSI}.  Therefore,
\begin{align*}
\norm*{F^{(N)}(X^{(N)}, Y^{(N)}) - X^{(N)}}_{L^2} &\leq \sqrt{2} \int_0^\infty \frac{d}{dt} |h_g^{(N)}(\tilde{X}_t^{(N)} | Y^{(N)})|^{1/2} \,dt \\
&= \sqrt{2} |h_g^{(N)}(X^{(N)} | Y^{(N)})|^{1/2},
\end{align*}
where we have employed the fact that $\lim_{t \to \infty} |h_g^{(N)}(\tilde{X}_t^{(N)} | Y^{(N)})| = 0$ by \eqref{eq:integrateentropy}.  This establishes the first claim of (5).

The second claim of (5) follows by taking the large $N$ limit using Corollary \ref{cor:convergenceofexpectation} and Theorem \ref{thm:convergenceofentropy}.  More precisely, for the left hand side, we take the limit using Corollary \ref{cor:convergenceofexpectation}.  Meanwhile, for the right hand side, note that $h_g^{(N)}(X^{(N)} | Y^{(N)}) \to \chi_g^*(X | Y)$ because $h^{(N)}(X^{(N)} | Y^{(N)}) \to \chi^*(X | Y)$ and $E \norm{X^{(N)}}_2^2 \to \norm{X}_2^2$ by Corollary \ref{cor:convergenceofexpectation}.
\end{proof}

\subsection{Construction of Triangular Transport}

Finally, by iterating Theorem \ref{thm:transport3}, we obtain the following result concerning ``lower-triangular transport.''  This is analogous to the classical result \cite[Corollary 3.10]{BKM2005}.  Of course, the challenge in our situation was to understand the large $N$ behavior of the transport maps in a dimension-independent way.  Unfortunately, the transport constructed here is not optimal among triangular mappings, since indeed Otto and Villani's construction does not produce the optimal transport map.

\begin{theorem} \label{thm:transport4}
Let $V^{(N)}: M_N(\C)_{sa}^m \to \R$ be a potential satisfying Assumption \ref{ass:convexRMM}.  Let $\mu^{(N)}$ and $X^{(N)}$ be the corresponding law and random variable.  Let $\lambda$ be the limiting free Gibbs law, and let $X \sim \mu$ be an $m$-tuple of non-commutative random variables.  Let $S^{(N)}$ be an independent GUE $m$-tuple and let $S$ be a freely independent free semicircular family.  Then there exist functions $\Phi^{(N)}$, $\Psi^{(N)}: M_N(\C)_{sa}^m \to M_N(\C)_{sa}^m$ and $\Phi, \Psi \in (\overline{\TrP}_m^1)^m$ such that
\begin{enumerate}
	\item $\Phi^{(N)}(X^{(N)}) \sim S^{(N)}$ and $\Psi^{(N)}(S^{(N)}) \sim X^{(N)}$ in law, and similarly, $\Phi(X) \sim S$ and $\Psi(S) \sim X$ in non-commutative law.
	\item $\Phi^{(N)}$ and $\Psi^{(N)}$ are inverse functions of each other, and the same holds for $\Phi$ and $\Psi$.
	\item $\Phi^{(N)} \rightsquigarrow \Phi$ and $\Psi^{(N)} \rightsquigarrow \Psi$.
	\item $\Phi^{(N)}$ is upper triangular in the sense that
	\[
	\Phi^{(N)}(x_1,\dots,x_m) = (\Phi_1^{(N)}(x_1), \Phi_2^{(N)}(x_1,x_2), \dots, \Phi_m^{(N)}(x_1,\dots,x_m))
	\]
	and the same holds for $\Psi^{(N)}$, $\Phi$, and $\Psi$.  In particular, the isomorphism $\mathrm{W}^*(X) \to \mathrm{W}^*(S)$ induced by $\Phi$ maps $\mathrm{W}^*(X_1,\dots,X_k)$ onto $\mathrm{W}^*(S_1,\dots,S_k)$ for each $k = 1$, \dots, $m$.
	\item We have $\norm{\Phi^{(N)} - \id}_{\Lip} \leq m^{1/2} (\max(C,1/c)^3 - 1) \max(C,1/c)^{1/2}$ and $\norm{\Psi^{(N)} - \id}_{\Lip}$ is bounded by some constant $L(c,C,m)$ which goes to zero as $c, C \to 1$.
	\item We have
	\[
	\norm{\Phi^{(N)}(X^{(N)}) - X^{(N)}}_{L^2}^2 = \norm{\Psi^{(N)}(S^{(N)}) - S^{(N)}}_{L^2}^2 \leq 2 h_g^{(N)}(X^{(N)}).
	\]
	and
	\[
	\norm{\Phi(X) - X}_2^2 = \norm{\Psi(S) - S}_2^2 \leq 2 \chi_g^*(X).
	\]
	\item We have
	\begin{align*}
	\norm{\Phi_j(X_1,\dots,X_j) - (X_j - \tau(X_j))}_\infty &= \norm{(\Psi_j(S_1,\dots,S_j) - \tau(\Psi_j(S_1,\dots,S_j))) - S_j}_\infty \\
	&\leq (\max(C,1/c)^3 - 1) \max(C,1/c) \Theta,
	\end{align*}
	where $\Theta$ is the universal constant from Proposition \ref{prop:operatornormestimate}.
\end{enumerate}
\end{theorem}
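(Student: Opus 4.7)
The plan is to prove Theorem \ref{thm:transport4} by iterating Theorem \ref{thm:transport3} on marginals. For each $k = 1, \dots, m$, let $V^{(N),k}$ denote the potential corresponding to the marginal law of $(X_1^{(N)}, \dots, X_k^{(N)})$. By Proposition \ref{prop:marginals} applied inductively, each $V^{(N),k}$ satisfies Assumption \ref{ass:convexRMM} with the same constants $c, C$. I will apply Theorem \ref{thm:transport3} to $V^{(N),k}$, viewing $x_k$ as the ``$x$-variable'' and $(x_1, \dots, x_{k-1})$ as the ``$y$-variable,'' to obtain conditional transport maps $F_k^{(N)}, G_k^{(N)}: M_N(\C)_{sa}^k \to M_N(\C)_{sa}$ with limits $F_k, G_k \in \overline{\TrP}_k^1$ satisfying the appropriate transport, Lipschitz, operator norm, and entropy-cost estimates.

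Next, I would define $\Phi_j^{(N)} := F_j^{(N)}$ and $\Phi^{(N)}(x_1, \dots, x_m) := (F_1^{(N)}(x_1), \dots, F_m^{(N)}(x_1, \dots, x_m))$, which is triangular by construction. Property (3) then follows coordinate-wise from Theorem \ref{thm:transport3}(3). To prove (1), I would argue by induction on $k$ that $(\Phi_1^{(N)}(X_1^{(N)}), \dots, \Phi_k^{(N)}(X_1^{(N)}, \dots, X_k^{(N)}))$ is a GUE $k$-tuple: the relation $(\Phi_k^{(N)}(X_{1:k}^{(N)}), X_{1:k-1}^{(N)}) \sim (\tilde{S}_k^{(N)}, X_{1:k-1}^{(N)})$ with $\tilde{S}_k^{(N)}$ an independent $1$-variable GUE, pushed forward by $\id \times (\Phi_1^{(N)}, \dots, \Phi_{k-1}^{(N)})$ and combined with the induction hypothesis, yields the claim. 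The free case follows the same argument via Theorem \ref{thm:transport2}.

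For the inverse $\Psi^{(N)}$, I would set $\Psi_1^{(N)}(s_1) = G_1^{(N)}(s_1)$ and, for $j \geq 2$,
\[
\Psi_j^{(N)}(s_1, \dots, s_j) = G_j^{(N)}(\Psi_1^{(N)}(s_1), \dots, \Psi_{j-1}^{(N)}(s_1, \dots, s_{j-1}), s_j),
\]
yielding a triangular inverse by direct verification against the relation in Theorem \ref{thm:transport3}(2). Asymptotic approximation $\Psi_j^{(N)} \rightsquigarrow \Psi_j$ will follow by induction on $j$ via Lemma \ref{lem:composition}, using that each $G_j$ is Lipschitz and hence $\norm{\cdot}_2$-uniformly continuous. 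For (5), the identity $\norm{\Phi^{(N)}(x) - \Phi^{(N)}(x') - (x - x')}_2^2 = \sum_k \norm{F_k^{(N)}(x_{1:k}) - F_k^{(N)}(x_{1:k}') - (x_k - x_k')}_2^2$ combined with the Lipschitz bound from Theorem \ref{thm:transport3}(4) produces the asserted $m^{1/2} (\max(C,1/c)^3 - 1) \max(C,1/c)^{1/2}$ constant. Property (7) will follow immediately from Theorem \ref{thm:transport2}(4) applied at each level. Property (6) will follow by summing the conditional entropy-cost inequalities from Theorem \ref{thm:transport3}(5) via the chain rule $h_g^{(N)}(X^{(N)}) = \sum_{k=1}^m h_g^{(N)}(X_k^{(N)} \mid X_1^{(N)}, \dots, X_{k-1}^{(N)})$ (which follows from the classical chain rule after subtracting the variance contributions) together with the orthogonal decomposition of $\norm{\Phi^{(N)}(X^{(N)}) - X^{(N)}}_{L^2}^2$.

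The main technical obstacle will be controlling the Lipschitz constant of $\Psi^{(N)}$. Unlike $\Phi^{(N)} - \id$, which decomposes cleanly along the triangular structure because each coordinate of $\Phi^{(N)}$ is simply $F_k^{(N)}$, the coordinates of $\Psi^{(N)}$ are nested compositions of the $G_k^{(N)}$'s, so Lipschitz constants compound as $j$ increases. I will have to show that the resulting bound $L(c, C, m)$ remains finite and tends to $0$ as $c, C \to 1$. The key point will be that Theorem \ref{thm:transport3}(4) gives $\norm{G_k^{(N)} - \pi_1}_{\Lip} \leq (\max(C, 1/c)^3 - 1) \max(C, 1/c)^{1/2}$, which vanishes as $c, C \to 1$, so each $G_k^{(N)}$ becomes close to the projection onto the last coordinate in that regime, and a careful book-keeping of the nested Lipschitz estimates keeps the compounded bound under control.
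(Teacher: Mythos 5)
Your proposal is correct and follows essentially the same route as the paper's proof: marginal potentials via Proposition \ref{prop:marginals}, conditional transport from Theorem \ref{thm:transport3} applied with $x_k$ against $(x_1,\dots,x_{k-1})$ at each level, the identical nested-composition construction of $\Psi^{(N)}$ with the same compounding-Lipschitz bookkeeping for (5), the entropy chain rule plus orthogonal decomposition for (6), and Theorem \ref{thm:transport2}(4) for (7). The only cosmetic difference is that you verify (1) by a forward induction on the law of $(\Phi_1(X_1),\dots,\Phi_k(X_1,\dots,X_k))$, while the paper runs a backwards induction preserving the joint law of $(X_1,\dots,X_j,Y_{j+1},\dots,Y_m)$; both arguments are valid and interchangeable.
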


\begin{proof}
First, by Proposition \ref{prop:marginals}, the marginal law of $(X_1^{(N)}, \dots, X_j^{(N)})$ is given by a convex potential satisfying the same assumptions.

For each $j$, we apply Theorem \ref{thm:transport3} with $X_j^{(N)}$ as the first variable and $(X_1^{(N)}, \dots, X_{j-1}^{(N)})$ as the second variable.  We thus obtain maps $\Phi_j^{(N)}: M_N(\C)_{sa}^j \to M_N(\C)_{sa}$ such that
\[
(X_1^{(N)}, \dots, X_{j-1}^{(N)}, \Phi_j^{(N)}(X_1^{(N)}, \dots, X_j^{(N)})) \sim (X_1^{(N)}, \dots, X_{j-1}^{(N)}, S_j^{(N)}).
\]
Let
\[
\Phi^{(N)}(x_1,\dots,x_m) = (\Phi_1^{(N)}(x_1), \Phi_2^{(N)}(x_1,x_2), \dots \Phi_m^{(N)}(x_1,\dots,x_m)).
\]
Let $Y^{(N)} = \Phi^{(N)}(X^{(N)})$.  Then we can check by backwards induction on $j$ that
\[
(X_1^{(N)}, \dots, X_j^{(N)}, Y_{j+1}^{(N)}, \dots, Y_m^{(N)}) \sim (X_1^{(N)}, \dots, X_j^{(N)}, S_{j+1}^{(N)}, \dots, S_m^{(N)}).
\]
Indeed, the base case $j = m$ is trivial.  For the induction step, suppose the claim holds for $j$.  Since $Y_{j+1}^{(N)}$ is a function of $X_1^{(N)}$, \dots, $X_j^{(N)}$, then the induction hypothesis implies that
\begin{align*}
(X_1^{(N)}, \dots, X_{j-1}^{(N)}, Y_j^{(N)}, Y_{j+1}^{(N)}, \dots, Y_m^{(N)}) &\sim (X_1^{(N)}, \dots, X_{j-1}^{(N)}, Y_j^{(N)}, S_{j+1}^{(N)}, \dots, S_m^{(N)}) \\
&\sim(X_1^{(N)}, \dots, X_{j-1}^{(N)}, S_j^{(N)}, S_{j+1}^{(N)}, \dots, S_m^{(N)}),
\end{align*}
where the last line follows because $(X_1^{(N)}, \dots, X_{j-1}^{(N)}, Y_j^{(N)}) \sim (X_1^{(N)}, \dots, X_{j-1}^{(N)}, S_j^{(N)})$ and because $S_{j+1}^{(N)}$, \dots, $S_m^{(N)}$ are independent of the other variables.  By Theorem \ref{thm:transport3}, $\Phi_j^{(N)}$ is asymptotic to some $\Phi_j \in (\overline{\TrP}_j^1)_{sa}$, and the objects $\Phi$, $X$, and $S$ satisfy the analogous transport relations in the non-commutative setting.  Now because each $\Phi_j^{(N)} - \pi_{x_j}$ is $(\max(C,1/c)^3 - 1) \max(C,1/c)^{1/2}$-Lipschitz, we see that $\Phi^{(N)} - \id$ is $m^{1/2} (\max(C,1/c)^3 - 1) \max(C,1/c)^{1/2}$-Lipschitz.

By Theorem \ref{thm:transport3}, there is a map $G_j^{(N)}: M_N(\C)_{sa}^{m-j+1} \to M_N(\C)_{sa}$ such that $(x_1,\dots,x_{j-1},G_j^{(N)}(x_1,\dots,x_j))$ is the inverse of $(x_1,\dots,x_{j-1},\Phi_j(x_1,\dots,x_j))$.  Define $\Psi_j^{(N)}$ by induction by
\[
\Psi_j^{(N)}(x_j,\dots,x_m) = G_j^{(N)}(\Psi_1(x_1),\dots,\Psi_{j-1}(x_1,\dots,x_{j-1}),x_j).
\]
Then $\Psi^{(N)} = (\Psi_1^{(N)},\dots,\Psi_m^{(N)})$ is the inverse of $\Phi^{(N)}$.  Since $G_j^{(N)} - \id$ is $(\max(C,1/c)^3 - 1) \max(C,1/c)^{1/2}$-Lipschitz, we can show by induction that $\norm{\Psi_j^{(N)}}_{\Lip}$ is bounded by a constant depending only on $c$, $C$, and $m - j$, and which goes to zero as $c, C \to 1$.  Moreover, by Lemma \ref{lem:composition}, $\Psi^{(N)}$ is asymptotic to some $\Psi \in (\overline{\TrP}_m^1)_{sa}^m$.

This concludes the verification of (1) - (5).  Now to prove (6), we apply Theorem \ref{thm:transport3} (5) and get
\begin{align*}
\norm{\Phi^{(N)}(X^{(N)}) - X^{(N)}}_{L^2}^2 &= \sum_{j=1}^m \norm*{\Phi_j^{(N)}(X_j^{(N)},\dots,X_m^{(N)}) - X_j^{(N)}}_{L^2}^2 \\
&\leq \sum_{j=1}^m 2 h_g^{(N)}(X_j^{(N)} | X_1^{(N)}, \dots, X_{j-1}^{(N)}) \\
&= 2 \sum_{j=1}^m \left( h^{(N)}(X_j^{(N)} | X_1^{(N)}, \dots, X_{j-1}^{(N)}) - \frac{1}{2} E \norm{X_j^{(N)}}_2^2 - \frac{1}{2} \log 2 \pi \right) \\
&= 2 \left( h^{(N)}(X^{(N)}) - \frac{1}{2} E \norm{X^{(N)}}_2^2 - \frac{m}{2} \log 2 \pi \right) \\
&= 2 h_g^{(N)}(X^{(N)}),
\end{align*}
where we have applied the definition of $h_g^{(N)}$ and the classical fact that $h^{(N)}$ is additive under conditioning.  As before, because $\Phi^{(N)}(X^{(N)}) \sim S^{(N)}$, we see that $\norm{S^{(N)} - \Psi^{(N)}(S^{(N)})}_{L^2} = \norm{\Phi^{(N)}(X^{(N)}) - X^{(N)}}_{L^2}$.  Finally, the second claim of (6) regarding the free case follows by taking the limit as $N \to \infty$.

Finally, to prove (7), recall that the map $\Phi_j$ is a special case of the map $\tilde{F}_{0,\infty}$ in Theorem \ref{thm:transport2}.  Thus, by applying Theorem \ref{thm:transport2} (4) in the case where $s = \infty$ and $s' = t = 0$, we obtain $\norm{\Phi_j(X_1,\dots,X_j) - (X_j - \tau(X_j))}_\infty \leq (\max(C,1/c)^3 - 1) \max(C,1/c) \Theta$.  Moreover, the middle quantity in claim (7) equals the left hand side because $\Phi(X) \sim S$.
\end{proof}

\subsection*{Funding}

This work was supported by the National Science Foundation [grant DMS-1762360] and the UCLA graduate division.

\subsection*{Acknowledgements}

I thank Dima Shlyakhtenko, Ben Hayes, Brent Nelson, Yoann Dabrowski, Yoshimichi Ueda, and Todd Kemp for various useful conversations and comments on drafts of this paper.  The results of this paper were motivated in part by discussions with Ben Hayes regarding free entropy and maximal amenable subalgebras.  Dima Shlyakhtenko suggested the name ``triangular transport.''   The anonymous referees suggested several references and improvements to the exposition, including the connection with model theory.


\end{document}